\newtheorem{thm}{Theorem}[section]
\newtheorem{lemma}[thm]{Lemma}
\newtheorem{prop}[thm]{Proposition}
\newtheorem{cor}[thm]{Corollary}
\newtheorem*{theorem_a}{Theorem A}
\newtheorem*{theorem_b}{Theorem B}
\newtheorem*{theorem_c}{Theorem C}
\theoremstyle{definition}
\newtheorem{remark}[thm]{Remark}
\newtheorem{definition}[thm]{Definition}
\theoremstyle{definition}
 \numberwithin{equation}{section}
\newcommand{\R}{{\mathbb R}}
\DeclareMathOperator{\dist}{dist}
\let\div\undefined
\DeclareMathOperator{\div}{div}
\def\sideremark#1{\ifvmode\leavevmode\fi\vadjust{\vbox to0pt{\vss
 \hbox to 0pt{\hskip\hsize\hskip1em
 \vbox{\hsize2.1cm\tiny\raggedright\pretolerance10000
  \noindent #1\hfill}\hss}\vbox to15pt{\vfil}\vss}}}%
\title[Optimal uniform bounds]{Optimal uniform bounds for competing variational elliptic systems with variable coefficients}
\author{Manuel Dias and Hugo Tavares}
\date{\today}
\begin{document}
\begin{abstract}

Let $\Omega \subset \mathbb{R}^N$ be an open set. In this work we consider solutions of the following gradient elliptic system
\[
-\div(A(x)\nabla u_{i,\beta}) = f_i(x,u_{i,\beta}) + a(x)\beta |u_{i, \beta}|^{\gamma -1}u_{i, \beta}
	\mathop{\sum_{j=1}^l}_{j\neq i} |u_{j, \beta}|^{\gamma + 1},
\]
for $i=1,\ldots, l$. We work in the competitive case, namely $\beta<0$. Under suitable assumptions on $A$, $a$, $f_i$ and on the exponent $\gamma$, we prove that uniform $L^\infty$--bounds on families of positive solutions $\{u_\beta\}_{\beta<0}=\{(u_{1,\beta},\ldots, u_{l,\beta})\}_{\beta<0}$ imply uniform Lipschitz bounds (which are optimal).

One of the main points in the proof are suitable generalizations of Almgren's and Alt-Caffarelli-Friedman's monotonicity formulas for solutions of such systems. Our work generalizes previous results, where the case $A(x)=Id$  (i.e. the operator is the Laplacian)  was treated. 
\end{abstract}
\maketitle

\tableofcontents

\section{Introduction}
\label{chapter:introduction}

\subsection{Statement of the main result}

Let $\Omega \subset \mathbb{R}^N$ be an open set, $N\geq 1$. Consider $u_\beta = (u_{1,\beta},...,u_{l,\beta})$, a solution of the variational system of equations given by:
\begin{align}
	\label{equation}
	&-\div(A(x)\nabla u_{i,\beta}) = f_i(x,u_{i,\beta}) + a(x)\beta |u_{i, \beta}|^{\gamma -1}u_{i, \beta}
	\mathop{\sum_{j=1}^l}_{j\neq i} |u_{j, \beta}|^{\gamma + 1}
\end{align}
for all $i=1,...,l$, where $\beta < 0$, $\gamma \geq 1$ and $x \in \Omega$. Under natural assumptions on $A$, $a$ and $f_i$, in this paper we  obtain uniform optimal bounds in $\beta$ for classes of solutions $\{u_\beta\}_{\beta<0}$. 

\smallbreak

More precisely, we make the following assumptions. For the matrix $A(x)$:

\begin{itemize}
\item[\textbf{(A1)}] There exists $\theta >0$ such that:
\begin{equation*}
	\langle A(x) \xi , \xi\rangle> \theta |\xi|^2 \quad \quad \forall x \in \Omega, \xi \in \mathbb{R}^N.
\end{equation*}
\item[\textbf{(A2)}] $A(\cdot) \in C^{0,1}(\Omega, \text{Sym}^{N\times N})$, and
\begin{align*}
	\sup_{x \in \Omega}\|A(x)\| \leq M, \quad\quad \sup_{x \in \Omega}\|DA(x)\| \leq M.
\end{align*}
\end{itemize}

For the functions $f_i$, we assume that:
\begin{itemize}
\item[\textbf{(F)}] $f_i(\cdot,\cdot) \in C(\Omega\times \mathbb{R})$, and
\end{itemize}
\begin{equation*}
	\sup_{x \in \Omega}|f_i(x,s)| = O(s),\quad \text{ as $s \rightarrow 0$ for all $i=1,...,l$. }
\end{equation*}

Finally, we make the following assumption on the function $a(x)$:
\begin{itemize}
\item[\textbf{(a)}] $a(\cdot) \in C^1(\Omega)$ and there exists $\delta>0$ such that:
\begin{equation*}
	a(x)>\delta>0 \quad \quad \forall x \in \Omega.
\end{equation*}
\end{itemize}
Our main result reads as follows.

\begin{thm}
	\label{DesiredTheorem}
	Under the previous assumptions on $A$, $a$, $f_i$, assume moreover that $\frac{\gamma N}{\gamma + 1} < 2$.	Let $\{u_\beta\}_{\beta < 0}$ be a family of positive solutions to the system  \eqref{equation} such that 
	\begin{equation}
    \label{boundedness}
 \text{there exists $m>0$ such that }     \sup_{\beta <0} \|u_\beta\|_{L^\infty(\Omega)}
    \leq
    m.
\end{equation}	
Then, given $K \Subset \Omega$, there exists a constant $C>0$ such that 
	\begin{equation}\label{eq:Lipbounds}
	\sup_{\beta <0}
	\|\nabla u_{i,\beta}\|_{L^\infty(K)}\leq
	C. \qquad \text{ for all $i \in \{1,...,l\}$.}
	\end{equation}
\end{thm}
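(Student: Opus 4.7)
The plan is to argue by contradiction via a blow-up analysis following the scheme developed in the constant-coefficient case (Noris--Tavares--Terracini--Verzini, Soave--Zilio), adapted to handle $\div(A\nabla\cdot)$. As a preliminary step I would derive uniform interior H\"older estimates for $\{u_\beta\}_{\beta<0}$ from the $L^\infty$ bound \eqref{boundedness}, using De Giorgi--Nash--Moser applied to each component: when testing the equation for $u_{i,\beta}$ against a cut-off of $u_{i,\beta}^+$, the competition term has a favourable sign since $\beta<0$, so \textbf{(A1)}--\textbf{(A2)} and \textbf{(F)} give a standard Caccioppoli inequality uniform in $\beta$. Assume now that \eqref{eq:Lipbounds} fails; then there are sequences $\beta_n$, $x_n\to x_0\in K$, and an index (say $i=1$) with $L_n:=|\nabla u_{1,\beta_n}(x_n)|\to\infty$.

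Choose $r_n\to 0$ and rescale by
\[
v_{i,n}(y):=\frac{1}{L_n r_n}\,u_{i,\beta_n}(x_n+r_n y),\qquad A_n(y):=A(x_n+r_n y),
\]
so that each $v_{i,n}$ solves an equation of the same type, with coefficient $M_n:=\beta_n L_n^{2\gamma}r_n^2$ in front of the competition term and a lower-order source that, by \textbf{(F)} and \eqref{boundedness}, vanishes in $L^\infty_{\mathrm{loc}}$. By \textbf{(A2)}, $A_n(y)\to A(x_0)=:A_0$ locally uniformly, and after a linear change of variables reducing $A_0$ to the identity, the limit problem lives on $\mathbb{R}^N$ with $-\Delta$. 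The scale $r_n$ is tuned so that the blown-up sequence has non-trivial normalized behaviour near the origin (e.g.\ $|\nabla v_{1,n}(0)|=1$ together with a vanishing condition coming from the interior H\"older bound). Three regimes must be ruled out: if $M_n\to 0$, the limit is a vector of entire $A_0$-harmonic functions with at most linear growth, which must be affine; if $M_n\to c\in(0,\infty)$, the limit solves an entire competitive system with constant coefficients; if $M_n\to -\infty$, the limit has pairwise segregated supports and each $v_i$ is harmonic on its positivity set.

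In each regime, the two key tools are the generalizations to $\div(A\nabla\cdot)$ of Almgren's frequency function, which forces the blow-up limit to be homogeneous of some degree, and of the Alt--Caffarelli--Friedman formula, whose finiteness forces at most one component to survive non-trivially in the segregated regime. Combined, they pin down $v_1$ as an affine harmonic function, which is incompatible with the free-boundary conditions inherited from the segregated limit or with the Liouville theorems available for competitive entire systems under the dimensional assumption $\gamma N/(\gamma+1)<2$. This last assumption is exactly what makes the intermediate case $M_n\to c$ exclude non-trivial solutions; without it, positive entire solutions of the competitive limit system exist and the strategy breaks.

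The hard part is the variable-coefficient extension of the two monotonicity formulas. On small balls centred at a reference point $x_0$, one must freeze $A$ to $A(x_0)$ and control the perturbation $A(x)-A(x_0)$; this produces error terms in the differential inequalities driving monotonicity. The Lipschitz regularity provided by \textbf{(A2)} is precisely what allows these errors to be absorbed into an exponential correction of the form $e^{C r^\alpha}$, so that the Almgren and ACF quantities become only \emph{almost}-monotone but still pass to the limit under blow-up with the correct homogeneity. Choosing the right geometry (Euclidean balls versus the intrinsic balls induced by the metric $A^{-1}$), the right boundary integrand (an $A$-conormal flux), and the right auxiliary subharmonicity for the ACF argument is where the bulk of the technical work sits; once these variable-coefficient formulas are established with errors compatible with the blow-up scaling, the contradiction argument proceeds as in the case $A\equiv \mathrm{Id}$.
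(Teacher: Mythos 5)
Your blow-up setup matches the paper's in spirit (rescale at the point where the gradient blows up, normalize by $A(x_n)^{1/2}$ so the frozen operator becomes the Laplacian, classify the limit according to the behaviour of $M_n$), but your endgame has a genuine gap. You propose to reach the contradiction \emph{at the level of the blow-up limit}: the monotonicity formulas "pin down $v_1$ as an affine harmonic function, which is incompatible with the free-boundary conditions \ldots or with the Liouville theorems available". This does not work for Lipschitz bounds. In the segregated regime the limit can legitimately be the two-plane profile $v_1=\gamma_1 x_1^{+}$, $v_2=\gamma_2 x_1^{-}$ (see Lemma \ref{blowDownSequence} and Lemma \ref{harmonic2compsystemCharacter}), which is an entire Lipschitz solution of the limit problem, satisfies every free-boundary and Liouville-type constraint, and is excluded by nothing at the limit level — this is exactly why the H\"older-bound strategy does not upgrade to Lipschitz. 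The contradiction must instead be extracted \emph{along the sequence}: one introduces the critical radius $\overline r_n$ where the Almgren frequency of $v_n$ crosses the threshold $1$, shows $\overline r_n\to\infty$ while $r_n\overline r_n\to 0$, gets a uniform positive lower bound for the Alt--Caffarelli--Friedman quotient $J_n$ at a fixed scale $2N/\theta^{1/2}$ (using nontriviality of \emph{two} components, which itself requires the multiplicity-two result of Appendix \ref{chapter:mult1PointsApp}), transports it up to $\overline r_n/3$ by the ACF almost-monotonicity, controls $J_n(\overline r_n/3)$ by $(E+H)(\overline r_n)/\overline r_n^{2}$, and finally uses the almost-monotonicity of $(E+H)/r^2$ on $[\overline r_n,\tilde r/r_n]$ to bound this by a quantity of order $1/L_n^2\to 0$. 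None of this quantitative interplay appears in your proposal, and without it the argument stalls precisely at the admissible two-plane limit.

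Two secondary points. First, you attribute the restriction $\frac{\gamma N}{\gamma+1}<2$ to a Liouville theorem for the intermediate regime $M_n\to M_\infty$; in fact it enters only in the proof of the Almgren monotonicity formula for the \emph{sequence} (inequality \eqref{eq:RESTRICTION}), where it guarantees that the competition terms in $E_\beta'$ carry a favourable sign after the Pohozaev-type identity — the extra $O(r)$ error from the variable coefficients is why strict inequality is needed here, whereas the Laplacian case tolerates equality. Second, the preliminary uniform H\"older bounds are not a routine consequence of De Giorgi--Nash--Moser with a "favourable sign": the right-hand side is only bounded by $|\beta|\,m^{2\gamma+1}$, so uniformity in $\beta$ across the interfaces is itself a nontrivial theorem (Theorem C), which moreover is used again in an essential and nonstandard way inside the proof of the ACF sphere lemma (Lemma \ref{sphereLemma}), where the lack of a symmetrization argument for variable coefficients is compensated by uniform H\"older estimates for the minimizers on the sphere.
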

\noindent To see the dependencies of the constant $C$ appearing in \eqref{eq:Lipbounds}, see Remark \ref{rem:dependence_of_constant} below. For a direct consequence in the framework of elliptic systems in Riemannian manifolds, see Corollary \ref{cor_Riem}.

\smallbreak

In the remainder of this introduction, we give background for this result, explain its proof and provide the structure of the paper.

\subsection{Background}

Systems of type \eqref{equation} have been widely considered in the literature in the case $A(x)=Id$ and $a(x)=1$, when the system reads
\begin{equation}\label{equation:Laplacian}
-\Delta u_{i,\beta} = f_i(x,u_{i,\beta}) + 
	\beta |u_{i, \beta}|^{\gamma -1}u_{i, \beta}\mathop{\sum_{j=1}^l}_{j\neq i} |u_{j, \beta}|^{\gamma + 1}.
\end{equation}
From a physical point of view, these systems arise naturally when looking for standing wave solutions of associated systems of Gross-Pitaevskii/nonlinear Schr\"odinger equations. The later model important phenomena in Nonlinear Optics \cite{AkAn} and Bose-Einstein condensation \cite{Rogel_Salazar_2013, Timmermans}. In the models, the solutions are the corresponding condensate amplitudes, the term $f_i(x,u_{i,\beta})$ regulates self-interactions within the same component, while $\beta$ expresses the strength and the type of interaction between different components $i$ and $j$. When $\beta>0$ this represents cooperation, while $\beta<0$ represents  competition. In the important case $f_i(x,u_{i,\beta})=|u_{i,\beta}|^{2\gamma}u_{i,\beta}$, starting from \cite{LinWei}, there is a vast literature regarding existence, multiplicity and classification of solutions to \eqref{equation:Laplacian}; we simply refer to the papers \cite{BartschDancerWang, ClappPistoia2022, ClappSzulkin2019, CorreiaJDE2016,CorreiaNA2016,DancerWeiWeth, Mandel,OliveiraTavares,PengWangWang2019,Soave,SoaveTavares,TianWang, WeiWu} (in the subcritical case) and \cite{ChenLinZou,ChenZouARMA2012, ChenZou2, ClappPistoia2018,PengPengWang2016,TavaresYou,TavaresYouZou, YinZou} (in the critical case) for more details and to check other references.

\smallbreak

In many situations (see for instance \cite{BartschDancerWang,ChenZouARMA2012,ChenZou2,ClappPistoia2018,DancerWeiWeth, TavaresYou,TavaresYouZou, TianWang}), one can build, using variational methods,  families of solutions $\{u_\beta\}_{\beta<0}$ which have uniform bounds in $L^\infty(\Omega)$, namely that satisfy \eqref{boundedness}. It is therefore a natural question to understand what is the asymptotic behaviour of such solutions as $\beta\to -\infty$, what are the optimal bounds, and how to characterize the limiting profiles. This was done in \cite{uniformHolderBoundsHugoTerraciniNoris} for $\gamma=1$ (see also \cite{CaffarelliLin,CLLL}).  Using the same strategy, the general case of system \eqref{equation:Laplacian} was done in the survey paper \cite{SOAVE2016388}.

\begin{theorem_a}[{{\cite[Theorems 1.2 \& 1.5]{SOAVE2016388}}}]\label{theoremA}
Take $\gamma>0$ and $f_i$ satisfying \textbf{(F)}. Let $\{u_\beta\}_{\beta<0}$ be a family of solutions of \eqref{equation:Laplacian} satisfying the uniform $L^\infty$--bound \eqref{boundedness}. Then, for every $K \Subset \Omega$ and $\alpha\in (0,1)$, there exists $C>0$ such that
\begin{equation}\label{eq:Holder1}
\sup_{\beta<0}\| u_\beta\|_{C^{0,\alpha}(K)}\leq C.
\end{equation}
In particular, there exists a limiting function $u=(u_1,\ldots, u_l)$, where each $u_i$ is Lipschitz continuous in $\Omega$, such that, up to a subsequence,
\begin{enumerate}
\item $u_\beta \to u$ strongly in $H^1_{loc}(\Omega)\cap C_{loc}^{0,\alpha}(\Omega)$ for every $0<\alpha<1$ and, for every compact $K \Subset \Omega$, we have
\[
\beta \int_{K} |u_{i,\beta}|^{p+1} |u_{j,\beta}|^{p+1} \to 0\qquad \text{ as $\beta \to \infty$, whenever $i\neq j$;}
\]
\item $u_i u_j\equiv 0$ whenever $i\neq j$, and  $-\Delta u_i=f_{i}(x,u)$ in the open set $\{|u_i|>0\}$.
\end{enumerate}
\end{theorem_a}

Moreover, for any limiting profile $u=(u_1,\ldots,u_l)$ as in the previous theorem, by \cite[Theorems 1.1 \& 8.1]{HugoTerraciniWeakReflectionLaw}, one deduces the structure of the free boundary $\{u=0\}$: it is, up to a set of Hausdorff dimension at most $N-2$, a regular hypersurface. Theorefore, by this regularity result, Theorem A-(2) and Hopf's lemma, one concludes that Lipschitz regularity is optimal for the limiting profiles $u$ (the gradient has a jump on the regular part of the free boundary $\{u=0\}$). The next natural question is whether one can obtain uniform Lipschitz bounds for $L^\infty$-bounded sequences of solutions $\{u_\beta\}_{\beta<0}$. This was positively answered  by Soave and Zilio in \cite{SoaveZilio}.

\begin{theorem_b}[{{\cite[Theorem 1.3]{SoaveZilio}}}]\label{theoremB}
Let $\gamma,N\geq 1$ be such that $\frac{\gamma N}{\gamma + 1} \leq 2$, and take $f_i$ satisfying \textbf{(F)}. Let $\{u_\beta\}_\beta$ be a family of solutions of \eqref{equation:Laplacian} satisfying the uniform $L^\infty$--bound \eqref{boundedness}. Then, for every $K \Subset \Omega$, there exists $C>0$ such that
\begin{equation}\label{eq:Holder2}
\sup_{\beta<0} \| u_\beta\|_{C^{0,1}(K)}\leq C.
\end{equation}
\end{theorem_b}

Therefore, our Theorem \ref{DesiredTheorem} is an extension of this result to the framework of systems of type \eqref{equation}. We explain in Subsection \ref{subsec:structure} which are the main difficulties one faces when passing from the case of the Laplacian operator to a divergence operator with variable coefficients.

\smallbreak

Observe that passing from H\"older to Lipschitz bounds is a nontrivial task. The proofs of H\"older bounds are based, among other things, on the fact that there exist no harmonic functions, apart from the constants, which have bounded $\alpha$-H\"older seminorm in $\mathbb{R}^N$ for some $\alpha \in ]0,1[$. The proof of \eqref{eq:Holder1} proceeds by contradiction and by performing a blowup argument close to the region where one does not have a bound. One then reaches a contradiction, in the end, by studying all possible cases for the \emph{limiting profiles}, excluding them using Liouville type results like the one just stated (within this process, an Almgren's monotonicity formula is proved for the limits). However, this type of proof does not translate to the Lipschitz setting. In \cite{SoaveZilio}, in order to prove \eqref{eq:Holder2}, a contradiction is not obtained at the limit of the blowups, but instead along the blowup sequence; for this, the authors combine in a very nice way an Almgren and an Alt-Caffarelli-Friedman monotonicity formula for rescaled solutions of system \eqref{equation:Laplacian}.

\smallbreak

Let us also point out that our work is also a natural follow up of the following theorem, obtained in \cite{HugoHolderVariable}, concerning $\alpha$-H\"older bounds ($\alpha \in ]0,1[$) for solutions of the system \eqref{equation}.

\begin{theorem_c}[{\cite[Theorem B.1]{HugoHolderVariable}}]
	\label{theoremForHolder}
	If $\{u_\beta\}_{\beta <0}$ is a family of solutions of \eqref{equation} satisfying the conditions \textbf{(A1)}, \textbf{(A2)}, \textbf{(F)}, \textbf{(a)} and \eqref{boundedness} then, for each $\alpha \in ]0,1[$ and $K \Subset \Omega$, there exists a constant $C$ such that
	$$
	\sup_{\beta < 0}
	\| u_{\beta}\|_{C^{0,\alpha}(K)}
	\leq
	C.
	$$
\end{theorem_c}

\begin{remark}
    A generalization of this result also easily follows using the same proof as in \cite{HugoHolderVariable}. Let, for each $\beta<0$, $A_{\beta}(\cdot) \in C^\infty(\Omega, \text{Sym}^{N\times N})$ be a matrix satisfying conditions \textbf{(A1)} and \textbf{(A2)} uniformly in $\beta$, and $f_{i,\beta} \in C^\infty(\Omega,\mathbb{R})$ satisfying \eqref{boundForF} uniformly. Take $v_\beta = (v_{1,\beta},...,v_{l,\beta})$  a positive solution of the system 
    \begin{equation*}
        -\div(
            A_\beta(x)
            \nabla v_{i,\beta}
        )
        =
        f_{i,\beta}
        (x,v_{i,\beta})
        +
        a(x) \beta
        \sum_{j\neq i} |v_{i,\beta}|^{\gamma-1}
        v_{i,\beta}
        |v_{j,\beta}|^{\gamma+1}
    \end{equation*}
 satisfying $\|v_\beta\|_{L^\infty(\Omega)}<m$ for some $m>0$,  Then, for each $\alpha \in ]0,1[$ and $K \Subset \Omega$, there exists $C>0$ depending on $\alpha, K,\gamma,N$ and the constants in the conditions \textbf{(A1)},\textbf{(A2)},\textbf{(F)},\textbf{(a)} such that
    $$
        \sup_{\beta<0}
        \|u_\beta\|_{C^{0,\alpha}(K)}
        \leq
        C.
    $$
\end{remark}

As a matter of fact, as we discuss next, these results that give uniform H\"older bounds are absolutely essential in our arguments to prove Theorem \ref{DesiredTheorem}. They will, in particular, be used in Section \ref{chapter:resultsChap4} to prove an Alt-Caffarelli-Friedman type monotonicity formula (see Lemma \ref{sphereLemma}). This is a key difference between our approach and the one used in \cite{SoaveZilio}.

\vspace{2mm}

\subsection{Structure of the paper and proof strategy}\label{subsec:structure}

We now give a brief description of the structure of this work and of the proof of Theorem \ref{DesiredTheorem}. The proof follows the blowup argument and the scheme found in \cite{SoaveZilio} (where, we recall, Theorem B is proved, which corresponds to Theorem \ref{DesiredTheorem} with $A(x) = Id$, $a(x)$). It relies on two monotonicity formulas, the Almgren monotonicity formula and the Alt-Caffarelli-Friedman monotonicity formula that are proved for blowup sequences (and not only for the blowup limits). 

Apart from the natural technical issues that arise from the fact that we have a more complicated operator, the main difficulty in our case is how to  generalize appropriately these monotonicity formulas to our setting of divergence type operators with variable coefficients.

\smallbreak

As stated before, the proof is based on a \emph{contradiction argument} using a normalized blowup sequence. It is a blowup done along the points $x_n$ where $\max_{j=1,...,l}|\nabla u_{j,\beta_n}|$ attains its maximum, for a sequence $\beta_n \rightarrow -\infty$. The normalization is done in such a way that the new sequence has bounded Lipschitz seminorm. Section \ref{chapter:background} is devoted to analyzing this blowup sequence and its properties. The results found are generalizations of arguments in \cite{SoaveZilio}, with adaptations for the variable coefficient case.

\smallbreak

Section \ref{chapter:implementation} is devoted to a generalization of the Almgren monotonicity formula for the variable coefficients case. In the context of limits $\beta\to -\infty$ of solutions to systems \eqref{equation:Laplacian} or to limits of blowup sequences, this formula has been used for instance in \cite{CaffarelliLin,uniformHolderBoundsHugoTerraciniNoris, SOAVE2016388, HugoTerraciniWeakReflectionLaw}; see \cite[Appendixes B \& C]{HugoHolderVariable}  for the case of system \eqref{equation}. In the later case, a crucial point is to perform a change of variables, changing (locally) the operator to become a perturbation of the Laplacian (see \eqref{tildeU} below, which is inspired by the previous works  \cite{Mariana2,GAROFALO2014682,Kukavica,SoaveWeth}). Here, we generalize \cite{SoaveZilio,SphereDensityAlt} (which deal with the Laplacian case) and prove an Almgren monotonicity formula for blowup \emph{sequences} associated to \eqref{equation}, see Theorem \ref{almgrenMonotonicity} below.  Since our objective is to obtain a monotonicity formula for the blowup sequence, and not the limit, there are extra terms that have to be considered. In the article \cite{SoaveZilio}, these terms are circumvented by taking the dimension to satisfy $\frac{\gamma N}{\gamma + 1} \leq 2$ (which implies $N\leq 4$). In our case, we can only obtain a monotonicity formula when the inequality is strict, that is $\frac{\gamma N}{\gamma + 1} < 2$ (which gives $N\leq 3$). This is due to extra terms coming from the variable coefficients, and it is the only place in the paper where the restriction is needed (see the proof of Theorem \ref{almgrenMonotonicity} for the details, in particular inequality \eqref{eq:RESTRICTION}).

\smallbreak

Section \ref{chapter:resultsChap4} is where we prove Theorem \ref{AltCaffMonotonicity}, which is a  generalization of the Alt-Caffarelli-Friedman formula found in \cite[Theorem 3.14]{SoaveZilio}. This is where our work differs the most from previous proofs, and it is one of the main contributions of our paper. Regarding this topic, there are two main problems in working with operators with variable coefficients.

Firstly, recall that the core of the proof of the classical Alt-Caffarelli-Friedman formula \cite{FriedmanSphere} is a result about a spectral optimal partition problem on the sphere, which says that:
\begin{equation}
\label{minimizationOnSphereBasic123}
\min
\{
\gamma(\lambda_1(\Omega_1))
+
\gamma(\lambda_1(\Omega_2))
:
\Omega_1,
\Omega_2
\subset 
\partial B_1,
\,
\Omega_1\cap \Omega_2 = \emptyset
\}
\geq
2,
\end{equation}
where $\lambda_1(\Omega)$ is the first Dirichlet eigenvalue of the Laplace-Beltrami operator on the sphere, $\Delta_\theta$, of the set $\Omega$, and $\gamma(t) = \sqrt{(\frac{N-2}{2})^2 + t} - \frac{N-2}{2}$. The proof of \cite[Theorem 3.14]{SoaveZilio} relies on a lower bound of a certain functional defined on the sphere, which is similar to the one found in \cite[Lemma 4.2]{SphereDensityAlt}, but with extra terms to account for the remaining terms in equation \eqref{equation:Laplacian}. Since in these papers (or in \eqref{minimizationOnSphereBasic123}) the functionals are related to the Laplacian, the proofs use a symmetrization argument which simplifies the procedure. This is not possible in our case due to the variable coefficients in our equations.  The result in our work, in the form of Lemma \ref{sphereLemma}, even though has the same structure of \cite{SoaveZilio, SphereDensityAlt}, obtains similar bounds through very different approaches. In particular, due to the lack of symmetrization, we cannot conclude the minimizing functions are uniformly Lipschitz, and to circumvent this we use Theorem C to obtain uniform H\"older bounds and make nontrivial use of the equation in a way it is enough for our purposes.

Secondly, the other main idea of the classical Alt-Caffarelli-Friedman formula is that (in dimension $N\geq 3$), $|y|^{2-N}$ is a fundamental solution of the Laplacian, that is $-\Delta (|y|^{2-N}) = C\delta$, for some $C$ depending on the dimension $N$. In our case, we are dealing (after a change of variables) with an operator $-\div(\tilde A_n(x)\nabla (\cdot))$, where $\tilde A_n(y) \sim Id$ for $y$ close to the origin.  The idea is to approximate this operator by $-\Delta(|y|^{2-N})$ plus an ``error'' term, and then use  Almgren's monotonicity formula to bound this error term. This allows an estimate like Lemma \ref{derivativeLemma}, and then a generalization of \eqref{minimizationOnSphereBasic123} in the form of Lemma \ref{sphereLemma}, which is the core of the proof of the monotonicity formula.

\smallbreak

Section \ref{sec:Lip} contains the proof of Theorem \ref{DesiredTheorem}.
In this section, more refined properties of the blowup sequence related to the Almgren monotonicity formula are studied. One also shows that there exists a radius $R_0>0$ such that two components of the limit of blowup sequences are nontrivial in $B_{R_0}$. This nontriviality is used to show that the Alt-Caffarelli-Friedman monotonicity formula can't go to zero in the limit. This is where the Alt-Caffarelli-Friedman and Almgren's monotonicity formulas are combined to obtain a contradiction on the blowup sequence, concluding the proof.  Here we follow \cite[Section 4]{SoaveZilio}, but adjustments for the variable coefficients case are (again) needed. 
\smallbreak

Section \ref{sec:conditions} is devoted to proving that the conditions of the Alt-Caffarelli-Friedman formula of Section \ref{chapter:resultsChap4} are satisfied for the blowup sequence. This is based on the characterization of certain limits of certain blowups and blowdowns. For this, we make use of some of the theorems from Section \ref{chapter:implementation}, in particular  Almgren's monotonicity formula.

In the appendices, we present important results that are used throughout the paper. Appendix \ref{chapter:sphereDivergence} shows a relation between the divergence operator on the $N-1$ dimensional sphere and the divergence in $\mathbb{R}^{N-1}$ through a stereographic projection. Appendix \ref{appendix:classG} makes a quick overview of results for functions that belong to the class $\mathcal{G}(\Omega)$ introduced in \cite{HugoTerraciniWeakReflectionLaw}; this is a set which has a strong relation with blowups of competitive systems. Appendix \ref{chapter:mult1PointsApp} is where one states (based on \cite{MULT1DANCERWANG}) that for limits $\lim_{\beta}u_\beta = v \in \mathcal{G}(\Omega)$ of competition systems like \eqref{equation}, when $\beta \rightarrow -\infty$, and points $x_0$ where $v(x_0) = 0$, then for every neighborhood $V_{x_0}$ of $x_0$ we must have two nontrivial components of $v = (v_1,...,v_l)$.  Finally, in Appendix \ref{chapter:AuxResultsApp} we collect other results.

\smallbreak

We conclude this introduction with one remark and an immediate corollary of Theorem \ref{DesiredTheorem}

\begin{remark}\label{rem:dependence_of_constant}
We notice that, by \textbf{(F)}, given $m>0$ there exists $d>0$ such that for all $i \in \{1,...,l\}$:
$$
    \sup_{x\in \Omega, s \in [-m,m]}
    |f_i(x,s)|
\leq
    d|s|.
$$
Combining the observation above with \eqref{boundedness} we obtain the existence of  $d >0$ such that
\begin{equation}
	\tag{\textbf{Fd}}\label{boundForF}
	|f_i(x,u_{i,\beta}(x))|
\leq
    d|u_{i,\beta}(x)| \qquad \text{for all $i \in \{1,...,l\}$.}
\end{equation}
The constant $C$ in Theorem \ref{DesiredTheorem} depends on the dimension $N$, the exponent $\gamma$, the compact $K$, the ellipticity constant $\theta$ in \textbf{(A1)}, and the upper bounds $M,m$ from \textbf{(M)} and \eqref{boundedness}.
\end{remark}

Our main result, Theorem \ref{DesiredTheorem}, has a direct correspondence with systems with the Laplace-Beltrami operator defined on Riemannian manifolds.

\begin{cor}\label{cor_Riem} Let $(\mathcal{M},g)$ be a $C^1$ Riemannian manifold, and consider $\{u_\beta\}_{\beta < 0}$  a family of positive solutions of the system
\begin{equation} \label{systemRiem}
-\Delta_g u=  f_i(x,u_{i,\beta}) + \beta |u_{i, \beta}|^{\gamma -1}u_{i, \beta} 
	\mathop{\sum_{j=1}^l}_{j\neq i} |u_{j, \beta}|^{\gamma + 1}\qquad \text{ in } \mathcal{M}
\end{equation}
for $i=1,\ldots, l$, under the assumption \textbf{(F)} for $f_i$, $N,\gamma\geq 1$. Assume moreover that $\frac{\gamma N}{\gamma + 1} < 2$, and that the sequence of solutions is uniformly bounded in $L^\infty$--norm \eqref{boundedness}.
Then, given $K \Subset \mathcal{M}$, there exists a constant $C>0$ such that 
	$$
	\sup_{\beta <0}
	\|\nabla u_{i,\beta}\|_{L^\infty(K)}\leq
	C\qquad \text{ for all $i \in \{1,...,l\}$.}
	$$
\end{cor}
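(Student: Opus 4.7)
The plan is to reduce Corollary \ref{cor_Riem} to Theorem \ref{DesiredTheorem} by working in local coordinates, exploiting the fact that the conclusion is purely local and that the Laplace--Beltrami operator is, in coordinates, exactly a divergence-form operator of the type considered in \eqref{equation}.

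First I would localize. Since $K \Subset \mathcal{M}$ is compact, for each $p\in K$ pick a coordinate chart $(U_p,\varphi_p)$ around $p$ and a compact neighbourhood $K_p$ of $p$ with $K_p\Subset U_p$, and then extract a finite subcover $K\subset \bigcup_{k=1}^M \operatorname{int}(K_{p_k})$. It then suffices to bound $\nabla u_{i,\beta}$ uniformly on each $K_{p_k}$.

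Next I would rewrite the equation in each chart. In local coordinates on $U_{p_k}$, writing $g^{ij}$ for the inverse metric and $|g|=\det(g_{ij})$, one has the standard identity
\[
\Delta_g u \;=\; \tfrac{1}{\sqrt{|g|}}\,\partial_i\!\bigl(\sqrt{|g|}\, g^{ij}\,\partial_j u\bigr).
\]
Setting $v_{i,\beta} := u_{i,\beta}\circ \varphi_{p_k}^{-1}$, $A_k(y) := \sqrt{|g(y)|}\,\bigl(g^{ij}(y)\bigr)_{ij}$ and $a_k(y) := \sqrt{|g(y)|}$, and multiplying \eqref{systemRiem} in coordinates by $\sqrt{|g|}$, the system becomes
\[
-\div\bigl(A_k(y)\nabla v_{i,\beta}\bigr) \;=\; \tilde f_{i,k}(y,v_{i,\beta}) \;+\; a_k(y)\,\beta\, |v_{i,\beta}|^{\gamma-1}v_{i,\beta}\!\!\mathop{\sum_{j=1}^{l}}_{j\neq i}|v_{j,\beta}|^{\gamma+1}
\]
on $\Omega_k := \varphi_{p_k}(U_{p_k}) \subset \mathbb{R}^N$, with $\tilde f_{i,k}(y,s) := a_k(y)\,f_i(\varphi_{p_k}^{-1}(y),s)$. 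Because $g$ is $C^1$ and positive definite, $A_k$ and $a_k$ are $C^1$, $A_k$ is uniformly elliptic, and on the precompact set $\varphi_{p_k}(K_{p_k})\Subset\Omega_k$ the bounds \textbf{(A1)}, \textbf{(A2)}, \textbf{(a)} hold; assumption \textbf{(F)} for $\tilde f_{i,k}$ follows from the boundedness of $a_k$ on $\varphi_{p_k}(K_{p_k})$ and the asymptotics of $f_i$ at $0$; and the $L^\infty$-bound on $u_\beta$ transfers to $v_\beta$.

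Finally I would apply Theorem \ref{DesiredTheorem} in each chart $\Omega_k$ with compact $\varphi_{p_k}(K_{p_k})$ to obtain a uniform Euclidean Lipschitz bound on $v_{i,\beta}$. Since on the compact $K_{p_k}$ the metric $g$ and its inverse are bounded and bounded away from zero, the Riemannian gradient norm $|\nabla_g u_{i,\beta}|$ is comparable to the Euclidean gradient of $v_{i,\beta}\circ \varphi_{p_k}$, yielding a uniform bound on each $K_{p_k}$ and hence on $K$ via the finite subcover. There is no real obstacle here; the only point requiring a little care is checking that each hypothesis of Theorem \ref{DesiredTheorem} is stable under the coordinate pullback, which is immediate from the $C^1$ regularity of the metric and the compactness of $K_{p_k}$.
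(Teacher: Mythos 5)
Your proposal is correct and follows exactly the route the paper indicates: pass to local coordinates, where $-\Delta_g$ becomes $-\frac{1}{\sqrt{|g|}}\div(\sqrt{|g|}\,g^{ij}\nabla\cdot)$, multiply through by $\sqrt{|g|}$ so that the system takes the form \eqref{equation} with $A=\sqrt{|g|}\,(g^{ij})$ and $a=\sqrt{|g|}$, verify \textbf{(A1)}, \textbf{(A2)}, \textbf{(F)}, \textbf{(a)} on a precompact coordinate neighbourhood, and apply Theorem \ref{DesiredTheorem} chartwise over a finite subcover of $K$. The paper leaves all of this as a one-line remark, so your write-up is simply a more detailed version of the same argument.
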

\noindent Indeed, when using local coordinates in a small neighborhood of each point, \eqref{systemRiem} turns into \eqref{equation}, where $A(x)$ and $a(x)$ contain information about the metric $g$.

\section{Contradiction argument and blowup sequences}
\label{chapter:background}

Let us suppose, without loss of generality, that $B_3 \subset \Omega$. Within this section, we work with dimension $N\geq 1$, for $A$ satisfying assumptions \textbf{(A1)} and \textbf{(A1)}, $f$ satisfying \textbf{(F)} and $a$ satisfying 
\textbf{(a)}, we take $u_\beta$ to be a family of solutions of the system of equations \eqref{equation} that has a uniform $L^\infty(B_3)$ bound: for some $m>0$,
$$
	\sup_{\beta<0}  \|u_{\beta}\|_{L^\infty(B_3)} \leq m.
$$
Our goal is to show uniform Lipschitz bounds in $B_1$.
Assume, by contradiction, that there exists a sequence $\beta_n \rightarrow - \infty$ such that:
\begin{equation*}
\sup_{i = 1,...,l}\|\nabla u_{i,\beta_n}\|_{L^\infty(B_1)}
\to
\infty
\end{equation*}
In the spirit of \cite{HugoHolderVariable, SOAVE2016388, SoaveZilio, SphereDensityAlt},  since we want to localize the argument, we introduce a smooth cut-off function  $0 \leq \eta(x) \leq 1$  such that $\eta(x) = 1$ for $x \in B_1$ and $\eta(x) = 0$ for $x \in \mathbb{R}^N\setminus B_2$ and observe that  the contradiction assumption yields
\begin{equation}
	\label{eq:contradictionassumption}
	L_n 
	:= 
	\sup_{i=1,...,l} 
	\sup_{x \in \overline{B_2}} 
	|\nabla (\eta u_{i_n,\beta_n})| \rightarrow \infty
\end{equation}
as $n \rightarrow \infty$. For each $n \in \mathbb{N}$, there is a point $x_n \in \overline{B}_2$ such that $L_n = |\nabla (\eta u_{i,\beta_n})(x_n)|$ for some $i_n$. We can assume, without loss of generality (by possibly extracting another subsequence and relabelling components), that:
\begin{itemize}
\item $L_n=|\nabla (\eta u_{1,\beta_n})(x_n)|$, i.e.,  the maximum is always attained at the first coordinate;
	\item  $x_n \to x_\infty\in \overline{B}_2$.
	\end{itemize}

We define:
\begin{equation}
	\label{tildeU}
	\tilde{u}_{i,\beta_{n}}(x) := u_{i,\beta_n}(x_n+A(x_n)^{\frac{1}{2}}x).
\end{equation}
and consider the matrix function given by:
\begin{equation}
	\label{tildeMatrix}
	\tilde{A}_n(x) 
:= 
    A(x_n)^{-\frac{1}{2}}
	A(x_n + A(x_n)^{\frac{1}{2}}x)
	A(x_n)^{-\frac{1}{2}},\qquad \text{ which is such that } \qquad \tilde{A}_n(0)=Id
\end{equation}

\begin{lemma}
	\label{lemmaForMatrixTilde}
	We have:
	\begin{equation}
	\label{tildeEquation}
		-\div(\tilde{A}_n(x)\nabla \tilde{u}_{i,\beta_n})
	=
		f_i(x_n+A(x_n)^{\frac{1}{2}}x,\tilde{u}_{i,\beta_n}) 
	+ 
		a(x_n+A(x_n)^{\frac{1}{2}}x)
		\mathop{\sum_{j=1}^l}_{j\neq i}\beta_n 
		|\tilde{u}_{j, \beta_n}|^{\gamma + 1}
		|\tilde{u}_{i, \beta_n}|^{\gamma -1}\tilde{u}_{i, \beta_n},
	\end{equation}
	for $x \in A(x_n)^{-\frac{1}{2}}(B_3 - x_n)$,
	where $\tilde{A}_n$ is the matrix in \eqref{tildeMatrix}.
	Moreover:
	\begin{enumerate}
	\item  $B_{{1}/{M^\frac{1}{2}}} \subset A(x_n)^{-\frac{1}{2}}(B_3 - x_n)$, 
	\item $\langle \tilde{A}_n(x)\xi, \xi \rangle \geq \frac{\theta}{M} |\xi|^2$ for every $\xi \in \mathbb{R}^N$,
	\item  there exists $C = C(M, \theta)>0$ such that $\|D\tilde{A}_n\|_{L^\infty} \leq C$ and $\|\tilde{A}_n\|_{L^\infty} \leq C$ .
	\end{enumerate}
\end{lemma}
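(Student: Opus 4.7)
The plan is to view the lemma as a routine coordinate-change computation: the map $\Phi_n(x) := x_n + A(x_n)^{1/2}x$ is an affine change of variables that should transform \eqref{equation} into \eqref{tildeEquation} and turn $A$ into the matrix $\tilde A_n$ defined in \eqref{tildeMatrix}. All four claims will follow from this change of variables together with straightforward spectral estimates on $A(x_n)$. Since $A(x_n)$ is symmetric positive definite, its square root $A(x_n)^{1/2}$ and inverse square root are well-defined symmetric matrices, and I will freely use symmetry in the computations below. The main ``obstacle'' is simply to avoid sign/transposition errors in the chain rule and to check that no Jacobian factor survives on the right-hand side of \eqref{tildeEquation}.

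For the PDE \eqref{tildeEquation}, I would compute pointwise. Writing $y = \Phi_n(x)$, the chain rule gives $\nabla_x \tilde u_{i,\beta_n}(x) = A(x_n)^{1/2}(\nabla_y u_{i,\beta_n})(y)$, using that $A(x_n)^{1/2}$ is symmetric. Hence
\[
\tilde A_n(x)\nabla_x \tilde u_{i,\beta_n}(x) \;=\; A(x_n)^{-1/2} A(y) A(x_n)^{-1/2}\cdot A(x_n)^{1/2}(\nabla_y u_{i,\beta_n})(y) \;=\; A(x_n)^{-1/2}\,(A(y)\nabla_y u_{i,\beta_n})(y).
\]
Taking $\div_x$ and using the chain rule again together with $A(x_n)^{-1/2}A(x_n)^{1/2}=Id$, the matrix factors collapse to the identity and one gets the clean identity $\div_x[\tilde A_n(x)\nabla_x \tilde u_{i,\beta_n}(x)] = (\div_y [A(y)\nabla_y u_{i,\beta_n}])(y)$. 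Substituting \eqref{equation} at $y=\Phi_n(x)$ and expressing $u_{j,\beta_n}(y)=\tilde u_{j,\beta_n}(x)$ yields \eqref{tildeEquation}.

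The remaining three items all reduce to bounding the spectrum of $A(x_n)^{\pm 1/2}$. From \textbf{(A1)} and \textbf{(A2)} the eigenvalues of $A(x_n)$ lie in $[\theta,M]$, hence $\|A(x_n)^{1/2}\|\le \sqrt{M}$ and $\|A(x_n)^{-1/2}\|\le 1/\sqrt{\theta}$, while $\langle A(x_n)^{-1}\xi,\xi\rangle \ge |\xi|^2/M$. For (1), given $x\in B_{1/\sqrt{M}}$, the triangle inequality gives $|\Phi_n(x)|\le \|A(x_n)^{1/2}\||x|+|x_n|<\sqrt{M}\cdot 1/\sqrt{M}+2=3$, since $x_n\in\overline{B_2}$, so $\Phi_n(x)\in B_3$. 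For (2), setting $\eta=A(x_n)^{-1/2}\xi$,
\[
\langle \tilde A_n(x)\xi,\xi\rangle \;=\; \langle A(\Phi_n(x))\eta,\eta\rangle \;\ge\; \theta|\eta|^2 \;=\; \theta\,\langle A(x_n)^{-1}\xi,\xi\rangle \;\ge\; \frac{\theta}{M}|\xi|^2.
\]
For (3), the $L^\infty$ bound is immediate: $\|\tilde A_n(x)\|\le \|A(x_n)^{-1/2}\|^2\|A(\Phi_n(x))\|\le M/\theta$. For the Lipschitz bound, differentiating \eqref{tildeMatrix} in $x$ brings in a factor of $A(x_n)^{1/2}$ from $D\Phi_n$, so $\|D\tilde A_n(x)\|\le \|A(x_n)^{-1/2}\|^2\|DA(\Phi_n(x))\|\|A(x_n)^{1/2}\|\le M^{3/2}/\theta$. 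Both constants depend only on $M$ and $\theta$, completing the proof. The only place where any care is required is the absence of a Jacobian factor in step (1), which is an artifact of using a symmetric linear change of coordinates and computing the divergence pointwise rather than in the weak form.
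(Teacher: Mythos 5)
Your proof is correct and follows essentially the same route as the paper: the identity \eqref{tildeEquation} by the chain rule under the affine change of variables $y=x_n+A(x_n)^{1/2}x$ (which the paper dismisses as a ``straightforward computation''), and items (1)--(3) by the spectral bounds $\|A(x_n)^{1/2}\|\le M^{1/2}$, $\|A(x_n)^{-1/2}\|\le\theta^{-1/2}$ coming from \textbf{(A1)}--\textbf{(A2)}. The only cosmetic difference is that you bound $\|\tilde A_n\|_{L^\infty}$ directly by $M/\theta$, while the paper deduces it from $\tilde A_n(0)=Id$ together with the Lipschitz bound; both are fine.
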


\begin{proof}
	
Equation \eqref{tildeEquation} follows from a straightforward computation. For (1),  we start by observing that, by \textbf{(A2)}, we have $|A(x_n)^\frac{1}{2}\xi |^2 =\langle A(x_n)\xi, \xi \rangle \leq M |\xi|^2$ for every $\xi\in \R^N$. Hence, since $x_n\in \overline{B}_2$, we have (1) and $|\xi|^2\leq M|A(x_n)^{-\frac{1}{2}}\xi|^2$. From this last fact and using also \textbf{(A1)}:
\[
		\langle \tilde{A}_n(x)\xi, \xi \rangle
		=
		\langle A(x_n + A(x_n)^{\frac{1}{2}}x)
		A(x_n)^{-\frac{1}{2}}\xi,A(x_n)^{-\frac{1}{2}} \xi \rangle \geq
		\theta|A(x_n)^{-\frac{1}{2}}\xi|^2 \geq 
		\frac{\theta}{M}|\xi|^2,
	\]
which is (2). Finally, by hypothesis \textbf{(A2)}, we have:
$$
  | \partial_{x_k}({A}_{ij}(x_n + A(x_n)^{\frac{1}{2}}x))|
=
|    \langle 
        \nabla A_{ij}(x_n + A(x_n)^{\frac{1}{2}}x),
        A(x_n)^{\frac{1}{2}}e_k     \rangle | \leq  \kappa \|DA\|_{L^\infty(\Omega)}
    \cdot
    \|A(x_n)^\frac{1}{2}\| \leq \kappa M^\frac{3}{2},
$$
where $\{e_j\}_{j=1,...,N}$ is the canonical basis for $\mathbb{R}^N$. This, combined with the fact that  $\|A(x_n)^{-\frac{1}{2}}\|_{L^\infty} \leq \theta^{-\frac{1}{2}}$ and $\tilde A_n(0)=Id$, implies (3).
\end{proof}

We present a result regarding the limit of $\tilde{u}_{\beta_n}$. This result will be a bit lateral for now, but will be necessary in the proof of Lemma \ref{radGoToZero} below.
\begin{lemma}
	\label{remarkOnStuff} The sequence $\{\tilde{u}_{\beta_n}\}$ is bounded in $C^{0,\alpha}(B_{1/(2M^\frac{1}{2})})$ and there exists  $\tilde{u}_\infty \in C^{0,\alpha}(B_{1/(2M^\frac{1}{2})})\cap H^1(B_{1/(2M^\frac{1}{2})})$ such that, up to a subsequence, $\tilde{u}_{\beta_n} \rightarrow \tilde{u}_\infty$ in $C^{0,\alpha}(B_{1/(2M^\frac{1}{2})})\cap H^1(B_{1/(2M^\frac{1}{2})})$ for every $\alpha\in (0,1)$. 	Moreover, $\tilde{u}_\infty(0) = 0$. 
\end{lemma}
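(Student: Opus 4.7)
The plan is: (i) derive uniform $C^{0,\alpha}$ bounds on $\tilde u_{\beta_n}$ by pulling back Theorem C through the affine change of coordinates $y=x_n+A(x_n)^{1/2}x$; (ii) upgrade the extracted subsequence to strong $H^1$--convergence by testing the equation against $\tilde u_{i,\beta_n}\varphi^2$ and exploiting the favourable sign of the competitive term; (iii) rule out $\tilde u_\infty(0)\neq 0$ by showing it would contradict the gradient blow-up \eqref{eq:contradictionassumption}.

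For (i), since $\|A(x_n)^{1/2}\|\le M^{1/2}$ and $x_n\in\overline{B_2}$, the change of variables sends $\overline{B_{1/(2M^{1/2})}}$ into the fixed compact set $\overline{B_{5/2}}\Subset B_3\subset\Omega$ with a bi-Lipschitz constant uniform in $n$. Theorem C applied on $\overline{B_{5/2}}$ gives, for every $\alpha\in(0,1)$, a uniform bound on $\|\tilde u_{\beta_n}\|_{C^{0,\alpha}(\overline{B_{1/(2M^{1/2})}})}$. Arzelà--Ascoli combined with a diagonal extraction then yields a subsequence (not relabelled) and a limit $\tilde u_\infty$ with $\tilde u_{\beta_n}\to \tilde u_\infty$ in $C^{0,\alpha}$ for every $\alpha\in(0,1)$.

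For (ii), I would take a cutoff $\varphi\in C_c^\infty(B_{3/(4M^{1/2})})$ with $\varphi\equiv 1$ on $B_{1/(2M^{1/2})}$ and test \eqref{tildeEquation} against $\tilde u_{i,\beta_n}\varphi^2$. Since $\tilde u_{i,\beta_n}\ge 0$, $\beta_n<0$ and $a>0$, the competitive contribution is non-positive and can be dropped; combining this with the uniform ellipticity of $\tilde A_n$ from Lemma \ref{lemmaForMatrixTilde}(2), Young's inequality, the $L^\infty$ bound, and \eqref{boundForF}, one obtains a uniform bound on $\|\nabla \tilde u_{i,\beta_n}\|_{L^2(B_{1/(2M^{1/2})})}$ and, as a by-product, on $|\beta_n|\int a\,\tilde u_{i,\beta_n}^{\gamma+1}\tilde u_{j,\beta_n}^{\gamma+1}\varphi^2$. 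After one further extraction, $\tilde u_{\beta_n}\rightharpoonup\tilde u_\infty$ in $H^1$. To pass from weak to strong $H^1$--convergence, I would run the standard energy-comparison argument: combining the energy identity for $\tilde u_{i,\beta_n}$ with the corresponding identity for the limit equation (namely $-\div(\tilde A_\infty\nabla\tilde u_{i,\infty})=\tilde f_i(\cdot,\tilde u_{i,\infty})$ on each component of $\{\tilde u_{i,\infty}>0\}$, extended globally via the segregation property $\tilde u_{i,\infty}\tilde u_{j,\infty}\equiv 0$ for $i\neq j$ that propagates from \cite{HugoHolderVariable}), the $\limsup$/$\liminf$ inequalities pin down $\int \tilde A_n|\nabla \tilde u_{i,\beta_n}|^2\varphi^2\to \int \tilde A_\infty|\nabla\tilde u_{i,\infty}|^2\varphi^2$, which together with weak convergence forces the strong one.

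For (iii), observe that $\tilde u_{i,\infty}(0)=\lim u_{i,\beta_n}(x_n)=u_{i,\infty}(x_\infty)$, where $u_\infty$ is the $C^{0,\alpha}_{\mathrm{loc}}$--limit of $u_{\beta_n}$ granted by Theorem C. Suppose for contradiction that $u_{k,\infty}(x_\infty)>0$ for some $k$. Then $u_{k,\beta_n}\ge c>0$ on some ball $B_\rho(x_\infty)\Subset\Omega$ for large $n$. Plugging $u_{k,\beta_n}\ge c$ into the equation for $u_{j,\beta_n}$ ($j\neq k$) produces the scalar differential inequality $-\div(A\nabla u_{j,\beta_n})+c_1|\beta_n|u_{j,\beta_n}^\gamma \le c_2 u_{j,\beta_n}$ on $B_\rho(x_\infty)$, and the variable-coefficient analogue of the decay estimates used in \cite{SoaveZilio,uniformHolderBoundsHugoTerraciniNoris} gives $|\beta_n|\,\|u_{j,\beta_n}^{\gamma+1}\|_{L^\infty(B_{\rho/2}(x_\infty))}\to 0$ for every $j\neq k$. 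Inserting this back into \eqref{equation} shows that the right-hand side of every component is uniformly bounded in $L^\infty(B_{\rho/2}(x_\infty))$, so the interior $W^{1,\infty}$--estimate for divergence-form operators with Lipschitz coefficients provides a uniform bound on $\|\nabla u_{i,\beta_n}\|_{L^\infty(B_{\rho/4}(x_\infty))}$ for every $i$. Since $x_n\to x_\infty$ and $\eta$ is smooth with $\eta,|\nabla\eta|$ bounded, this forces $L_n\le C$, contradicting \eqref{eq:contradictionassumption}. Hence $u_\infty(x_\infty)=0$, i.e., $\tilde u_\infty(0)=0$. The main obstacle is precisely the $L^\infty$--decay step in (iii), which requires adapting the classical strong-competition decay estimates to the variable-coefficient setting.
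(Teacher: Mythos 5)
Your proposal is correct and follows essentially the same route as the paper: Theorem C for the uniform H\"older bounds, the standard strong--competition $H^1$ argument for the convergence, and the observation that a component bounded below on a ball forces the competition terms (hence the right-hand sides) to be uniformly bounded, yielding interior $C^1$ bounds that contradict $L_n\to\infty$. The ``variable-coefficient decay estimate'' you flag as the main obstacle is already available as Lemma \ref{estimateLemma}-(1) in the appendix; note it yields a uniform bound $|\beta_n|\,u_{j,\beta_n}^{\gamma}\le c$ rather than convergence to zero, but this is all that is needed to bound the right-hand side and close the contradiction.
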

\begin{proof}
	By results in the first sentence follow directly from Theorem C and reasoning exaclty as in \cite[Theorem 1.4]{uniformHolderBoundsHugoTerraciniNoris} for the strong convergence in $H^1$. 
	
Now, if $\tilde{u}_\infty(0)> 0$, then there exist $\epsilon,r C>0$, $i \in \{1,...,l\}$ such that  $\tilde{u}_{i,\beta_n}(x)\geq \epsilon$ for $x \in B_r$ and $n$ large enough.  We now have two cases. 

If $i\neq 1$, then by the equation of $\tilde{u}_{1,\beta_n}$ we have
	$$
	    -\div(\tilde{A}_n(x)\nabla \tilde{u}_{1,\beta_n})
	\leq
	    -
	    |M_n|
	    \delta
	    \epsilon^{\gamma}
	    \tilde{u}_{1,\beta_n}^{\gamma}
	    +
	    dm
	    \qquad
	    \forall x \in B_{r}
	$$
	and so, by Lemma \ref{estimateLemma}-(1), we conclude the existence of $c>0$ such that $|M_n|\tilde{u}_{1,\beta_n}^{\gamma}(x)<c$ for $x \in B_{r/2}$ and so (going back to the equation of  $\tilde{u}_{1,\beta_n}$), $|\div(\tilde{A}_n(x)\nabla \tilde{u}_{1,\beta_n})|$ is uniformly bounded in $n$. By elliptic regularity theory, we must have that, up to a subsequence, $\tilde{u}_{1,\beta_n} \rightarrow \tilde{u}_{1,\infty}$ in $C^1(B_{r/2})$, in contradiction with the fact that the gradient $|\nabla \tilde{u}_{1,\beta_n}(0)|$ blows up.
	
	On the other hand, if $i = 1$, then for all $j \neq 1$ we can apply the argument above to conclude that there exists $c>0$ such that $|M_n| \tilde{u}_{j,n}(x) < c$ for all $x \in B_{r/2}$, from which we conclude a uniform bound for $|\div(\tilde{A}_n(x)\nabla \tilde{u}_{1,\beta_n})|$. This, once again, leads to a contradiction.

\end{proof}

Consider now the blowup sequences given by:
\begin{align}
	&v_{i,n}(x) := \eta(x_n) \frac{\tilde{u}_{i,\beta_n}(r_nx)}{L_n r_n} \label{blowUp}\\
	&
	\overline{v}_{i,n}(x) 
	:= \frac{(\eta u_{i, \beta_n})(x_n + r_n A(x_n)^{\frac{1}{2}} x)}{L_n r_n},\nonumber
\end{align}
where we consider each function defined in:
\begin{equation}
	\label{Domain}
	\Omega_n := \frac{A(x_n)^{-\frac{1}{2}}(B_3 - x_n)}{r_n}, \quad \text{ where} \quad  0< r_n := \sum_{i=1}^l\frac{(\eta u_{i,\beta_n})(x_n)}{L_n}\to 0.
\end{equation} 
The fact that $r_n\to 0$ is a consequence of the bound $r_n\leq lm/L_n \to 0$, using the uniform boundedness from \eqref{boundedness} and the contradiction assumption \eqref{eq:contradictionassumption}. On the other hand, to show that $r_n>0$, notice that, since $|\nabla(\eta u_{1,\beta_n})(x_n)| = L_n>0$, we must have $(\eta u_{1,\beta_n})(x_n)>0$, otherwise around the point $x_n$ the function $(\eta u_{1,\beta_n})$ would take negative values, which is a contradiction.

With this choice of $r_n$, we have the normalization
\begin{equation}
\label{rnDefinition}
	\sum_{i=1}^l\overline{v}_{i,n}(0) 
= 
	\sum_{i=1}^l\frac{(\eta u_{i, \beta_n})(x_n)}{L_n r_n} 
= 
	1.
\end{equation}

Define
\begin{align*}
	&A_n(y) = \tilde{A}_n(r_n y) =  A(x_n)^{-\frac{1}{2}}
	A(x_n + r_nA(x_n)^{\frac{1}{2}}y)
	A(x_n)^{-\frac{1}{2}},\\
	&a_n(y) = a(x_n + r_nA(x_n)^{\frac{1}{2}}y),\\
	&f_{i,n}(y,t) = \frac{r_n \eta(x_n)}{L_n}f_i(x_n + r_nA(x_n)^{\frac{1}{2}} y, \frac{L_n r_n}{\eta(x_n)} t).
\end{align*}

\begin{lemma}
    \label{lemmaMatrixBlowLimit}
We have 	$A_n \in C^\infty(\Omega_n, \text{Sym}^{N\times N})$ and:
	\begin{align}
		\label{ellipticConstantForBlowUp}
		\begin{split}
			\|A_n(y) - Id\| \leq Cr_n |y|, \quad \|DA_n\|_{L^\infty(B_r)}\leq Cr_n, \quad
			\langle A_n(y)\xi,\xi \rangle\geq \frac{\theta}{M}|\xi|^2.
		\end{split}
	\end{align}
	Also $a_n(\cdot) \rightarrow a(x_\infty)$ locally uniformly in $\Omega_n$, $f_{i,n}(x,v_{i,n}(\cdot)) \rightarrow 0$ uniformly in $\Omega_n$,
 and $|f_{i,n}(x,v_{i,n}(x))|\leq dr_n^2|v_{i,n}(x)|$.
\end{lemma}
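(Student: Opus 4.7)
The plan is to verify each claim by direct computation, leaning on the bounds for $\tilde{A}_n$ already established in Lemma \ref{lemmaForMatrixTilde} together with the simple scaling identity $A_n(y) = \tilde{A}_n(r_n y)$. The ellipticity bound $\langle A_n(y)\xi,\xi\rangle \geq \frac{\theta}{M}|\xi|^2$ is just Lemma \ref{lemmaForMatrixTilde}(2) pulled back by $y \mapsto r_n y$, while the regularity of $A_n$ is inherited from that of $A$ through composition with an affine map.

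For the two smallness statements, the chain rule gives $DA_n(y) = r_n\, D\tilde{A}_n(r_n y)$, hence $\|DA_n\|_{L^\infty(B_r)} \leq r_n \|D\tilde{A}_n\|_{L^\infty} \leq C r_n$ by Lemma \ref{lemmaForMatrixTilde}(3). Combined with $A_n(0)=\tilde{A}_n(0)=Id$ and the mean value inequality, this gives
\[
\|A_n(y)-Id\| = \|\tilde{A}_n(r_n y)-\tilde{A}_n(0)\| \leq \|D\tilde{A}_n\|_{L^\infty}\, r_n|y| \leq C r_n|y|.
\]
For the statement on $a_n$, fix $R>0$ and $y \in B_R$. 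Since $\|A(x_n)^{1/2}\|\leq M^{1/2}$ and $r_n \to 0$, the point $x_n + r_n A(x_n)^{1/2} y$ lies in a neighborhood of $x_\infty$ that shrinks to $\{x_\infty\}$ uniformly in $y\in B_R$, so continuity of $a$ at $x_\infty$ yields $a_n \to a(x_\infty)$ uniformly on $B_R$.

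The only claim that requires more than bookkeeping is the control of $f_{i,n}$. From the definition of $v_{i,n}$ one reads off $\tilde{u}_{i,\beta_n}(r_n x) = \tfrac{L_n r_n}{\eta(x_n)}\, v_{i,n}(x)$, so
\[
f_{i,n}(x,v_{i,n}(x)) = \frac{r_n \eta(x_n)}{L_n}\, f_i\!\left(x_n + r_n A(x_n)^{1/2}x,\ \tilde{u}_{i,\beta_n}(r_n x)\right).
\]
Plugging in \eqref{boundForF} gives immediately
\[
|f_{i,n}(x,v_{i,n}(x))| \leq \frac{r_n \eta(x_n)}{L_n}\, d\, |\tilde{u}_{i,\beta_n}(r_n x)| = d\, r_n^2\, |v_{i,n}(x)|,
\]
which is the pointwise estimate. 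To upgrade it to uniform convergence on all of $\Omega_n$, I would insert the crude bound $|\tilde{u}_{i,\beta_n}|\leq m$ (from \eqref{boundedness}) into the middle expression, giving $|f_{i,n}(x,v_{i,n}(x))|\leq d m r_n/L_n \to 0$ since $r_n\to 0$ and $L_n\to\infty$. No serious obstacle is expected; the only delicate point is keeping the two scaling factors straight---$r_n$ for space and $1/(L_n r_n)$ for amplitude---which is exactly what the normalization in \eqref{Domain} is designed to balance.
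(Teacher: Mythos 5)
Your proposal is correct and follows essentially the same route as the paper: the matrix bounds are pulled back from Lemma \ref{lemmaForMatrixTilde} via the scaling $A_n(y)=\tilde A_n(r_ny)$, and the estimate for $f_{i,n}$ is exactly the computation the paper performs using \eqref{boundForF}, with uniform convergence following from $r_n/L_n\to 0$ and the $L^\infty$ bound on $u_\beta$. The only cosmetic difference is that for $a_n$ you invoke continuity of $a$ at $x_\infty$ where the paper uses the $C^1$ bound from \textbf{(a)} to get the quantitative estimate $|a_n(y)-a(x_n)|\leq Cr_n|y|$; both yield the stated local uniform convergence.
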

\begin{proof}The first three inequalities follow by Lemma \ref{lemmaForMatrixTilde} and by \textbf{(A2)}.

Let $K \Subset \Omega_n$ be a compact set. Using \textbf{(a)}, we obtain the existence of $C>0$ also such that:
$$
	|a_n(y)-a(x_n)| \leq C r_n|y|
$$
for all $y \in K$. Thus $a_n \rightarrow a(x_\infty) = \lim_n a(x_n)$ for each compact set in $\Omega_n$.

For the last information about $f_n$, using \eqref{boundForF} we obtain:
\begin{align*}
    |f_{i,n}(x,v_{i,n}(x))|
= &
    |\frac{r_n \eta(x_n)}{L_n}f(x_n + r_nA(x_n)^{\frac{1}{2}} x, u_{i,n}(x_n + r_nA(x_n)^{\frac{1}{2}} x))|\\
\leq &
    d\frac{r_n \eta(x_n)}{L_n}|u_{i,n}(x_n + r_nA(x_n)^{\frac{1}{2}} x)|
=  dr_n^2 |v_{i,n}(x)|; \qedhere
\end{align*}
the uniform convergence to 0 is now a consequence of the fact that $r_n\to 0$, $L_n\to 0$ and the uniform bounds for $u_\beta$.
\end{proof}

\begin{prop}
	\label{ListProp}
	The following are satisfied:
	\begin{enumerate}
		\item We have that $
		B_{1/(M^{\frac{1}{2}} r_n)} \subset \Omega_n,
		$ so $\Omega_n$ exhaust the whole $\mathbb{R}^N$ as $n\to \infty$.
		\item The sequence $v_{i,n}$ satisfies the equation:
		\begin{equation}\label{eq:system_rescaled}
		-\div(A_n(y)\nabla v_{i,n}) = f_{i,n}(y, v_{i,n}) + M_na_n(y)\mathop{\sum_{j=1}^l}_{j\neq i} |v_{j,n}|^{\gamma+1} |v_{i,n}|^{\gamma-1}v_{i,n}(y)
		\qquad
		\forall
		y \in \Omega_n
		\end{equation}
		with $M_n := \beta_n(\frac{ L_n}{\eta(x_n)})^{2\gamma} r_n^{2\gamma+2}$.
		\item The sequence $\overline{v}_n$ has a uniform Lipschitz bound:
		\begin{equation}\label{eq:boundedgradient}
		\sup_{i=1,...,l}
		\sup_{x \in \Omega_n}
		|\nabla \overline{v}_{i,n}|
		=
		\sup_{i=1,...,l}\, \mathop{\sup_{x,y \in \Omega_n}}_{x\neq y}\frac{|\overline{v}_{i,n}(x)-\overline{v}_{i,n}(y)|}{|x-y|} \leq M^{1/2},
		\end{equation}
		for some $C>0$. Also,
\begin{equation}\label{bounds_nabla_at_origin}
		|\nabla \overline{v}_{1,n}(0)|  \geq 		\theta^\frac{1}{2}\quad \text{ and } \lim \inf 		|\nabla v_{1,n}(0)| \geq \theta^{\frac{1}{2}}.
		\end{equation}
		\item There exists $v = (v_1,...,v_l) \in C(\mathbb{R}^N,\mathbb{R}^l)$ such that $v_n, \overline{v}_n  \rightarrow v$ in $L^\infty_{loc}(\mathbb{R}^N)$.
		\item Moreover, $v_{n} \rightarrow v$ in $H^1_{loc}(\mathbb{R}^N)$ and, for any $r$, there exists $C_r$ such that:
		\begin{equation}\label{eq:uniform_nonvariational}
		\int_{B_r} |M_n|a_n(y)
		\mathop{\sum_{j=1}^l}_{j\neq i} |v_{j,n}|^{\gamma+1}|v_{i,n}|^{\gamma-1}v_{i,n}dy \leq C_r
		\end{equation}
		for every $i \in \{1,...,l\}$.
		In particular, if $M_n \rightarrow -\infty$, then $v_{i,n}\cdot v_{j,n} \rightarrow 0$ whenever $j \neq i$.
	\end{enumerate}
\end{prop}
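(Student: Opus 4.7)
The plan is to handle the five items sequentially; items (1)--(3) are essentially computations, while (4) and (5) contain the substance.

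For (1), divide the inclusion $B_{1/M^{1/2}}\subset A(x_n)^{-1/2}(B_3-x_n)$ from Lemma \ref{lemmaForMatrixTilde}(1) by $r_n$. For (2), apply the chain rule to \eqref{tildeEquation} under the scaling $v_{i,n}(x)=\eta(x_n)\tilde u_{i,\beta_n}(r_n x)/(L_n r_n)$: the factor $r_n^2$ produced by the divergence cancels one power of $r_n$ from the definition, and collecting the prefactors in front of the competitive nonlinearity gives exactly $M_n=\beta_n(L_n/\eta(x_n))^{2\gamma}r_n^{2\gamma+2}$. For (3), differentiate $\overline v_{i,n}$ by the chain rule to obtain $\nabla\overline v_{i,n}(x)=A(x_n)^{1/2}\nabla(\eta u_{i,\beta_n})(x_n+r_n A(x_n)^{1/2}x)/L_n$, bounded in norm by $M^{1/2}$ thanks to \textbf{(A2)} and the defining property of $L_n$. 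Evaluated at $x=0$, the same identity combined with $|A(x_n)^{1/2}\xi|\geq\theta^{1/2}|\xi|$ (which follows from \textbf{(A1)} via $|A^{1/2}\xi|^2=\langle A\xi,\xi\rangle$) and $|\nabla(\eta u_{1,\beta_n})(x_n)|=L_n$ yields $|\nabla\overline v_{1,n}(0)|\geq\theta^{1/2}$. The analogous quantity $\nabla v_{1,n}(0)=\eta(x_n)A(x_n)^{1/2}\nabla u_{1,\beta_n}(x_n)/L_n$ differs from $\nabla\overline v_{1,n}(0)$ by $A(x_n)^{1/2}\nabla\eta(x_n)u_{1,\beta_n}(x_n)/L_n=O(1/L_n)$, which vanishes by $\|u_\beta\|_\infty\leq m$; hence the liminf bound is inherited.

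For (4), Arzel\`a--Ascoli applies to $\overline v_n$: the Lipschitz bound from (3) together with the normalization $\sum_i\overline v_{i,n}(0)=1$ give equiboundedness $|\overline v_{i,n}(x)|\leq 1+M^{1/2}|x|$ on compacts, so a subsequence converges in $C_{\mathrm{loc}}(\mathbb{R}^N)$ to some $v$. The same $v$ is the limit of $v_n$, since $|\overline v_{i,n}(x)-v_{i,n}(x)|\leq\|\nabla\eta\|_\infty M^{1/2}m\,|x|/L_n\to 0$, obtained by factoring $u_{i,\beta_n}$ out of the difference and using the Lipschitz continuity of $\eta$ together with $\|u_\beta\|_\infty\leq m$.

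The substance is in (5). I would first test \eqref{eq:system_rescaled} against $v_{i,n}\phi^2$ for a nonnegative cutoff $\phi$; expanding $\nabla(v_{i,n}\phi^2)$ and applying Young's inequality in the $A_n$-inner product to absorb the cross term into the ellipticity term produces an estimate whose left side controls both $\int\langle A_n\nabla v_{i,n},\nabla v_{i,n}\rangle\phi^2$ and the symmetric interaction $|M_n|\int a_n\sum_{j\neq i}|v_{j,n}|^{\gamma+1}|v_{i,n}|^{\gamma+1}\phi^2$ (with the correct sign because $M_n<0$ and $v_n\geq 0$), and whose right side is uniformly bounded using the $L^\infty_{\mathrm{loc}}$ convergence from (4) together with $|f_{i,n}v_{i,n}|\leq dr_n^2 v_{i,n}^2$ from Lemma \ref{lemmaMatrixBlowLimit}. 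This simultaneously yields the $H^1_{\mathrm{loc}}$ bound on $v_n$ and a uniform bound on the symmetric interaction. For the asymmetric bound \eqref{eq:uniform_nonvariational}, test the equation against $\phi$ alone: since $v_n\geq 0$ and $M_n<0$ the interaction term has a definite sign, and rearranging gives $|M_n|\int a_n\sum_{j\neq i}|v_{j,n}|^{\gamma+1}v_{i,n}^\gamma\phi = \int f_{i,n}\phi - \int A_n\nabla v_{i,n}\cdot\nabla\phi$, whose right side is controlled by the $H^1$ bound just obtained. Strong $H^1_{\mathrm{loc}}$ convergence $v_n\to v$ then follows from the weak convergence by the standard difference-of-equations argument, exploiting the uniform convergence $v_n\to v$ and the smallness $\|A_n-Id\|=O(r_n)$. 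Finally, when $M_n\to-\infty$, the uniform bound on $|M_n|\int|v_{i,n}|^{\gamma+1}|v_{j,n}|^{\gamma+1}$ forces $\int v_i^{\gamma+1}v_j^{\gamma+1}=0$ in the limit on every compact, so $v_iv_j\equiv 0$ for $i\neq j$.

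The main technical nuisance, compared with the constant-coefficient case in \cite{SoaveZilio}, is that the perturbations $\nabla A_n$ and $\|A_n-Id\|$, although of order $r_n$, reappear in every integration by parts and must be tracked carefully; the control supplied by Lemma \ref{lemmaMatrixBlowLimit} is what makes them harmless at each step.
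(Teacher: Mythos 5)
Your proposal is correct and follows essentially the same route as the paper: the same chain-rule computations for (1)--(3), Arzel\`a--Ascoli plus the $O(1/L_n)$ comparison of $v_n$ and $\overline v_n$ for (4), and the same pair of test functions (a cutoff $\phi$ and $v_{i,n}$ times a cutoff) for (5), concluding segregation by dividing the interaction bound by $|M_n|$. The only deviation is cosmetic: you derive the $H^1_{loc}$ bound first (via Young's inequality) and then control $\int\langle A_n\nabla v_{i,n},\nabla\phi\rangle$ by it, whereas the paper first gets \eqref{eq:uniform_nonvariational} by writing that term as $-\int v_{i,n}\div(A_n\nabla\phi)$ (needing only $L^\infty$ bounds) and then uses it to bound the gradient; both orders work.
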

\begin{proof} 
	Items (1) and (2) follows directly from  the definitions and Lemma \ref{lemmaForMatrixTilde}.	 
	
\noindent	\textbf{Proof of (3).} By using \textbf{(A2)} and the fact that $\sup_{x \in B_2}|\nabla(\eta u_{i,\beta_n}(x))|\leq L_n$ for all $i \in \{1,...,l\}$ and $\eta=0$ on $\R^N\setminus B_2$:
	\begin{align*}
		|\nabla \overline{v}_{i,n}(y)|
		&=
		\frac{
			|A(x_n)^{\frac{1}{2}}\nabla(\eta u_{i,\beta_{n}})(x_n+r_nA_n(x_n)^{\frac{1}{2}}y)|
		}{
			L_n
		}\leq M^{1/2}
		\quad \quad \forall i = 1,...,l.
	\end{align*}
	On the other hand, we have:
	\begin{align*}
			|\nabla \overline{v}_{1,n}(0)|
			&=
			\frac{
				|A(x_n)^{\frac{1}{2}}\nabla(\eta u_{1,\beta_{n}})(x_n)|
			}{
				L_n
			}=
			\frac{1}{L_n}\langle A(x_n)^\frac{1}{2}\nabla(\eta u_{1,\beta_n} )(x_n),A(x_n)^\frac{1}{2}\nabla(\eta u_{1,\beta_n} )(x_n)  \rangle^{\frac{1}{2}}
			=\\
			&=
			\frac{1}{L_n}\langle A(x_n)\nabla(\eta u_{1,\beta_n} )(x_n),\nabla(\eta u_{1,\beta_n} )(x_n) \rangle^{\frac{1}{2}}
			\geq \theta^{\frac{1}{2}}\frac{
				|\nabla(\eta u_{1,\beta_n} )(x_n)|
			}
			{
				L_n
			} = \theta^{\frac{1}{2}},
	\end{align*}
	by using hypothesis \textbf{(A1)}.
	Moreover,
	$$
	\nabla \overline{v}_{1,n}(0) = \frac{u_{1,\beta_n}(x_n) A(x_n)^{\frac{1}{2}}\nabla \eta(x_n)}{L_n} + \frac{A(x_n)^{\frac{1}{2}}\nabla u_{1,\beta_n}(x_n) \eta(x_n)}{L_n} 
	= 
	O(\frac{1}{L_n}) + \nabla v_{1,n}(0)
	$$
	(using this time conditions \textbf{(A2)} and \eqref{boundedness}),
	concluding the result.
	
	\vspace{2mm}
	
\noindent \textbf{Proof of (4).} Since the sequence $\bar{v}_n$ is bounded at $0$ (recall \eqref{rnDefinition}) and has bounded gradient (recall \eqref{eq:boundedgradient}), by Ascoli-Arzela's Theorem one can find $v \in C(\mathbb{R}^N)$ such that $\bar v_n \rightarrow v$ uniformly over any compact set (up to a subsequence).
	
	Now, for any $K \subset \mathbb{R}^N$ compact and $y \in K$, we have:
	$$
	|\overline{v}_{i,n}(y) - v_{i,n}(y)| 
	\leq 
	\sup_{y \in K}
	\frac{
		u_{i,\beta_n}(x_n+r_nA(x_n)^{\frac{1}{2}}y)
	}{
		L_n 
	}\frac{
	|\eta(x_n) - \eta(x_n+r_nA(x_n)^{\frac{1}{2}}y)| }{r_n} \leq \frac{C}{L_n}
	\rightarrow 
	0
	$$
we also conclude that $v_{i,n} \rightarrow v_i$ uniformly over any compact set in $\mathbb{R}^N$, for all $i \in \{1,...l\}$.
	
	\vspace{2mm}
	
\noindent \textbf{Proof of (5).} For $r>0$ fixed, test the equation of $v_{i,n}$ against a function $\phi \in C_c^\infty(B_{2r})$ satisfying $0 \leq \phi\leq 1$ and $\phi(x) = 1$ for $x \in B_r$. Thus, for large $n$, there exists $C_r$ depending only on $r$ such that
	\begin{align*}
		&\mathop{\sum_{j=1}^l}_{j\neq i}
		\int_{B_r} a_n(y)|M_n| |v_{j,n}|^{\gamma+1}|v_{i,n}|^{\gamma-1}v_{i,n}dy
		\leq 
		\mathop{\sum_{j=1}^l}_{j\neq i}
		-\int_{B_{2r}} 
		a_n(y)
		M_n |v_{j,n}|^{\gamma+1}|v_{i,n}|^{\gamma-1}v_{i,n}\phi\\
		&= \large|\int_{B_{2r}} f_{i,n}(y,v_{i,n})\phi + \langle 
			A_n(y)\nabla v_{i,n},
			\nabla \phi
		\rangle
		\large|=
		\large|
		\int_{B_{2r}} f_{i,n}(y,v_{i,n})\phi - 
			v_{i,n}
			\div(
			A_n(y)
			\nabla \phi
			)
		dy
		\large|\\
		&\leq  
		|B_{2r}|\cdot
		\|v_{i,n}\|_{L^\infty(B_{2r})}
		\large(d
		+
		C\|DA_n\|_{L^\infty(B_{2r})}
		\|\phi\|_{C^1(B_{2r})}+
		C\|A_n\|_{L^\infty(B_{2r})}
		\|\phi\|_{C^2(B_{2r})}
		\large) \leq C_r,		
	\end{align*}
where we have used Lemma \ref{lemmaMatrixBlowLimit}, the fact that  $v_{i,n}$ is uniformly bounded over compact sets (item (4)), and the fact that $\|DA_n\|_{L^\infty(B_{2r})}$ and $\|A_n\|_{L^\infty(B_{2r})}$ are uniformly bounded. This yields \eqref{eq:uniform_nonvariational}.

To prove the that $v_n\to v$ in $H^1_{loc}(\R^N)$, we test this time the equation of $v_{i,n}$ against $v_{i,n}\phi$. Using the ellipticity constant for $A_n$ given by \eqref{ellipticConstantForBlowUp}, we obtain:
	\begin{align}
		\frac{\theta}{M} &\int_{B_r}|\nabla v_{i,n}(x)|^2dx 
		\leq
		\int_{B_r}
		\langle A_n(x) \nabla v_{i,n}, \nabla v_{i,n}  \rangle 
		\leq
		\left|\int_{B_{2r}}\phi(x)\langle A_n(x) \nabla v_{i,n}, \nabla v_{i,n}\rangle dx\right| \label{boundOnDerivativeProp2}\\
		=& 
\left| \int_{B_{2r}} - \langle A_n(x)\nabla v_{i,n}, \nabla\phi		\rangle v_{i,n} 		f_{i,n}(x,v_{i,n}) v_{i,n}\phi + a_n(x)M_n\sum_{j\neq i} |v_{j,n}|^{\gamma+1}|v_{j,n}|^{\gamma-1}		v_{i,n}^2\phi				\right|\nonumber \\
		=& \left| \int_{B_{2r}} \frac{1}{2}  v_{i,n} \div\left(		A_n(x)\nabla\phi\right)		v_{i,n}+		f_{i,n}(x,v_{i,n}) v_{i,n}\phi + a_n(x)M_n\mathop{\sum_{j=1}^l}_{j\neq i} |v_{j,n}|^{\gamma+1}|v_{j,n}|^{\gamma-1}v_{i,n}^2\phi		 \right|,\nonumber
	\end{align}
	and this is uniformly bounded: the first two terms since $v_{i,n}$ and  $\div\left(
	A_n(x)\nabla\phi\right)$ are uniformly bounded in $B_{2r}$, and the last one by \eqref{eq:uniform_nonvariational}. In particular, $v_n$ is bounded in $H^1_{loc}(\R^N)$ and $v_n \rightharpoonup v_i$ weakly. Now given $K>0$, we consider the set:
$E_K = \large\{ s \in [0,r]:	\sup_{i, n}	\int_{\partial B_s}	|\nabla v_{i,n}|^2	d\sigma(y)	>	K	\large\}	$.
	Since there exists $C>0$ such that for all $n$:
	\begin{align*}
		C 
		\geq&
		\int_{B_r} |\nabla v_{i,n}|^2dx
		=
		\int_{0}^r 
		\int_{\partial B_s}
		|\nabla v_{i,n}|^2 
		d\sigma(x)
		dr,
	\end{align*}
	then this implies that $|E_K| \leq \frac{C}{K}$. Thus by taking $K$ large enough such that $|E_K| < \epsilon$, then we can choose a slightly smaller radius $r' \in [r-\epsilon, r]$ such that:
	\begin{align*}
		\int_{\partial B_{r'}} |\nabla v_{i,n}|^2dx
		\leq
		K
	\end{align*}
	for all $n \in \mathbb{N}$.
	For the rest of the proof, we will still call $r$ this slightly smaller radius.	Now we test the equation for $v_{i,n}$ against $v_{i,n}-v_i$ in $B_r$, obtaining
	\begin{align*}
		&\Big|\int_{B_r}\langle A_n(y) \nabla v_{i,n}, \nabla (v_{i,n}-v_{i}) \rangle \left|		=	\Big|\int_{\partial B_r} \langle A_n(y) \nu_y, \nabla v_{i,n} \rangle (v_{i,n} - v_{i}) d\sigma \right.\\  
		&\qquad \left. + \int_{B_r}				f_{i,n}(y,v_{i,n})(v_{i,n}-v_{i}) + M_n a_n(y)\sum_{j\neq i}|v_j|^{\gamma+1} |v_i|^\gamma (v_{i,n}-v_{i})\, dy \right|\\		
		\leq& \left(		M		\int_{\partial B_r}		|\nabla v_{i,n}| 		d\sigma	+ 		\int_{B_r} 				|f_{i,n}(y,v_{i,n})| 		+ 		|M_n| a_n(y) 		\sum_{j\neq i}		|v_{i,n}|^{\gamma+1}|v_{i,n}|^\gamma\, dy			\right)		\|v_{i,n}-v_{i}\|_{L^{\infty}(B_r)} \rightarrow 0
	\end{align*}
	since all the terms are bounded and $\|v_{i,n}-v_{i}\|_{L^\infty(B_r)} \rightarrow 0$.
 On the other hand, by the weak covergence of $v_{i,n}$ we have:
	$$\int_{B_r}\langle \nabla v_{i}, \nabla(v_i-v_{i,n}) \rangle dy \rightarrow 0,\qquad \text{ and so }\qquad \int_{B_r}\langle A_n(y) \nabla v_i, \nabla (v_i-v_{i,n}) \rangle dy \rightarrow 0.$$
	We conclude:
	$$
	\theta\int_{B_r} \large|
	\nabla (v_i - v_{i,n})|^2 dy 
	\leq 
	\int_{B_r} 
	\langle A_n(y)\nabla (v_i-v_{i,n}), \nabla (v_i-v_{i,n})\rangle dy  
	\rightarrow 
	0
	$$
therefore $v_{i,n}$ converges strongly in $H^1(B_r)$ to $v_i$. This, combined with \eqref{eq:uniform_nonvariational} and the lower bound of the function $a_n$ (cf.  \textbf{(a)}), 
$$
\sum_{j\neq i}^l\int_{B_r} \delta |v_{i}|^{\gamma+1}|v_{i}|^\gamma dx	\leq \lim \sum_{j\neq i}^l\int_{B_r} a_n(x) |v_{i,n}|^{\gamma+1}|v_{i,n}|^\gamma(x)dx \leq \lim \frac{C_r}{|M_n| }=0.
	$$
In conclusion, $v_i v_j = 0$ whenever $i \neq j$.
\end{proof}

\begin{lemma}
	\label{nonTrivial}
	Let   $v=(v_1,\ldots, v_l)$ be the limit of the  sequences $(v_n)$ and $(\bar v_n)$, provided by Proposition \ref{ListProp}-(4). Then the first component, $v_1$, is nonconstant. 
\end{lemma}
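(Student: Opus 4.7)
The plan is to argue by contradiction: suppose $v_1\equiv c$ for some constant $c\geq 0$, and upgrade the $L^\infty_{loc}$-convergence $v_{1,n}\to v_1$ of Proposition \ref{ListProp}-(4) to $C^1_{loc}$-convergence around the origin; combined with $\liminf_n|\nabla v_{1,n}(0)|\geq \theta^{1/2}>0$ from \eqref{bounds_nabla_at_origin}, this yields $0=|\nabla v_1(0)|\geq\theta^{1/2}$, a contradiction. Since the matrices $A_n$ are uniformly elliptic, uniformly bounded, and satisfy $\|DA_n\|_{L^\infty(B_r)}\to 0$ (Lemma \ref{lemmaMatrixBlowLimit}), the decisive ingredient is a uniform $L^\infty_{loc}$-bound on the right-hand side of \eqref{eq:system_rescaled} for $i=1$: then interior $W^{2,p}$ estimates with $p>N$, Sobolev embedding, and Ascoli-Arzel\`a yield the required $C^{1,\alpha}_{loc}$-convergence.

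If $\{M_n\}$ is bounded along a subsequence, the bound on the right-hand side is immediate: the $v_{i,n}$ are uniformly bounded on compact sets by Proposition \ref{ListProp}-(4), $|f_{1,n}(y,v_{1,n})|\leq dr_n^2|v_{1,n}|\to 0$ by Lemma \ref{lemmaMatrixBlowLimit}, and all the remaining factors in the interaction term are uniformly bounded.

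The delicate case is $M_n\to-\infty$. Here Proposition \ref{ListProp}-(5) gives the segregation $v_iv_j\equiv 0$ for $i\neq j$; meanwhile $\sum_i v_i(0)=\lim_n\sum_i\overline{v}_{i,n}(0)=1$ by \eqref{rnDefinition}, so there exists an index $k$ with $v_k(0)>0$. Fix $r>0$ such that $v_{k,n}(y)\geq v_k(0)/2$ on $B_r$ for $n$ large. Inserting this lower bound into \eqref{eq:system_rescaled} for each $v_{j,n}$ with $j\neq k$, and using condition \textbf{(a)}, one obtains the absorption-type inequality
\[
-\div(A_n\nabla v_{j,n})+|M_n|\delta(v_k(0)/2)^{\gamma+1}v_{j,n}^{\gamma}\leq dr_n^2\, v_{j,n}\quad\text{on }B_r,
\]
to which Lemma \ref{estimateLemma}-(1) applies (exactly as in the proof of Lemma \ref{remarkOnStuff}), giving the uniform pointwise bound $|M_n||v_{j,n}(y)|^\gamma\leq C$ on $B_{r/2}$ for every $j\neq k$.

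This pointwise bound converts the full interaction term in \eqref{eq:system_rescaled} for $i=1$ into a uniformly $L^\infty(B_{r/2})$-bounded quantity: in the sub-case $k=1$ one writes $|M_n|v_{j,n}^{\gamma+1}v_{1,n}^{\gamma}=(|M_n|v_{j,n}^\gamma)\,v_{j,n}\,v_{1,n}^\gamma\leq C\,v_{j,n}v_{1,n}^\gamma$ for $j\neq 1$; in the sub-case $k\neq 1$ one uses instead $|M_n|v_{1,n}^\gamma\leq C$ to bound each summand by $C\,v_{j,n}^{\gamma+1}$. Interior Schauder regularity now yields $C^{1,\alpha}_{loc}$-convergence of $v_{1,n}$ to $v_1$ on $B_{r/2}$, which contradicts \eqref{bounds_nabla_at_origin}. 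The main obstacle in the proof is precisely this step: transferring the integral bound \eqref{eq:uniform_nonvariational} into pointwise control on the interaction, resolved via the "good component" $v_{k,n}$ bounded away from zero.
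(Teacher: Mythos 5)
Your proof is correct and follows essentially the same route as the paper's: the bounded-$M_n$ case is handled by a direct bound on the right-hand side, and in the case $M_n\to-\infty$ you use the normalization \eqref{rnDefinition} to find a component bounded away from zero near the origin, feed it into Lemma \ref{estimateLemma}-(1) to get the pointwise bound $|M_n|v_{j,n}^\gamma\leq C$, and conclude via elliptic regularity and \eqref{bounds_nabla_at_origin}. The only difference is organizational: the paper first rules out $v_1(0)=0$ and then runs the absorption argument with $k=1$, whereas you treat the sub-cases $k=1$ and $k\neq 1$ symmetrically, which is marginally cleaner.
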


\begin{proof} We split the proof in two cases, according to the asymptotic behaviour of $M_n$.

\smallbreak

\noindent	Case 1. If $M_n$ is bounded, then the right hand side of \eqref{eq:system_rescaled} is also bounded. Then, by elliptic regularity theory, that $v_{1,n}\to v_1$ in $C^{1,\alpha}_{loc}$, for every $\alpha\in (0,1)$. Thus, by \eqref{bounds_nabla_at_origin}: 
	$$
	0 < \theta^{\frac{1}{2}}\leq \lim \inf|\nabla v_{1,n}(0)| = |\nabla v_1(0)|,
	$$ 
	and so $v_1$ is nonconstant.

Case 2. If $|M_n| \rightarrow \infty$, then we know the limit $v=(v_1,\ldots, v_k)$ has segregated components by Proposition \ref{ListProp}-\emph{6}, that is, $v_iv_j = 0$ whenever $i\neq j$. Since $ \bar v_n\to v$ uniformly in $B_r$, passing to the limit the normalization condition \eqref{rnDefinition} we have

	$$\sum_{j=1}^l {v}_j(0) = 1$$
Therefore we either have $v_1(0) = 1$ or $v_1(0)= 0$. 
	\smallbreak 
	First we suppose $v_1(0) = 0$ (and we check that this leads to a contradiction). Then there exists $h \in \{2,...,l\}$ such that $v_h(0) = 1$. By the uniform convergence $v_n\to v$ in $B_r$, we must have:
	\begin{equation}
	    \label{lowBoundSomewhere}
	    v_{h,n}(0)\geq \frac{7}{8} \qquad \text{ for $n$ large enough.}
	\end{equation}
	
	Using Proposition \ref{ListProp}-(3), there exists $C>0$ such that $\|\nabla \overline{v}_n\|_{L^\infty(B_r)} \leq C$. Therefore, since $v_n-\bar v_n\to 0$ locally uniformaly,  for $x \in B_{\frac{1}{2C}}$ we have
	$$|v_{h,n}(x) - v_{h,n}(0)| \leq |v_{h,n}(x) - \overline{v}_{h,n}(x)| + |\overline{v}_{h,n}(x) - \overline{v}_{h,n}(0)| \leq o(1) + C|x| \leq o(1)+ \frac{1}{2}$$
Therefore
\[
v_{h,n}(x) \geq \frac{1}{8}\qquad \text{ for $x \in B_{\frac{1}{2C}}$,\ $n$ large enough, }
\]
For every $\epsilon>0$, by Lemma \ref{lemmaMatrixBlowLimit} we also have
	$$
	\sup_{x\in B_{\frac{1}{2}}}|f_{i,n}(x,v_{i,n}(x))| < \epsilon \qquad \text{ for $n$ large};
	$$ 
combining this with $a_n(y)\geq \delta>0$ and inequality \eqref{lowBoundSomewhere}, and remembering that $M_n<0$, we are able to conclude that $v_{1,n}$ satisfies the inequalities:
	\begin{equation}
		\label{equationForLemmaBound}
		-\div\left(
		    A_n(x)\nabla v_{1,n}
		 \right) \leq 
		-
		\delta
		\left(
		  \frac{7}{8}
		\right)^{\gamma+1} |M_n|v_{1,n}^\gamma + \epsilon,
	\end{equation}
	\begin{equation}
		\label{equationForGoodBound}
		|\div(A_n(x)\nabla v_{1,n})| \leq \delta
		\left(
		  \frac{7}{8}
		\right)^{\gamma+1} |M_n|v_{1,n}^\gamma + \epsilon.
	\end{equation}
Using equation \eqref{equationForLemmaBound} together with Lemma \ref{estimateLemma}-(1), we obtain the existence of $c>0$ such that $|M_n|v_{1,n}^\gamma(x)< c$ for $x \in B_{\frac{1}{4C}}$. Plugging this information in \eqref{equationForGoodBound} , we conclude that  $|\div(A_n(x)\nabla v_{1,n}(x))|<C$ for all $n \in \mathbb{N}$. From the uniform ellipticity of $A_n(x)$ and by elliptic regularity theory, we conclude that (up to a subsequence) $v_{1,n} \rightarrow v_1$ in $C^1(B_{\frac{1}{4C}})$, and once again by \eqref{bounds_nabla_at_origin} we have: 
	$$
	|\nabla v_1(0)| = \lim_n |\nabla v_{1,n}(0)| \geq \theta^\frac{1}{2}>0,
	$$ 
	which is a contradiction since $v_1$ is positive and $v_1(0) = 0$. Thus we must have $v_1(0) = 1$.

	Since $v_1(0) = 1$, we can reason similarly to the previous paragraph, showing via Lemma \ref{estimateLemma}-(1) that $|M_n| v_{j,n}^\gamma(x)< C$ for $x \in B_{\frac{1}{4C}}$ and  $j \neq 1$. From this we conclude the boundedness of $|\div(A_n(x)\nabla v_{1,n})|$, so we can take the convergence $v_{n,1}\to v_1$ in $C^1(B_{\frac{1}{4C}})$ and
	$$
	|\nabla v_{1}(0)|=\lim_n|\nabla v_{1,n}(0)| \geq \theta^\frac{1}{2}>0,
	$$ concluding the fact that $v_1$ is nonconstant.
\end{proof}

Now we make a concluding proposition of this section, which summarizes what is known about the limiting profiles $v$.

\begin{prop}
	\label{concludeProp}
		Let   $v=(v_1,\ldots, v_l)\in H^1_{loc}(\R^N)\cap C(\R^N)$ be the limit of the  sequences $(v_n)$ and $(\bar v_n)$. Then
	\begin{equation}\label{eq:Lipschitzbound_limit}
	\max_{i=1,...,l}\mathop{\sup_{x\neq y}}_{x,y\in\mathbb{R}^N}\frac{|v_i(x)-v_i(y)|}{|x-y|}\leq C,
	\end{equation}
	and $v_1$ is nonconstant. Actually,  $v$ may only have at most another nontrivial component, say $v_2$. These two components satisfy
	\begin{equation*}
	v_1(0)+v_2(0)=1,
	\qquad
	|\nabla v_1(0)|\geq \theta^\frac{1}{2}>0,
	\end{equation*}
	and furthermore
	\begin{enumerate}
\item If $M_n \rightarrow -\infty$, then both $v_1$ and $v_2$ are subharmonic in $\mathbb{R}^N$ and:
	\begin{equation}
	\label{unboundedEquation}
	\begin{cases}
		-\Delta v_1 = 0 & \text{in } \, \{v_1>0\}\\
		-\Delta v_2 = 0 &  \text{in } \, \{v_2>0\}\\
		v_1\cdot v_2 = 0 & \text{in } \, \mathbb{R}^N\\
		v_1, v_2 \geq 0 & \text{in } \, \mathbb{R}^N
	\end{cases}
	\end{equation}
\item If $M_n$ is bounded then there exists $M_\infty <0$ such that up to a subsequence $M_n \rightarrow M_\infty < 0$ and:
	\begin{equation}
	\label{boundedEquation}
	\begin{cases}
		-\Delta v_1 = M_\infty v_1^\gamma v_2^{\gamma+1} & \text{in } \, \mathbb{R}^N\\
		-\Delta v_2 = M_\infty v_2^\gamma v_1^{\gamma+1} &  \text{in } \, \mathbb{R}^N\\
		v_1, v_2 \geq 0 & \text{in } \, \mathbb{R}^N
	\end{cases}
	\end{equation}
	\end{enumerate}
\end{prop}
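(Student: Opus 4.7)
The plan is to assemble the proposition by combining the earlier lemmas of this section with standard passage-to-the-limit arguments, and then deal separately with the more delicate claim on the number of non-trivial components.

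The Lipschitz bound \eqref{eq:Lipschitzbound_limit} is an immediate consequence of Proposition \ref{ListProp}-(3)-(4): the sequence $\bar v_n$ is uniformly Lipschitz with constant $M^{1/2}$ and converges locally uniformly to $v$, so passing difference quotients to the limit on compact sets and then sending the radius to infinity transfers the bound to $v$. Non-constancy of $v_1$ and the gradient lower bound $|\nabla v_1(0)| \geq \theta^{1/2}$ are essentially the content of Lemma \ref{nonTrivial}, whose proof produces $C^1_{\mathrm{loc}}$-convergence of $v_{1,n}$ to $v_1$ in a neighborhood of $0$ in both $M_n$-subcases (via elliptic regularity applied to \eqref{eq:system_rescaled} after controlling the coupling term through Lemma \ref{estimateLemma}). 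The normalization $v_1(0) + v_2(0) = 1$ will follow by passing \eqref{rnDefinition} to the limit, once the vanishing of the remaining components at $0$ is justified in the case analysis below.

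For the $M_n$-dichotomy: in the unbounded case $M_n \to -\infty$, Proposition \ref{ListProp}-(5) already gives segregation $v_i v_j \equiv 0$. Combined with $\sum_i v_i(0) = 1$ and the Case 2 analysis of Lemma \ref{nonTrivial} (which excluded $v_1(0) = 0$), this forces $v_1(0) = 1$ and $v_j(0) = 0$ for $j \neq 1$. The PDEs on the positivity sets follow by passing \eqref{eq:system_rescaled} to the limit on compact subsets of $\{v_i > 0\}$, where the interaction term vanishes by segregation while $A_n \to \mathrm{Id}$ and $f_{i,n}(\cdot, v_{i,n}) \to 0$ by Lemma \ref{lemmaMatrixBlowLimit}. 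Global subharmonicity is obtained by testing \eqref{eq:system_rescaled} against nonnegative test functions and exploiting the sign of the interaction term together with the local $L^1$-bound \eqref{eq:uniform_nonvariational}. In the bounded case, extracting a subsequence yields $M_n \to M_\infty \leq 0$; to rule out $M_\infty = 0$, observe that otherwise the right-hand side of \eqref{eq:system_rescaled} would converge to zero locally uniformly (Lemma \ref{lemmaMatrixBlowLimit} handles $f_{i,n}$), making each $v_i$ a nonnegative harmonic function on $\mathbb{R}^N$ with at most linear growth, hence constant by the Liouville theorem for nonnegative harmonic functions---contradicting the non-constancy of $v_1$. The limit system \eqref{boundedEquation} then follows by passing \eqref{eq:system_rescaled} to the limit, the positive factor $a(x_\infty)$ being absorbed into $M_\infty$.

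The main obstacle I anticipate is the assertion that $v$ has at most one non-trivial component besides $v_1$. I would approach it as follows. In the unbounded case, $v_1(0) = 1$ already forces all other $v_j$ to vanish in a neighborhood of $0$; to rule out two distinct $v_j$, $j \geq 2$, being globally non-trivial, I would combine the multiplicity-$1$ boundary structure from Appendix \ref{chapter:mult1PointsApp} with the class-$\mathcal{G}(\mathbb{R}^N)$ framework of Appendix \ref{appendix:classG}, using the linear growth \eqref{eq:Lipschitzbound_limit} to exclude three-phase configurations; alternatively, an Alt--Caffarelli--Friedman-type argument shows that three mutually disjoint positivity sets yield a growth strictly greater than linear, contradicting \eqref{eq:Lipschitzbound_limit}. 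In the bounded case, a Liouville-type classification for the coupled system \eqref{boundedEquation} on $\mathbb{R}^N$ among nonnegative Lipschitz solutions should play the analogous role. This global structural step is where the real work lies; all other items reduce to algebra and standard passage-to-the-limit from the lemmas of this section.
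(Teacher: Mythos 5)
Your proposal is correct and follows essentially the same route as the paper: Lipschitz bound and non-constancy from Proposition \ref{ListProp} and Lemma \ref{nonTrivial}, segregation plus passage to the limit in the weak formulation for the two $M_n$-regimes, and Liouville for nonnegative harmonic functions to exclude $M_\infty=0$. The ``global structural step'' you flag as the real work is handled in the paper simply by invoking Lemma \ref{subHarmonicNonTrivial} (subharmonic, segregated, globally Lipschitz vectors have at most two nontrivial components) and Lemma \ref{equationNonTrivial} (its analogue for the coupled system), quoted from the literature in Appendix \ref{chapter:AuxResultsApp}; your sketched Alt--Caffarelli--Friedman growth argument is precisely the standard proof of those lemmas, so no new idea is missing.
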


\begin{proof}
	The first observation is just a consequence of Proposition \ref{ListProp}. To show that the limit can only have at most two nontrivial components and that these satisfy either \eqref{unboundedEquation} or \eqref{boundedEquation}, we divide the discussion in two cases.

\noindent \textbf{Case 1.} Suppose that $M_n \rightarrow -\infty$. Then, by Proposition \ref{ListProp}-(5), we know that $v_{i,n}\cdot v_{j,n} \rightarrow 0$ locally uniformly for all $j \neq i$, and so also $v_i\cdot v_j=0$ whenever $j \neq i$. 

Fix $i$ and let $\phi \in C_c^\infty(\{v_i>0\})$,  $K:= \text{supp}(\phi)$ and $\delta>0$ such that  $v_i|_{K}>\delta$. Exactly as we did in the proof of Case 2 in  Lemma \ref{nonTrivial}, we can  use Lemma \ref{estimateLemma}-(1) and show that we have that $|M_n\|v_{j,n}|^\gamma \leq C$ in $K$, for every each $j \neq i$  (just use the uniform boundedness of $v_{n}$ over this compact set and take a cover by balls). In particular, $|M_n\|v_{j,n}|^{\gamma+1}|v_{i,n}(x)|^\gamma \rightarrow 0$ uniformly in $K$

Thus, using the equation for $v_n$ in \eqref{eq:system_rescaled} and the fact that $A_n(y) \to  Id$  and $f_{i,n}(y,v_{i,n}(y)) 
\to 0$  over compact sets (recall Lemma \ref{lemmaMatrixBlowLimit}), we obtain:
	\begin{align*}
		\int_{\{v_i>0\}} \langle \nabla \phi,\nabla v_{i}\rangle dy
		&=
		\lim_n
		\int_{K}
		\langle A_n(y)\nabla \phi, \nabla v_{i,n}\rangle dy\\
		&= 
		\lim_n 	\int_{K}
		\left(
		f_{i,n}(y,v_{i,n})\phi(y)
		+
		M_na_n(y)
		\sum_{j\neq i}^l|v_{j,n}|^{\gamma+1}|v_{i,n}|^\gamma \phi(y)
		\right)dy=0,
	\end{align*}	
In conclusion, we have that:
	$$
	-\Delta v_i = 0\qquad \text{	in $\{v_i>0\}$.}
	$$
Since $v_i \in C(\mathbb{R}^N)$ are zero outside this set we conclude that the $v_i$ are subharmonic in $\mathbb{R}^N$. Using Lemma \ref{subHarmonicNonTrivial}, since we have the bound \eqref{eq:Lipschitzbound_limit}, we conclude that this system has at most two nontrivial components.

\noindent \textbf{Case 2.} In case $M_n \rightarrow M_\infty$ then using the fact that $f_{i,n}(y,v_{i,n}(y)) \rightarrow 0$ and that $a_n(x) \rightarrow a(x_\infty)>0$ uniformly over compact sets we conclude that:
	$$
	-\Delta v_i = M_\infty a(x_\infty)\sum_{j\neq i} v_{i}^{\gamma}v_{j}^{\gamma+1},
	$$
	since, by elliptic regularity, $v_{i,n}$ converges in $C^{1,\alpha}_{loc}(\mathbb{R}^N)$.  If $M_\infty = 0$, this would imply that the functions $v_i$ are harmonic and nonnegative which is a contradiction, thus $M_\infty < 0$. Using Lemma \ref{equationNonTrivial}, we conclude that $v$ only has two nontrivial components.
	
\end{proof}

Proposition \ref{concludeProp}, by itself, does not give any contradiction related to the limiting profiles $v$. We need to add more information about these limits; for this, in Section \ref{chapter:implementation} we explore Almgren's monotonicity formulas (a consequence of the variational structure of the approximating system) and in Sections \ref{chapter:resultsChap4} we show a Alt-Caffarelli-Friedman type monotonicity formula.


\section{Almgren's Monotonicity Formula}
\label{chapter:implementation}

In this section we will deduce an  Almgren's Monotonicity formula. The formula will be stated for vector functions $u_\beta = (u_{1,\beta},...,u_{l,\beta})$, whose components are positive solutions of the system: 
\begin{equation}
	\label{AlmgrenEq}
	-\div(A(x)\nabla u_{i,\beta})= f_i(x,u_{i,\beta})+a(x)\beta 
	\mathop{\sum^l_{i,j=1}}_{j\neq i} u_{j,\beta}^{\gamma+1}u_{i,\beta}^\gamma
	\qquad
	i=1,...,l
\end{equation}
satisfying conditions \textbf{(A1)}, \textbf{(A2)}, \textbf{(F)} and \eqref{boundedness} where $\beta <0$. We also assume that the matrix $A$ satisfies:
\begin{equation*}
	A(0)=Id.
\end{equation*}
Thus, around zero, the operator $v \mapsto \div(A(x)\nabla v)$ is just a perturbation of the usual Laplacian. The main purpose of this section is to prove Theorem \ref{almgrenMonotonicity} below.
 
 \smallbreak

We define:
$$
\mu(x) 
:= 
\langle A(x)\frac{x}{|x|}, \frac{x}{|x|} \rangle \geq \theta
\qquad
\forall x \in \mathbb{R}^N\setminus \{0\},
$$
The following lemma is taken from \cite{HugoHolderVariable} (which, in turn, is based on \cite{GAROFALO2014682}). 

\begin{lemma}[{{\cite[Lemma C.2]{HugoHolderVariable}}}]
	\label{listLemma}
	There exists a constant $C$ and radius $0<\tilde{r}< d(0, \partial \Omega)$, depending only on the dimension $N$ and $\theta, M$ (bounds from conditions \textbf{(A1)} and \textbf{(A2)}) such that, for $|x| < \tilde{r}$, we have:
	\begin{enumerate}
		\item $\|A(x) - Id\| \leq C|x|,$ 
		\item $|\mu(x)-1| \leq C|x|,$
		\item $|\frac{1}{\mu(x)} - 1| \leq C|x|,$. 
		\item $|\frac{1}{\mu^2(x)}-1| \leq \frac{C}{1-C|x|^2}|x|,$
		\item $|\nabla \mu(x)| \leq C,$
		\item $|\div(A(x)\nabla |x|)-\frac{N-1}{|x|}|\leq C,$ 
		\item $|\div(\frac{A(x)x}{\mu(x)}) - N| \leq C|x|.$ 
	\end{enumerate}
\end{lemma}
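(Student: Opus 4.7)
The plan is to derive all seven estimates from the two inputs $A(0)=Id$ and $\|DA\|_{L^\infty(\Omega)}\leq M$, together with the ellipticity $\mu(x)\geq \theta$. First, the fundamental theorem of calculus applied componentwise gives $A(x)-Id = \int_0^1 DA(tx)\cdot x\, dt$, hence $\|A(x)-Id\|\leq C|x|$ for $|x|<d(0,\partial \Omega)$, which is (1). Estimate (2) then follows from $\mu(x)-1=\langle (A(x)-Id)\tfrac{x}{|x|},\tfrac{x}{|x|}\rangle$. For (3), since $\mu(x)\geq \theta$, $\left|\tfrac{1}{\mu}-1\right|=\tfrac{|\mu-1|}{\mu}\leq \tfrac{C|x|}{\theta}$; and (4) factors as $\tfrac{1}{\mu^2}-1=\tfrac{(1-\mu)(1+\mu)}{\mu^2}$, whose denominator is controlled from below by $(1-C|x|)^2\geq 1-C|x|^2$ (after relabelling the constant), giving the stated bound.

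The main obstacle is (5): differentiating $\mu(x)=\sum_{i,j}A_{ij}(x)\tfrac{x_ix_j}{|x|^2}$ produces angular derivatives of $x_ix_j/|x|^2$ of size $1/|x|$, so the boundedness of $\nabla\mu$ must come from a cancellation. I would compute, using symmetry of $A$ and $\langle Ax,x\rangle=\mu(x)|x|^2$,
\[
\partial_k\mu(x)=\sum_{i,j}(\partial_k A_{ij})(x)\tfrac{x_ix_j}{|x|^2}+\tfrac{2}{|x|^2}\bigl((Ax)_k-\mu(x)x_k\bigr).
\]
The first term is bounded by $M$. For the second, $Ax-\mu(x)x=(A-Id)x-\tfrac{\langle (A-Id)x,x\rangle}{|x|^2}x$ has norm at most $2\|A(x)-Id\|\,|x|\leq 2C|x|^2$ by (1), so the whole second summand is bounded. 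This gives (5).

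Items (6) and (7) are divergence computations. For (6), since $\nabla|x|=x/|x|$,
\[
\div\!\Bigl(\tfrac{A(x)x}{|x|}\Bigr)=\tfrac{\div(A(x)x)}{|x|}-\tfrac{\langle A(x)x,x\rangle}{|x|^3}=\tfrac{\div(A(x)x)-\mu(x)}{|x|}.
\]
Expanding $\div(A(x)x)=\trace A(x)+\sum_{i,j}(\partial_i A_{ij})x_j$ and using $A(0)=Id$ with the Lipschitz bound yields $\div(A(x)x)=N+O(|x|)$; combined with $\mu(x)=1+O(|x|)$ from (2), we get $\div(A(x)\nabla|x|)=\tfrac{N-1}{|x|}+O(1)$. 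For (7), the product rule gives
\[
\div\!\Bigl(\tfrac{A(x)x}{\mu(x)}\Bigr)=\tfrac{\div(A(x)x)}{\mu(x)}-\tfrac{\langle A(x)x,\nabla\mu(x)\rangle}{\mu(x)^2},
\]
and the first term equals $N+O(|x|)$ by (3) together with $\div(A(x)x)=N+O(|x|)$, while the second is $O(|x|)$ by (5), $|A(x)x|\leq M|x|$, and $\mu\geq\theta$. Throughout, the radius $\tilde r$ is chosen small enough to absorb lower-order corrections into the constant $C$, which depends only on $N$, $\theta$, and $M$.
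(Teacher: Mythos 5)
Your proof is correct, and since the paper does not prove this lemma (it is imported verbatim from \cite[Lemma C.2]{HugoHolderVariable}), your direct computation is exactly the standard argument one would expect there: everything reduces to $A(x)-Id=\int_0^1 DA(tx)x\,dt$, the identity $\langle A(x)x,x\rangle=\mu(x)|x|^2$, and the cancellation $A(x)x-\mu(x)x=(A(x)-Id)x-\tfrac{\langle (A(x)-Id)x,x\rangle}{|x|^2}x=O(|x|^2)$ that makes $\nabla\mu$ bounded despite the homogeneity-zero factor $x_ix_j/|x|^2$. One tiny slip: in (4) the inequality $(1-C|x|)^2\geq 1-C|x|^2$ is false for small $|x|$ no matter how you relabel the constant; but this step is unnecessary, since bounding $\mu^2\geq\theta^2$ already gives $|\mu^{-2}-1|\leq C'|x|$, which is stronger than the stated estimate once $\tilde r$ is small enough that $0<1-C'|x|^2\leq 1$.
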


 Define the following: 
 \begin{align}
 	E_{i,\beta}(u_\beta,r)&= \frac{1}{r^{N-2}}\int_{B_r}\langle A(x)\nabla u_{i,\beta}, \nabla u_{i,\beta} \rangle 
		- 
		f_i(x,u_{i,\beta})u_{i,\beta} 
		- 
		\mathop{\sum_{j=1}^l}_{j\neq i} a(x)\beta |u_{j,\beta}|^{\gamma +1} |u_{i,\beta}|^{\gamma+1}
		\Big)dx \nonumber \\
		&= \frac{1}{r^{N-2}} \int_{\partial B_r} u_{i, \beta}\langle  A(x) \nabla u_{i,\beta}, \nu\rangle d\sigma(x)\ \text{ for each $i=1,\ldots, l$,  and}		\label{ETerm}\\
 	E_\beta(u_\beta,r)&:=\sum_{i=1}^l E_{i,\beta}(u_\beta,r). \nonumber
 \end{align}
where, to obtain \eqref{ETerm}, we have tested the $i$-th equation in \eqref{AlmgrenEq} by $u_{i,\beta}$ and integrated by parts. We also define
\begin{align*}
	H_{i,\beta}(u_\beta,r) := \frac{1}{r^{N-1}}\int_{\partial B_r}\mu(x)|u_{i,\beta}|^2d\sigma(x), \ i=1,\ldots, l,\quad \text{ and } \quad	H_\beta(u_\beta,r) := \sum_{i=1}^l H_{i,\beta}(u_\beta,r),
\end{align*}
and, whenever $H_\beta(u_\beta,r)\neq 0$, consider the \emph{Almgren's quotient}:
\begin{align}
\label{Neq}
	N_\beta(u_\beta,r):= \frac{
		E_\beta(u_\beta,r)
	}{
		H_\beta(u_\beta,r)
	}.
\end{align}

\begin{lemma}
	\label{derivOfH}
	There exists $\overline{r},C>0$ depending only on the dimension $N$, and the constants $M$ and $\theta$ from conditions \textbf{(A1)} and \textbf{(A2)}, such that:
\[
	\Big|
	H_{i,\beta}'(u_\beta,r)-\frac{2}{r}E_{i,\beta}(u_\beta,r)
	\Big| 
	\leq 
	CH_{i,\beta}(u_\beta,r),\quad \text{ and }\quad  r\mapsto H_{i,\beta}(u_\beta,r)e^{Cr} \text{ is monotone nondecreasing}
\]
	for all $r \in ]0,\overline{r}[$, $\beta <0$. In particular, by summing up in $i$,
	$$\Big|
	H_{\beta}'(u_\beta,r)-\frac{2}{r}E_{\beta}(u_\beta,r)
	\Big| 
	\leq 
	CH_{\beta}(u_\beta,r),\quad \text{ and }\quad  r\mapsto H_{\beta}(u_\beta,r)e^{Cr} \text{ is monotone nondecreasing}
	$$\end{lemma}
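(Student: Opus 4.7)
The plan is to compute $H'_{i,\beta}(u_\beta,r)$ by rescaling to the unit sphere and then identifying which pieces reconstruct $\tfrac{2}{r}E_{i,\beta}$ and which are genuine errors controlled by $H_{i,\beta}$. Write $H_{i,\beta}(r)=\int_{\partial B_1}\mu(ry)u_{i,\beta}(ry)^2\,d\sigma(y)$ and differentiate under the integral, then change variables back to $\partial B_r$; since the unit outward normal satisfies $\nu=x/r$ on $\partial B_r$, one obtains
\[
H'_{i,\beta}(r)=\frac{1}{r^{N-1}}\int_{\partial B_r}\bigl[\partial_\nu\mu\cdot u_{i,\beta}^2+2\mu(x)\,u_{i,\beta}\,\partial_\nu u_{i,\beta}\bigr]\,d\sigma.
\]
To bring in $E_{i,\beta}$, I use the fact that $\mu(x)=\langle A(x)\nu,\nu\rangle$, so $A(x)\nu-\mu(x)\nu$ is exactly the tangential projection of $A(x)\nu$. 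This gives the key decomposition $\mu(x)\partial_\nu u_{i,\beta}=\langle A(x)\nabla u_{i,\beta},\nu\rangle-\langle \nabla u_{i,\beta},A(x)\nu-\mu(x)\nu\rangle$, and by formula \eqref{ETerm} the first piece integrates exactly to $\tfrac{2}{r}E_{i,\beta}(u_\beta,r)$.

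Two error terms remain. The first, $\tfrac{1}{r^{N-1}}\int_{\partial B_r}\partial_\nu\mu\cdot u_{i,\beta}^2\,d\sigma$, is handled directly using $|\nabla\mu|\le C$ from Lemma~\ref{listLemma}~(5) together with the ellipticity lower bound $\mu\ge\theta$, yielding a bound by $CH_{i,\beta}$. The more delicate term is
\[
\frac{2}{r^{N-1}}\int_{\partial B_r} u_{i,\beta}\,\langle\nabla u_{i,\beta},W\rangle\,d\sigma, \qquad W:=A(x)\nu-\mu(x)\nu,
\]
which a priori involves $|\nabla u_{i,\beta}|$ on the boundary and cannot be absorbed by $H_{i,\beta}$ via Young's inequality. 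The trick I would use is that $W$ is \emph{tangential} to $\partial B_r$, so $\langle\nabla u_{i,\beta},W\rangle=\langle\nabla_{\tan}u_{i,\beta},W\rangle$, and $2u_{i,\beta}\langle\nabla_{\tan}u_{i,\beta},W\rangle=\langle\nabla_{\tan}u_{i,\beta}^2,W\rangle$. Integration by parts on the closed manifold $\partial B_r$ (no boundary contribution) then moves the derivative off $u_{i,\beta}$:
\[
\int_{\partial B_r}u_{i,\beta}\langle\nabla u_{i,\beta},W\rangle\,d\sigma=-\tfrac{1}{2}\int_{\partial B_r}u_{i,\beta}^2\,\mathrm{div}_{\partial B_r}(W)\,d\sigma.
\]
A direct computation using \textbf{(A2)}, together with Lemma~\ref{listLemma}~(1),(2),(5), shows $|\mathrm{div}_{\partial B_r}(W)|\le C$ for $|x|<\tilde r$: derivatives of $W$ involve $DA$, $\nabla\mu$, and $(A-\mu I)D\nu$ (with $|A-\mu I|\le Cr$ and $|D\nu|\sim 1/r$), all $O(1)$. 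Combining the two error estimates gives the first claim $|H'_{i,\beta}-\tfrac{2}{r}E_{i,\beta}|\le C H_{i,\beta}$ for all $r<\bar r$.

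For the monotonicity of $r\mapsto H_{i,\beta}(r)e^{Cr}$, rewrite $(H_{i,\beta}e^{Cr})'=e^{Cr}(H'_{i,\beta}+CH_{i,\beta})\ge e^{Cr}\tfrac{2}{r}E_{i,\beta}(r)$ after absorbing the error into $CH_{i,\beta}$. Here I would exploit the sign structure of $E_{i,\beta}$: because $\beta<0$ and $a\ge\delta>0$, the competition contribution $-\sum_{j\neq i}a\beta|u_{j,\beta}|^{\gamma+1}|u_{i,\beta}|^{\gamma+1}$ is nonnegative, and $\langle A\nabla u_{i,\beta},\nabla u_{i,\beta}\rangle\ge\theta|\nabla u_{i,\beta}|^2\ge0$; only the $f_iu_{i,\beta}$ term is potentially negative, and \eqref{boundForF} bounds it by $du_{i,\beta}^2$, so $E_{i,\beta}(r)\ge -\tfrac{d}{r^{N-2}}\int_{B_r}u_{i,\beta}^2\,dx$. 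Using the coarea formula together with $\mu\ge\theta$ gives $\int_{B_r}u_{i,\beta}^2\le\theta^{-1}\int_0^r s^{N-1}H_{i,\beta}(s)\,ds$, which fed into $(H_{i,\beta}e^{Cr})'\ge \tfrac{2}{r}E_{i,\beta}e^{Cr}$ yields an integral differential inequality of Gronwall type; after enlarging the constant $C$ this delivers the monotonicity of $H_{i,\beta}(r)e^{Cr}$ on $(0,\bar r)$. Summing over $i=1,\ldots,l$ gives the statement for $H_\beta$. The step I expect to be most delicate is the tangential integration by parts and the explicit bound on $\mathrm{div}_{\partial B_r}(W)$, since this is precisely where the variable-coefficient structure of $A$ interacts with the geometry of the sphere and decides whether the gradient of $u_{i,\beta}$ on $\partial B_r$ can be eliminated.
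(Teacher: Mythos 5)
Your computation of $H_{i,\beta}'$ and the bound $|H_{i,\beta}'-\tfrac{2}{r}E_{i,\beta}|\leq CH_{i,\beta}$ is correct, and it is essentially the intended argument (the paper outsources it to \cite[Lemma C.5]{HugoHolderVariable}): the whole reason for weighting $H$ with $\mu=\langle A\nu,\nu\rangle$ is precisely that $W=A\nu-\mu\nu$ is tangential to $\partial B_r$, so the term involving $\nabla u_{i,\beta}$ on the boundary can be integrated by parts on the sphere, and your estimate $|\operatorname{div}_{\partial B_r}W|\leq C$ via $\|A-\mu I\|\leq C r$, $|D\nu|\sim 1/r$ and Lemma~\ref{listLemma} is sound.

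The monotonicity step, however, has a genuine gap. After discarding the Dirichlet term $\langle A\nabla u_{i,\beta},\nabla u_{i,\beta}\rangle\geq 0$ you are left with
\[
H_{i,\beta}'(r)\;\geq\;-CH_{i,\beta}(r)-\frac{2d}{\theta\, r^{N-1}}\int_0^r s^{N-1}H_{i,\beta}(s)\,ds,
\]
and this is strictly weaker than $H_{i,\beta}'\geq -C'H_{i,\beta}$: the negative part of the right-hand side is proportional to a \emph{past average} of $H_{i,\beta}$, not to its current value, so no Gronwall argument upgrades it. Concretely, a nonnegative function satisfying this inequality may equal $1$ on $[0,\tfrac12]$ and then drop to an arbitrarily small $\epsilon$ over an interval of fixed length $\delta_0\sim 1/(C+Kc_N)$ (the inequality only forces $|H'|\lesssim C+K$ there, since the history integral stays of order one while $H$ is small); then $H(r)e^{C'r}$ fails to be nondecreasing for any fixed $C'$ as $\epsilon\to 0$. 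The fix is exactly to \emph{keep} the gradient term you threw away: writing $E_{i,\beta}(r)\geq\frac{1}{r^{N-2}}\int_{B_r}(\theta|\nabla u_{i,\beta}|^2-d\,u_{i,\beta}^2)$ and applying the Poincar\'e--trace inequality of Lemma~\ref{PoincareIneq}, the solid integral $\frac{dr^2}{r^{N}}\int_{B_r}u_{i,\beta}^2$ is absorbed, for $r$ small (say $dr^2<\theta(N-1)$), into $\frac{\theta}{r^{N-2}}\int_{B_r}|\nabla u_{i,\beta}|^2$ plus $\frac{1}{r^{N-1}}\int_{\partial B_r}u_{i,\beta}^2\leq\theta^{-1}H_{i,\beta}(r)$, giving $\tfrac{2}{r}E_{i,\beta}(r)\geq-\tilde CH_{i,\beta}(r)$ pointwise and hence the claimed monotonicity. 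This is the route the paper takes.
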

\begin{proof}
Recalling \eqref{ETerm}, the proof of the bounds follows exactly as the one of  \cite[Lemma C.5]{HugoHolderVariable}. 	Regarding the monotonicity,  we compute the derivative of $H_{i,\beta}(u_\beta, r)$:
	\begin{align*}
		H_{i,\beta}'(u_\beta,r) 		&\geq		\frac{2}{r}E_\beta(u_\beta,r) -CH_{i,\beta}(u_\beta,r) 		\\
		&\geq \frac{2}{r^{N-1}}\int_{B_r}\left(	 \langle A(x) \nabla u_{i,\beta} , \nabla u_{i,\beta}\rangle - f_i(x,u_{i,\beta})u_{i,\beta} \right) dx	- CH_{i,\beta}(u_\beta,r) \\
		&\geq \int_{B_r}\Bigg(\frac{2\theta}{r^{N-1}}|\nabla u_{i,\beta}|^2 	- \frac{2d r}{r^{N}}u_{i,\beta}^2\Bigg)dx- CH_{i,\beta}(u_\beta,r)\\
		&= 2\theta \int_{B_r}\Bigg(\frac{1}{r^{N-1}} |\nabla u_{i,\beta}|^2 - \frac{d r}{\theta r^{N}}u_{i,\beta}^2\Bigg)dx - CH_\beta(u_\beta,r).
	\end{align*}
By using Poincar\'e's inequality (Lemma \ref{PoincareIneq}) we know that, if $dr/\theta<N-1$, then:
	$$\int_{B_r} 
	\left(
    	\frac{1}{r^{N-1}}|\nabla u_{i,\beta}|^2
    	- 
    	\frac
    	{d r}
    	{\theta r^{N}}
    	u_{i,\beta}^2
	\right)
	dx
	\geq 
	- 
	\frac{1}{r^{N-1}}\int_{\partial B_r} u_{i,\beta}^2d\sigma(x) 
	\geq 
	-
	\frac{1}{\theta r^{N-1}}
	\int_{\partial B_r}
	\mu(x)u_{i,\beta}^2
	d\sigma(x)$$
	since $\mu(x)\geq \theta$. With this, we conclude:	
	$$H_{i,\beta}'(u_\beta,r)\geq 2\theta\Big(-\frac{1}{\theta} H_{i,\beta}(u_\beta,r)\Big) - CH_{i,\beta}(u_\beta,r)\geq -\tilde{C}H_{i,\beta}(u_\beta,r),$$
	from which the monotonicity of $\mapsto H_{i,\beta}(u_\beta,r)e^{\tilde Cr}$ easily follows.
\end{proof}

We now define the following vector field:

$$Z(x) = \frac{A(x)x}{\mu(x)}$$
Observe that $Z(x)\sim x$ for $x\sim 0$, since $A(0) = Id$ and $\mu(x) \rightarrow 1$. Using \textbf{(A1)}, \textbf{(A2)} and Lemma \ref{listLemma}-\emph{7}, we have the existence of $\overline{r}$ and $C>0$ such that: 
\begin{equation}\label{eq:bounds_Z}
    |Z(x)|\leq    C|x|, \qquad |\div(Z(x)-x)|\leq C|x|, \qquad \forall x:  |x|\leq \overline{r}.
\end{equation}

The following lemma is written in the Einstein notation, so summations are suppressed. This means that any repeated indices over any pair of variables having that index are supposed to be summed over.

\begin{lemma}
	\label{PozohaevLemma}
	For $r>0$ such that $B_r \subset \Omega$, if $A(x) = A = (a_{ij})$ we have that:
	\begin{align*}&
		r\int_{\partial B_r} \langle A(x)\nabla u_{i,\beta}, \nabla u_{i,\beta} \rangle 
		= 
		\int_{B_r} 
		\div(Z)
		\langle A\nabla u_{i,\beta}, \nabla u_{i,\beta} \rangle
		+ 
		2\int_{B_r} 
		f_{i}(x,u_{i,\beta})
		\langle\nabla u_{i,\beta}, Z \rangle\\
		+&
		2\int_{\partial B_r}
		\langle Z, \nabla u_{i,\beta} \rangle \langle A \nabla u_{i,\beta} , \nu\rangle 
		+ 
		\int_{B_r}
		\langle Z, \nabla a_{hl}\rangle \frac{\partial u_{i,\beta}}{\partial x_h}\frac{\partial u_{i,\beta}}{\partial x_l}
		- 
		2\int_{B_r} 
		a_{hl}\frac{\partial Z_j}{\partial x_h} \frac{\partial u_{i,\beta}}{\partial x_j}\frac{\partial u_{i,\beta}}{\partial x_l}\\
		+&
		2\sum_{j<i} \int_{B_r}\left(
		-\frac{\div(Z)}{\gamma+1} 
	    -\frac{\langle Z, \nabla a(x) \rangle}{a(x)(\gamma+1)}
		\right)
		a(x)\beta|u_{j,\beta}|^{\gamma+1}|u_{i,\beta}|^{\gamma+1}
		+ 
		2
		\sum_{j<i}
		\int_{\partial B_r}\frac{\langle Z, \nu \rangle}{\gamma +1} a(x)\beta |u_{j,\beta}|^{\gamma+1}|u_{i,\beta}|^{\gamma+1}.
	\end{align*}
\end{lemma}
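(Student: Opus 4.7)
The plan is a Pohozaev-type identity for the operator $-\div(A(x)\nabla\cdot)$ with the multiplier $\langle Z,\nabla u_{i,\beta}\rangle$, where $Z(x)=A(x)x/\mu(x)$. I would test equation \eqref{AlmgrenEq} against this multiplier, integrate over $B_r$, perform two integrations by parts on the principal term (using the symmetry of $A$), and handle the coupling term via the algebraic identity $u_{i,\beta}^{\gamma}\nabla u_{i,\beta}=\tfrac{1}{\gamma+1}\nabla(u_{i,\beta}^{\gamma+1})$ followed by one integration by parts and a symmetrisation $(i,j)\leftrightarrow(j,i)$ (which is what produces the sum over $j<i$ in the statement, after summing the Pohozaev identities over $i$ on both sides).

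For the principal term I would write
\[-\int_{B_r}\div(A\nabla u_{i,\beta})\langle Z,\nabla u_{i,\beta}\rangle=\int_{B_r}\langle A\nabla u_{i,\beta},\nabla\langle Z,\nabla u_{i,\beta}\rangle\rangle-\int_{\partial B_r}\langle A\nabla u_{i,\beta},\nu\rangle\langle Z,\nabla u_{i,\beta}\rangle,\]
then expand $\partial_k(Z_j\partial_j u_{i,\beta})=(\partial_k Z_j)\partial_j u_{i,\beta}+Z_j\partial_k\partial_j u_{i,\beta}$. The first piece supplies directly the contribution $-2\int a_{hl}(\partial_h Z_j)\partial_j u_{i,\beta}\partial_l u_{i,\beta}$ (after multiplying the whole identity by $2$); for the second, the symmetry of $A$ gives
\[a_{hk}\partial_h u_{i,\beta}\partial_k\partial_j u_{i,\beta}=\tfrac12\partial_j(a_{hk}\partial_h u_{i,\beta}\partial_k u_{i,\beta})-\tfrac12(\partial_j a_{hk})\partial_h u_{i,\beta}\partial_k u_{i,\beta},\]
and a second IBP against $Z_j\partial_j$ converts this into $-\tfrac12\int\div(Z)\langle A\nabla u_{i,\beta},\nabla u_{i,\beta}\rangle+\tfrac12\int_{\partial B_r}\langle A\nabla u_{i,\beta},\nabla u_{i,\beta}\rangle\langle Z,\nu\rangle-\tfrac12\int\langle Z,\nabla a_{hk}\rangle\partial_h u_{i,\beta}\partial_k u_{i,\beta}$. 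The crucial explicit fact is that on $\partial B_r$, where $\nu=x/|x|$,
\[\langle Z,\nu\rangle=\frac{\langle A(x)x,x\rangle}{\mu(x)\,|x|}=|x|=r\]
by the very definition of $\mu$, which is what produces the $r$-prefactor of the surface integral on the LHS.

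For the coupling term, for each $j\neq i$ I would use
\[\int_{B_r} a\beta u_{j,\beta}^{\gamma+1}u_{i,\beta}^{\gamma}\langle Z,\nabla u_{i,\beta}\rangle=\frac{1}{\gamma+1}\int_{B_r}a\beta u_{j,\beta}^{\gamma+1}\langle Z,\nabla u_{i,\beta}^{\gamma+1}\rangle,\]
and integrate by parts once. Expanding $\div(a u_{j,\beta}^{\gamma+1}Z)$ yields the boundary contribution with prefactor $\tfrac{\langle Z,\nu\rangle}{\gamma+1}$, a bulk contribution with $-\tfrac{\div(Z)}{\gamma+1}$, a bulk contribution with $-\tfrac{\langle Z,\nabla a\rangle}{a(\gamma+1)}$, plus a residual $-\int a\beta u_{i,\beta}^{\gamma+1}u_{j,\beta}^{\gamma}\langle Z,\nabla u_{j,\beta}\rangle$ coming from differentiating $u_{j,\beta}^{\gamma+1}$, which is precisely the $(j,i)$-coupling integral. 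Summing the Pohozaev identities over $i$ and grouping pairs $(i,j)$ with $j<i$ makes the residual combine with the original $(j,i)$-integral, leaving the symmetric combination $|u_{j,\beta}|^{\gamma+1}|u_{i,\beta}|^{\gamma+1}$ multiplied by the factors displayed in the statement. Multiplication by $2$ and rearrangement yields the claimed identity.

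The main technical obstacle, beyond the bookkeeping, is that in contrast with the constant-coefficient case one has to carry along the two `error' terms specific to the variable-coefficient setting, namely the bulk term $-2a_{hl}(\partial_h Z_j)\partial_j u_{i,\beta}\partial_l u_{i,\beta}$ (from the $\partial_k Z_j$ factor in the product rule) and $\langle Z,\nabla a_{hl}\rangle\partial_h u_{i,\beta}\partial_l u_{i,\beta}$ (from derivatives of the entries of $A$ generated when $Z_j\partial_j$ is commuted past $A$). Both vanish identically when $A\equiv\mathrm{Id}$ and $Z(x)=x$, so they are absent from the classical Pohozaev identity; they are exactly the terms that will later need to be absorbed, via Lemma \ref{listLemma} and \eqref{eq:bounds_Z}, when this identity is used as the starting point of the Almgren monotonicity argument.
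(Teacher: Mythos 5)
Your proposal is correct and is essentially the computation the paper relies on: the paper's proof of Lemma \ref{PozohaevLemma} consists of a citation to \cite[Lemma C.6]{HugoHolderVariable}, which is exactly this Rellich--Pohozaev argument with multiplier $\langle Z,\nabla u_{i,\beta}\rangle$, the identity $\langle Z,\nu\rangle=r$ on $\partial B_r$, and the $(i,j)\leftrightarrow(j,i)$ symmetrisation of the coupling term (recall the lemma is written in Einstein convention, so the repeated index $i$ is also summed, which is what makes the $\sum_{j<i}$ grouping legitimate). Your identification of the two variable-coefficient correction terms and their signs matches the statement.
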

\begin{proof}
This proof follows exactly as the one of \cite[Lemma C.6]{HugoHolderVariable} (without considering the singular limit right at the end of the proof).
\end{proof}

Now define the quantity:
$$\tilde{E}_\beta(u_\beta,r) := \sum_{i=1}^l\frac{1}{r^{N-2}}\int_{B_r}\langle A(x)\nabla u_{i,\beta}, \nabla u_{i,\beta} \rangle.$$
whose derivative we compute in the following lemma.
\begin{lemma}
	\label{estimateOnVariable}
	We have:
	\begin{align}
		\tilde{E}_\beta'(u_\beta,r) =&\sum_{i=1}^l\Bigg( \frac{2}{r^{N-2}}		\int_{\partial B_r} 		\frac{\langle A(x) \nabla u_{i,\beta} , \nu_x \rangle^2}{\mu(x)}  d\sigma(x)		+ 		\frac{2}{r^{N-1}}\int_{B_r} f_{i}(x,u_{i,\beta})\langle\nabla u_{i,\beta}, Z \rangle \nonumber \\
		&+ 		\frac{1}{r^{N-1}}\int_{B_r}\langle Z, \nabla a_{hl} \rangle 		\frac{\partial u_{i,\beta}}{\partial x_h}\frac{\partial u_{i,\beta}}{\partial x_l}		- 		\frac{2}{r^{N-1}}\int_{B_r} a_{hl}\frac{\partial (Z_j-x_j)}{\partial x_h} \frac{\partial u_{i,\beta}}{\partial x_j}\frac{\partial u_{i,\beta}}{\partial x_l} \nonumber \\		
		&+		\frac{1}{r^{N-1}}\int_{B_r}\div(Z-x)\langle A \nabla u_{i,\beta} , \nabla u_{i,\beta}\rangle 		\Bigg) \nonumber \\
		&+		\mathop{\sum_{i,j=1}^l}_{j<i}		\frac{2}{r^{N-1}(\gamma+1)}\int_{\partial B_r}\langle Z(x), \nu_x \rangle{a(x)\beta}|u_{j,\beta}|^{\gamma+1}|u_{i,\beta}|^{\gamma+1} d\sigma(x) \nonumber \\
		&+\frac{2}{r^{N-1}} 		\mathop{\sum_{i,j=1} ^l}_{j<i}		\int_{B_r}\left(-\frac{\div(Z)}{\gamma+1} 		-		\frac{\langle Z(x), \nabla a(x) \rangle}{a(x)(\gamma+1)}		\right)
		a(x)\beta|u_{j,\beta}|^{\gamma+1}|u_{i,\beta}|^{\gamma+1}. 	\label{KineticEquation}
	\end{align}
	Moreover, there also exist $C$ and $\overline{r}$ such that:
	\begin{align}
			\Big|\sum_{i=1}^l\Big(
			\frac{1}{r^{N-1}}
			\int_{B_r}
			\langle Z, \nabla a_{hl}\frac{\partial u_{i,\beta}}{\partial x_h}\frac{\partial u_{i,\beta}}{\partial x_l} \rangle 
			- 
			\frac{2}{r^{N-1}}&\int_{B_r} 
			a_{hl}\frac{\partial (Z_j-x_j)}{\partial x_h} \frac{\partial u_{i,\beta}}{\partial x_j}
			\frac{\partial u_{i,\beta}}{\partial x_l} \nonumber \\
			+
			\frac{1}{r^{N-1}}
			&\int_{B_r}\div(Z-x)
			\langle A \nabla u_{i,\beta} , \nabla u_{i,\beta}\rangle
			\Big)
			\Big| 
			\leq 
			C\tilde{E}(u_\beta,r) 		\label{weirdBound}
	\end{align}
	for all $r \in ]0,\overline{r}[$ and $\beta < 0$.
\end{lemma}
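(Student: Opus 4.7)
\textbf{Plan for Lemma \ref{estimateOnVariable}.} The plan is to differentiate $\tilde{E}_\beta(u_\beta,r)$ directly in $r$ and then substitute the Pohozaev-type identity of Lemma \ref{PozohaevLemma}. Since
\[
\tilde{E}_\beta(u_\beta,r)=\sum_{i=1}^l \frac{1}{r^{N-2}}\int_{B_r}\langle A\nabla u_{i,\beta},\nabla u_{i,\beta}\rangle\,dx,
\]
each component contributes
\[
-\frac{N-2}{r^{N-1}}\int_{B_r}\langle A\nabla u_{i,\beta},\nabla u_{i,\beta}\rangle\,dx + \frac{1}{r^{N-2}}\int_{\partial B_r}\langle A\nabla u_{i,\beta},\nabla u_{i,\beta}\rangle\,d\sigma,
\]
and the boundary integral is replaced using Lemma \ref{PozohaevLemma} (after division by $r^{N-1}$).

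The crucial algebraic step is to separate $Z$ from the identity field $x$. Using $\div(Z)=\div(Z-x)+N$, the divergence contribution from Pohozaev becomes $\frac{1}{r^{N-1}}\int_{B_r}\div(Z-x)\langle A\nabla u_{i,\beta},\nabla u_{i,\beta}\rangle + \frac{N}{r^{N-1}}\int_{B_r}\langle A\nabla u_{i,\beta},\nabla u_{i,\beta}\rangle$, and combined with the $-(N-2)$ term from the differentiation this yields the desired $\div(Z-x)$ contribution plus a spurious $+\frac{2}{r^{N-1}}\int_{B_r}\langle A\nabla u_{i,\beta},\nabla u_{i,\beta}\rangle$. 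This spurious term is exactly cancelled by writing $\partial_h Z_j=\delta_{hj}+\partial_h(Z_j-x_j)$ inside the Pohozaev term $-2a_{hl}\partial_h Z_j\,\partial_j u_{i,\beta}\,\partial_l u_{i,\beta}$: the $\delta_{hj}$ piece produces $-\frac{2}{r^{N-1}}\int_{B_r}\langle A\nabla u_{i,\beta},\nabla u_{i,\beta}\rangle$, while the remainder yields the $\partial_h(Z_j-x_j)$ integral appearing in \eqref{KineticEquation}. The Pohozaev boundary term $2\int_{\partial B_r}\langle Z,\nabla u_{i,\beta}\rangle\langle A\nabla u_{i,\beta},\nu\rangle$ simplifies via the identity $\langle Z,\nabla u_{i,\beta}\rangle=\frac{r}{\mu(x)}\langle A\nabla u_{i,\beta},\nu\rangle$ on $\partial B_r$, which follows from $Z=A(x)x/\mu(x)$, $\nu=x/|x|$, and the symmetry of $A$. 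The $f_i$ and competition terms carry over unchanged.

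For the bound \eqref{weirdBound}, the key observation is that $A(0)=Id$ forces $Z(x)-x$ to vanish to first order at the origin: a direct computation from $Z_j=A_{jk}(x)x_k/\mu(x)$ together with Lemma \ref{listLemma} gives $\partial_h Z_j(0)=\delta_{hj}$, hence, refining \eqref{eq:bounds_Z}, one has $|Z(x)|\le C|x|$, $|\div(Z-x)|\le C|x|$, and $|\partial_h(Z_j-x_j)|\le C|x|$ for $|x|\le \bar r$. Combining these pointwise bounds with the ellipticity $\langle A\xi,\xi\rangle\ge\theta|\xi|^2$ and the $L^\infty$ bounds on $A$ and $\nabla a_{hl}$ from \textbf{(A2)}, each integrand in \eqref{weirdBound} is pointwise majorized by $C|x|\langle A\nabla u_{i,\beta},\nabla u_{i,\beta}\rangle$, and integrating over $B_r$ with $|x|\le r$ yields a bound of the form $C\,\tilde E_{i,\beta}(u_\beta,r)/\theta$; summing over $i$ completes the argument. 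No step poses a serious obstacle here: the whole argument is essentially careful bookkeeping around the Pohozaev identity, the central point being the clean cancellation that ensures the final formula is free of uncontrolled bulk quadratic terms in $\nabla u_{i,\beta}$.
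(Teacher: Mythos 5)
Your proposal is correct and follows essentially the same route as the paper: differentiate $\tilde E_\beta$ in $r$, substitute the Pohozaev identity of Lemma \ref{PozohaevLemma}, split $\div(Z)=N+\div(Z-x)$ and $\partial_h Z_j=\delta_{hj}+\partial_h(Z_j-x_j)$ so the bulk quadratic terms cancel, convert the boundary term via $\langle Z,\nabla u_{i,\beta}\rangle=\tfrac{r}{\mu}\langle A\nabla u_{i,\beta},\nu\rangle$, and then prove \eqref{weirdBound} from the pointwise bounds $|Z|,|\div(Z-x)|,|\partial_h(Z_j-x_j)|\le C|x|$ together with ellipticity. This matches the paper's argument in both structure and the key cancellations.
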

\begin{proof} We have
\begin{align}
\tilde{E}_\beta'(u_\beta, r)
&=
\frac{2-N}{r}\tilde {E}_\beta(u_\beta, r) + \frac{1}{r^{N-2}} \left(\int_{B_r} \langle A\nabla u_{i,\beta}, \nabla u_{i,\beta}\rangle\right)' \nonumber \\
&=
-\frac{(N-2)}{r^{N-1}}\int_{B_r} \langle A\nabla u_{i,\beta},\nabla u_{i,\beta}\rangle
+
\frac{1}{r^{N-2}}\left(\int_{B_r} \langle A\nabla u_{i,\beta}, \nabla u_{i,\beta}\rangle\right)'.\label{eq:original_derivative}
\end{align}
In order to compute the derivative of the last term, we define:
\begin{align*}
    Q_\beta(u_\beta, r)
:=&
    \mathop{\sum_{i,j=1}^l}_{j<i}
		\frac{2}{r^{N-1}(\gamma+1)}
		\int_{\partial B_r}
		\langle 
		    Z, \nu 
		\rangle
		{a(x)\beta}|u_{j,\beta}|^{\gamma+1}|u_{i,\beta}|^{\gamma+1} d\sigma(x)+\\
		&+\frac{2}{r^{N-1}}
		\mathop{\sum_{i,j=1}^l}_{j<i}
		\int_{B_r}\left(-\frac{\div(Z)}{\gamma+1} 
		-
		\frac{\langle Z, \nabla a(x) \rangle}{a(x)(\gamma+1)}
		\right)
		a(x)\beta|u_{j,\beta}|^{\gamma+1}|u_{i,\beta}|^{\gamma+1}.
\end{align*}
From Lemma \ref{PozohaevLemma}, $\div(Z) =N+\div(Z-x)$ and using: 
\[
 \frac{2}{r}\int_{\partial B_r} \langle Z,\nabla u_{i,\beta}\rangle \langle A\nabla u_{i,\beta},\nu\rangle=2\int_{\partial B_r}\frac{\langle A\nabla u_{i,\beta},\nu\rangle^2}{\mu},\quad  \frac{2}{r}\int_{B_r}
    a_{hl}
    \frac{\partial x_j}{\partial x_h}
    \frac{\partial u_{i,\beta}}{\partial x_j}
    \frac{\partial u_{i,\beta}}{\partial x_l}= \frac{2}{r}    \int_{B_r}    \langle A \nabla u_{i,\beta}, \nabla u_{i,\beta} \rangle
\]
and , we have
\begin{align*}
\frac{d}{dr} & \int_{B_r}  \langle A\nabla u_{i,\beta}, \nabla u_{i,\beta}\rangle=    \int_{\partial B_r} \langle A(x)\nabla u_{i,\beta}, \nabla u_{i,\beta}\rangle \\
    =& \frac{1}{r}\int_{B_r}	\div(Z)\langle A\nabla u_{i,\beta}, \nabla u_{i,\beta} \rangle+ \frac{2}{r}\int_{B_r} f_{i}(x,u_{i,\beta})\langle\nabla u_{i,\beta}, Z \rangle+ \frac{2}{r}\int_{\partial B_r}\langle Z, \nabla u_{i,\beta} \rangle \langle A \nabla u_{i,\beta} , \nu\rangle\\
		& 		+		\frac{1}{r}		\int_{B_r}		\langle Z, \nabla a_{hl}\rangle \frac{\partial u_{i,\beta}}{\partial x_h}\frac{\partial u_{i,\beta}}{\partial x_l}- \frac{2}{r}\int_{B_r} a_{hl}\frac{\partial Z_j}{\partial x_h} \frac{\partial u_{i,\beta}}{\partial x_j}\frac{\partial u_{i,\beta}}{\partial x_l} + 		Q_\beta(u_\beta,r)r^{N-2}+		2\int_{\partial B_r}\frac{\langle A\nabla u_{i,\beta},\nu\rangle^2}{\mu}	\\    
		=&\frac{1}{r}\int_{B_r} 	\div(x + Z -x)\langle A(x)\nabla u_{i,\beta}, \nabla u_{i,\beta} \rangle		+ 		\frac{2}{r}\int_{B_r} 		f_{i}(x,u_{i,\beta})		\langle\nabla u_{i,\beta}, Z \rangle\\	
			&	+ 		\frac{1}{r}		\int_{B_r}		\langle Z, \nabla a_{hl}\rangle \frac{\partial u_{i,\beta}}{\partial x_h}\frac{\partial u_{i,\beta}}{\partial x_l}	- 		\frac{2}{r}\int_{B_r} 		a_{hl}\frac{\partial (Z_j-x_j+x_j)}{\partial x_h} \frac{\partial u_{i,\beta}}{\partial x_j}\frac{\partial u_{i,\beta}}{\partial x_l}		+		Q_\beta(u_\beta,r)r^{N-2}	\\
         =&
            \frac{N-2}{r}\int_{B_r} \langle A\nabla u_{i,\beta},\nabla u_{i,\beta}\rangle +\frac{2}{r}\int_{B_r} f_i(x,u_{i,\beta})\langle \nabla u_{i,\beta},Z\rangle 	  \\ 
		&+2\int_{\partial B_r}\frac{\langle A\nabla u_{i,\beta},\nu\rangle^2}{\mu}+\frac{1}{r}\int_{B_r} \div (Z-x) \langle A\nabla u_{i,\beta},\nabla u_{i,\beta}\rangle +\frac{1}{r}\int_{B_r} \langle Z,\nabla a_{hl}\rangle\frac{\partial u_{i,\beta}}{\partial x_h} \frac{\partial u_{i,\beta}}{\partial x_l}\\
		&-\frac{2}{r}\int_{B_r}a_{hl}\frac{\partial (Z_j-x_j)}{\partial x_h}\frac{\partial u_{i,\beta}}{\partial x_j} \frac{\partial u_{i,\beta}}{\partial x_l} + Q_\beta(u_\beta, r)r^{N-2}.
\end{align*}
Going back to \eqref{eq:original_derivative}, we conclude that identity \eqref{KineticEquation} is true.	

 It remains to prove \eqref{weirdBound}. We bound each term individually. By  \textbf{(A2)} and \eqref{eq:bounds_Z},
$$
\left|
    \frac{1}{r^{N-1}}
    \int_{B_r}
    \langle
    Z,\nabla a_{h,l}
    \rangle
    \frac{\partial u_{i,\beta}}{\partial x_h}
    \frac{\partial u_{i,\beta}}{\partial x_l}
\right|
\leq
    \frac{C}{r^{N-2}}
    \int_{B_r}
        |\nabla u_{i,\beta}|^2
\leq
    \frac{C}{\theta r^{N-2}}
    \tilde{E}_\beta(u_\beta, r).
$$
Moreover, $\div(Z(x)-N)| \leq C|x|$ and
\begin{align*}
    |\frac{\partial}{\partial x_k}
    (Z_j(x)- x_j)|
&=
    |\frac{\partial}{\partial x_k}
    \left(
        \sum_{h=1}^N
        \frac{a_{jh}(x)x_h}{\mu(x)}
        -
        x_j
    \right)|\\
&=| \sum_{h=1}^N\left( \frac{\partial a_{jh}(x)}{\partial x_k} \frac{x_h}{\mu(x)}    + \frac{a_{jh}(x)\delta_{hk}}{\mu(x)} + \frac{a_{jh}(x)x_h}{\mu^2(x)}  \frac{\partial \mu(x)}{\partial x_l} \right)  -  \delta_{jk}|\\
&\leq C|x|+   | \frac{a_{jk}(x)}{\mu(x)}  -\delta_{jk} | \leq C|x| + |\frac{a_{jk}(x) - \delta_{jk}}{\mu(x)} |   +  |  \frac{\delta_{jk}}{\mu(x)}     -   \delta_{jk}  |\leq C'|x|,
\end{align*} 
where we used Lemma \ref{listLemma}-\emph{1.,3.,5.} and  $\mu(x)\geq \theta >0$. With this, we conclude the desired bound:
\begin{multline*}
\Big| 
    \frac{1}{r^{N-1}} 
    \int_{B_r} 
    \langle Z,\nabla a_{hl}\rangle 
    \frac{\partial u_{i,\beta}}{\partial x_h}
    \frac{\partial u_{i,\beta}}{\partial x_l}
+
    \frac{1}{r^{N-1}}
    \int_{B_r} 
    \div (Z-x) 
    \langle A\nabla u_{i,\beta},\nabla u_{i,\beta}\rangle\\
-
    \frac{2}{r^{N-1}}  
    \int_{B_r} a_{hl} \frac{\partial u_{i,\beta}}{\partial x_l}
    \frac{\partial (Z_j-x_j)}{\partial x_h} 
    \frac{\partial u_{i,\beta}}{\partial x_j}
\Big|
\leq 
    C\frac{1}{r^{N-2}}
    \int_{B_r}
    \langle A\nabla u_{i,\beta},\nabla u_{i,\beta}\rangle
=
    C\tilde{E}_\beta(u_{\beta},r),
\end{multline*}which completes the proof.
\end{proof}

\begin{thm}
	\label{almgrenMonotonicity}
Let $u_\beta$ be a positive solution of \eqref{AlmgrenEq}, under $\beta <0$ \textbf{(A1)}, \textbf{(A2)}, \textbf{(F)} and \eqref{boundedness}. Assume moreover that 
\begin{equation*}
	A(0)=Id.
\end{equation*}
If $N\in \mathbb{N}$, $\gamma\geq 1$ and $\frac{\gamma N}{\gamma + 1} < 2$, then there exist constants $\overline{r}$ and $C>0$, such that:
	\begin{equation*}	    \label{NisIncreasingStuff1}
	    N_\beta(u_{\beta},r)+1\geq 0,\qquad 	    H_\beta(u_\beta,r) > 0,\qquad	    N_\beta'(u_{\beta},r) \geq -C(N(u_{\beta},r)+1)
	\end{equation*}
In particular,
		\begin{equation}
	\label{Nmonotonicity}
	(N_\beta(u_\beta,r)+1)e^{Cr} 
	\end{equation}
		is monotone nondecreasing for every $r \in
	]0,\overline{r}[$, $\beta<0$. 
	
The constants $C$ and $\bar r$	depend only on the ellipticity constant $\theta>0$ of $A$, the upper bound $M>0$ of $\|DA\|_\infty$, the dimension $N$, and the uniform bound $m>0$ from \eqref{boundedness} and the constant $d$ from \eqref{boundForF}.
\end{thm}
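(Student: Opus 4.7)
The strategy is to establish the differential inequality $N_\beta'(u_\beta,r) \geq -C(N_\beta(u_\beta,r)+1)$ on $(0,\bar r)$, from which the monotonicity of $(N_\beta+1)e^{Cr}$ follows by an elementary integrating-factor argument. First, one checks the positivity statements. Positivity of $H_\beta$ is immediate from $u_{i,\beta}>0$ and $\mu\geq\theta$. For $N_\beta+1\geq 0$, write $E_\beta = \tilde E_\beta - \sum_i F_i - 2I$, where $\tilde E_\beta$ denotes the kinetic part, $F_i := r^{2-N}\int_{B_r} f_i(x,u_{i,\beta})\,u_{i,\beta}\,dx$, and $I := \sum_{j<i} r^{2-N}\int_{B_r} a\beta u_{j,\beta}^{\gamma+1}u_{i,\beta}^{\gamma+1}\,dx \leq 0$. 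Since $\tilde E_\beta\geq 0$ and $-2I\geq 0$, only the $F_i$-term can be negative. Combining \eqref{boundForF} with the near-monotonicity of $H_\beta e^{Cr}$ from Lemma \ref{derivOfH} and a polar-coordinates computation gives
\[
\int_{B_r} u_{i,\beta}^2\,dx \leq \frac{1}{\theta}\int_0^r s^{N-1} H_{i,\beta}(s)\,ds \leq \frac{e^{C\bar r} r^N}{N\theta}\,H_\beta(r),
\]
so $\sum_i|F_i|\leq C r^2 H_\beta$, whence $N_\beta+1\geq 1/2$ for $r\leq\bar r$ small.

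Next, writing $N'_\beta = E'_\beta/H_\beta - E_\beta H'_\beta/H_\beta^2$ and substituting $H'_\beta = (2/r)E_\beta + O(H_\beta)$ from Lemma \ref{derivOfH} yields
\[
N'_\beta \geq \frac{E'_\beta}{H_\beta} - \frac{2E_\beta^2}{r H_\beta^2} - C\,\frac{|E_\beta|}{H_\beta}.
\]
The positive Pohozaev term $\sum_i \frac{2}{r^{N-2}}\int_{\partial B_r}\mu^{-1}\langle A\nabla u_{i,\beta},\nu\rangle^2\,d\sigma$ appearing in $\tilde E'_\beta$ (Lemma \ref{estimateOnVariable}) cancels $2E_\beta^2/(rH_\beta^2)$ by two weighted Cauchy--Schwarz estimates: componentwise on $\partial B_r$ with weight $\mu$, giving $E_{i,\beta}^2/(rH_{i,\beta}) \leq \frac{1}{r^{N-2}}\int_{\partial B_r}\mu^{-1}\langle A\nabla u_{i,\beta},\nu\rangle^2\,d\sigma$, and then across components via $E_\beta^2/H_\beta \leq \sum_i E_{i,\beta}^2/H_{i,\beta}$. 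The remaining contributions to $E'_\beta = \tilde E'_\beta - \sum_i F'_i - 2I'$ split into three groups: (a) the $A$-errors in $\tilde E'_\beta$, bounded by $C\tilde E_\beta$ via \eqref{weirdBound}, contributing $\leq C(N_\beta+1)$; (b) the $f_i$-contributions, handled using $|Z|\leq Cr$, \eqref{boundForF}, and the bound on $\int u_{i,\beta}^2$ from the first step, giving again $O(N_\beta+1)$; and (c) the $\beta$-interaction terms.

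The core of the argument is group (c). Using the identity $\langle Z,\nu\rangle = \langle Ax,x\rangle/(\mu(x)r) = r$ on $\partial B_r$, the boundary contributions from $\tilde E'_\beta$ and from $-2I'$ combine to
\[
-\frac{2\gamma}{(\gamma+1)r^{N-2}}\sum_{j<i}\int_{\partial B_r} a\beta u_{j,\beta}^{\gamma+1}u_{i,\beta}^{\gamma+1}\,d\sigma \geq 0,
\]
while, using $\div(Z)=N+O(r)$ from Lemma \ref{listLemma}-(7) and $|\langle Z,\nabla a\rangle|\leq Cr$, the volume contributions combine to
\[
\frac{2}{r}\cdot\frac{N\gamma-2(\gamma+1)}{\gamma+1}\,I \;+\; O(1)\cdot|I|.
\]
Under the strict assumption $\gamma N/(\gamma+1)<2$, the coefficient $N\gamma-2(\gamma+1)$ is strictly negative; since $I\leq 0$ the leading term equals $(c_0/r)|I|$ for some $c_0>0$, which dominates the $O(1)|I|$ error provided $r\leq\bar r$ is small enough. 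The combined $\beta$-contribution is therefore nonnegative and can be dropped from the lower bound for $N'_\beta$.

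The hard part is precisely this last step. In the constant-coefficient setting of \cite{SoaveZilio} one has $\div(Z)\equiv N$ exactly and no $O(1)|I|$ defect arises, so the nonstrict inequality $\gamma N/(\gamma+1)\leq 2$ suffices. In our variable-coefficient setting, the unavoidable $O(r)$ error in $\div(Z)-N$ produces an $O(1)|I|$ term whose sign we cannot control, and absorbing it requires a strictly positive gap $c_0$; this is why the assumption must be strict, and is the only place where it is used. Collecting (a), (b), (c) gives $N'_\beta \geq -C(N_\beta+1)$, and the integrating factor $e^{Cr}$ then makes $(N_\beta+1)e^{Cr}$ nondecreasing.
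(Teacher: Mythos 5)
Your proposal is correct and follows essentially the same route as the paper: a Pohozaev-type identity for $E_\beta'$, cancellation of the $2E_\beta^2/(rH_\beta^2)$ term against the boundary flux term via weighted Cauchy--Schwarz, and the sign analysis of the $\beta$-interaction terms using $\langle Z,\nu\rangle=r$ on $\partial B_r$ and $\div Z=N+O(r)$, with the strict inequality $\gamma N/(\gamma+1)<2$ used exactly where you say, to absorb the $O(r)$ defect coming from the variable coefficients. The only (harmless) deviation is that you control $\int_{B_r}u_{i,\beta}^2$ by integrating the near-monotonicity of $H_{i,\beta}e^{Cr}$ in polar coordinates, whereas the paper uses Poincar\'e's inequality on $B_r$; both rest on previously established lemmas and yield the same $O(r^2)H_\beta$ bound.
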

\begin{proof} \textbf{Step 1.} We show  $E_\beta(u_\beta,r) + H_\beta(u_\beta,r) \geq 0$, which is equivalent to $N_\beta(u_{\beta},r)+1\geq 0$.

Indeed, since $a(x)>0$, $\beta < 0$, \eqref{boundForF} and (which implies, in particular, that $\mu(y)\geq \theta$) we have
	\begin{align*}
		E_\beta(&u_\beta,r) + H_\beta(u_\beta,r) =
		\sum_{i=1}^l\Bigg[\frac{1}{r^{N-2}}\int_{B_r} \left(\langle A(x) \nabla u_{i,\beta} , \nabla u_{i,\beta} \rangle - f_i(x,u_{i,\beta})u_{i,\beta} - a(x)\beta \sum_{j\neq i} u_{j,\beta}^{\gamma+1}u_{i,\beta}^{\gamma+1}\right)\\
		&\qquad\qquad+ \frac{1}{r^{N-1}}		\int_{\partial B_r} \mu(x) u_{i,\beta}^2d\sigma(x)\Bigg] \\
		\geq&\sum_{i=1}^l \Bigg[\int_{B_r}\left(\frac{1}{r^{N-2}} \langle A(x) \nabla u_{i,\beta} \nabla u_{i,\beta} \rangle -\frac{dr^2}{r^N} u_{i,\beta}^2\right)+ \frac{1}{r^{N-1}}\int_{\partial B_r} \mu(x) u_{i,\beta}^2d\sigma\Bigg] \\
	\geq& \sum_{i=1}^l\Bigg[\int_{B_r}\left(   \frac{\theta}{r^{N-2}}	|\nabla u_{i,\beta}|^2 -\frac{dr^2}{r^N}u_{i,\beta}^2\right)	+ \frac{\theta}{r^{N-1}}\int_{\partial B_r}  u_{i,\beta}^2d\sigma
		\Bigg] \geq \sum_{i=1}^l \left [ \int_{B_r} \frac{\theta(N-1)-dr^2}{r^N} u_{i,\beta}^2 \right],
	\end{align*}
where we have used Poincar\'e's inequality (Lemma \ref{PoincareIneq}). The claim now follows by choosing $r$ small enough so that $dr^2 < \theta (N-1)$.
	
\noindent	\textbf{Step 2.} We show equation $N_\beta'(u_{\beta},r) \geq -C(N(u_{\beta},r)+1)$ whenever $r$ is small $H_\beta(u_\beta,r) \neq 0$ (which, in particular, shows \eqref{Nmonotonicity} for such $r$'s). First of all, by Lemma \ref{derivOfH},
\[
	H_\beta'(u_\beta,r) = \frac{2E_\beta(u_\beta,r)}{r} + O(1)H_\beta(u_\beta,r),
	\]
As for $E_\beta(u_\beta,r)$, recalling that
	\begin{align*}
	    E_\beta(u_\beta,r)  &=  \tilde{E}_\beta(u_\beta,r)   - \frac{1}{r^{N-2}}\int_{B_r}f_i(x,u_{i,\beta})u_{i,\beta} - \frac{1}{r^{N-2}}\int_{B_r}2\mathop{\sum_{j=1}^l}_{j<i} a(x)\beta |u_{j,\beta}|^{\gamma +1} |u_{i,\beta}|^{\gamma+1}dx
	\end{align*}
we have, by  Lemma \ref{estimateOnVariable},
	\begin{align}
		E_\beta'(u_\beta,r) =& \tilde{E}_\beta'(u_\beta,r) + \sum_{i=1}^l\Bigg[\frac{(N-2)}{r^{N-1}}\int_{B_r} f_i(x,u_{i,\beta})u_{i,\beta} dx - \frac{1}{r^{N-2}}\int_{\partial B_r} f_i(x,u_{i,\beta})u_{i,\beta}d\sigma(x)\nonumber\\
		&-\sum_{j<i}\Bigg(\frac{(4-2N)}{r^{N-1}}\int_{B_r}a(x)\beta|u_{j,\beta}|^{\gamma+1}|u_{i,\beta}|^{\gamma +1 } dx- \frac{2}{r^{N-2}}\int_{\partial B_r} a(x)\beta|u_{j,\beta}|^{\gamma+1}|u_{i,\beta}|^{\gamma +1 }d\sigma(x)\Bigg)\Bigg] 	\nonumber\\
		=&
		O(1)\tilde E_\beta(u_\beta,r) + R_\beta(u_\beta, r)+\sum_{i=1}^l \Bigg[\frac{2}{r^{N-1}} \int_{\partial B_r}\frac{\langle A(x)\nabla u_{i,\beta}, \nu_x\rangle^2}{\mu(x)}d\sigma(x) \nonumber \\
		&-\sum_{j<i}\Bigg(\frac{1}{r^{N-1}}\int_{B_r}\left(4-2N +\frac{2\div(Z)}{\gamma+1} - 2\frac{\langle Z(x), \nabla a(x)\rangle}{a(x)(\gamma+1)}\right)a(x)\beta|u_{j,\beta}|^{\gamma+1}|u_{i,\beta}|^{\gamma +1 }dx \nonumber\\
		&- \frac{1}{r^{N-2}}\int_{\partial B_r}\left(2+\frac{2\langle Z(x), \nu_x \rangle}{r(\gamma+1)}\right) a(x)\beta|u_{j,\beta}|^{r(\gamma+1)}|u_{i,\beta}|^{\gamma +1 }d\sigma(x)\Bigg)\Bigg],
		\label{derivativeOfEeq123}
	\end{align}
where	
	\begin{align*}
R_\beta(u_\beta,r) :=& \sum_{i=1}^l\Bigg[\frac{2}{r^{N-1}} \int_{B_r}			\left(f_i(x,u_{i,\beta})\langle Z(x),\nabla u_{i,\beta} \rangle+ \frac{(N-2)}{r^{N-1}}f_i(x,u_{i,\beta})u_{i,\beta}\right)\\
		&- \frac{1}{r^{N-2}}\int_{\partial B_r}f_i(x,u_{i,\beta})u_{i,\beta}d\sigma\Bigg].
	\end{align*}
and $O(1)$ is a bounded function that comes from \eqref{weirdBound}.


Now, we check that the terms in \eqref{derivativeOfEeq123} with $\beta$ are all nonnegative. Indeed, using the fact that $Z(0) = 0$, $\div(Z) = N + O(r)$ and condition \textbf{(a)}, we obtain $\frac{\langle Z(x), \nabla a(x) \rangle}{a(x)(\gamma+1)} = O(r)$, and so, for $r$ small, we have
	\begin{equation}\label{eq:RESTRICTION}
	4-2N 
	+
	\frac{2\div(Z)}{\gamma+1} 
	+ 
	\frac{2\langle Z(x), \nabla a(x)\rangle}{a(x)(\gamma+1)} 
	= 
	2(2-\frac{\gamma N}{\gamma+1}) 
	+ 
	O(r) 
	> 
	0
	\end{equation}
since, by assumptin, $\frac{\gamma N}{\gamma + 1} < 2$.  Also, on $\partial B_r$:
	$$
	    2-\frac{\langle Z(x), \nu_x\rangle}{r(\gamma+1)}
	=
	    2 
	    - 
	    \frac
	    {\langle A(x)\frac{x}{|x|}, \frac{x}{|x|}\rangle} 
	    {\mu(x)(\gamma + 1)}
    =
        2 - \frac{1}{\gamma+1}> 0.$$
Therefore, since $\beta < 0$,  we conclude from \eqref{derivativeOfEeq123} that:
	\begin{equation}
	\label{KineticBoundFromBellow}
	E_\beta'(u_\beta,r)
	\geq 
	\frac{2}{r^{N-1}} 
	\sum_{i=1}^l 
	\int_{\partial B_r}
	\frac{\langle A(x)\nabla u_{i,\beta}, \nu_x \rangle^2}{\mu(x)}d\sigma(x)+ O(1)\tilde{E}_\beta(u_\beta,r) + R_\beta(u_\beta, r).
	\end{equation}
	Next, we estimate $O(1)\tilde{E}_\beta(u_\beta,r)+R_\beta(u_\beta, r)$. Using  \eqref{boundForF} (which implies that $|f(x,u_{i,\beta})|\leq d|u_{i,\beta}|$), the bound $|Z(x)|\leq C|x|$ for $|x|\leq \overline{r}$, the ellipticity condition \textbf{(A1)} and  choosing $r<\overline{r}<1$ (which implies $r^{N}<r^{N-1}$), then:
	\begin{align}
		|O(1)\tilde{E}_\beta(u_\beta,r)&+R_\beta(u_\beta, r)|
		\leq O(1)\sum_{i=1}^l\left[\int_{B_r}\left(\frac{1}{r^{N-2}}\langle A(x)\nabla u_{i,\beta} , \nabla u_{i,\beta} \rangle + \frac{1}{r^{N}}  u_{i,\beta}^2 \right)dx+ \frac{1}{r^{N-1}} \int_{\partial B_r} u_{i,\beta}^2 \right]\nonumber \\
	&		\leq O(1) \left[ E_\beta(u_\beta,r)	+ \sum_{i=1}^l  \int_{B_r}f_i(x,u_{i,\beta})u_{i,\beta} dx +  \frac{1}{r^{N-1}}\sum_{i=1}^l  \int_{\partial B_r} u_{i,\beta}^2  \right] \nonumber\\
		&		\leq O(1) \left[ E_\beta(u_\beta,r)	+ H_\beta(u_\beta,r)+   \frac{d r^2}{r^{N}}\sum_{i=1}^l\int_{B_r}u_{i,\beta}^2dx.\right] \label{remainderFinalBound}.
	\end{align}
	On the other hand, using Poincar\'e's inequality (Lemma \ref{PoincareIneq}) and reasoning as above,
	\begin{align*}
		\frac{1}{r^{N}}\sum_{i=1}^l 
		\int_{B_r}
		u_{i,\beta}^2(x)
		dx
		&\leq 
		\sum_{i=1}^l\frac{1}{N-1}\left(
		\frac{1}{r^{N-2}}
		\int_{B_r}
		|\nabla u_{i,\beta}(x)|^2
		dx 
		+ 
		\frac{1}{r^{N-1}}
		\int_{\partial B_r}
		u_{i,\beta}^2(x)d\sigma(x)
		\right)\\
		&\leq
		    O(1)\Big(E_\beta(u_\beta,r) 
		+ 
		    H_\beta(u_\beta,r)
		+
    		\frac{r^2d}{r^N}
    		\int_{\partial B_r}
    		u_{i,\beta}^2
    		dx
		\Big).
	\end{align*}
	Thus for $r$ small enough we conclude that:
	\begin{equation}
	    \label{BoundOfMass}
	    \frac{1}{r^{N}}\sum_{i=1}^l 
		\int_{B_r}
		u_{i,\beta}^2(x)
		dx
	\leq
	    O(1)
	    \Big(E_\beta(u_\beta,r) 
		+ 
		    H_\beta(u_\beta,r)
		\Big).
	\end{equation}
In conclusion, by combining \eqref{KineticBoundFromBellow}, \eqref{remainderFinalBound} and \eqref{BoundOfMass} we have:
	$$
	E_\beta'(u_\beta,r) \geq  \frac{2}{r^{N-1}} \sum_{i=1}^l \int_{\partial B_r}\frac{\langle A(x)\nabla u_{i,\beta}, \nu_x \rangle^2}{\mu(x)}d\sigma(x)+ O(1)\Big(E_\beta(u_\beta,r) + H_\beta(u_\beta,r)\Big).
	$$
	Using Holder's inequality:
	\begin{align*}
		N_\beta'(u_\beta,r)  &=\frac{E_\beta'(u_\beta,r)H_\beta(u_\beta,r) - E_\beta(u_\beta,r)H_\beta'(u_\beta,r)}{H_\beta^2(u_\beta,r)}\\
		&\geq
		\frac{2}{H_\beta^2(u_\beta,r)r^{2N-3}}
		\Bigg[
		\Big(
		\sum_{i=1}^l
		\int_{\partial B_r}
		\frac{\langle A(x) \nabla u_{i,\beta} , \nu \rangle^2}{\mu(x)}d\sigma
		\Big) 
		\Big(
		\sum_{i=1}^l
		\int_{\partial B_r }
		\mu(x)
		u_{i,\beta}^2
		d\sigma
		\Big)\\ 
		&- 
		\Big(
		\sum_{i=1}^l 
		\int_{\partial B_r} 
		u_{i,\beta} 
		\langle 
		A(x) \nabla u_{i,\beta} , \nu \rangle
		d\sigma
		\Big)^2
		\Bigg]\\
		&
		+
		\frac{1}{H_\beta^2(u_\beta,r)}\Bigg[
		O(1)
		H_\beta(u_\beta,r)
		\Big(
		E_\beta(u_\beta,r) + H_\beta(u_\beta,r)
		\Big) 
		+ 
		O(1)E_\beta(u_\beta,r)H_\beta(u_\beta,r)
		\Bigg] \\
		&\geq
		\frac{1}{H_\beta^2(u_\beta,r)}
		\Bigg[
		O(1)H_\beta(u_\beta,r)
		\Big(
		E_\beta(u_\beta,r) + H_\beta(u_\beta,r)
		\Big) 
		+	 
		O(1)E_\beta(u_\beta,r)H_\beta(u_\beta,r)
		\Bigg] \\
		&\geq
		-C\Big(\frac{E_\beta(u_\beta,r)}{H_\beta(u_\beta,r)} + 1\Big) 
		=
		-C\Big(N_\beta(u_\beta,r)+1\Big),
	\end{align*}
    where in the last inequalities we used the fact (proved in Step 1.) that $E_\beta(u_\beta,r) + H_\beta(u_\beta,r)$ is positive. Thus we conclude $N_\beta'(u_\beta,r)\geq -C\left(N_\beta(u_\beta,r)+1\right)$ whenever $r$ is small and $H_\beta(u_\beta,r) \neq 0$. 
	
\noindent	\textbf{Step 3.} There exists $\overline{r}>0$ small enough such that $H_\beta(u_\beta, r) \neq 0$ for $r \in ]0,\overline{r}[$. Indeed, we have $H_\beta'(u_\beta,r)=a_\beta(r)H_\beta(u_\beta,r)$, where (by  Lemma \ref{derivOfH}) $a_\beta(r)=\frac{2}{r}N_\beta(u_\beta,r)+O(1)$. Then, by the existence and uniqueness theorem for this ODE, and since $u_\beta>0$, we have $H_\beta(u_\beta,r)>0$ for sufficiently small $r>0$.
\end{proof}

\begin{remark}
The restriction $\frac{\gamma N}{\gamma+1}<2$ in Theorem \ref{DesiredTheorem} comes only from the proof of the previous theorem, namely from the necessity of having the inequality
\[
4-2N 
	+
	\frac{2\div(Z)}{\gamma+1} 
	+ 
	\frac{2\langle Z(x), \nabla a(x)\rangle}{a(x)(\gamma+1)} 
	= 
	2(2-\frac{\gamma N}{\gamma+1}) 
	+ 
	O(r) 
	> 
	0
	\]
	for small $r>0$.	
\end{remark}

We conclude this section with the following result.

\begin{lemma}
	\label{doublingLemma}
Under the assumptions of Theorem \ref{almgrenMonotonicity}, there exists $C>0$
	\begin{enumerate}
\item If there exists $\tilde{r}$ and $R$ such that $N_\beta(u_\beta,r)\leq \lambda$ for all $0 \leq \tilde{r} \leq r \leq R \leq \overline{r}$, then:
	$$
	r \mapsto \frac{H_\beta(u_\beta,r)}{r^{2\lambda}}e^{-Cr}
	$$
	is monotone nonincreasing for $\tilde{r} \leq r\leq R$.
	\item If there exists $\tilde{r}$ and $R$ such that $N_\beta(u_\beta,r)\geq \gamma$  for all $0 \leq \tilde{r} \leq r \leq R \leq \overline{r}$, then:
	$$
	r \mapsto \frac{H_\beta(u_\beta,r)}{r^{2\gamma}}e^{Cr}. 
	$$
	is monotone nondecreasing
	\end{enumerate}
\end{lemma}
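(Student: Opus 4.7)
The plan is to reduce both monotonicity statements to a one-line differential inequality for the logarithmic derivative of $H_\beta$, using the key estimate from Lemma \ref{derivOfH} together with the definition of Almgren's quotient in \eqref{Neq}. The strategy is standard for doubling-type lemmas that accompany Almgren monotonicity formulas, and the presence of variable coefficients only manifests itself through the constant $C$ in Lemma \ref{derivOfH}.

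Concretely, for part (1), I would compute
\[
\frac{d}{dr}\log\!\left(\frac{H_\beta(u_\beta,r)}{r^{2\lambda}}e^{-Cr}\right)
=
\frac{H_\beta'(u_\beta,r)}{H_\beta(u_\beta,r)}-\frac{2\lambda}{r}-C,
\]
and use Lemma \ref{derivOfH} in the form $H_\beta'(u_\beta,r)\leq \frac{2}{r}E_\beta(u_\beta,r)+C H_\beta(u_\beta,r)$, which (dividing by $H_\beta(u_\beta,r)>0$, already established in Theorem \ref{almgrenMonotonicity}) gives
\[
\frac{H_\beta'(u_\beta,r)}{H_\beta(u_\beta,r)}\leq \frac{2}{r}N_\beta(u_\beta,r)+C.
\]
Substituting and using the hypothesis $N_\beta(u_\beta,r)\leq \lambda$ on $[\tilde r,R]$ yields
\[
\frac{d}{dr}\log\!\left(\frac{H_\beta(u_\beta,r)}{r^{2\lambda}}e^{-Cr}\right)
\leq \frac{2(N_\beta(u_\beta,r)-\lambda)}{r}\leq 0,
\]
so the map is nonincreasing on $[\tilde r,R]$.

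For part (2) the argument is entirely symmetric: using the lower bound $H_\beta'(u_\beta,r)\geq \frac{2}{r}E_\beta(u_\beta,r)-CH_\beta(u_\beta,r)$ from Lemma \ref{derivOfH} and the hypothesis $N_\beta(u_\beta,r)\geq \gamma$, one obtains
\[
\frac{d}{dr}\log\!\left(\frac{H_\beta(u_\beta,r)}{r^{2\gamma}}e^{Cr}\right)
\geq \frac{2(N_\beta(u_\beta,r)-\gamma)}{r}\geq 0,
\]
so that map is nondecreasing. The constant $C$ is simply the one provided by Lemma \ref{derivOfH} (and one may need to enlarge $\overline{r}$ slightly so that the whole interval $[\tilde r,R]$ lies where that estimate holds). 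There is no real obstacle here: all the work, including the handling of the perturbation of the Laplacian by the variable-coefficient operator and the contribution of the lower-order terms $f_i$, has already been absorbed into the constant in Lemma \ref{derivOfH}, so the lemma follows by a purely algebraic manipulation once positivity of $H_\beta$ is invoked.
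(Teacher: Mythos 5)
Your proof is correct and is essentially identical to the paper's: both reduce the statement to the differential inequality $\frac{d}{dr}\log H_\beta(u_\beta,r)\leq \frac{2}{r}N_\beta(u_\beta,r)+C$ (and its reverse) coming from Lemma \ref{derivOfH}, after invoking the positivity of $H_\beta$ established in Theorem \ref{almgrenMonotonicity}. No further comment is needed.
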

\begin{proof}
(1) We know by Lemma \ref{derivOfH} that there exists $C$ such that:
	\begin{align*}
		\frac{d}{dr}\log\left(
		H_\beta(u_\beta,r)
		\right)
		\leq
		\frac{2N_\beta(u_\beta,r)}{r} + C \leq \frac{2\lambda}{r} + C
	\end{align*}
	from this the result follows. The proof of  (2) is similar.
\end{proof}



\section{An Alt-Caffarelli-Friedman type monotonicity formula ($N\geq 3$)}
\label{chapter:resultsChap4}
The purpose of this section is to prove an Alt-Caffarelli-Friedman type monotonicity formula, see Theorem \ref{AltCaffMonotonicity} below. For simplicity, we focus on systems of type \eqref{equation} with $\gamma = 1$; however, it should be clear how to adapt our proofs to the general case $\gamma\geq 1$, see Remark \ref{rem_finalSection4} for the details.
Let $u_n = (u_{1,n}, u_{2,n},...,u_{l,n})$  be a nonnegative solution of the system:
\begin{equation}
	\label{altCafEq}
	-\div(A_n(y)\nabla u_{i,n}) = f_{i,n}(y,u_{i,n}) + M_n a_n(y)\sum_{j\neq i} |u_{j,n}|^{2}u_{i,n}
	\qquad
	y \in \Omega_n
\end{equation}
where $M_n < 0$. We take $N\geq 3$ through this section, see Remark \ref{rem:dimension} below on why this is not restrictive.  Before stating the monotonicity formula, we need some preparations.

\noindent \textbf{Notation. } Within this section, we let $
    \nabla u_{i,n}(y)$ denote the usual gradient in $\mathbb{R}^N$, while the gradient on a sphere is denoted by
$$
    \nabla_{\theta}u_{i,n}(y) := \pi_{T_y (\partial B_{|y|})} ( \nabla u_{i,n} (y) ),
$$ 
where $\pi_{T_y (\partial B_{|y|})}$ is the projection of $\nabla u_{i,n}(y)$ onto $T_y\partial(B_{|y|}(0))$. Given $y \neq 0$, the vector $\nu_y := \frac{y}{|y|}$ is the exterior normal of the sphere $\partial(B_{|y|}(0))$ at $y$.
We also define: 
$$\mu_n(y) := \langle A_n(y)\nu_y,\nu_y\rangle =\left\langle A_n(y) \frac{y}{|y|}, \frac{y}{|y|} \right\rangle \qquad \forall y \neq 0.$$

Fix the following objects:
\begin{itemize}
	\item $c_n$ a sequence such that $c_n \rightarrow 0$,
	\item $R_n>1$ a sequence of radius such that $B_{R_n}\subset \Omega_n$,
	\item $\epsilon_n > 0$ a sequence of positive numbers,
	\item $\lambda, w > 0$ positive numbers,
\end{itemize}
and assume the following conditions:
\begin{itemize}
	\item[($h_0$)]$\sup_{y \in B_r(0)}\|A_n(y)-Id\| \leq c_nr$ and $\sup_{y \in B_r(0)}\|DA_n(y)\| \leq c_n$ for all $r \in ]1,R_n[$;\\
	\item[($h_1$)] $\epsilon_n R_n^2 \leq (\frac{N-2}{2})^2-\delta$ for some $\delta>0$;\\
	\item[($h_2$)] $|f_{i,n}(y,u_{i,n})| \leq \frac{1}{2}\epsilon_n  \mu_n(y) |u_{i,n}|$;\\
	\item[($h_3$)] 
	$
\displaystyle	\frac{1}{\lambda}
\leq 
    \frac
	{
    	\int_{\partial B_r}
    	\mu_n(y)
    	u_{1,n}^2 d\sigma(y)
	}
	{
	    \int_{\partial B_r} \mu_n(y)
	    u_{2,n}^2d\sigma(y)
	} 
	\leq \lambda
	$ and $ \displaystyle \frac{
		1}{r^{N-1}}\int_{\partial B_r}\mu_n(y)u_{i,n}^2d\sigma(y)\geq w$ for all $r \in ]1,R_n[$ and $i=1,2$;\\
	\item[($h_4$)] $c_nR_n \rightarrow 0$;\\
	\item[($h_5$)] There exists $C>0$ such that, for all $s,r \in [0,R_n]$ satisfying $s\leq r$ and $i\in \{1,...,l\}$, we have \[
	\frac{1}{s^{N-1}}\int_{\partial B_s} u_{i,n}^2(y)d\sigma(y) \leq \frac{C}{r^{N-1}}\int_{\partial B_r}u_{i,n}^2(y)d\sigma(y).\]
\end{itemize}

\begin{lemma}		\label{divClaim}
Under ($h0$)--($h5$), there exists $\alpha > 0$, independent of $n$ and $r$, such that:
		\begin{equation*}
    	    \label{alphaEquationFTW}	\int_{B_r}u_{1,n}^2\div(A_n(y)\nabla|y|^{2-N})dy 
    		\leq 
    		\frac
    		{\alpha c_n(N-2)}{r^{N-2}}
    		\int_{\partial B_r}\mu_n(y)u_{1,n}^2d\sigma(y).
		\end{equation*}
\end{lemma}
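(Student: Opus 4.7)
The plan is to exploit the fact that $|y|^{2-N}$ is (up to constant) the fundamental solution of the Laplacian in $\mathbb{R}^N$, so its Laplacian vanishes off the origin; since $A_n$ is close to the identity by $(h_0)$, the operator $\div(A_n(y)\nabla\,\cdot\,)$ applied to $|y|^{2-N}$ is a small perturbation of zero, of order $c_n|y|^{1-N}$. This small error is then integrated against $u_{1,n}^2$, and the doubling-type hypothesis $(h_5)$ converts a bulk integral into a boundary one.

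First I would make the computation explicit: using $\nabla|y|^{2-N}=(2-N)|y|^{-N}y$ and the product rule,
\[
\div(A_n(y)\nabla|y|^{2-N})
=(2-N)|y|^{-N}\bigl[\div(A_n(y)y)-N\mu_n(y)\bigr].
\]
Writing $A_n(y)=Id+B_n(y)$ with $\|B_n(y)\|\leq c_n|y|$ and $\|DB_n(y)\|\leq c_n$ by $(h_0)$, a direct expansion yields $\div(A_n(y)y)=N+O(c_n|y|)$ and $\mu_n(y)=1+O(c_n|y|)$, so the bracket is $O(c_n|y|)$. Hence there exists $C=C(N)$ with
\[
\bigl|\div(A_n(y)\nabla|y|^{2-N})\bigr|\leq C(N-2)\,c_n\,|y|^{1-N}.
\]

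Next I would reduce the left-hand side to a radial integral via the coarea formula. Writing $f(s):=\int_{\partial B_s}u_{1,n}^2\,d\sigma$, we have $\int_{B_r}u_{1,n}^2|y|^{1-N}\,dy=\int_0^r s^{1-N}f(s)\,ds$. The hypothesis $(h_5)$ gives $f(s)\leq C s^{N-1}r^{1-N}f(r)$ for every $s\leq r\leq R_n$, and inserting this yields
\[
\int_{B_r}u_{1,n}^2|y|^{1-N}\,dy\;\leq\;\frac{C\,r}{r^{N-1}}f(r)\;=\;\frac{C}{r^{N-2}}\int_{\partial B_r}u_{1,n}^2\,d\sigma.
\]
Finally, for $n$ large, $(h_0)$ and $(h_4)$ together imply $c_n r\leq c_nR_n\to 0$, so $\mu_n(y)\geq 1/2$ on $\partial B_r$ and therefore $\int_{\partial B_r}u_{1,n}^2\,d\sigma\leq 2\int_{\partial B_r}\mu_n(y)u_{1,n}^2\,d\sigma$. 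Combining the previous three displays and choosing $\alpha$ to absorb all constants gives the claim.

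I do not anticipate a real obstacle; everything is a perturbative computation around the flat case $A_n=Id$, where the integrand would simply vanish off the origin. The only slightly delicate point is bookkeeping the sign: the lemma asserts a one-sided upper bound, but $\div(A_n(y)\nabla|y|^{2-N})$ has indeterminate sign coming from the $O(c_n|y|)$ error term, so one must use the absolute-value bound from the first step, after which both sides are positive and the doubling estimate closes the argument.
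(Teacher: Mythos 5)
Your argument is essentially the paper's own proof: the same perturbative bound $\bigl|\div\bigl((A_n(y)-Id)\nabla|y|^{2-N}\bigr)\bigr|\leq C c_n|y|^{1-N}$ from ($h_0$), the same coarea-plus-($h_5$) reduction of the bulk integral to $\frac{C c_n}{r^{N-2}}\int_{\partial B_r}u_{1,n}^2$, and the same use of $|\mu_n-1|\leq c_n$ to insert the weight $\mu_n$. The one point you gloss over is that the divergence on the left-hand side must be read distributionally (it arises later from an integration by parts against the singular field $A_n(y)\nabla|y|^{2-N}$), so besides your pointwise term $(2-N)|y|^{-N}\bigl[\div(A_n(y)y)-N\mu_n(y)\bigr]$ there is a Dirac contribution $-C_N\,u_{1,n}^2(0)\,\delta_0$ at the origin; the paper isolates this term and discards it precisely because it is nonpositive, so your one-sided inequality survives, but this should be said explicitly rather than treated as "the Laplacian vanishes off the origin."
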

\begin{proof}
We know that $\Delta |y|^{2-N} = -C_N\delta$. Thus, we have:
		\begin{align*}
			\int_{B_r} u_{1,n}^2\div(A_n(y)\nabla |y|^{2-N})dy 
			&= 
			-C_Nu_{1,n}^2(0) 
			+ 
			\int_{B_r} u_{1,n}^2\div\big((A_n(y)-Id\big)\nabla |y|^{2-N})dy
			\\
			&\leq 
			\int_{B_r}
			u_{1,n}^2
		\div\big((A_n(y)-Id)\nabla|y|^{2-N}\big)
			dy.
		\end{align*}
To estimate the integral above, we observe that there exists $\tilde{C}>0$ such that:
	    \begin{align*}
		|\div (	&(A_n(y)-Id) \nabla |y|^{2-N})	| 	\leq\left| \sum_{i=1}^N	\frac{\partial}{\partial y_i}\left((A_n(y)-Id) \nabla |y|^{2-N}\right)_i\right| \\
		&=\left| \sum_{i=1}^N\left(   \frac{\partial}{\partial y_i}   (A_n(y)-Id)   \nabla |y|^{2-N}\right)_i+  \left( (A_n(y)-Id) \frac{\partial}{\partial y_i}  \nabla |y|^{2-N}  \right)_i 	\right| \leq \frac{\tilde{C}c_n	}{|y|^{N-1}}.
		\end{align*}
		since $\|A_n(y)-Id\|_{L^\infty(B_r)}\leq c_n|y|$ and $\|DA_n(y)\|_{L^\infty(B_r)} \leq c_n$, by ($h0$). Thus, we obtain
		\begin{align*}
			\left|	\int_{B_r}	u_{1,n}^2\div((A_n(y)-Id)\nabla|y|^{2-N})	dy \right| &\leq 	\tilde{C}c_n	\int_{B_r}\frac{u_{1,n}^2}{|y|^{N-1}}dy = 
			\tilde{C}c_n\int_0^r \int_{\partial B_s} \frac{u_{1,n}^2}{s^{N-1}}d\sigma(y)ds\\
			&\leq \tilde{C} c_n
			\int_{0}^r 
			\int_{\partial B_s}
			\frac{u_{1,n}^2}{r^{N-1}}
			d\sigma(y) 
			ds
			\leq
			\tilde C C c_nr 
			\int_{\partial B_r} \frac{u_{1,n}^2}{r^{N-1}}d\sigma(y).
		\end{align*}
	where we have used hypothesis ($h_5$). Now, since $|\mu_n(y) -1| \leq c_n$, then $1 \leq 2\mu_n(y)$ and so:
		$$
		    \int_{B_r}u_{1,n}^2\div(A_n(y)\nabla|y|^{2-N})dy 
		\leq 
    		2\tilde C C (c_nr)\int_{\partial B_r}\mu_n(y)\frac{u_{1,n}^2}{r^{N-1}}d\sigma(y),
		$$
	    and we can choose $\alpha = \frac{2\tilde{C} C}{N-2}$.
\end{proof}

We consider, for the first two components $(u_{1,n},u_{2,n})$ of the solution of \eqref{altCafEq}, the expressions:
\begin{align*}
J_{1,n}(r)&:= \int_{B_r}\left(\langle A_n(y)\nabla u_{1,n}, \nabla u_{1,n}  \rangle - M_n a_n(y)  |u_{1,n}|^2|u_{2,n}|^2 - u_{1,n}f_{1,n}(y,u_{1,n})\right)|y|^{2-N} dy,\\
J_{2,n}(r)&:= \int_{B_r}\left(\langle A_n(y)\nabla u_{2,n}, \nabla u_{2,n} \rangle - M_n a_n(y)  |u_{1,n}|^{2}|u_{2,n}|^2- u_{2,n}f_{1,n}(y,u_{2,n})\right)|y|^{2-N}dy.
\end{align*}
For $\alpha>0$ as in Lemma \ref{divClaim}, we define
\begin{align}
\Lambda_{1,n}(r)&:= \frac{\displaystyle r^2\int_{\partial B_r} \langle A_n(y) \nabla_{\theta} u_{1,n}, \nabla_{\theta} u_{1,n} \rangle - \frac{\displaystyle \langle A_n \nabla_{\theta}u_{1,n}, \nu_y \rangle^2}{\mu_n(y)} - M_na_n(y)|u_{2,n}|^{2} |u_{1,n}|^2	-u_{1,n}f_{1,n}(y,u_{1,n})}{\displaystyle \int_{\partial B_r}(1+\alpha rc_n)u_{1,n}^2(y)\mu_n(y)d\sigma(y)} \label{LambdaAltCaf1}\\
\Lambda_{2,n}(r)&:= \frac{\displaystyle r^2\int_{\partial B_r} \langle A_n(y) \nabla_{\theta} u_{2,n}, \nabla_{\theta} u_{2,n} \rangle - \frac{\langle A_n \nabla_{\theta}u_{2,n}, \nu_y \rangle^2}{\mu_n(y)} - M_na_n(y)|u_{1,n}|^{2} |u_{2,n}|^2
	-u_{2,n}f_{2,n}(y,u_{2,n})}{\displaystyle \int_{\partial B_r}(1+\alpha rc_n)u_{2,n}^2(y)\mu_n(y)d\sigma(y)}.\nonumber 
\end{align}
In addition to ($h0$)--($h5$), we assume
\begin{itemize}
	\item[($h_6$)] $J_{i,n}(r)>0$ and $\Lambda_{i,n}(r)>0$ for every $r \in ]1, R_n[$;\\
\end{itemize}

The main objective of this section is to prove the following theorem:

\begin{thm}
	\label{AltCaffMonotonicity}
	Let $u_n=(u_{1,n},...,u_{l,n})$ be a nonnegative solution of equation \eqref{altCafEq}, and that $A_n$ satisfy \textbf{(A1)} and \textbf{(A2)} for some $\theta>0$, $M>0$. Assume that ($h_0$)-($h_6$) hold true. Then, for any $0<\eta<\frac{1}{4}$, there exists a positive constant $C= C((R_n),(c_n), (\epsilon_n), \lambda, w, N,\eta)$, such that:
	$$r \mapsto \frac{J_{1,n}(r)J_{2,n}(r)}{r^4}
	e^{
		-C|M_n|^{-\eta}
		r^{-2\eta} 
		+ 
		C\epsilon_nr^2
		+
		C
		c_n
		r
	} 
	$$
	is monotone nondecreasing for $r \in ]1,R_n[$.
\end{thm}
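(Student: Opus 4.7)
My plan adapts the classical Alt--Caffarelli--Friedman scheme from \cite[Theorem 3.14]{SoaveZilio} to the variable-coefficient rescaled setting, with Lemma \ref{divClaim} absorbing the non-flatness of $A_n$ and Lemma \ref{sphereLemma} playing the role of the sphere lower bound. The goal is to establish, for $i=1,2$,
\[
\frac{rJ_{i,n}'(r)}{J_{i,n}(r)}\geq 2\gamma\bigl(\Lambda_{i,n}(r)\bigr)-C(c_n r+\epsilon_n r^2),\qquad \gamma(\lambda):=\sqrt{\tfrac{(N-2)^2}{4}+\lambda}-\tfrac{N-2}{2},
\]
then sum in $i=1,2$, invoke Lemma \ref{sphereLemma} for the bound $\gamma(\Lambda_{1,n}(r))+\gamma(\Lambda_{2,n}(r))\geq 2-C|M_n|^{-\eta}r^{-2\eta}$, and integrate.

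First, the co-area formula gives $J_{i,n}'(r)=r^{2-N}\int_{\partial B_r}(\langle A_n\nabla u_{i,n},\nabla u_{i,n}\rangle - M_n a_n u_{1,n}^2u_{2,n}^2-u_{i,n}f_{i,n})\,d\sigma$. I split $\nabla u_{i,n}=\nabla_\theta u_{i,n}+(\partial_\nu u_{i,n})\nu_y$ and use the identity
\[
\langle A_n v,v\rangle=\frac{\langle A_n v,\nu_y\rangle^2}{\mu_n}+\langle A_n v_\tau,v_\tau\rangle-\frac{\langle A_n v_\tau,\nu_y\rangle^2}{\mu_n}
\]
to separate a normal piece from a tangential piece. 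The tangential contribution, together with the competition and self-interaction boundary terms in $J_{i,n}'$, reproduces exactly the numerator of $\Lambda_{i,n}(r)$, contributing $r^{-N}\Lambda_{i,n}(r)(1+\alpha c_n r)\mathcal{H}_{i,n}(r)$, with $\mathcal{H}_{i,n}(r):=\int_{\partial B_r}u_{i,n}^2\mu_n\,d\sigma$. For the normal piece, testing \eqref{altCafEq} against $u_{i,n}|y|^{2-N}$ and integrating by parts twice produces a Pohozaev-type identity expressing $J_{i,n}(r)$ as the sum of $r^{2-N}\int_{\partial B_r}u_{i,n}\langle A_n\nabla u_{i,n},\nu_y\rangle\,d\sigma$, the boundary term $\tfrac{N-2}{2r}\mathcal{H}_{i,n}(r)$ coming from $\nabla|y|^{2-N}$, the volume remainder $\tfrac{1}{2}\int_{B_r}u_{i,n}^2\div(A_n\nabla|y|^{2-N})\,dy$, and a nonpositive piece from the other interactions ($j\neq 1,2$). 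Lemma \ref{divClaim} absorbs the volume remainder into a factor $1+\alpha c_n r$ multiplying $\tfrac{N-2}{2r}\mathcal{H}_{i,n}(r)$, matching exactly the denominator of $\Lambda_{i,n}$. Cauchy--Schwarz then yields
\[
\int_{\partial B_r}\frac{\langle A_n\nabla u_{i,n},\nu_y\rangle^2}{\mu_n}\,d\sigma\geq\frac{\bigl(r^{N-2}J_{i,n}(r)-\tfrac{N-2}{2r}(1+\alpha c_n r)\mathcal{H}_{i,n}(r)\bigr)^2}{\mathcal{H}_{i,n}(r)},
\]
the bracket being nonnegative in the regime of interest thanks to $(h_3)$ and $(h_6)$.

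Combining the two pieces and setting $x_i:=r^{N-1}J_{i,n}(r)/\mathcal{H}_{i,n}(r)$, the estimate becomes
\[
\frac{rJ_{i,n}'(r)}{J_{i,n}(r)}\geq x_i-(N-2)(1+\alpha c_n r)+\frac{\tfrac{(N-2)^2}{4}(1+\alpha c_n r)^2+\Lambda_{i,n}(r)(1+\alpha c_n r)}{x_i}-C\epsilon_n r^2,
\]
where $\epsilon_n r^2$ absorbs the $u_{i,n}f_{i,n}$ contribution via $(h_2)$. Minimizing the RHS in $x_i$ by AM--GM (equivalent to completing the square about $\tfrac{N-2}{2}(1+\alpha c_n r)$) yields $2\gamma(\Lambda_{i,n}(r))$ up to $O(c_n r)$. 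Summing for $i=1,2$, applying Lemma \ref{sphereLemma}, and integrating $\tfrac{d}{dr}\log[J_{1,n}J_{2,n}/r^4]\geq -C(|M_n|^{-\eta}r^{-2\eta-1}+\epsilon_n r+c_n)$ finally delivers the announced monotonicity after exponentiation. The main obstacle is the Pohozaev step: the variable-coefficient identity requires the volume remainder $\int u_{i,n}^2\div(A_n\nabla|y|^{2-N})dy$ to come with a prefactor of size $c_n r$ (not merely $c_n$), so that the induced $(1+\alpha c_n r)$ correction matches the denominator of $\Lambda_{i,n}$ and does not spoil the AM--GM step; this sharp control is exactly the content of Lemma \ref{divClaim}, whose proof depends decisively on the doubling hypothesis $(h_5)$.
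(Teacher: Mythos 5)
Your proposal follows essentially the same route as the paper: the same testing identity against $u_{i,n}|y|^{2-N}$, the same tangential/normal decomposition of $\langle A_n\nabla u_{i,n},\nabla u_{i,n}\rangle$, Lemma \ref{divClaim} to absorb the volume remainder into the $(1+\alpha c_n r)$ factor matching the denominator of $\Lambda_{i,n}$, the lower bound $\gamma(\Lambda_{1,n})+\gamma(\Lambda_{2,n})\geq 2-C(|M_n|^{-\eta}r^{-2\eta}+\epsilon_n r^2+c_n r)$ from Lemma \ref{sphereLemma} after rescaling to $\partial B_1$ (using $(h_1)$, $(h_3)$ for the normalizations and $k_n\sim |M_n|r^2$), and integration of the logarithmic derivative. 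The only cosmetic difference is that you extract $2\gamma(\Lambda_{i,n}(r))$ via Cauchy--Schwarz followed by minimization in $x_i$ — which strictly speaking needs the standard extra case when the bracket $r^{N-2}J_{i,n}-\tfrac{N-2}{2r}(1+\alpha c_n r)\mathcal{H}_{i,n}$ is negative, where the tangential term alone gives the bound since $\gamma(\Lambda)\leq \Lambda/(N-2)$ — whereas the paper applies Young's inequality with weight $\gamma(\Lambda_{i,n}(r))$ and the identity $\gamma^2+(N-2)\gamma=\Lambda$, sidestepping that case; the two computations are otherwise equivalent.
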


\begin{remark}\label{eq:differencesfromLaplace}
This result is inspired by \cite[Theorem 3.14]{SoaveZilio}, which deals with a system with the Laplace operator. Comparing the hypothesis of this reference with ours, besides the technical changes, the main difference is condition ($h_5$). This condition was essential in the proof of Lemma \ref{divClaim}; this key bound is straightforward in case the operator ir the Laplacian, since $-\Delta(\frac{1}{|y|^{N-2}}) = C\delta$; in our case, we need ($h_5$) when approximating  the fundamental solution of the operator: $-\div(A_n(x)\nabla (\cdot))$ by the fundamental solution of the Laplacian operator, recalling also that $A_n(y) \sim Id$ for $y$ close to zero. Later on we will apply Theorem \ref{AltCaffMonotonicity} to the blowup sequence $(v_n)$ introduced in Section \ref{chapter:background}, and condition ($h5$) will be a consequence of the Almgren's-monotonicity formula, see Section \ref{sec:conditions} below for the details.
\end{remark}

\begin{remark}
 In condition ($h4$) we assume that $r \in ]1,R_n[$, but we notice that the lower bound $1$ can be replaced with any other positive.
\end{remark}

The rest of the section is devoted to the proof of Theorem \ref{AltCaffMonotonicity}, which we divide into several lemmas. Before we start, we define:
\begin{equation}
\label{GammaCaffareliDef}
\gamma(t):= \sqrt{(\frac{N-2}{2})^2 + t} - \frac{N-2}{2},
\end{equation}
a natural quantity within this context (see \cite{FriedmanSphere}), which satisfies $\gamma(t)^2 + (N-2)\gamma(t) = t$. The following lemma clarifies the definition of $\Lambda_{i,n}(r)$, its relation with $J_{i,n}(r)$ and the need of Lemma \ref{divClaim}.

\begin{lemma}
	\label{derivativeLemma}
	Let $u = (u_{1,n},...,u_{l,n})$ be a positive solution of \eqref{altCafEq}, assume hypothesis ($h_0$)-($h_6$), and \textbf{(A1)}. Then, for $i=1,2$,
	\begin{equation}
	    \label{AltCaffJequation}
    	J_{i,n}(r) \leq \frac{r}{2\gamma(\Lambda_i(r))} \int_{\partial B_r }\left(\langle A_n(y)\nabla u_{i,n}, \nabla u_{i,n}  \rangle 
    	- 
    	M_n a_n(y)  |u_{1,n}|^{2}|u_{2,n}|^2 - u_{i,n}f_{i,n}(y,u_{i,n})\right)|y|^{2-N}.
	\end{equation}
\end{lemma}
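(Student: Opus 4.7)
My plan is to prove the bound for $i=1$ (the case $i=2$ is symmetric) by a three-stage integration-by-parts argument, followed by a Cauchy--Schwarz/AM--GM closure that reproduces the classical ACF identity $\gamma(\Lambda)^2+(N-2)\gamma(\Lambda)=\Lambda$. The idea is to take the equation for $u_{1,n}$, test it against the singular weight $u_{1,n}|y|^{2-N}$, push the bulk into a surface integral, and then use Lemma \ref{divClaim} to absorb the non-constant coefficient error into the factor $(1+\alpha rc_n)$ that appears in the denominator of $\Lambda_{1,n}(r)$.

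First, I would test the $i=1$ equation of \eqref{altCafEq} against $u_{1,n}|y|^{2-N}$ and integrate by parts over $B_r$. Using $M_n<0$, $a_n>0$ and $\sum_{j\ne 1}|u_{j,n}|^2\geq |u_{2,n}|^2$ to convert the full competition sum into the single $u_{2,n}$-term in $J_{1,n}(r)$, this gives
\[
J_{1,n}(r) \leq r^{2-N}\int_{\partial B_r} u_{1,n}\langle A_n\nabla u_{1,n},\nu_y\rangle\,d\sigma-\int_{B_r} u_{1,n}\langle A_n\nabla u_{1,n},\nabla|y|^{2-N}\rangle dy.
\]
Next, writing $u_{1,n}\nabla u_{1,n}=\tfrac12\nabla u_{1,n}^2$, integrating by parts a second time (using symmetry of $A_n$), and computing $\langle A_n\nabla|y|^{2-N},\nu_y\rangle|_{\partial B_r}=(2-N)r^{1-N}\mu_n$, I get a surface term $\frac{N-2}{2}r^{1-N}\int_{\partial B_r}u_{1,n}^2\mu_n$ plus a bulk remainder $\frac12\int_{B_r}u_{1,n}^2\div(A_n\nabla|y|^{2-N})$. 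This is exactly where Lemma \ref{divClaim} is used: it controls the remainder by $\tfrac{\alpha c_n(N-2)}{2r^{N-2}}\int_{\partial B_r}\mu_n u_{1,n}^2$, producing the desired factor $(1+\alpha r c_n)$:
\[
J_{1,n}(r)\leq r^{2-N}\int_{\partial B_r}u_{1,n}\langle A_n\nabla u_{1,n},\nu_y\rangle\,d\sigma+\tfrac{(N-2)(1+\alpha r c_n)}{2}r^{1-N}\int_{\partial B_r}u_{1,n}^2\mu_n\,d\sigma.
\]

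Now I introduce the abbreviations
\[
D:=\int_{\partial B_r}u_{1,n}^2\mu_n,\quad Q:=\int_{\partial B_r}\tfrac{\langle A_n\nabla u_{1,n},\nu_y\rangle^2}{\mu_n},\quad S:=\int_{\partial B_r}\bigl[-M_n a_n u_{1,n}^2 u_{2,n}^2-u_{1,n}f_{1,n}\bigr],
\]
and I verify the algebraic identity
\[
\langle A_n\nabla u_{1,n},\nabla u_{1,n}\rangle=\langle A_n\nabla_\theta u_{1,n},\nabla_\theta u_{1,n}\rangle-\tfrac{\langle A_n\nabla_\theta u_{1,n},\nu_y\rangle^2}{\mu_n}+\tfrac{\langle A_n\nabla u_{1,n},\nu_y\rangle^2}{\mu_n}
\]
by decomposing $\nabla u_{1,n}=\nabla_\theta u_{1,n}+(\partial_\nu u_{1,n})\nu_y$ and completing a square (the ``hidden'' square being $(\langle A_n\nabla_\theta u_{1,n},\nu_y\rangle/\sqrt{\mu_n}+\partial_\nu u_{1,n}\sqrt{\mu_n})^2$). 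With $T:=\int_{\partial B_r}\langle A_n\nabla_\theta u_{1,n},\nabla_\theta u_{1,n}\rangle$ and $N_*:=\int_{\partial B_r}\langle A_n\nabla_\theta u_{1,n},\nu_y\rangle^2/\mu_n$, this gives $J'_{1,n}(r)=r^{2-N}(T-N_*+Q+S)$ while $\Lambda_{1,n}(r)=r^2(T-N_*+S)/[(1+\alpha rc_n)D]$.

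Finally, I apply the weighted Cauchy--Schwarz inequality $\int u_{1,n}\langle A_n\nabla u_{1,n},\nu_y\rangle\leq\sqrt{DQ}$. Setting $x:=r\sqrt{Q/D}\geq 0$, $k:=1+\alpha rc_n\geq 1$ and $\gamma:=\gamma(\Lambda_{1,n}(r))$, the desired bound $J_{1,n}(r)\leq \frac{r}{2\gamma}J'_{1,n}(r)$ reduces after multiplying through by $2\gamma/D$ and using $\gamma^2+(N-2)\gamma=\Lambda_{1,n}(r)$ to the manifestly true
\[
(x-\gamma)^2+(k-1)\gamma^2\geq 0.
\]
The main difficulty is Step 2: obtaining the surface expression $\frac{(N-2)(1+\alpha rc_n)}{2}r^{1-N}\int_{\partial B_r}u_{1,n}^2\mu_n$ from the variable-coefficient divergence term. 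This is the place where non-constancy of $A_n$ genuinely enters, and it is precisely why Lemma \ref{divClaim} (and in turn the doubling-type hypothesis $(h_5)$) was needed. Once that perturbative error is absorbed into the weight $k=1+\alpha rc_n$, the classical Friedman algebra goes through essentially unchanged, since the extra term $(k-1)\gamma^2\geq 0$ has the favorable sign.
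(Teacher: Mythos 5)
Your proposal is correct and follows essentially the same route as the paper's proof: testing the equation against $u_{1,n}|y|^{2-N}$, a second integration by parts producing the divergence term controlled by Lemma \ref{divClaim} (hence the weight $1+\alpha r c_n$), the tangential/normal decomposition of $\langle A_n\nabla u_{1,n},\nabla u_{1,n}\rangle$, and a Young/Cauchy--Schwarz closure via $\gamma(\Lambda)^2+(N-2)\gamma(\Lambda)=\Lambda$. Your final completed square $(x-\gamma)^2+(k-1)\gamma^2\geq 0$ is just an explicit repackaging of the paper's Young's inequality step, so there is no substantive difference.
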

\begin{proof} We prove the statement for $i=1$.

\noindent    \textbf{Step 1.}
    Check that
    \begin{equation}\label{AltCaffJequation_aux}
    J_{1,n}(r) = \int_{B_r}\frac{1}{2} u_{1,n}^2\div( A_n(y) \nabla |y|^{2-N} ) + \int_{\partial B_r} \left(\frac{1}{r^{N-2}} u_{1,n}\langle A_n(y)\nabla u_{1,n}, \nu_y \rangle+ 
		\frac{N-2}{2 r^{N-1}}	 \mu_n(y)  u_{1,n}^2\right).
		\end{equation}
	First, we test the equation \eqref{altCafEq} for $u_{1,n}$ by $u_{1,n}(y)|y|^{2-N}$, obtaining
	\begin{align*}
	    \int_{B_r}
	    & \langle
	        A_n(y)
	        \nabla
	        u_{1,n}
	        ,
	        \nabla
	        u_{1,n}
	    \rangle
	    |y|^{2-N}
	    dy
	=
	   	-\int_{B_r}
		u_{1,n}(y)
		\langle A_n(y)\nabla u_{1,n},   \nabla |y|^{2-N}\rangle dy \\
	&+
	\int_{B_r}
	\left(
		\sum_{i\neq 1}^l
		M_n a_n(y) |u_{i,n}|^{2}|u_{1,n}|^2 
	+ 
		u_{1,n}f_{1,n}(y,u_{1,n})
	\right)
		|y|^{2-N}dy +
	    \frac{1}{r^{N-2}}
		\int_{\partial B_r} 
		u_{1,n}
		\langle A_n(y)\nabla u_{1,n},  \nu_y \rangle .
	\end{align*}
Using the above equation and the fact that $M_n<0$, $a(y)>0$ and the fact that $\nabla (|y|^{2-N}) = \frac{(2-N) y}{r^N} = \frac{(2 - N)}{r^{N-1}} \nu_y$ for $y \in \partial B_r$ , we conclude that:	
	\begin{align*}
		J_{1,n}(r) &\leq \int_{B_r} \left(\langle A_n(y)\nabla u_{1,n}, \nabla u_{1,n}  \rangle - \sum_{i \neq 1}^lM_n a_n(y) |u_{i,n}|^{2}|u_{1,n}|^2 - u_{1,n}f_{1,n}(y,u_{1,n})\right)|y|^{2-N}dy \\
		&= -\int_{B_r}u_{1,n}\langle A_n(y)\nabla u_{1,n},   \nabla |y|^{2-N}\rangle dy + \frac{1}{r^{N-2}}\int_{\partial B_r} u_{1,n}\langle A_n(y)\nabla u_{1,n},  \nu_y \rangle d\sigma(y)\\
		&= -\frac{1}{2}\int_{B_r}\langle \nabla( u_{1,n}^2),  A_n(y) \nabla |y|^{2-N}\rangle dy+ \frac{1}{r^{N-2}}\int_{\partial B_r} u_{1,n}\langle A_n(y)\nabla u_{1,n},  \nu_y \rangle d\sigma(y)\\
		&= \int_{B_r}\frac{1}{2} u_{1,n}^2\div( A_n(y) \nabla |y|^{2-N} ) + \frac{1}{r^{N-2}}\int_{\partial B_r} u_{1,n}\langle A_n(y)\nabla u_{1,n}, \nu_y \rangle d\sigma(y)\\
		&\quad \quad+ 
		\frac{N-2}{2 r^{N-1}}
		\int_{\partial B_r}
		\langle  \nu_y, A_n(y) \nu_y \rangle 
		u_{1,n}^2
		d\sigma(y) 
		\\
		&= 
		\int_{B_r}\frac{1}{2} 
		u_{1,n}^2 
		\div( A_n(y) \nabla |y|^{2-N} ) 
		dy
		+ 
		\frac{1}{r^{N-2}}
		\int_{\partial B_r} 
		u_{1,n}
		\langle A_n(y)\nabla u_{1,n}, \nu_y \rangle 
		d\sigma(y),
	\end{align*}
which implies \eqref{AltCaffJequation_aux}, by recalling that $\mu_n(y) = \langle A_n(y) \nu_y, \nu_y \rangle$.
		
\noindent		\textbf{Step 2.} Conclusion of inequality \eqref{AltCaffJequation}.
		
		Using Lemma \ref{divClaim}  in inequality \eqref{AltCaffJequation_aux}, we obtain
	\begin{equation}
		\label{randomEqJ}
		J_{1,n}(r) 
		\leq 
		\frac{1}{r^{N-2}}
		\int_{\partial B_r} 
		u_{1,n}
		\langle A_n(y)\nabla u_{1,n},  \nu_y \rangle 
		d\sigma (y)
		+ 
		\frac{N-2}{2 r^{N-1}}
		\int_{\partial B_r}
		(1+\alpha c_n r) 
		\mu_n(y) 
		u_{1,n}^2 
		d\sigma(y).
	\end{equation}
	Now, by Young's inequality, one obtains:
	\begin{align*}
		&\int_{\partial B_r} 
		u_{1,n}
		\langle A_n(y)\nabla u_{1,n},  \nu_y \rangle
		d\sigma(y) 
		\\
		\leq&
		\frac{\gamma(\Lambda_{1,n}(r))}{2r}\int_{\partial B_r}u_{1,n}^2\mu_n(y)d\sigma(y)+\frac{r}{2\gamma(\Lambda_{1,n}(r))}\int_{\partial B_r}\frac{\langle A_n(y)\nabla u_{1,n},  \nu_y \rangle^2}{\mu_n(y)}d\sigma(y)
		\\
		\leq& 
		\frac{\gamma(\Lambda_{1,n}(r))}{2r}
		\int_{\partial B_r}
		(1+\alpha c_nr)
		u_{1,n}^2
		\mu_n(y)
		d\sigma(y) 
		+ 
		\frac{r}{2\gamma(\Lambda_{1,n}(r))}
		\int_{\partial B_r}
		\frac{
			\langle A_n(y)\nabla u_{1,n},  \nu_y \rangle^2
		}{
			\mu_n(y)
		}
		d\sigma(y).
	\end{align*}
	Applying this inequality to equation \eqref{randomEqJ}:
	\begin{align*}
	J_{1,n}(r) 
	\leq&\frac{1}{2r^{N-1}\gamma(\Lambda_{1,n}(r))} \Bigg[ \left(\gamma(\Lambda_{1,n}(r))^2  + (N-2)\gamma(\Lambda_{1,n}(r)) \right) \int_{\partial B_r}(1+\alpha c_nr)u_{1,n}^2\mu_n(y)d\sigma(y)\\ 
	&+ r^2\int_{\partial B_r}\frac{\langle A_n(y)\nabla u_{1,n},  \nu_y \rangle^2}{\mu_n(y)}d\sigma(y)\Bigg].
	\end{align*}
	Using the fact that $\gamma(\Lambda_{1,n}(r))^2  + (N-2)\gamma(\Lambda_{1,n}(r)) = \Lambda_{1,n}(r)$ then by the definition given in \eqref{LambdaAltCaf1} we obtain:
	\begin{align}
			J_{1,n}(r) 
			\leq& 
			\frac{r^2}{2r^{N-1}
				\gamma(\Lambda_{1,n}(r))}
			\Bigg(
			\int_{\partial B_r} 
			\left(
			\langle A_n(y) \nabla_{\theta} u_{1,n}, \nabla_{\theta} u_{1,n} \rangle 
			- 
			\frac{\langle A_n(y) \nabla_{\theta}u_{1,n}, \nu_y \rangle^2}{\mu_n(y)}
			+
			\frac{
				\langle A_n(y)\nabla u_{1,n},  \nu_y \rangle^2
			}{
				\mu_n(y)
			}
			\right)d\sigma(y)
			\nonumber \\
			&\quad\quad+
			\int_{\partial B_r}\left(
			-
			M_na_n(y)|u_{1,n}|^{2} |u_{2,n}|^{2}
			-u_{1,n}f_{1,n}(y,u_{1,n})
			\right)
			d\sigma(y)
			\Bigg). 		\label{anothRandomEqJ}
	\end{align}
	Now, to compute $\frac{\langle A_n(y)\nabla u_{1,n}(y), \nu_y \rangle^2}{\mu_n(y)}$, we use the following auxiliary equations:
	$$
	    \nabla u_{1,n}
	  =
	    \nabla_{\theta} u_{1,n} 
	  +
	    (\partial_{\nu} u_{1,n}) \nu_y,
	    \qquad
	    \langle 
	        A_n(y)\nabla u_{1,n}, \nu_y \rangle
	  =
	    \langle 
	        A_n(y) \nabla_\theta u_{1,n}, \nu_y
	    \rangle
	  +
	    \mu_n(y)(\partial_\nu u_{1,n}).
	$$
	With them, we obtain:
	$$
	    \frac{\langle A_n(y)\nabla u_{1,n}, \nu_y \rangle^2}{\mu_n(y)}
	 =
	    \frac{
	        \langle 
	        A_n(y)
	        \nabla_\theta 
	        u_{1,n}, 
	        \nu_y
	        \rangle^2
	    }
	    {
	        \mu_n(y)
	    }
	 +
	    2(\partial_\nu u_{1,n})
	    \langle 
	        A_n(y) \nabla_\theta u_{1,n},
	        \nu_y
	    \rangle
	 +
	    (\partial_\nu u_{1,n})^2 \mu_n(y).
	$$ 
	We also notice:
	\begin{align*}
		\langle A_n(y)\nabla u_{1,n}, \nabla u_{1,n} \rangle 
		=& 
		\langle A_n(y)\nabla_{\theta} u_{1,n}, \nabla_{\theta} u_{1,n} \rangle+ 
		2\langle A_n(y)\nabla_{\theta}u_{1,n},  \nu_y \rangle (\partial_\nu u_{1,n}) + (\partial_\nu u_{1,n})^2\mu_n(y),
	\end{align*}
	and from this we conclude that:
	$$ 
	\langle A_n(y)\nabla u_{1,n}, \nabla u_{1,n} \rangle
	=
	\langle A_n(y) \nabla_{\theta} u_{1,n}, \nabla_{\theta} u_{1,n} \rangle
	- 
	\frac{\langle A_n(y) \nabla_{\theta}u_{1,n}, \nu_y \rangle^2}{\mu_n(y)} 
	+ 
	\frac{\langle A_n(y)\nabla u_{1,n},  \nu_y \rangle^2}{\mu_n(y)}.$$
	Thus, applying this equality to equation \eqref{anothRandomEqJ}, we conclude \eqref{AltCaffJequation}, as wanted.
\end{proof}

Before we proceed, it is important at this point to simplify the notation of $\Lambda_{i,n}(r)$. We  rewrite one of the terms as follows:
\begin{align*}
	\Big\langle 
	A_n(y) \nabla_{\theta} u_{1,n}, \nabla_{\theta} u_{1,n} \Big\rangle &- \frac{\langle A_n(y) \nabla_{\theta}u_{1,n}, \nu_y \rangle^2}{\mu_n(y)}\\
	=& \Big\langle
	A_n(y) \nabla_{\theta} u_{1,n}
	, 
	\nabla_{\theta} u_{1,n} 
	\Big\rangle 
	- 
	\Big\langle \frac{\langle A_n(y) \nu_y, \nabla_{\theta}u_{1,n} \rangle}{\mu_n(y)}A_n(y) \nu_y, \nabla_{\theta}u_{1,n} 
	\Big\rangle \\
	=& \Big\langle A_n(y) \nabla_{\theta} u_{1,n} -\frac{\langle A_n(y) \nu_y, \nabla_{\theta}u_{1,n} \rangle}{\mu_n(y)}A_n(y) \nu_y, \nabla_{\theta} u_{1,n} \Big\rangle.
\end{align*}
\begin{definition}\label{definition:operatorB}
For $y \neq 0$, we define the operator $B_n(y)$ by:
$$
B_n(y) v 
:= 
\left(
A_n(y) v 
- 
\frac{\langle A_n(y) \nu_y, v \rangle}{\mu_n(y)}A_n(y) \nu_y\right)
$$
and write, for $i,j\in \{1,2\}$, $i\neq j$,
	\begin{equation}\label{eq:equivalent_LAMBDA}
	\Lambda_{i,n}(r) 
	= 
	\frac{r^2\int_{\partial B_r} \langle B_n(y) \nabla_{\theta} u_{i,n}, \nabla_{\theta} u_{i,n} \rangle 
	- 
	M_na_n(y)|u_{i,n}|^{2} |u_{j,n}|^2
		+
		u_{i,n}f_{i,n}(y,u_{i,n})d\sigma(y)}{\int_{\partial B_r}(1+\alpha(rc_n))\mu_n(y)u_{i,n}^2d\sigma(y)}
	\end{equation}

\end{definition}
\noindent It is straightforward to see that $B_n(y)$ is a symmetric operator. Moreover, for all $\xi \in \mathbb{R}^N$, we have that $B_n(y)\xi \in T_y (\partial B_{|y|})$. This is the case since:
\begin{align*}
    \langle B_n(y) \xi, \nu_y \rangle
=
    \langle 
    A_n(y) \xi 
- 
    \frac{\langle A_n(y)\nu_y, \xi \rangle}{\mu_n(y)}
     A_n(y) \nu_y, \nu_y
    \rangle =
    \langle A_n(y) \xi, \nu_y \rangle
-
    \frac{\langle A_n(y) \nu_y, \xi \rangle \mu_n(y)}{\mu_n(y)}
=
    0.
\end{align*}

The following result shows what is the ellipticity constant of $B_n$, and provides a bound for its distance to the identity operator.
\begin{lemma}
    \label{MatrixBoundsOnB}
    Suppose that \textbf{(A1)} is satisfied for the sequence $A_n$:
    \begin{equation}
    \label{ellipticityCaf1}
    \langle A_n(y) \xi, \xi \rangle
\geq
    \theta |\xi|^2
    \qquad
    \forall \xi \in \mathbb{R}^N,
    \end{equation}
and ($h0$) and ($h4$) hold true.    Then there exist $C,\tilde{\theta}>0$ depending only on $\theta$, $M$ and $N$ such that, for all $y \in B_{R_n}$,
    \begin{equation}
    \label{ineq2AltCafMatrixB}
        \langle B_n(y)\xi, \xi \rangle
    \geq
        \tilde{\theta}|\xi|^2
        \quad
        \forall
        \xi \in 
        T_y 
        \left(
            \partial B_{|y|}
        \right),
\quad \text{     and } \quad 
        \|    Id|_{T_{y}\partial B_{|y|}} -  B_n(y) \| \leq C  \|Id_{\mathbb{R}^N}-A_n(y)\|.
    \end{equation}
\end{lemma}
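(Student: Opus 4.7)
The plan is to treat the second inequality first, since it will give the ellipticity as an easy corollary. Fix $y\in B_{R_n}$ and $\xi\in T_y(\partial B_{|y|})$, so that $\langle \xi,\nu_y\rangle=0$ and hence $\langle A_n(y)\nu_y,\xi\rangle=\langle (A_n(y)-Id)\nu_y,\xi\rangle$. A direct computation using the definition of $B_n$ gives
\[
(Id|_{T_y(\partial B_{|y|})}-B_n(y))\xi
=(Id-A_n(y))\xi
+\frac{\langle (A_n(y)-Id)\nu_y,\xi\rangle}{\mu_n(y)}\,A_n(y)\nu_y.
\]
Then I would take norms: use Cauchy--Schwarz on the scalar factor, the lower bound $\mu_n(y)\geq\theta$ coming from \textbf{(A1)}, and $\|A_n(y)\nu_y\|\leq M$ from \textbf{(A2)}, to conclude
\[
\|(Id|_{T_y(\partial B_{|y|})}-B_n(y))\xi\|\leq\Bigl(1+\frac{M}{\theta}\Bigr)\|Id-A_n(y)\|\,|\xi|,
\]
which gives the bound $\|Id|_{T_y\partial B_{|y|}}-B_n(y)\|\leq C\|Id-A_n(y)\|$ with $C=1+M/\theta$, depending only on $\theta$ and $M$.

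For the ellipticity, writing $B_n=Id|_{T_y(\partial B_{|y|})}-(Id|_{T_y(\partial B_{|y|})}-B_n)$ and pairing with $\xi$ yields
\[
\langle B_n(y)\xi,\xi\rangle\geq |\xi|^2-\|Id|_{T_y(\partial B_{|y|})}-B_n(y)\|\,|\xi|^2\geq \bigl(1-C\|Id-A_n(y)\|\bigr)|\xi|^2.
\]
By hypothesis ($h_0$), $\|Id-A_n(y)\|\leq c_n|y|\leq c_n R_n$ for $y\in B_{R_n}$, and by ($h_4$) we have $c_nR_n\to 0$. Hence for $n$ large enough one gets $1-C c_n R_n\geq \tfrac{1}{2}$, and the ellipticity constant $\tilde\theta=1/2$ works uniformly in $n$ and $y$; for the finitely many remaining indices $n$ one may adjust constants without affecting the dependencies claimed.

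I do not expect a real obstacle here: the content is linear algebra on a two-line identity, and the only subtlety is that the tangential restriction makes the rank-one correction act only through $\langle (A_n-Id)\nu_y,\xi\rangle$, which is precisely what brings in the factor $\|A_n-Id\|$ and allows both assertions to be proved simultaneously. The slightly delicate step is keeping $\tilde\theta$ independent of $n$ while the perturbation size depends on $n$ through ($h_0$)--($h_4$); I would handle this by absorbing the finitely many small $n$ into the constants and exploiting $c_nR_n\to 0$ for the rest.
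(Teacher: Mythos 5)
Your proof is correct and follows essentially the same route as the paper: exploit $\langle \xi,\nu_y\rangle=0$ to replace $\langle A_n(y)\nu_y,\xi\rangle$ by $\langle (A_n(y)-Id)\nu_y,\xi\rangle$, bound $Id|_{T_y\partial B_{|y|}}-B_n(y)$ in terms of $\|Id-A_n(y)\|$, and then invoke ($h_0$), ($h_4$) for the ellipticity; the only (immaterial) difference is that the paper perturbs from $\langle A_n(y)\xi,\xi\rangle\geq\theta|\xi|^2$ rather than from $|\xi|^2$, getting $\tilde\theta=\theta/2$ instead of $1/2$. One small caveat: your remark about absorbing the finitely many small $n$ by "adjusting constants" does not by itself restore ellipticity when $c_nR_n$ is not yet small, but the paper's own proof carries the same implicit restriction to $n$ large, so this is not a gap relative to the paper.
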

\begin{proof}
    Given $\xi \in T_y (\partial B_{|y|})$, we obtain:
    \begin{align*}
        |(B_n(y) - &Id_{T_y (\partial B_{|y|})} ) \xi|
    =
        |
        (
            A_n(y)\xi 
        - 
            \frac
            {
            \langle 
                A_n(y) \nu_y, \xi 
            \rangle
            }
            {
            \mu_n(y)
            } 
        )
        -
        \xi
        |
    =
        |
            (A_n(y) - Id)\xi 
        - 
            \frac
            {
            \langle 
                \nu_y, A_n(y)\xi 
            \rangle
            }
            {
            \mu_n(y)
            } 
        |\\
    &\leq
        |(A_n(y)-Id)\xi|
    +
        |\frac{\langle \nu_y, A_n(y)\xi \rangle}{\mu_n(y)}|
    =
        |(A_n(y)-Id)\xi|
    +
        |\frac{\langle \nu_y, (A_n(y)-Id)\xi \rangle}{\mu_n(y)}|\\
    &\leq
           \sqrt{N} \|A_n(y)-Id\|\cdot|\xi|
        +
            \frac{\sqrt{N}}{\mu_n(y)}
            \|A_n(y)-Id\|\cdot|\xi|
    \leq
        \sqrt{N}
        (1 + \frac{1}{\theta})
        \|A_n(y)-Id\|
        \cdot 
        |\xi|,
    \end{align*}
    where we used $\langle \xi, \nu_y \rangle = 0$, and the ellipticity constant of \eqref{ellipticityCaf1} to obtain $\frac{1}{\mu_n(y)} \leq \frac{1}{\theta}$. Taking $C =\sqrt{N}(1 + \frac{1}{\theta})$ concludes the second statement in \eqref{ineq2AltCafMatrixB}. Regarding the first statement in \eqref{ineq2AltCafMatrixB}, given $\xi \in T_y(\partial B_{|y|})$:
    \begin{align*}
        \langle B_n(y) \xi, \xi \rangle&=  \Big\langle   A_n(y)\xi  -  \frac{    \langle  A_n(y)\nu_y , \xi  \rangle  A_n(y)\nu_y } {   \mu_n(y)  } ,  \xi \Big\rangle= \langle A_n(y)\xi,\xi \rangle -  \frac{\langle A_n(y)\nu_y,\xi\rangle^2}{\mu_n(y)}\\
        &= \langle A_n(y)\xi,\xi \rangle - \frac{\langle (A_n(y)-Id)\nu_y,\xi\rangle^2}{\mu_n(y)} \geq \theta |\xi|^2 -\frac{\langle (A_n(y)-Id)\nu_y,\xi\rangle^2}{\mu_n(y)} \\
        &\geq \theta |\xi|^2 - \frac{\sqrt{N}}{\theta} \|A_n(y)-Id\|\cdot |\xi|^2\geq      \theta |\xi|^2 -   \frac{\sqrt{N}}{\theta}  c_nR_n |\xi|^2\geq   \frac{\theta}{2}|\xi|^2,
    \end{align*}
    where we used $(h_0)$ and $(h_5)$ in the last inequalities.
\end{proof}

Given $\lambda_n > 0$ and a sequence $\tilde{c}_n \rightarrow 0$, define, for each $n$ the subspace of $(H^1(\partial B_1))^2$ given by:
\begin{equation*}
	H_{\lambda_n,\tilde{c}_n} 
	= 
	\Big\{
	(u,v) \in (H^1(\partial B_1))^2: \int_{\partial B_1}(1+\alpha \tilde{c}_n)\mu_n(y)u^2dy = 1 ;\,\, \int_{\partial B_1}(1+\alpha \tilde{c}_n)\mu_n(y)v^2 dy = \lambda_n
	\Big\}.
\end{equation*}

A fundamental result in the proof of the classical Alt-Caffarelli-Friedman's monotonicity formula  \cite{FriedmanSphere} is the following Friedman-Hayman inequality \cite{FriedlanHayman}:
\begin{equation}\label{eq:FH_ineq}
	\gamma\left(
	\frac{\int_{\partial B_1} |\nabla_\theta f|^2
	d\sigma(y)}
	{\int_{\partial B_1}f^2d\sigma(y)}
	\right) 
	+ 
	\gamma\left(
	\frac{\int_{\partial B_1} |\nabla_\theta g|^2
	d\sigma(y)}{\int_{\partial B_1} g^2d\sigma(y)}
	\right)\geq 2,\ \ \text{ for every $f,g\in H^1(\partial B_1)$ with $fg\equiv 0$,}
\end{equation}
where $\gamma$ is defined in \eqref{GammaCaffareliDef}. Inspired by  \cite[Lemma 4.2]{SphereDensityAlt} and \cite[Lemma 3.10]{SoaveZilio}, we prove the following result.

\begin{lemma}
	\label{sphereLemma}
	Fix $\overline{\lambda}>1$ and let $\delta$, $\tilde{c}$, $\tilde{M}$, $\tilde{\theta}$ be positive constants. Then, for every $0 <\eta < \frac{1}{4}$, there exists $C = C(N, \overline{\lambda}, \tilde{c}, \delta, \eta, \alpha, \tilde{\theta}, \tilde{M})>0$ such that, whenever:
	\begin{equation}\label{MatrixSphereBoundsCaffareli0}
	\frac{1}{\overline{\lambda}} < \lambda_n < \overline{\lambda}, \qquad 0
	\leq 
	\epsilon_n <  (\frac{N-2}{2})^2 - \delta,
	\qquad
    \tilde{c}_n
    \to 0,
    \
    \tilde{c}_n \leq \tilde{c},
    \qquad
    k_n\geq0,
	\end{equation}
	and $\tilde{B}_n$ is a sequence of symmetric operators such that
	\begin{equation}
	\label{MatrixSphereBoundsCaffareli}
	\tilde{\theta}
	|\xi|^2\leq \langle \tilde{B}_n(y)\xi,\xi \rangle
	\quad
	\forall 
	\xi \in T_y (\partial B_1),
	\qquad
	\sup_{y \in \partial B_1}
	\|D\tilde{B}_n(y)\| \leq \tilde{M},
	\qquad
	    \sup_{y \in \partial B_1}
	    \|
	        \tilde{B}_n(y) - Id|_{T_y \partial B_1}
	    \|
	\leq
	\tilde{M}
	\tilde{c}_n,
	\end{equation}
	we have:
	\begin{multline*}
	\min_{(u,v)\in H_{\lambda_n,\tilde{c}_n}} \gamma\left(
	\int_{\partial B_1} \langle \tilde{B}_n(y)\nabla_{\theta} u, \nabla_{\theta} u \rangle + k_nu^2v^2-\epsilon_n
	\right)
	+\gamma\left(
	\frac{\int_{\partial B_1} \langle \tilde{B}_n(y)\nabla_{\theta} v, \nabla_{\theta} v \rangle + k_nu^2v^2-\epsilon_n \lambda_n}{\lambda_n}
	\right)\\
	\geq 
	2 -C(\epsilon_n + k_{n}^{-\eta} + \tilde{c}_n).
	\end{multline*}
\end{lemma}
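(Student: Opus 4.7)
My plan is to argue by contradiction, via a compactness argument that uses the uniform Hölder regularity of Theorem C to replace the symmetrization step available in the Laplacian setting of \cite{SphereDensityAlt, SoaveZilio}. First observe that if $k_n$ stays bounded, then $k_n^{-\eta}$ remains bounded below, so by choosing $C$ large enough the inequality becomes vacuous; hence I may assume $k_n \to +\infty$. Similarly, if either of the arguments of $\gamma$ blows up along the sequence, the corresponding $\gamma$ term blows up and the inequality holds trivially, so I may assume the Dirichlet-type energies of any minimizing pair stay uniformly bounded.

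Next I would verify the existence of minimizing pairs $(u_n,v_n)\in H_{\lambda_n,\tilde c_n}$ by the direct method, using the coerciveness coming from the normalization together with the positivity of the coupling $k_n u^2 v^2$, and derive the Euler-Lagrange system
\[
-\div_{\theta}\bigl(\tilde B_n \nabla_\theta u_n\bigr) + k_n u_n v_n^2 = \mu_n\bigl((1+\alpha\tilde c_n)\Lambda_{1,n} + \epsilon_n\bigr) u_n
\]
and its symmetric counterpart for $v_n$, where $\div_{\theta}$ denotes the tangential divergence associated with $\tilde B_n$. Passing to local stereographic coordinates, this becomes a competitive elliptic system with bounded, uniformly elliptic coefficients on open subsets of $\mathbb{R}^{N-1}$ (cf.\ Appendix \ref{chapter:sphereDivergence}), so Theorem C (in the sharp form of the remark following it) yields uniform $C^{0,\alpha}(\partial B_1)$ bounds for every $\alpha\in(0,1)$. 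By Arzelà-Ascoli, up to a subsequence, $u_n \to u_\infty$, $v_n \to v_\infty$ uniformly on $\partial B_1$ and weakly in $H^1(\partial B_1)$. Since $\int_{\partial B_1} k_n u_n^2 v_n^2\, d\sigma \leq C$ from the energy bound, the limit satisfies $u_\infty v_\infty \equiv 0$; moreover, by \eqref{MatrixSphereBoundsCaffareli} and $\tilde c_n\to0$, $\tilde B_n \to Id|_{T_y\partial B_1}$ and $\mu_n\to 1$ uniformly, so $(u_\infty, v_\infty)$ becomes admissible for the classical Friedman-Hayman inequality \eqref{eq:FH_ineq} and in the limit $\gamma(\cdot) + \gamma(\cdot/\lambda_\infty)\geq 2$ follows.

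The main obstacle is converting this qualitative limit into the quantitative rate $2 - C(\epsilon_n + k_n^{-\eta} + \tilde c_n)$. The $\epsilon_n$ and $\tilde c_n$ losses are routine: $\gamma$ is Lipschitz on bounded intervals, the shift by $\epsilon_n$ in the argument of $\gamma$ is direct, and replacing $\tilde B_n$ with the tangential identity perturbs the Rayleigh quotient by $O(\tilde c_n)$ by \eqref{MatrixSphereBoundsCaffareli}. For the $k_n^{-\eta}$ loss I would decompose $(u_n,v_n)$ into ``almost disjoint'' functions, e.g.\ $\tilde u_n = (u_n - v_n)_+$ and $\tilde v_n = (v_n - u_n)_+$, so that $\tilde u_n \tilde v_n \equiv 0$, and estimate $\|u_n - \tilde u_n\|_{H^1(\partial B_1)} + \|v_n - \tilde v_n\|_{H^1(\partial B_1)} \leq C k_n^{-\eta}$ for any $\eta < 1/4$. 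This rate will come from interpolating the $L^2$-smallness $\int u_n^2 v_n^2 \leq C/k_n$ against the uniform Hölder norm: on an overlap region $\{u_n, v_n \geq \delta\}$ the Hölder bound produces a ball of definite radius where both components remain comparable to $\delta$, which constrains the measure of overlap; balancing $\delta$ against this measure produces $1/4$ as the threshold exponent (with loss). Applying the classical Friedman-Hayman inequality \eqref{eq:FH_ineq} to $(\tilde u_n, \tilde v_n)$ and propagating the above perturbations through the Lipschitz function $\gamma$ should then deliver the claimed bound and close the contradiction.
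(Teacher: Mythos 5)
Your overall skeleton coincides with the paper's: existence of minimizers in $H_{\lambda_n,\tilde c_n}$, the Euler--Lagrange system with two multipliers, stereographic coordinates plus Theorem C to replace symmetrization by uniform H\"older bounds, the decomposition into $f_n=(u_n-v_n)^+$, $g_n=(u_n-v_n)^-$, and the classical Friedman--Hayman inequality \eqref{eq:FH_ineq} at the end. The soft compactness step ($u_\infty v_\infty\equiv 0$, limit inequality $\ge 2$) is correct but only yields the qualitative statement; the lemma lives entirely in the quantitative remainder, and that is where your argument has a genuine gap.

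The gap is the claimed estimate $\|u_n-\tilde u_n\|_{H^1}+\|v_n-\tilde v_n\|_{H^1}\le Ck_n^{-\eta}$, i.e.\ control of $\|\min(u_n,v_n)\|_{H^1}$. First, your mechanism for the rate --- balancing the measure of the overlap set $\{u_n,v_n\ge\delta\}$ (bounded by $C/(k_n\delta^4)$ from $\int k_nu_n^2v_n^2\le C$) against the lower bound on that measure forced by the H\"older modulus --- produces an exponent of order $1/\big(\tfrac{N-1}{\beta}+4\big)$, which is dimension-dependent and strictly worse than $1/4$; it does not reproduce the threshold $\eta<\tfrac14$. In the paper the pointwise bound $u_nv_n\le Dk_n^{-\beta/(\beta+1)}$ (whence $\min(u_n,v_n)\le D^{1/2}k_n^{-\beta/(2(\beta+1))}$, giving $1/4$ as $\beta\to1^-$) is obtained by a blowup at a point of maximal product, rescaling by $t_n=\tilde u_n(0)+\tilde v_n(0)$ in amplitude and $t_n^{1/\beta}$ in space, and invoking the exponential decay estimate of Lemma \ref{estimateLemma}-(2): one component is bounded below on a ball, which forces the other to decay like $e^{-c\sqrt{k_n}\,t_n^{1+1/\beta}}$, contradicting the assumed blowup. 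Second, and independently, even granting the pointwise smallness of $\min(u_n,v_n)$, no interpolation against a $C^{0,\beta}$ norm can control $\int_{\{u_n<v_n\}}|\nabla_\theta u_n|^2$: H\"older bounds carry no gradient information. The paper gets the gradient and cross-term estimates \eqref{squareGradientEstimate}--\eqref{innerProductEstimate} by integrating the Euler--Lagrange equations over the sublevel and superlevel sets of $u_n-v_n$ at a regular value $\epsilon$ (chosen via Morse--Sard), exploiting the sign of $\langle\tilde B_n\nabla_\theta(v_n-u_n),\nu_\epsilon^1\rangle$ on the level set and the pointwise smallness to absorb the boundary and interaction terms. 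Your proposal omits this PDE step entirely, so as written the quantitative heart of the lemma is not proved.
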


This proof is substantially harder than the one in the case where the operator is the Laplacian. Indeed, in order to obtain \eqref{eq:FH_ineq}, or to obtain \cite[Lemma 4.2]{SphereDensityAlt}, \cite[Lemma 3.10]{SoaveZilio}, an important part of the argument is to symmetrize the solutions of the underlying  minimization problem; this, in particular, allows to conclude directly that the sequence of minimizers is Lipschitz continuous, and that its level sets are circles.  The proof in our case is harder and required new ideas: since we are dealing with an operator with variable coefficients, we cannot use a symmetrization argument; instead, we rely on Theorem C, from which we obtain uniform H\"older bounds.

We now prove Lemma \ref{sphereLemma},  after which we are able to conclude the proof of Theorem \ref{AltCaffMonotonicity}.

\begin{proof}[Proof of Lemma \ref{sphereLemma}] 	
	We see that the minimization problem in the theorem is equivalent to minimizing in $H_{1,\tilde c_n}$ (by replacing $v$ with $\frac{v}{\sqrt{\lambda_n}}$):
	\begin{equation}
	\label{GammaMinimization}
	\min_{H_{1,\tilde{c}_n}} \gamma\left(
	\int_{\partial B_1} \langle \tilde{B}_n(y)\nabla_{\theta} u, \nabla_{\theta} u \rangle + k_n \lambda_n u^2v^2-\epsilon_n
	\right) 
	+ \gamma\left(
	{\int_{\partial B_1} \langle \tilde{B}_n(y)\nabla_{\theta} v, \nabla_{\theta} v \rangle + k_nu^2v^2-\epsilon_n}
	\right).
	\end{equation}
The direct method of Calculus of Variations yields the existence of a minimizer  $(u_n, v_n)$ of \eqref{GammaMinimization} satisfying:
	\begin{equation*}\label{GammaMinimization_norm}
	\int_{\partial B_1}(1+\alpha \tilde{c}_n)\mu_n(y)u_n^2d\sigma(y) = 1,
	\qquad
	\int_{\partial B_1}(1+\alpha \tilde{c}_n)\mu_n(y)v_n^2d\sigma(y)
	=
	1.
	\end{equation*}
By eventually replacing $(u,v)$ by $(|u|,|v|)$,  we may assume without loss of generality that the minimizer $(u_n,v_n)$ is nonnegative.   (notice that $ \int_{\partial B_1}\langle \tilde B_n(y)\nabla_\theta u, \nabla_\theta u \rangle d\sigma(y) =  \int_{\partial B_1}\langle \tilde{B}_n(y) \nabla_\theta |u|, \nabla_\theta |u| \rangle d\sigma(y)$). Let:
\begin{align}
	x_n &= \int_{\partial B_1} 
	\left(
	    \langle \tilde{B}_n(y)\nabla_{\theta} u_n, \nabla_{\theta} u_n \rangle + k_n \lambda_n u_n^2v_n^2 
	\right)d\sigma(y)
	-\epsilon_n, \label{def_of_x_n}\\
	y_n &= {\int_{\partial B_1} 
	\left(
	\langle \tilde{B}_n(y)\nabla_{\theta} v_n, \nabla_{\theta} v_n \rangle + k_nu_n^2v_n^2
	\right)
	d\sigma(y)-\epsilon_n}.\nonumber
	\end{align}
	There exist two Lagrange multipliers $\sigma_{1,n}$ and $\sigma_{2,n}$ such that:
	\begin{equation}
		\label{SphereEq}
		\begin{cases}
			-\div_{\partial B_1}(\tilde{B}_n(y)\nabla_{\theta} u_n) = -k_n(\lambda_n + \frac{\gamma'(y_n)}{\gamma'(x_n)})u_nv_n^2 + \frac{\sigma_{1,n}}{\gamma'(x_n)}(1+ \alpha \tilde{c}_n)\mu_n(y) u_n\\
			-\div_{\partial B_1}(\tilde{B}_n(y)\nabla_{\theta} v_n) = -k_n(1 + \frac{\lambda_n\gamma'(x_n)}{\gamma'(y_n)})v_nu_n^2 + \frac{\sigma_{2,n}}{\gamma'(y_n)}(1+\alpha\tilde{c}_n)\mu_n(y)v_n
		\end{cases}
	\end{equation}
	We divide the proof in several steps.

\noindent	\textbf{Step 1.} There exists $C = C(N, \tilde{M}, \tilde{\theta}, \delta, \overline{\lambda}, \tilde{c})>0$ such that $\|u_n\|_{H^1(\partial B_1)},\|v_n\|_{H^1(\partial B_1)}\leq C$, $\frac{1}{C}\leq \gamma'(x_n),\gamma'(y_n)\leq C$ and $0 \leq \sigma_{1,n}, \sigma_{2,n}<C$.
	
	\vspace{2mm}
	
	Fix $\phi = \phi^+ - \phi^- \in H^1(\partial B_1)$ such that $\|\phi^+\|_{H^1(\partial B_1)}^2 = \|\phi^-\|_{H^1(\partial B_1)}^2 = 1$. Then, since $(u_n, v_n)$ is the minimizer for \eqref{GammaMinimization}, and $\phi^+\cdot \phi^- = 0$, we have:
	\begin{multline*}
	    \gamma\left(
	\int_{\partial B_1} 
	\left(
    	\langle 
    	\tilde{B}_n(y)\nabla_{\theta} u_n, 
    	\nabla_{\theta} u_n 
    	\rangle 
    	+ 
    	k_n \lambda_n u_n^2v_n^2
	\right)
	-
	\epsilon_n
	\right) 
	+ \gamma\left(
	{
    	\int_{\partial B_1}
    	\left(\langle \tilde{B}_n(y)\nabla_{\theta} v_n, \nabla_{\theta} v_n \rangle + k_nu_n^2v_n^2\right)
    	-\epsilon_n
	}
	\right)\\
	\leq
	\gamma \Big(
	       \frac{
	        \int_{\partial B_1}
	    \langle
	        \tilde{B}_n(y)
	        \nabla_{\theta}
	        \phi^+
	    ,
	        \nabla_\theta
	        \phi^+
	    \rangle
	-
	    \epsilon_n
	    (1+\alpha \tilde{c}_n)
	    \mu_n(y)
	    (\phi^+)^2
	    }
	    {
	        \int_{\partial B_1}
    	    (1+\alpha \tilde{c}_n)\mu_n(y)
    	    (\phi^+)^2
	    }
	    \Big)
	    \\
	    +
	    \gamma \Big(
	       \frac{
	        \int_{\partial B_1}
	    \langle
	        \tilde{B}_n(y)
	        \nabla_{\theta}
	        \phi^-
	    ,
	        \nabla_\theta
	        \phi^-
	    \rangle
	-
	    \epsilon_n
	    (1+\alpha \tilde{c}_n)
	    \mu_n(y)
	    (\phi^-)^2
	    }
	    {
	        \int_{\partial B_1}
    	    (1+\alpha \tilde{c}_n)
    	    \mu_n(y)
	       (\phi^-)^2
        }
	    \Big).
	\end{multline*}
	Therefore, we need a uniform bound on the right-hand-side of the previous inequality; we just bound the term involving $\phi^+$, since the computations for the other term are analogous. We use equations \eqref{MatrixSphereBoundsCaffareli0} and \eqref{MatrixSphereBoundsCaffareli} to conclude
	\begin{align*}
	    \int_{\partial B_1}
	    \left(
	    \langle
	        \tilde{B}_n(y)
	        \nabla_{\theta}
	        \phi^+
	    ,
	        \nabla_\theta
	        \phi^+
	    \rangle
	-
	    \epsilon_n
	    (1+\alpha \tilde{c})
	    \mu_n(y)
	    \phi^+
	    \right)
	&\leq
        \|\tilde{B}_n(y)\|
    \cdot
        \|\phi^+\|^2_{H^1(\partial B_1)}
  \leq 
        (1+\tilde{M}\tilde c),\\
	    \int_{\partial B_1}
	    (1+\alpha \tilde{c}_n)\mu_n(y)
	    (\phi^+)^2
	&\geq
	    \int_{\partial B_1}
	    \mu_n(y)(\phi^+)^2
	  	\geq
	    \tilde{\theta}\int_{\partial B_1}(\phi^+)^2
	>
	    0.
	\end{align*}
	Thus, since $\gamma$ is monotone, there exists $C = C(N, \tilde{c}, \tilde{\theta}, \tilde{M}) = 2\gamma (\frac{(1+\tilde{M}\tilde c)}{\tilde \theta}) >0$ such that 
	  $  \gamma(x_n)+\gamma(y_n)
	\leq
	    C
	$
	for all $n \in \mathbb{N}$. 
	Since $\epsilon_n < (\frac{N-2}{2})^2 - \delta$, and $x_n,y_n\geq -\epsilon_n$, then:
	$
	    \delta
	    -
	    (\frac{N-2}{2})^2
	    \leq 
	    x_n, y_n.
	$
	Thus, due to the expression of $\gamma$ in \eqref{GammaCaffareliDef}, there exists $C = C(N, \tilde{c}, \tilde{\theta}, \delta)$ large enough such that:
	$$
	\frac{1}{C}
	\leq
	\gamma'(x_n)\leq C,
	\quad
	\frac{1}{C}
	\leq
	\gamma'(y_n)
	\leq
	C, \quad
	    \int_{\partial B_1}
	    \Big(
	    \langle
	        \tilde{B}_n(y)
	        \nabla_\theta
	        u_n
	    ,
	        \nabla_\theta
	        u_n
	    \rangle
	 +
        \lambda_n 
        k_n
        u_n^2
        v_n^2
        \Big)
        d\sigma(y)
    \leq
        C.
	$$
In particular, this implies that each one of the two terms in the last inequality are bounded; using the uniform ellipticity of $\tilde{B}_n(y)$, we obtain that $u_n$ and $v_n$ are uniformly bounded in $H^1(\partial B_1)$ with a bound depending again on $\tilde{\theta}$. Finally, as for the Lagrage's multiplier's, there exists $C = C(N,\tilde{c}, \tilde{\theta}, \delta, \overline{\lambda}, \tilde{M})$ such that
	$$
    0
    \leq
    \sigma_{1,n}
	= 
	\gamma'(x_n)
	\int_{\partial B_1} 
	\Big(
	\langle \tilde{B}_n(y)\nabla_{\theta}u_n, \nabla_{\theta}u_n \rangle +  k_n(\lambda_n+\frac{\gamma'(y_n)}{\gamma'(x_n)})u_n^2v_n^2 
	\Big)
	d\sigma(y)
	\leq 
	C,
	$$
and similarly for $\sigma_{2,n}$.
	
\noindent	\textbf{Step 2.} We check that there exists $C = C(N, \tilde{c}, \tilde{\theta}, 
	\delta, \overline{\lambda}, \alpha, \tilde{M})$ such that
	$\|u_n\|_{L^\infty(\partial B_1)}, \|v_n\|_{L^\infty(\partial B_1)} \leq C$ and moreover that, 
given $0 <\beta < 1$, there exists $D = D(N, \tilde{c}, \tilde{\theta}, 
	\delta, \overline{\lambda}, \beta, \tilde M, \alpha)$ such that $(u_nv_n)^{\frac{1}{2}+\frac{1}{2\beta}}(x) \leq D k_n^{-\frac{1}{2}}$ for all $x \in \partial B_1$. In particular:
	\begin{align*}
		&x \in \{v_n-u_n\leq 0\}
		\implies
		v_n(z)\leq D^\frac{1}{2}k_n^{-\frac{\beta}{2(\beta+1)}},\qquad x \in \{v_n-u_n\geq 0\}
		\implies
		u_n(z)\leq D^\frac{1}{2}k_n^{-\frac{\beta}{2(\beta+1)}}.
	\end{align*}
	
	\vspace{2mm}
	
	To first show the uniform $L^\infty(\partial B_1)$ bound, we notice that the functions $u_n,v_n$ are nonnegative, and $k_n\geq 0$, so by equation \eqref{SphereEq} we have
	$$-\div_{\partial B_1}
	\left(\tilde{B_n}(y)\nabla_{\theta} u_n\right) 
	\leq 
	\frac{C\sigma_{1,n}}{\gamma'(x_n)}u_n, \quad
	-\div_{\partial B_1}\left(\tilde{B_n}(y)\nabla_{\theta} v_n\right) 
	\leq 
	\frac{C\sigma_{2,n}}{\gamma'(y_n)}v_n.
	$$
	By Step 1, the sequences $\frac{\sigma_{1,n}}{\gamma'(x_n)}$, $\frac{\sigma_{2,n}}{\gamma'(y_n)}$ are uniformly bounded and the functions $u_n,v_n$ are uniformly bounded in $H^1(\partial B_1)$. Then, by a Brezis-Kato-type argument (see for instance \cite[Appendix B.2 B.3]{Struwe}), using the uniform ellipticity of $\tilde{B}_n$, we obtain a uniform $L^\infty(\partial B_1)$ bound on $u_n$ and $v_n$.
	
	Next, to prove the second part of Step 2, we suppose by contradiction that there exists a sequence of points $z_n \in \partial B_1$ such that:
	\begin{equation}
		\label{blowUpContradiction}
		k_n^{\frac{1}{2}}
		\left(
		v_n^{1+\frac{1}{\beta}}(z_n)
		u_n^{1+\frac{1}{\beta}}(z_n)
		\right)^{\frac{1}{2}}
		\rightarrow 
		\infty.
	\end{equation}
	By the uniform boundedness of $(u_n,v_n)$ we have, since $\beta>0$,
	\begin{equation}
	    \label{kGoToInfinity}
	    k_n \rightarrow \infty
	,\qquad
		k_n
		\left(
		v_n(z_n)
		u_n(z_n)
		\right)
		\rightarrow 
		\infty.
	\end{equation}	
	For each point $z_n \in \partial B_1$, we consider the parametrization of the sphere $\partial B_1$, $\phi_n: \mathbb{R}^{N-1} \rightarrow \partial B_1/\{-z_n\}$ given by the stereographic projection from that point, thus $\phi_n(0) = z_n$.
	Now fix the sequence $a_n$ for which there exists $C$ such that:
	$$
	\frac{1}{C}
	\leq
	a_n := \sqrt{
	        \frac{
	            \lambda_n
	       +
	            \frac{
	               \gamma'(y_n)
	            }{
	                \gamma'(x_n)
	            }
	       }{
	            1 
	       + 
	            \lambda_n
	            \frac{
	                \gamma'(x_n)
	            }{
	                \gamma'(y_n)
	            }
	       }
	       }
	   =
	   \sqrt
	   {
	   \frac{
	    \gamma'(y_n)
	   }{
	    \gamma'(x_n)
	   }
	   }
	   \leq
	   C
	$$
	and take the functions $\tilde{v}_n, \tilde{u}_n: \mathbb{R}^{N-1} \rightarrow \mathbb{R}$:
	$$\tilde{u}_n(z) = u_n(\phi_n(z)), \qquad 
	\tilde{v}_n(z) = a_nv_n(\phi_n(z)).$$
	This change of variables leads to the equation:
	\begin{equation}
		\label{SemiPlaneSemiSphereEq}
		\begin{cases}
			-\div_{\partial B_1}\left(
			\tilde{B}_n(y)\nabla_\theta u_n
			\right)_{y = \phi_n(z)}
			= 
			-k_n(1 + \frac{\lambda_n \gamma'(x_n)}{\gamma'(y_n)})\tilde{u}_n\tilde{v}_n^2 + \frac{\sigma_{1,n}(1+\alpha\tilde{c}_n)}{\gamma'(x_n)}\mu_n(\phi_n(z))\tilde{u}_n
			\\
			-a_n\div_{\partial B_1}\left(
		    	\tilde{B}_n(y)\nabla_\theta v_n
			\right)_{y=\phi_n(z)}
			= 
			-k_n(1 + \frac{\lambda_n\gamma'(x_n)}{\gamma'(y_n)})\tilde{v}_n\tilde{u}_n^2 
			+ 
			\frac{\sigma_{2,n}(1+\alpha\tilde{c}_n)}{\gamma'(y_n)}\mu_n(\phi_n(z))\tilde{v}_n
		\end{cases}
	\end{equation}
By Proposition \ref{DivergenceSphereProp}, in appendix, we know that:
	$$
	    \div_{\partial B_1}
	    \left(
			(\tilde{B}_n(y)\nabla_\theta u_n 
		\right)_{y=\phi_n(z)}
	=
	    (1+|z|^2)^{N-1}
	    \div_{\mathbb{R}^{N-1}}
	    \left(
	        \frac{1}{4(1+|z|^2)^{N-3}}
	        M_n(z)
	        \nabla_{\mathbb{R}^{N-1}}
	        \tilde{u}_n
	    \right)
	$$
	where $M_n(z) = (d\phi_n)^{-1}_{\phi(z)}\tilde{B}_n(\phi_n(z))(d\phi_n)_z$.
	For simplicity of notation, we define $\tilde{M}_n(z) := \frac{1}{4(1+|z|^2)^{N-3}}M_n(z)$ and $g(z) = (1+|z|^2)^{N-1}$. This allows us to rewrite equation \eqref{SemiPlaneSemiSphereEq} as:
	\begin{equation}
		\label{planeEq}
		\begin{cases}
			-\div_{\mathbb{R}^{N-1}}
			\left(
			\tilde{M}_n(z)
			\nabla_{\mathbb{R}^{N-1}}
			\tilde{u}
			\right) 
			= 
			-\frac{k_n}{g(z)}
			(1 + \frac{\lambda_n \gamma'(x_n)}{\gamma'(y_n)})\tilde{u}_n\tilde{v}_n^2 
			+ \frac
			{\sigma_{1,n}(1+\alpha\tilde{c}_n)}
			{g(z)\gamma'(x_n)}
			\mu_n(\phi_n(z))\tilde{u}_n
			\\
			-\div_{\mathbb{R}^{N-1}}\left(
			\tilde{M}_n(z)\nabla_{\mathbb{R}^{N-1}}\tilde{v}_n 
			\right) 
			= 
			-
			\frac{k_n}{g(z)}
			(1 + \frac{\lambda_n\gamma'(x_n)}{\gamma'(y_n)})\tilde{v}_n\tilde{u}_n^2 
			+ 
			\frac
			{
			    \sigma_{2,n}(1+\alpha\tilde{c}_n)
			}{
			    g(z)\gamma'(y_n)
			}
			\mu_n(\phi_n(z))\tilde{v}_n.
		\end{cases}
	\end{equation}
	By assumption, $\tilde{B}_n(y)$ has  ellipticity constant $\tilde{\theta}$, and $\|D\tilde{B}_n\| \leq \tilde{M}$.
Moreover, by Proposition \ref{DivergenceSphereProp} we know that, given the compact set $K = B_2(0) \subset \mathbb{R}^{N-1}$, there exists a constant $\tilde{C} = C(\tilde{M}, K)$ such that:
	$$
	    \langle 
	    \tilde{M}_n(y) \xi, \xi
	    \rangle_{\mathbb{R}^{N-1}}
	\geq
	    \frac{1}{4\cdot 5^{N-3}}
	    \theta
	    \langle 
	    \xi, \xi
	    \rangle_{\mathbb{R}^{N-1}}
	   \quad
	   \forall \xi,y
	   \in 
	   \mathbb{R}^{N-1},
	   \qquad
	   \|D \tilde{M}_n\|
	   \leq
	   \tilde{C},
	$$
	and
	$$
	    \frac
	    {1}
	    {5^{N-1}}
	\leq 
	   \frac{1}{g(z)}
	\leq
	    1 \quad
        \forall
        z \in \mathbb{R}^{N-1}.
	$$
	Since $(\tilde{u}_n, \tilde{v}_n)$ are uniformly bounded in $L^\infty$, satisfies the system \eqref{planeEq}, and  $\tilde{M}_n$ are uniformly elliptic over $B_2(0) \subset \mathbb{R}^{N-1}$, we are under the assumptions of Theorem C. Therefore, for each $0<\beta<1$, there exists a constant $C_\beta = C(\beta, B_2(0), \theta, M, N, \tilde{\theta}, \tilde{M})$ such that 
	\begin{equation*}
	    \label{AltCaffHolderBound1}
	    \|\tilde{u}_n\|_{C^{0,\beta}(B_1(0))}, \|\tilde{v}_n\|_{C^{0,\beta}(B_1(0))} \leq C_\beta.
	\end{equation*}
	
	    Define:
	\begin{equation*}
		t_n := \tilde{u}_n(0) + \tilde{v}_n(0);
	\end{equation*} 
we claim that both $\tilde{u}_n(0) \rightarrow 0$ and $\tilde{v}_n(0) \rightarrow 0$. We suppose, in view of a contradiction, that $\tilde{v}_n(0)\geq \overline{\delta}>0$ for all $n$. Then, by uniform convergence and boundedness of H\"older  norms, there exists a radius $R>0$ small enough such that 
	$$
	\inf_{x\in B_{2R}(0)}\tilde{v}_n(x) 
	\geq 
	\frac{\delta}{2}\qquad \text{ for all $n$}.
	$$
	From this, we conclude the differential inequality
	\begin{align*}
	    -\div_{\mathbb{R}^{N-1}}
	    (
	        \tilde{M}_n(z) 
	        \nabla \tilde{u}_n
	    )
	&=
	    -\frac{k_n}{g(z)}
			(1 + \frac{\lambda_n \gamma'(x_n)}{\gamma'(y_n)})\tilde{u}_n\tilde{v}_n^2 
			+ \frac
			{\sigma_{1,n}(1+\alpha\tilde{c}_n)}
			{g(z)\gamma'(x_n)}
			\mu_n(\phi_n(z))\tilde{u}_n
			\\
	&\leq
	    \big(
	    -
	        \frac{k_n}{g(z)}
	        \frac{\delta^2}{4}
	    +
			\frac{\sigma_{1,n}(1+\alpha\tilde{c}_n)}
			{\gamma'(x_n)}
			\mu_n(\phi_n(z))
		\big)
		\tilde{u}_n\\
	& \leq
	    \big(
	        -\frac{k_n}{5^{N-1}}
	        \frac{\delta^2}{4}
	        +
	        C^2(1+\alpha \tilde c)(1+\tilde c)^\frac{1}{2}
	    \big)
	    \tilde{u}_n \qquad \forall z \in B_{2R}(0)\subset \mathbb{R}^{N-1}.
	\end{align*}
From \eqref{kGoToInfinity} and since $k_n \rightarrow \infty$, for $n$ large enough we have: 
	$$
	-\frac{k_n}{5^{N-1}}
	        \frac{\delta^2}{4}
	        +
	        C^2(1+\alpha \tilde c)(1+\tilde c)^\frac{1}{2} \leq -\frac{k_n}
	        {5^{N-1}}
	        \frac{\delta^2}{8},
	 $$
	 and so
	 \begin{equation}
	    \label{divIneqAltCaff1}
	     -\div_{\mathbb{R}^{N-1}}
	    (
	        \tilde{M}_n(x) 
	        \nabla \tilde{u}_n
	    )
	 \leq
         -\frac{k_n}{5^{N-1}}
         \frac{\delta^2}{8}
         \tilde{u}_n.
	 \end{equation}
	Thus, by Lemma \ref{estimateLemma}-(2) and inequality \eqref{divIneqAltCaff1}, there exists $C$ and $c_2 = c_2(\beta, B_1(0), \theta, \tilde{M}, N, \tilde{\theta}, \tilde{c})$ such that
	$$
	\sup_{x \in B_{R}(0)}\tilde{u}_n(x)
	\leq
	Ce^{-c_2R\sqrt{k_n\delta^2}},
	$$
	and since $k_n \rightarrow \infty$ we have 
	$0\leq 
	k_n\tilde{u}_n(0) \leq Ck_ne^{-c_2R\sqrt{k_n\delta^2}}  \rightarrow 0.
	$.
	This contradicts the fact that:
	$
	C k_n\tilde{u}_n(0)
	\geq
	k_n 
	\tilde{v}_n(0)
	\tilde{u}_n(0)
	\to \infty$
	coming from \eqref{blowUpContradiction} and the uniform boundedness $\|\tilde{v}_n\|_{L^\infty(\partial B_1)} \leq C$.
	Thus we have that $\tilde{v}_n(0) \rightarrow 0$, and similarly $\tilde{u}_n(0) \rightarrow 0$, thus $t_n = \tilde{v}_n(0) + \tilde{u}_n(0) \rightarrow 0$, as claimed.
	
	Now define the functions $(\overline{u}_n, \overline{v}_n)$ and the matrix $\overline{M}_n(z)$ by:
	\begin{equation*}
	\label{definitionOfOverlines}
	\overline{u}_n(z) = \frac{1}{t_n}\tilde{u}_n(t_n^{\frac{1}{\beta}}z), \qquad
	\overline{v}_n(z) 
	= 
	\frac{1}{t_n}\tilde{v}_n(t_n^{\frac{1}{\beta}}z),
	\qquad
	\overline{M}_n(z)
	=
	\tilde{M}_n(t_n^\frac{1}{\beta}z)
	.
	\end{equation*}
	From \eqref{planeEq}, we have:
	\begin{equation}
		\label{overlineEqAltCaff}
		\begin{cases}
			-\div_{\mathbb{R}^{N-1}}
			\left(
			\overline{M}_n(z)
			\nabla_{\mathbb{R}^{N-1}}
			\overline{u}_n
			\right) 
			= 
			-t_n^{2+\frac{2}{\beta}}
			\frac{k_n}{g(t_n^\frac{1}{\beta}z)}
			(1 + \frac{\lambda_n \gamma'(x_n)}{\gamma'(y_n)})\overline{u}_n\overline{v}_n^2 
			+ 
			t_n^\frac{2}{\beta}
			\frac	{\sigma_{1,n}(1+\alpha\tilde{c}_n)}
			{g(t_n^\frac{1}{\beta}z)\gamma'(x_n)}
			\mu_n(\phi_n(t_n^\frac{1}{\beta}z))\overline{u}_n
			\\
			-\div_{\mathbb{R}^{N-1}}
			\left(
			\overline{M}_n(z)
			\nabla_{\mathbb{R}^{N-1}}
			\overline{v}_n
			\right) 
			= 
			-t_n^{2+\frac{2}{\beta}}
			\frac{k_n}{g(t_n^\frac{1}{\beta}z)}
			(1 + \frac{\lambda_n \gamma'(x_n)}{\gamma'(y_n)})\overline{v}_n\overline{u}_n^2 
			+ 
			t_n^\frac{2}{\beta}
			\frac	{\sigma_{2,n}(1+\alpha\tilde{c}_n)}
			{g(t_n^\frac{1}{\beta}z)\gamma'(x_n)}
			\mu_n(\phi_n(t_n^\frac{1}{\beta}z))\overline{v}_n.
		\end{cases}
	\end{equation}
	Moreover, the functions $\overline{u}_n, \overline{v}_n$ are $\beta$-H\"older, with constant
	$C_\beta$ in the set $B_{t_n^{-1/\beta}}(0)$, we have
	\begin{equation}
	    \label{normalizationAtZero}
	    \overline{u}_n(0)+\overline{v}_n(0) = 1,
	\end{equation}
Since $t_n \rightarrow 0$, ovserve that $B_1(0) \subset B_{t_n^{-1/\beta}}$. The uniform H\"older  bounds and the boundedness of the functions at $0$ by \eqref{normalizationAtZero} imply the existence of $C_\infty = 1 + C_\beta$ such that:
	\begin{equation*}
    	\label{LinfinityBoundAltCaffareli}
    	\|\overline{u}_n\|_{L^\infty(B_1(0))} \leq C_\infty,\qquad \|\overline{v}_n\|_{L^\infty(B_1(0))} \leq C_\infty,
	\end{equation*}
and by Ascoli-Arzel\'a's Theorem there exists $(\overline{u}_\infty, \overline{v}_\infty) \in C^0(B_1(0))$ such that, up to a subsequence, $(\overline{u}_n,\overline{v}_n) \rightarrow (\overline{u}_\infty,\overline{v}_\infty)$ in $C^0(B_1(0))$.
	Since $\overline{u}_\infty(0)+\overline{v}_\infty(0) = 1$, we may assume without loss of generality that $\overline{u}_\infty(0) \geq  \frac{1}{2}$. Then, there exists a $0 <\delta < 1$ and $\overline{n}$ large enough such that:
	$$\overline{u}_n(x)\geq \frac{1}{4} 
	\qquad 
	\forall n > \overline{n}, \ x \in B_{2\delta}(0).$$
Notice also that $\tilde{u}_n(0)\tilde{v}_n(0) \leq \tilde{u}_n^2(0)+\tilde{v}_n^2(0) + 2\tilde{u}_n(0) \tilde{v}_n(0) = t_n^2$, and so:
	$$k_n\tilde{u}_n^{1+\frac{1}{\beta}}(0)\tilde{v}_n^{1+\frac{1}{\beta}}(0) \leq k_n t_n^{2+\frac{2}{\beta}}.$$
	Since, by the contradiction hypothesis \eqref{blowUpContradiction}, we have   $k_n\tilde{u}_n^{1+\frac{1}{\beta}}(0)\tilde{v}_n^{1+\frac{1}{\beta}}(0) \rightarrow \infty$, then $k_nt_n^{2+\frac{2}{\beta}} \rightarrow \infty$.

	By using equation \eqref{overlineEqAltCaff}, that $k_nt_n^{2+\frac{2}{\beta}} \rightarrow \infty$ and $t_n^\frac{2}{\beta}
			\frac	{\sigma_{1,n}(1+\alpha\tilde{c}_n)}
			{g(t_n^\frac{1}{\beta}z)\gamma'(x_n)}
			\mu_n(\phi_n(t_n^\frac{1}{\beta}z))$ is bounded in $B_1$, we conclude there exists a constant $C>0$ such that:
	\begin{align}
		-
		\div_{\mathbb{R}^{N-1}}&
		(
		\overline{M}_n(z)
		\nabla_{\theta} \overline{v}_n
		) 
		= 
		\overline{v}_n\left(
	    -t_n^{2+\frac{2}{\beta}}
			\frac{k_n}{g(t_n^\frac{1}{\beta}z)}
			(1 + \frac{\lambda_n \gamma'(x_n)}{\gamma'(y_n)})\overline{u}_n^2 
			+
			t_n^\frac{2}{\beta}
			\frac	{\sigma_{2,n}(1+\alpha\tilde{c}_n)}
			{g(t_n^\frac{1}{\beta}z)\gamma'(x_n)}
			\mu_n(\phi_n(t_n^\frac{1}{\beta}z))
		\right) 
		\nonumber \\
		&\leq 
		\overline{v}_n\left(
		-t_n^{2+\frac{2}{\beta}}
		\frac{k_n}{5^{N-1}}
		(\frac{1}{4})^2 
		+ 
		\frac{t_n^{\frac{2}{\beta}}
			\sigma_{2,n}(1+\alpha \tilde c)
			(1+\tilde c)^\frac{1}{2}}{\gamma'(y_n)}
		\right) \leq 
		-Ct_n^{2+\frac{2}{\beta}}k_n\overline{v}_n \qquad \forall z \in B_{2\delta}.
		\label{overlineInequalityAltCaff}
	\end{align}
Using once again Lemma \ref{estimateLemma}-(2) and inequality \eqref{overlineInequalityAltCaff},  there exist constants $C_1$ and $C_2$ depending on the ellipticity constant $\tilde \theta$, and the bounds on the norms of $\overline{M}_n$ such that:
	\begin{equation}
	\label{overlineInequalityCoolLemaAltCaff}
	\overline{v}_n(0) \leq C_1C_\infty e^{-C_2\delta t_n^{1+\frac{1}{\beta}}k_n^{\frac{1}{2}}}.
	\end{equation}
	Multiplying inequality \eqref{overlineInequalityCoolLemaAltCaff} by $\overline{u}_n(0)$ and bounding it by $C_\infty$, we obtain:
	\begin{equation}
	    \label{overlineInequalityCoolLemaAltCaff2}
    	\overline{u}_n(0)\overline{v}_n(0) \leq C_1C_\infty^2 e^{-C_2\delta t_n^{1+\frac{1}{\beta}}k_n^{\frac{1}{2}}}.
	\end{equation}
	Raising both sides of \eqref{overlineInequalityCoolLemaAltCaff2} to the power $\frac{1}{2}+\frac{1}{2\beta}$ and taking $\tilde{C}_1 = C_1^{\frac{1}{2}+\frac{1}{2\beta}}C_\infty^{1+\frac{1}{\beta}}$ and $\tilde{C}_2 = C_2(\frac{1}{2}+\frac{1}{2\beta})$ we get:
	\begin{equation}
	\label{overlineInequalityCoolLemaAltCaff3}
	\overline{u}_n^{\frac{1}{2}+\frac{1}{2\beta}}(0)\overline{v}_n^{\frac{1}{2}+\frac{1}{2\beta}}(0) \leq \tilde{C}_1 e^{-\tilde{C}_2t_n^{1+\frac{1}{\beta}}k_n^{\frac{1}{2}}}.
	\end{equation}
	Multiplying \eqref{overlineInequalityCoolLemaAltCaff3} by $k_n^{\frac{1}{2}}t_n^{1+\frac{1}{\beta}}$ we get:
	\begin{equation}
	\label{finalIneqAltCaff5}
	k_n^{\frac{1}{2}}\tilde{u}_n^{\frac{1}{2}+\frac{1}{2\beta}}(0)\tilde{v}_n^{\frac{1}{2}+\frac{1}{2\beta}}(0)
	=
	k_n^\frac{1}{2}
	t_n^{1+\frac{1}{\beta}}\overline{u}_n^{\frac{1}{2}
		+
		\frac{1}{2\beta}}(0)\overline{v}_n^{\frac{1}{2}+\frac{1}{2\beta}}(0) 
	\leq 
	k_n^{\frac{1}{2}}t_n^{1+\frac{1}{\beta}}\tilde{C}_1 e^{-\tilde{C}_2\delta t_n^{1+\frac{1}{\beta}}k_n^{\frac{1}{2}}}
	\end{equation}
	Since 
	$
	t_n^{1+\frac{1}{\beta}}k_n^{\frac{1}{2}} \rightarrow \infty
	$ 
	we know that 
	$
	k_n^{\frac{1}{2}} t_n^{1+\frac{1}{\beta}}\tilde{C}_1 e^{-\tilde{C}_2\delta t_n^{1+\frac{1}{\beta}}k_n^{\frac{1}{2}}} \rightarrow 0
	$ 
	and, by the contradiction hypothesis \eqref{blowUpContradiction} we have 
	$
	k_n^{\frac{1}{2}}\tilde{u}_n^{\frac{1}{2}+\frac{1}{2\beta}}(0)\tilde{v}_n^{\frac{1}{2}+\frac{1}{2\beta}}(0) \rightarrow \infty
	$ 
	in contradiction with inequality \eqref{finalIneqAltCaff5}
	concluding the proof of this step. Thus, there exists $D = D(N, \tilde{c}, \tilde{\theta}, 
	\delta, \overline{\lambda}, \beta)>0$ such that 
	$
	u_nv_n \leq Dk_n^{-\frac{\beta}{\beta+1}}.
	$
	In particular, we have:
	\begin{equation*}
		\label{boundOnEquality}
		x \in \{v_n-u_n\leq 0\}
		\implies
		v_n(x)\leq D^\frac{1}{2}k_n^{-\frac{\beta}{2(\beta+1)}},\quad x \in \{v_n-u_n\geq 0\}
		\implies
		u_n(x)\leq D^\frac{1}{2}k_n^{-\frac{\beta}{2(\beta+1)}}.
	\end{equation*}
	
\noindent	\textbf{Step 3.} We claim that there exists $C = C(N, \overline{\lambda}, \tilde{c}, \delta, \beta, \alpha, \tilde{\theta}, \tilde{M})$ such that
	\begin{equation}
	\label{squareGradientEstimate}
	\int_{\{v_n> u_n\}}|\nabla_\theta u_n|^2d\sigma(y) \leq Ck_n^{-\frac{\beta}{2(\beta+1)}}, \qquad \int_{\{u_n> v_n\}}|\nabla_\theta v_n|^2
	d\sigma(y) 
	\leq 
	Ck_n^{-\frac{\beta}{2(\beta+1)}}
	\end{equation}
	and
	\begin{equation}
	\label{innerProductEstimate}
	\int_{\{u_n> v_n\}}
	\langle 
	    B_n(y) \nabla_\theta u_n,
	    \nabla_\theta v_n
	\rangle
	d\sigma(y)
	\leq 
	Ck_n^{-\frac{\beta}{2(\beta+1)}}
	,\qquad
	\int_{\{v_n> u_n\}}
	\langle 
	    B_n(y) \nabla_\theta u_n,
	    \nabla_\theta v_n
	\rangle
	d\sigma(y)
	\leq 
	Ck_n^{-\frac{\beta}{2(\beta+1)}}.
	\end{equation}

	\vspace{2mm}
	
	To show this, we fix from now on $n \in \mathbb{N}$, and we consider
	$
	\epsilon>0
	$ 
	such that if
	$
	u_n(x)-v_n(x) = \epsilon.
	$
	Then:
	$$
	v_n(x)\leq Ck_n^{-\frac{\beta}{2(\beta+1)}},\qquad
	u_n(x) \leq Ck_n^{-\frac{\beta}{2(\beta+1)}}.
	$$
	This is possible because, if $u_n(x)-v_n(x) = \epsilon$, then by Step 2 $v_n(x) \leq D^\frac{1}{2}k_n^{-\frac{\beta}{2(\beta+1)}}$ and
	$
	    u_n^2(x)-\epsilon u_n(x)
	\leq
	    Dk_n^{-\frac{\beta}{(\beta+1)}}.
	$
	Thus, by taking $\epsilon \leq \frac{Dk_n^{-\frac{\beta}{\beta+1}}}{2(1+\|u_n\|_{L^\infty(\partial B_1)})}$, we obtain $u_n(x) \leq 2^\frac{1}{2}D^\frac{1}{2}k_n^{-\frac{\beta}{2(\beta+1)}}$. Similarly, we obtain $v_n(x) \leq 2^\frac{1}{2}D^\frac{1}{2}k_n^{-\frac{\beta}{2(\beta+1)}}$ for $x \in \{u_n-v_n = \epsilon\}$. In particular, we conclude the following statements:
	\begin{align}
	    \label{setImplicationsAltCaff}
	    &x \in \{u_n-v_n\leq \epsilon\}
	    \Longrightarrow
	    u_n(x)
	    \leq 
	    Ck_n^{-\frac{\beta}{2(\beta+1)}},\qquad 
	    x \in \{u_n-v_n\geq\epsilon\}
	    \Longrightarrow
	    v_n(x)
	    \leq
	    Ck_n^{-\frac{\beta}{2(\beta+1)}}
	\end{align}

	By Morse-Sard's theorem (see for instance the version in \cite[Lemma 2.96]{AmbrosioFusco}) we can also suppose that 
	$
	\epsilon
	$ 
	is such that the set
	$
	\{u_n-v_n=\epsilon\}
	$ is an $N-2$ dimensional submanifold in the sphere $\partial B_1$. Now we integrate equation \eqref{SphereEq} for $u_n$ in the subset 
	$
	\{u_n-v_n \geq \epsilon\}
	$, and using the divergence theorem one obtains:
	\begin{align}
	-\int_{\{u_n-v_n=\epsilon \}}
	&\langle\tilde{B}_n(y)  \nabla_\theta u_n,  (\nu_{\epsilon}^1)_{y}\rangle_{\partial B_1}
	d\mathcal{H}^{N-2} \nonumber \\
&	=
	\int_{\{u_n-v_n\geq \epsilon\}}
	\left(
	-k_n(
	\lambda_n 
	+ 
	\frac{\gamma'(y_n)}{\gamma'(x_n)})
	u_nv_n^2 
	+ 
	\frac{\sigma_{1,n}(1+\alpha \tilde{c}_n)}{\gamma'(x_n)}\mu_n(y)
	u_n
	\right)
	d\sigma(y)	, \label{unEquationAltCaffSphere}
	\end{align}
	where 
	$
	(\nu_{\epsilon}^1)_y \in T_y \partial B_1
	$ 
	is the exterior normal to the set 
	$
	\{u_n-v_n\geq \epsilon\}
	$. Integrating equation \eqref{SphereEq} in all of $\partial B_1$, we obtain: 
	\begin{equation*}
	    \int_{\partial B_1}
	    k_n(\lambda_n + \frac{\gamma'(y_n)}{\gamma'(x_n)})u_nv_n^2 
	    d\sigma(y)
	=
	    \int_{\partial B_1}
	    \frac{\sigma_{1,n}(1+\alpha \tilde{c}_n)}{\gamma'(x_n)}\mu_n(y)
	    u_n
	    d\sigma(y).
	\end{equation*}
	Thus, by Step 1, we conclude that there exists $C>0$ such that:
	\begin{equation}
		\label{boundOnInteraction}
			\int_{\partial B_1}
			k_n v_n^2u_nd\sigma(y)
			\leq
			C\qquad
			\int_{\partial B_1}
			k_n u_n^2v_n
			d\sigma(y)
			\leq
			C\qquad \text{	for all $n \in \mathbb{N}$.}
	\end{equation}
	With \eqref{boundOnInteraction}, we conclude that the right-hand-side of \eqref{unEquationAltCaffSphere} is uniformly bounded in $n$ from above and below thus there exists $C>0$ such that:
	\begin{equation}
		\label{boundOnBoundaryIntegral1}
		\Big|
		\int_{\{u_n-v_n=\epsilon \}}
		\langle\tilde{B}_n(y)\nabla_\theta u_n,  (\nu_{\epsilon}^1)_y\rangle 
		d\mathcal{H}^{N-2}
		\Big| 
		\leq 
		C.
	\end{equation}
	We can do the same with $v_n$:
	\begin{equation}
		\label{boundOnBoundaryIntegral2}
		\Big|
		\int_{\{u_n-v_n=\epsilon\}}
		\langle \tilde{B}_n(y)\nabla_\theta v_n, (\nu_{\epsilon}^1)_y\rangle 
		d\mathcal{H}^{N-2}
		\Big|
		\leq 
		C.
	\end{equation}

	Now we multiply the equation \eqref{SphereEq} by $u_n$ and integrate in the set
	$
	\{u_n-v_n \leq \epsilon\}
	$. Then, by \eqref{setImplicationsAltCaff} for $x \in \{u_n-v_n\leq \epsilon\}$, we have $u_n(x) \leq Ck_n^{-\frac{\beta}{2(\beta+1)}}$,
	and so:
	\begin{multline*}
	\Big|
	\int_{\{u_n-v_n\leq \epsilon\}} 
	\langle \tilde{B}_n(y)\nabla_\theta u_n, \nabla_\theta u_n \rangle
	d\sigma(y)
	- 
	\int_{\{u_n-v_n=\epsilon\}} 
	\langle \tilde{B}_n(y) \nabla_\theta u_n,  (\nu_{\epsilon}^1)_y \rangle 
	u_n(y)
	d\mathcal{H}^{N-2}
	\Big| \\
	= \Big|
	\int_{\{u_n-v_n\leq \epsilon\}}
	\left(
    	-k_n(\lambda_n + \frac{\gamma'(y_n)}{\gamma'(x_n)})u_n^2v_n^2 + \frac{\sigma_{1,n}}{\gamma'(x_n)}u_n^2
	\right)
	d\sigma(y)
	\Big|\leq Ck_n^{-\frac{\beta}{2(\beta+1)}}
	\end{multline*}
	(since $-(\nu_{\epsilon}^1)$ is the exterior normal to $\{u_n-v_n \leq \epsilon\}$), the right hand side is bounded by 
	$
	Ck_n^{-\frac{\beta}{2(\beta+1)}}
	$ 
	by using equation \eqref{setImplicationsAltCaff} and \eqref{boundOnInteraction}. We can also do the same for $v_n$ by integrating in $\{u_n-v_n\geq \epsilon\}$ and we obtain the following bounds:
	$$
	\Big|
	\int_{\{u_n-v_n\leq \epsilon\}} 
	\langle \tilde{B}_n(y)\nabla_\theta u_n, \nabla_\theta u_n \rangle
	d\sigma(y)
	- 
	\int_{\{u_n-v_n=\epsilon\}} 
	\langle \tilde{B}_n(y) \nabla_\theta u_n,  (\nu_{\epsilon}^1)_y \rangle
	u_n 
	d\mathcal{H}^{N-2}
	\Big| 
	\leq 
	Ck_n^{-\frac{\beta}{2(\beta+1)}},
	$$
	$$
	\Big|
	\int_{\{u_n-v_n\geq \epsilon\}} 
	\langle \tilde{B}_n(y)\nabla_\theta v_n, \nabla_\theta v_n \rangle 
	d\sigma(y)
	+ 
	\int_{\{u_n-v_n=\epsilon\}} 
	\langle \tilde{B}_n(y) \nabla_\theta v_n,  (\nu_{\epsilon}^1)_y \rangle 
	v_n 
	d\mathcal{H}^{N-2}
	\Big| \leq Ck_n^{-\frac{\beta}{2(\beta+1)}}.
	$$
	The bound for $v_n$ has an inverted sign for the integral in $\{u_n-v_n=\epsilon\}$ since the exterior normal to $\{u_n-v_n\geq \epsilon\}$ is simply $-(\nu_{\epsilon}^1)$, because $(\nu_{\epsilon}^1)$ is the exterior normal to $\{u_n-v_n\leq \epsilon\}$.
	
	Summing up both equations we obtain that:
	\begin{align}
	&\Big|
	\int_{\{u_n-v_n\leq \epsilon\}} \langle \tilde{B}_n(y)\nabla_{\theta} u_n, \nabla_{\theta} u_n \rangle 
	d\sigma(y)
	+ \int_{\{u_n-v_n\geq \epsilon\}} \langle \tilde{B}_n(y)\nabla_{\theta} v_n, \nabla_{\theta} v_n \rangle 
    d\sigma(y)
	\nonumber \\
	&+ 
	\int_{\{u_n-v_n=\epsilon\}}
	\left(
	\langle \tilde{B}_n(y) (\nabla_{\theta} v_n - \nabla_{\theta} u_n),  (\nu_{\epsilon}^1)_y \rangle u_n - \epsilon\langle \nabla \tilde{B}_n(y)\nabla_\theta v_n,  (\nu_{\epsilon}^1)_y \rangle
	\right)
	d\mathcal{H}^{N-2}
	\Big| 
	\leq 
	Ck_n^{-\frac{\beta}{2(\beta+1)}}. \label{essentialEquationaltCaff3}
	\end{align}
	Now we make the observation that
	$$
	\nabla_{\theta}\left(
	v_n-u_n
	\right) 
	= 
	\big|\nabla_\theta \left(
	v_n-u_n
	\right)\big| 
	(\nu^1_{\epsilon})_y,
	$$
	since 
	$
	\nu_{\epsilon}^1
	$
	is the normal exterior to the level set
	$
	\{u_n-v_n\geq\epsilon\}
	$
	. We conclude:
	$$ 
	\langle \tilde{B}_n(y) \nabla_{\theta} (v_n - u_n),  (\nu_{\epsilon}^1)_y \rangle 
	= 
	\big|
	\nabla_\theta \big(v_n-u_n\big) 
	\big|
	\langle\tilde{B}_n(y) (\nu^1_{\epsilon})_y, (\nu^1_{\epsilon})_y \rangle 
	\geq 
	0.
	$$
	Thus, using the fact that the integrand below has a sign, we know that:
	\begin{align}
	&\Big|
	\int_{\{u_n-v_n=\epsilon\}} \langle \tilde{B}_n(y) (\nabla_{\theta} v_n - \nabla_{\theta} u_n), (\nu_{\epsilon}^1)_y \rangle u_nd\mathcal{H}^{N-2}
	\Big| 
\nonumber \\
	&\leq 
	\|u_n\|_{L^\infty(\{u_n-v_n=\epsilon\})}
	\Big|
	\int_{\{u_n-v_n=\epsilon\}} \langle \tilde{B}_n(y) (\nabla_{\theta} v_n - \nabla_{\theta} u_n),  (\nu_{\epsilon}^1)_y \rangle d\mathcal{H}^{N-2}
	\Big| 
	\leq 
	Ck_n^{-\frac{\beta}{2(\beta+1)}}
	\label{diferenceLineIntegral1}\end{align}
	where we have used  equations \eqref{setImplicationsAltCaff}, \eqref{boundOnBoundaryIntegral1} and \eqref{boundOnBoundaryIntegral2}.	Similarly, we have that:
	\begin{equation}
	\label{lineIntegral2}
	\Big|
	\int_{\{u_n-v_n=\epsilon\}} \epsilon\langle  \tilde{B}_n(y)\nabla_\theta v_n,(\nu_{\epsilon}^1)_y \rangle d\mathcal{H}^{N-2}
	\Big| 
	\leq 
	C\epsilon.
	\end{equation}
	From equations \eqref{diferenceLineIntegral1}, \eqref{lineIntegral2} and \eqref{essentialEquationaltCaff3} we obtain:
	$$
	\int_{\{u_n-v_n\geq \epsilon\}} \langle \tilde{B}_n(y)\nabla_{\theta} u_n, \nabla_{\theta} u_n \rangle d\sigma(y) + \int_{\{u_n-v_n\leq \epsilon\}} \langle \tilde{B}_n(y)\nabla_{\theta} v_n, \nabla_{\theta} v_n \rangle d\sigma(y)
	\leq 
	Ck_n^{-\frac{\beta}{2(\beta+1)}} + C\epsilon.
	$$
	By making $\epsilon$ go to zero we conclude that:
	$$
	\int_{\{u_n-v_n>0\}} \langle \tilde{B}_n(y)\nabla_{\theta}u_n, \nabla_{\theta}u_n  \rangle d\sigma(y)
	+ 
	\int_{\{u_n-v_n<0\}} \langle \tilde{B}_n(y)\nabla_{\theta}v_n, \nabla_{\theta}v_n\rangle d\sigma(y) 
	\leq 
	Ck_n^{-\frac{\beta}{2(\beta+1)}}.
	$$
	This shows \eqref{squareGradientEstimate}.
	
	Finally, by multiplying the equation for $v_n$ in \eqref{SphereEq} by $\min\{u_n,v_n\}$ and integrating in the entire sphere, we obtain an estimate of the form:
	$$
	    \Big|
	        \int_{\{u_n<v_n\}}
	        \langle 
    	        B_n(y)
    	        \nabla_\theta
    	        u_n,
    	        \nabla_\theta
    	        u_n
	        \rangle
	        d\sigma(y)
	    +
	        \int_{\{v_n\leq u_n\}}
	        \langle
	            B_n(y)
	            \nabla_\theta
	            u_n,
	            \nabla_\theta
	            v_n
	        \rangle
	        d\sigma(y)
	    \Big|
	\leq
	    Ck_n^{-\frac{\beta}{2(\beta+1)}},
	$$
	thus using the first estimate given by \eqref{squareGradientEstimate} we can find $C = C(N, \overline{\lambda}, \tilde{c}, \delta, \beta, \alpha, \tilde{\theta}, \tilde{M})$ such that
	$$
	\Big|
	    \int_{\{v_n\leq u_n\}}
	        \langle
	            B_n(y)
	            \nabla_\theta
	            u_n,
	            \nabla_\theta
	            v_n
	        \rangle
	        d\sigma(y)
	\Big|
	\leq
	    Ck_n^{-\frac{\beta}{2(\beta+1)}},
	$$
which is the first estimate in \eqref{innerProductEstimate}. To obtain the second one we proceed in an analogous way, this time multiplying the equation for $v_n$ in \eqref{SphereEq} by $\min\{u_n,v_n\}$.
	
\noindent	\textbf{Step 4. } Conclusion of the proof of the lemma. Take the functions 
	$
	f_n = (u_n-v_n)^+, g_n = (u_n-v_n)^- \in H^1(\partial B_1)
	$. Using the classical Friedman-Hayman inequality in the sphere \eqref{eq:FH_ineq}, we obtain:
	$$
	2
	\leq 
	\gamma\left(
	\frac{\int_{\partial B_1} |\nabla_\theta f_n|^2
	d\sigma(y)}
	{\int_{\partial B_1}f_n^2d\sigma(y)}
	\right) 
	+ 
	\gamma\left(
	\frac{\int_{\partial B_1} |\nabla_\theta g_n|^2
	d\sigma(y)}{\int_{\partial B_1} g_n^2d\sigma(y)}
	\right).
	$$
	We compute the $L^2(\partial B_1)$ norm of the gradient $\nabla_\theta f_n$: from $
	\sup_{y \in \partial B_1}\|\tilde{B}_n(y)|_{T_y \partial B_1}- Id|_{T_y \partial B_1}\| \leq \tilde{M}\tilde{c}_n
	$, the uniform boundedness of $f_n$, and the estimates 	\eqref{innerProductEstimate}, \eqref{squareGradientEstimate} proved in Step 3,
	\begin{align*}
	\int_{\partial B_1} &|\nabla_\theta f_n|^2 d\sigma(y)\leq \int_{\partial B_1}\langle\tilde{B}_n(y)\nabla_\theta f_n, \nabla_\theta f_n\rangle d\sigma(y)+	C\tilde{c}_n \\ 
	&=\int_{\{u_n>v_n\}} \left(\langle \tilde{B}_n(y)\nabla_\theta u_n, \nabla_\theta u_n \rangle + \langle \tilde{B}_n(y)\nabla_\theta v_n,\nabla_\theta v_n \rangle - 2\langle \tilde{B}_n(y)\nabla_\theta u_n, \nabla_\theta v_n \rangle\right)d\sigma(y) +	C\tilde{c}_n\\
&\leq   \int_{\partial B_1}\langle \tilde{B}_n(y)\nabla_\theta u_n, \nabla_\theta u_n \rangle  d\sigma(y)	+ C\tilde{c}_n+ Ck_n^{-\frac{\beta}{2(\beta+1)}} \\
&\leq 	\left(\int_{\partial B_1}\left(\langle \tilde{B}_n(y)\nabla_\theta u_n, \nabla_\theta u_n \rangle + k_nu_n^2v_n^2\right)d\sigma(y)- \epsilon_n\right) + C \left(\epsilon_n+k_n^{-\frac{\beta}{2(\beta+1)}} + \tilde{c}_n\right)\\
&\leq x_n 	+  C\left(\epsilon_n+k_n^{-\frac{\beta}{2(\beta+1)}} + \tilde{c}_n\right),
\end{align*}
where we recall from \eqref{def_of_x_n} the definition of $x_n$. 	On the other hand, by \label{GammaMinimization_norm},
	\begin{align*}
	1&=	\int_{\partial B_1}(1+\alpha \tilde{c}_n)\mu_n(y)u_n^2d\sigma(y) \leq \int_{\partial B_1} u_n^2+C\tilde c_n = \int_{\{u_n>v_n\}}u_n^2+\int_{\{v_n>u_n\}}u_n^2+C\tilde c_n\\
	   &=\int_{\partial B_1} f_n^2 + \int_{\{u_n>v_n\}} (2u_nv_n-v_n^2) + \int_{\{v_n>u_n\}}u_n^2+C\tilde c_n \leq \int_{\partial B_1} f_n^2 + C \tilde c_n+ Ck_n^{-\frac{\beta}{\beta+1}},
	\end{align*}
where we used estimates from Step 2. Then
	$$
	    \frac
	    {\int_{\partial B_1}
	        |\nabla_\theta f_n|^2
	        d\sigma(y)
	    }
	    {\int_{\partial B_1}
	        |f_n|^2
	        d\sigma(y)
	    }
	   \leq
	   \frac{x_n + C(\epsilon_n + \tilde{c}_n + k_n^\frac{-\beta}{2(\beta+1)})}{1-C\tilde{c}_n-Ck_n^{-\frac{\beta}{\beta+1}}}
	   \leq 
	   x_n
	   +
	   C'
	   (\epsilon_n + \tilde{c}_n + k_n^\frac{-\beta}{2(\beta+1)}).
	$$
Using moreover  the monotonicity and concavity of $\gamma$, we have:
	$$
	2
	\leq 
	\gamma\left(
	\frac{\int_{\partial B_1} |\nabla_\theta f_n|^2
	d\sigma(y)}
	{\int_{\partial B_1}f_n^2
	d\sigma(y)}
	\right) 
	+ 
	\gamma\left(
	\frac{\int_{\partial B_1} |\nabla_\theta g_n|^2
	d\sigma(y)}{\int_{\partial B_1} g_n^2d\sigma(y)}
	\right) 
	\leq 
	\gamma(x_n) 
	+ 
	\gamma(y_n) 
	+ 
	C\left(
	\epsilon_n
	+
	k_n^{-\frac{\beta}{2(\beta+1)}} 
	+ 
	\tilde{c}_n
	\right).$$
	Thus, we obtain the desired bound:
	$$
	2-C(\epsilon_n+k_n^{-\frac{\beta}{2(\beta+1)}} + \tilde{c}_n) 
	\leq  
	\gamma(x_n) + \gamma(y_n).$$
Observing that an arbitrary choice of $\beta \in ]0,1[$ yields an arbitrary choice of $\frac{\beta}{2(\beta+1)} \in ]0,\frac{1}{4}[$, we conclude the proof of the lemma.
\end{proof}

We are now able to prove the Alt-Caffarelli-Friedman- type monotonicity formula.

\begin{proof}[Proof of Theorem \ref{AltCaffMonotonicity}]
	We start by computing the derivative of 
	$
	\log\left(
	\frac{J_{1,n}(r)J_{2,n}(r)}{r^4}
	\right)
	$:
\begin{multline*}
	\frac{d}{dr}\log\left(
	\frac{J_{1,n}(r)J_{2,n}(r)}{r^4}
	\right)
	=
	-\frac{4}{r}\\
	+ 
	\frac{\int_{\partial B_r}\left(
		\langle A_n(y) \nabla u_{1,n}, \nabla u_{1,n}  \rangle
		-
		M_na_n(y)|u_{1,n}|^2|u_{2,n}|^2
		-
		u_{1,n}f_{1,n}(y,u_{1,n})|y|^{2-N}d\sigma(y)
		\right) }
	{\int_{B_r}\left(
		\langle A_n(y) \nabla u_{1,n}, \nabla u_{1,n} \rangle
		-
		M_na_n(y)|u_{1,n}|^2|u_{2,n}|^2
		-
		u_{1,n}f_{1,n}(y,u_{1,n})|y|^{2-N}dy
		\right)}\\
	+ 
	\frac{\int_{\partial B_r}\left(
		\langle A_n(y) \nabla u_{2,n}, \nabla u_{1,n}  \rangle
		-
		M_na_n(y)|u_{2,n}|^2|u_{1,n}|^2
		-
		u_{2,n}f_{2,n}(y,u_{2,n})|y|^{2-N}d\sigma(y)
		\right) }
	{\int_{B_r}\left(
		\langle A_n(y) \nabla u_{2,n}, \nabla u_{2,n}  \rangle
		-
		M_na_n(y)|u_{2,n}|^2|u_{1,n}|^2
		-
		u_{2,n}f_{2,n}(y,u_{2,n})|y|^{2-N}dy
		\right)}.
\end{multline*}
	Since the hypothesis for Lemma \ref{derivativeLemma} are satisfied, we obtain:
	\begin{equation}
	\label{OkEquationCaff}
	\frac{d}{dr}\log\left(
	\frac{J_{1,n}(r)J_{2,n}(r)}{r^4}
	\right)
	\geq
	-\frac{2}{r}\left(
	2
	-
	\gamma\left(
	\Lambda_{1,n}(r)
	\right)
	-
	\gamma\left(
	\Lambda_{2,n}(r)
	\right)
	\right)
	\end{equation}
Recalling the definition of the operator $B_n$ from Definition \ref{definition:operatorB} and the equivalent formulation of $\Lambda_{1,n}$ written in \eqref{eq:equivalent_LAMBDA}, 
	we can use the fact that $M_n < 0$, hypothesis ($h_2$) and the monotonicity of $\gamma$ to conclude:
	$$
	\gamma(\Lambda_{1,n}(r)) 
	\geq 
	\gamma\left(
	\frac{
		r^2\int_{\partial B_r}
		\left(
		\langle B_n(y) \nabla_{\theta} u_{1,n}, \nabla_{\theta} u_{1,n} \rangle -
		M_na_n(y)|u_{1,n}|^{2} |u_{2,n}|^2
		-
		(1+\alpha(rc_n))\epsilon_n \mu_n(y)u_{1,n}^2 
		\right)}
	{
		\int_{\partial B_r}
		(1+\alpha(rc_n))\mu_n(y)u_{1,n}^2
	}
	\right).
	$$

In order to rewrite the above as integrals over $\partial B_1$, we consider the change of variables given by:
	\begin{equation}
	\label{changeOfVarAltCaff}
	u_{i,n,r}(z) 
	= 
	\frac{u_{i,n}(rz)}
	{\sqrt{\frac{1}{r^{N-1}}\int _{\partial B_r}(1+\alpha(rc_n))\mu_n(y)u_{1,n}^2}}
	\end{equation}
and denote, for convenience,
	$$
	d_{n,r} = \frac{1}{r^{N-1}}\int_{\partial B_r}(1+\alpha(rc_n))\mu_n(y) u_{1,n}^2d\sigma(y)\quad \text{ and }\quad	m_{n,r} = (1+\alpha(rc_n)).
	$$ 
Therefore,
	\begin{multline*}
	\frac{
		r^2\int_{\partial B_r} \langle B_n(y) \nabla_{\theta} u_{1,n}, \nabla_{\theta} u_{1,n} \rangle - M_na_n(y)|u_{2,n}|^{2} |u_{1,n}|^2-\epsilon_nm_{n,r} \mu_n(y)u_{1,n}^2 d}
	{\int_{\partial B_r}
		m_{n,r}\mu_n(y)u_{1,n}^2}\\
	=
	{\int_{\partial B_1} \left(
		\langle B_n(rz) \nabla_\theta u_{1,n,r}, \nabla_{\theta} u_{1,n,r}  \rangle 
		-    
		d_{n,r}M_nr^2a_n(rz)|u_{2,n,r}|^2|u_{1,n,r}|^2
		-
		r^2\epsilon_nm_{n,r}\mu_n(rz)u_{1,n,r}^2
		\right)}
\end{multline*}
	and
\begin{multline*}
	\frac{
		r^2\int_{\partial B_r} \langle B_n(y) \nabla_{\theta} u_{2,n}, \nabla_{\theta} u_{2,n} \rangle - M_na_n(y)|u_{2,n}|^{2} |u_{1,n}|^2
		-
		\epsilon_nm_{n,r} \mu_n(y)u_{2,n}^2 }
	{
		\int_{\partial B_r}
		m_{n,r}\mu_n(y)u_{2,n}^2
	}\\
	=
	\frac{
		{\int_{\partial B_1} \left(
			\langle B_n(rz) \nabla_\theta u_{2,n,r}, \nabla_{\theta} u_{2,n,r}  \rangle 
			-    
			d_{n,r}M_nr^2a_n(rz)|u_{2,n,r}|^2|u_{1,n,r}|^2
			-
			r^2\epsilon_nm_{n,r}\mu_n(rz)u_{2,n,r}^2
			\right)}
	}
	{
		\int_{\partial B_1}
		m_{n,r}\mu_n(y)u_{2,n,r}^2
	}.
\end{multline*}
Thus, by hypotheses ($h_0$), ($h_1$), and Lemma \ref{MatrixBoundsOnB}, we know that:
	$$
	\sup_{y \in \partial B_1}\|B_n(ry)|_{T_y \partial B_1}-Id_{T_y \partial B_1}\| 
	\leq 
	Cc_nr,
	$$
while by $(h_3)$  we know that there exists $w>0$ such that:
	\begin{equation*}
	\label{inequalityDnAltCaff}
	d_{n,r} = \frac{1}{r^{N-1}}\int_{\partial B_r}(1+\alpha c_nr)\mu_n(y)u_{1,n}^2 
	\geq 
	w.
	\end{equation*}
Combining this with the monotonicity of $\gamma$, and since $M_n<0$, we conclude:
\begin{multline*}
	\gamma\left(
	\int_{\partial B_1} \left(
	\langle B_n(rz) \nabla_\theta u_{1,n,r}, \nabla_{\theta} u_{1,n,r}  \rangle 
	-    
	d_{n,r}M_nr^2a_n(rz)|u_{2,n,r}|^2|u_{1,n,r}|^2
	-
	r^2
	\epsilon_nm_{n,r}\mu_n(rz)u_{1,n,r}^2
	\right)
	\right)\\
	+\gamma\left(
	\frac{
		{\int_{\partial B_1} \left(
			\langle B_n(rz) \nabla_\theta u_{2,n,r}, \nabla_{\theta} u_{2,n,r}  \rangle 
			-    
			d_{n,r}M_nr^2a_n(rz)|u_{2,n,r}|^2|u_{1,n,r}|^2
			-
			r^2
			\epsilon_nm_{n,r}\mu_n(rz)u_{2,n,r}^2
			\right)}
	}
	{
		\int_{\partial B_1}
		m_{n,r}\mu_n(rz)u_{2,n,r}^2
	}
	\right)\\
	\geq
	\gamma\left(
	\int_{\partial B_1} \left(
	\langle B_n(rz) \nabla_\theta u_{1,n,r}, \nabla_{\theta} u_{1,n,r}  \rangle 
	-    
	wM_nr^2a_n(rz)|u_{2,n,r}|^2|u_{1,n,r}|^2
	-
	r^2
	\epsilon_nm_{n,r}\mu_n(rz)u_{1,n,r}^2
	\right)
	\right)\\
	+\gamma\left(
	\frac{
		{\int_{\partial B_1} \left(
			\langle B_n(rz) \nabla_\theta u_{2,n,r}, \nabla_{\theta} u_{2,n,r}  \rangle 
			-    
			wM_nr^2a_n(rz)|u_{2,n,r}|^2|u_{1,n,r}|^2
			-
			r^2
			\epsilon_nm_{n,r}\mu_n(rz)u_{2,n,r}^2
			\right)}
	}
	{
		\int_{\partial B_1}
		m_{n,r}\mu_n(rz)u_{2,n,r}^2
	}
	\right).
\end{multline*}
	By the change of variables \eqref{changeOfVarAltCaff}, we have:
	$$
	\int_{\partial B_1} m_{n,r}\mu_n(rz)u_{1,n,r}d\sigma(z) 
	= 
	1,
	$$
	and, by \eqref{changeOfVarAltCaff} and hypothesis $(h_3)$:
	$$
	\frac{1}{\lambda}
	\leq
	\int_{\partial B_1} m_{n,r}\mu_n(rz)u_{2,n,r}^2d\sigma(z)
	=
	\frac
	{
	    \int_{\partial B_r}
	    \mu_n(y)u_{2,n}^2d\sigma(y)
	}
	{
	    \int_{\partial B_r}
	    \mu_n(y)u_{1,n}^2d\sigma(y)
	}
	\leq
	\lambda.
	$$
	
	We can now apply Lemma \ref{sphereLemma} with $\tilde B_n(z) = B_n(rz)$, $\tilde{c}_n = c_nr$, $\lambda_n = \int_{\partial B_1} m_{n,r}\mu_n(rz)u_{2,n,r}^2d\sigma(z)$, $\tilde{\epsilon}_n = \epsilon_n r^2<\epsilon_nR_n<(\frac{N-2}{2})^2-\delta$ and $k_n = wM_nr^2\min(a_n)$, concluding the existence of $C>0$ such that, for $\eta \in ] 0, \frac{1}{4}[$:
\begin{multline*}
\gamma(\Lambda_{1,n}(r)) + \gamma(\Lambda_{2,n}(r)) \nonumber\\
	\geq \gamma\left(
	\int_{\partial B_1} \left(
	\langle B_n(rz) \nabla_\theta u_{1,n,r}, \nabla_{\theta} u_{1,n,r}  \rangle 
	-    
	wM_nr^2a_n(rz)|u_{2,n,r}|^2|u_{1,n,r}|^2
	-   
	r^2
	\epsilon_n m_{n,r}\mu_n(rz)u_{1,n,r}^2
	\right)
	\right) \nonumber\\
	+\gamma\left(
	\frac{
		\int_{\partial B_1} \left(
		\langle B_n(rz) \nabla_\theta u_{2,n,r}, \nabla_{\theta} u_{2,n,r}  \rangle 
		-    
		wM_nr^2a_n(rz)|u_{2,n,r}|^2|u_{1,n,r}|^2
		-   
		r^2\epsilon_nm_{n,r}\mu_n(rz)u_{2,n,r}^2
		\right)}
	{
		\int_{\partial B_1}m_{n,r}\mu_n(rz) u_{2,n,r}
	}
	\right) \nonumber\\ 
	\geq 
	2
	-
	C\left(             
	-
	|M_n|^{-\eta}
	r^{-2\eta} 
	- 
	\epsilon_nr^2 
	- 
	c_nr
	\right)	
\end{multline*}
	
Thus, combining this inequality with \eqref{OkEquationCaff}, we see that:
	\begin{align*}
		\frac{d}{dr}\left(\log\left(  
		\frac{J_{1,n}(r)J_{2,n}(r)}{r^4}
		\right)
		\right)
		&\geq 
		-
		\frac{2}{r}\left(2
		-
		C\left(             
		-
		M_n^{-\eta}
		r^{-2\eta} 
		- 
		\epsilon_nr^2 
		- 
		c_nr
		\right)
		-
		2
		\right) \\
		&= 
		-C\left(
		|M_n|^{-\eta}
		r^{-2\eta-1} - \epsilon_nr - c_n
		\right),
	\end{align*}
and the proof is finished.	
\end{proof}


\begin{remark}\label{rem_finalSection4} In case $\gamma>1$, a result like this also holds true. The only necessary changes are in the definitions of $J_{i,n}$ and $\Lambda_{i,n}$, where the terms $M_n a_n(y)  |u_{1,n}|^2|u_{2,n}|^2$ should be replaced by $M_n a_n(y)  |u_{1,n}|^{\gamma+1}|u_{2,n}|^{\gamma+1}$, and in the proof of Lemma \ref{sphereLemma}, whenever Lemma \ref{estimateLemma}-(2) is used, one should use instead Lemma  \ref{estimateLemma}-(1).
\end{remark}

\begin{remark}\label{rem:dimension} For this section we will consider the dimension $N\geq 3$, since we need it for the classical Alt-Caffarelli-Friedman formula. Thus, to obtain Theorem \ref{DesiredTheorem} for dimensions $N\leq 2$, if $u_\beta(x)$, $x\in \mathbb{R}^N$, is a solution to the system \eqref{equation}, we consider the new vector solution $\tilde{u}_\beta(x,y) =u_\beta(x) $ with $x\in \mathbb{R}^N$, $y\in \mathbb{R}^{3-N}$, obtaining a system in  dimension $N=3$. This new system will still be of type \eqref{equation}, and one can apply the Alt-Caffarelli-Friedman type formula.
\end{remark}

\section{Interior Lipschitz bounds}\label{sec:Lip}
 In this section, we conclude the proof of Theorem \ref{DesiredTheorem}. As observed in  Remark \ref{rem:dimension}, we just need to consider the case $N\geq 3$ (in particular, the results of the previous sections are true). In Section \ref{chapter:background}, under the contradiction assumption that $\{u_{\beta_n}\}$ was not uniformly Lipschitz, we introduced in \eqref{blowUp} a blowup sequence $\{v_n\}$, solution to \eqref{eq:system_rescaled}. This sequence was defined in such a way that it has bounded Lipschitz-seminorm, it concentrates at a point where the gradient blowsup, and solves a system where the differential operator is a perturbation of the Laplacian close to the blowup point. In that section we arrived at Proposition \ref{concludeProp}: this sequence converges locally uniformly to a limiting profile $v=(v_1,\ldots, v_l)$, of which at most $v_1$ and $v_2$ are nontrivial. Much more information is necessary to arrive at a contradiction. In the previous two sections, we proved Almgren and Alt-Caffareli-Friedman-type monotonicity formulas; we will now apply them to the sequence $\{v_n\}$ to achieve the desired contradiction. Here we follow the structure of \cite[Section 4]{SoaveZilio}, with the necessary modifications that arise from the fact that we are dealing with a system with divergence type operators with variable coefficients.

We recall the sequence of functions $\tilde{u}_{i,\beta_n}>0$ defined in \eqref{tildeU} by:
$$
    \tilde{u}_{i,\beta_{n}}(x) = u_{i,\beta_n}(x_n+A(x_n)^{\frac{1}{2}}x).
$$
This sequence, by Lemma \ref{lemmaForMatrixTilde}, satisfies 
\begin{equation}\label{eq:systems_for_tildeu_rep}
-\div(\tilde{A}_n(x)\nabla \tilde{u}_{i,\beta_n})
	=
		f_i(x_n+A(x_n)^{\frac{1}{2}}x,\tilde{u}_{i,\beta_n}) 
	+ 
		a(x_n+A(x_n)^{\frac{1}{2}}x)
		\mathop{\sum_{j=1}^l}_{j\neq i}\beta_n 
		|\tilde{u}_{j, \beta_n}|^{\gamma + 1}
		|\tilde{u}_{i, \beta_n}|^{\gamma -1}\tilde{u}_{i, \beta_n},
\end{equation}
where $\tilde{A}_n(x) = A(x_n)^{-\frac{1}{2}}A(x_n +A(x_n)^{\frac{1}{2}}x)A(x_n)^{-\frac{1}{2}}$ is such that $\tilde{A}_n(0)=Id$.  By hypotheses \eqref{boundedness} and \eqref{boundForF}, there exist $m>0$ and $d>0$ such that:
$$
\max_{i=1,...,l}\|\tilde{u}_{i,\beta_n}\|_{L^\infty(B_{1/M^{\frac{1}{2}}})}<m,\qquad
\max_{i=1,...,l}\sup_{y\in[0,m]}f_i(x_n+A(x_n)^\frac{1}{2}x , y)\leq d |y|
$$
for all $n \in \mathbb{N}$, where $M$ is as in \textbf{(A2)}.
Also by Lemma \ref{lemmaForMatrixTilde} there exists $C>0$ such that for all $n \in \mathbb{N}$ we have: 
$$
    \langle 
        \tilde{A}_n(x)\xi,
        \xi
    \rangle
\geq
    \frac{\theta}{M}|\xi|^2,
\qquad
    \|D\tilde{A}_n\|_{L^\infty(B_{1/M^{\frac{1}{2}}})}
\leq
    C,
\qquad
    \|\tilde{A}_n\|_{L^\infty(B_{1/M^{\frac{1}{2}}})}
\leq
    C.
$$
Moreover, at the point $0$ we have $\tilde{A}(0) = Id$. We also define $\tilde{\mu}_n(y) = \langle \tilde{A}_n(y)\frac{y}{|y|},\frac{y}{|y|} \rangle$. 

We may, therefore, apply all the results of Section \ref{chapter:implementation} to the sequence $\{\tilde{u}_{\beta_n}\}$. In particular, Lemma \ref{derivOfH} and Theorem \ref{almgrenMonotonicity} imply the following.

\begin{prop}
	\label{MonotonicityForu} Let $\gamma \geq 1$ be such that $\frac{\gamma N}{\gamma+1}<2$.
	Then there exists $\tilde{r}$ and $\tilde{C}>0$ such that, for every $n \in \mathbb{N}$, the functions:
	$$(N_{\beta_n}(\tilde{u}_{\beta_n},r) + 1)e^{\tilde{C}r}
	\quad
	\text{ and }
	\quad
	H_{i,\beta_n}(\tilde{u}_{\beta_n},r)e^{\tilde{C}e}=
	\frac{1}{r^{N-1}}
	\int_{\partial B_r}
	\tilde{\mu}_n(y)
	\tilde{u}_{i,\beta_n}^2
	d\sigma(y)
	e^{\tilde{C}r}
	$$
	are monotone nondecreasing for $r \in ]0,\tilde{r}[$ and all $i \in \{1,...,l\}$. We recall that $N_{\beta_n}$ is defined in \eqref{Neq}.
\end{prop}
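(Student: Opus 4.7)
The plan is to verify that the hypotheses of Theorem \ref{almgrenMonotonicity} and Lemma \ref{derivOfH} hold for each $\tilde{u}_{\beta_n}$ with constants that are uniform in $n$, and then to invoke these results directly. The point is that, although the sequence $\tilde{u}_{\beta_n}$ involves a different matrix $\tilde{A}_n$ and shifted nonlinearities for each $n$, all the structural quantities on which $\tilde{r}$ and $\tilde{C}$ depend in those earlier statements are bounded uniformly in $n$.

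First I would observe that $\tilde{u}_{\beta_n}$ satisfies \eqref{eq:systems_for_tildeu_rep}, which is exactly an equation of the form \eqref{AlmgrenEq} with matrix $\tilde{A}_n$, nonlinearity $f_{i,n}(x,s):=f_i(x_n+A(x_n)^{1/2}x,s)$, and interaction coefficient $\tilde{a}_n(x):=a(x_n+A(x_n)^{1/2}x)$. By Lemma \ref{lemmaForMatrixTilde}, $\tilde{A}_n(0)=Id$, the ellipticity constant $\theta/M$, the Lipschitz bound on $\tilde{A}_n$, and the uniform $L^\infty$-bound $\|\tilde{A}_n\|_\infty\le C$ are all independent of $n$. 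By \textbf{(a)}, the functions $\tilde{a}_n$ are still strictly positive and uniformly bounded with uniform $C^1$-norm. By \eqref{boundedness} and \eqref{boundForF}, the $L^\infty$-bound $m$ and the constant $d$ in $|f_{i,n}(x,\tilde{u}_{i,\beta_n})|\le d|\tilde{u}_{i,\beta_n}|$ are the same for every $n$. Hence every quantitative hypothesis entering Theorem \ref{almgrenMonotonicity} (and Lemma \ref{derivOfH}) is bounded above and below uniformly in $n$.

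Next I would point out that, since $\gamma\ge 1$ and $\gamma N/(\gamma+1)<2$ by assumption, the restriction \eqref{eq:RESTRICTION} used in the proof of Theorem \ref{almgrenMonotonicity} is satisfied. Applying that theorem to each $\tilde{u}_{\beta_n}$ then produces constants $\tilde{r}_n>0$ and $\tilde{C}_n>0$ such that $r\mapsto (N_{\beta_n}(\tilde{u}_{\beta_n},r)+1)e^{\tilde{C}_n r}$ is monotone nondecreasing on $]0,\tilde{r}_n[$. However, tracing the dependencies as listed at the end of Theorem \ref{almgrenMonotonicity}, these constants depend only on $N$, $\gamma$, the ellipticity constant $\theta/M$, the uniform bound $C$ on $\|D\tilde{A}_n\|_\infty$, the uniform bounds on $\tilde{a}_n$, the common constant $m$ and the common constant $d$. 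As all these bounds are uniform in $n$, we may choose $\tilde{r}:=\inf_n \tilde{r}_n>0$ and $\tilde{C}:=\sup_n \tilde{C}_n<\infty$, for which the conclusion holds for every $n$.

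The same argument applies to Lemma \ref{derivOfH}, giving the monotonicity of $r\mapsto H_{i,\beta_n}(\tilde{u}_{\beta_n},r)e^{\tilde{C}r}$ on $]0,\tilde{r}[$ with (possibly enlarged) uniform constants $\tilde{C}$ and $\tilde{r}$. Since there is no actual delicate estimate to perform here beyond bookkeeping, I expect no real obstacle; the one point that needs mild care is to make explicit that the proofs of Lemma \ref{derivOfH} and Theorem \ref{almgrenMonotonicity} use only the quantitative data just enumerated, so that taking an infimum (respectively supremum) over $n$ of the arising radii and constants yields admissible, strictly positive (respectively finite) values. After this, Proposition \ref{MonotonicityForu} follows immediately.
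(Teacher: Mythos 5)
Your argument is correct and is exactly what the paper does: the text preceding Proposition \ref{MonotonicityForu} records that $\tilde{u}_{\beta_n}$ solves \eqref{eq:systems_for_tildeu_rep} with $\tilde{A}_n(0)=Id$ and with ellipticity, Lipschitz, $L^\infty$ and \eqref{boundForF}-type constants uniform in $n$ (via Lemma \ref{lemmaForMatrixTilde}), and then the proposition is obtained by directly invoking Lemma \ref{derivOfH} and Theorem \ref{almgrenMonotonicity}, whose constants depend only on those uniform quantities. Your extra remark about taking the infimum of the radii and supremum of the constants is just the bookkeeping the paper leaves implicit.
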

To ease notation, from now on in this section we omit the lower index $\beta_n$ in the functions $N_{\beta_n}(\tilde{u}_{\beta_n},r)$.

Now we introduce the quantity given by:
\begin{equation}
\label{BigRdefinition}
    R_{\beta_n} 
:= 
    \sup
    \left\{r \in ]0,\tilde{r}[: (N(\tilde{u}_{\beta_n},r) + 1)e^{\tilde{C}r}<2-r
    \right\}.
\end{equation}
\begin{lemma}
We have $R_{\beta_n}>0$ for all $n \in \mathbb{N}$.
\end{lemma}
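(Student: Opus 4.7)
The plan is to show that the defining set of $R_{\beta_n}$ is nonempty by analyzing the behavior of $(N(\tilde u_{\beta_n},r)+1)e^{\tilde C r}$ as $r\to 0^+$, for each fixed $n$. At $r=0$ the right-hand side of the defining inequality equals $2$, so it suffices to prove that the left-hand side tends to a value strictly below $2$ (indeed, to $1$) as $r\to 0^+$; then continuity gives the inequality on some interval $]0,r_0(n)[$.

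First, I would compute $\lim_{r\to 0^+} H_{\beta_n}(\tilde u_{\beta_n},r)$. Writing the surface integral in polar form $\frac{1}{r^{N-1}}\int_{\partial B_r} g(y)\,d\sigma(y)=\int_{\partial B_1} g(r\omega)\,d\sigma(\omega)$, and using that $\tilde A_n(0)=Id$ implies $\tilde\mu_n(y)\to 1$ as $y\to 0$, together with continuity of $\tilde u_{i,\beta_n}$ at $0$, we get
\[
\lim_{r\to 0^+} H_{\beta_n}(\tilde u_{\beta_n},r)=\omega_{N-1}\sum_{i=1}^{l}|\tilde u_{i,\beta_n}(0)|^{2}>0,
\]
where the strict positivity uses that $\tilde u_{\beta_n}>0$ (strong maximum principle applied to the original positive solution at the point $x_n$, giving $\tilde u_{i,\beta_n}(0)=u_{i,\beta_n}(x_n)>0$).

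Next, for $E_{\beta_n}(\tilde u_{\beta_n},r)$, I would observe that for \emph{fixed} $n$ (hence fixed $\beta_n$ and fixed smooth $\tilde u_{\beta_n}$), the integrand in the bulk expression of $E_{\beta_n}$ is continuous and therefore uniformly bounded on $B_{\tilde r}$. Since $|B_r|=O(r^{N})$, the factor $r^{2-N}$ yields $E_{\beta_n}(\tilde u_{\beta_n},r)=O(r^{2})\to 0$ as $r\to 0^+$. Combining with the previous step, $N(\tilde u_{\beta_n},r)=E_{\beta_n}/H_{\beta_n}\to 0$, so
\[
\lim_{r\to 0^+}\bigl(N(\tilde u_{\beta_n},r)+1\bigr)e^{\tilde C r}=1<2.
\]

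Finally, by continuity of $r\mapsto (N(\tilde u_{\beta_n},r)+1)e^{\tilde C r}$ on $]0,\tilde r[$ (valid because $H_{\beta_n}$ stays positive for small $r$ by Step~3 of the proof of Theorem~\ref{almgrenMonotonicity}, together with the limit computed above), there exists $r_0=r_0(n)\in ]0,\tilde r[$ such that $(N(\tilde u_{\beta_n},r)+1)e^{\tilde C r}<2-r$ for every $r\in ]0,r_0[$; hence $R_{\beta_n}\geq r_0>0$. There is essentially no obstacle here: the only mildly delicate point is ensuring $H_{\beta_n}(\tilde u_{\beta_n},r)\neq 0$ in a punctured right-neighborhood of $0$ so that $N$ is defined, which follows from the explicit positive limit of $H_{\beta_n}$ at $0$.
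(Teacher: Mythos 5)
Your proof is correct and follows essentially the same route as the paper: for each fixed $n$ one shows $N(\tilde{u}_{\beta_n},r)\to 0$ as $r\to 0^+$ (using positivity and $C^1$ regularity of $\tilde{u}_{\beta_n}$ near $0$ to keep $H_{\beta_n}$ bounded away from zero while $E_{\beta_n}$ vanishes), so that $(N(\tilde{u}_{\beta_n},r)+1)e^{\tilde{C}r}\to 1<2$ and the set defining $R_{\beta_n}$ is nonempty. The only cosmetic difference is that you estimate $E_{\beta_n}$ via its volume-integral form (getting $O(r^2)$), whereas the paper uses the boundary-integral identity \eqref{ETerm} to bound the quotient directly by $O(r)$; both are valid.
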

\begin{proof}
    Fix $n \in \mathbb{N}$.
    Since $\tilde{u}_{i,\beta_n}$ is positive and of class $C^1$, there exist $\delta,\epsilon,C>0$ such that $\delta <u_{i,\beta_n}(x)<m$ and  $|\nabla \tilde{u}_{i,\beta_n}(x)| < C$ whenever  $|x|<\epsilon$.
    
    With this, for $r < \epsilon$ we conclude:
    \begin{align*}
     |N(\tilde{u}_{\beta_n},r)|
    =\left|
        \frac
        {\frac{1}{r^{N-2}}
		\sum_ {i=1}^l
		\int_{\partial B_r} \tilde{u}_{i, \beta_n}\langle  \tilde{A}_n(x) \nabla \tilde{u}_{i,\beta_n}, \nu_x\rangle d\sigma(x)
        }{
         \frac{1}{r^{N-1}}\sum_{i=1}^l\int_{\partial B_r}\tilde{\mu}_n(x)|\tilde{u}_{i,\beta_n}|^2d\sigma(x)
        } 
    \right|
    \leq
    \left(\frac{m MC l}{\delta^2 \tilde{\theta}}\right)r,
    \end{align*}
    and so $N(\tilde{u}_{\beta_n},r) \rightarrow 0$ as $r \rightarrow 0$; this implies that  $R_{\beta_n}>0$.
\end{proof}

\begin{lemma}
	\label{radGoToZero}
	$R_{\beta_n} \rightarrow 0$ as $n \rightarrow \infty$.
\end{lemma}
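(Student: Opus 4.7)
The plan is to argue by contradiction. Suppose there exist $\rho\in(0,\tilde r)$ and a subsequence, still denoted $\{\beta_n\}$, with $R_{\beta_n}\ge\rho$ for every $n$. I would then transfer the Almgren bound from $\tilde u_{\beta_n}$ to the rescaled blowup sequence $\{v_n\}$ of Section \ref{chapter:background} and exploit the structural information about the limit $v$ given by Proposition \ref{concludeProp} to contradict the lower bound $|\nabla v_1(0)|\ge\theta^{1/2}>0$.

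First, from Proposition \ref{MonotonicityForu} and the very definition \eqref{BigRdefinition} of $R_{\beta_n}$, one has
\[
N(\tilde u_{\beta_n},r)+1\le (2-r)e^{-\tilde Cr}\le 2\qquad\text{for every }r\in(0,\rho),
\]
so in particular $N(\tilde u_{\beta_n},r)\le 1$ on that interval. Almgren's frequency is scale-invariant under the rescaling $v_n(x)=\eta(x_n)\tilde u_{\beta_n}(r_nx)/(L_nr_n)$: a direct change of variables based on \eqref{ETerm} and the definition of $H$ gives $E(v_n,R)=(\eta(x_n)^2/L_n^2r_n^2)\,E(\tilde u_{\beta_n},r_nR)$ and the analogous identity for $H$, whence $N(v_n,R)=N(\tilde u_{\beta_n},r_nR)$. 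Since $r_n\to 0$, for every fixed $R>0$ and $n$ large enough $r_nR<\rho$, and therefore $N(v_n,R)\le 1$ for all such $R$.

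Next I would pass to the limit. Using the strong $H^1_{\mathrm{loc}}$-convergence $v_n\to v$ from Proposition \ref{ListProp}(5), the uniform convergence on compact sets, the local uniform convergence $A_n\to Id$ (consequence of $\tilde A_n(0)=Id$ together with \eqref{ellipticConstantForBlowUp} and $r_n\to 0$), and the uniform $L^1_{\mathrm{loc}}$-bound \eqref{eq:uniform_nonvariational} on the interaction term (which, due to $\beta_n<0$, enters $E_{i,n}$ with the favourable sign), one obtains that the limit $v=(v_1,\dots,v_l)$ satisfies the global Almgren bound $N(v,R)\le 1$ for every $R>0$, in the natural sense associated to the limiting segregated system \eqref{unboundedEquation} or the nonlinear Schrödinger system \eqref{boundedEquation}.

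Finally, Proposition \ref{concludeProp} gives that $v$ has at most two nontrivial components, $v_1(0)+v_2(0)=1$ and $|\nabla v_1(0)|\ge\theta^{1/2}>0$; its proof in fact ensures $v_1(0)=1$, so that $v_j\equiv 0$ near the origin for $j\ne 1$ and $v_1$ is harmonic in a neighborhood of $0$. The global bound $N(v,R)\le 1$, combined with the monotonicity of Almgren's frequency for the limiting problem and a Liouville-type analysis distinguishing the two cases \eqref{unboundedEquation} and \eqref{boundedEquation}, forces $v_1$ to be a globally bounded affine function. The positivity constraint $v_1\ge 0$ on $\mathbb{R}^N$ then makes $v_1$ constant, contradicting $|\nabla v_1(0)|\ge\theta^{1/2}>0$. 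The main obstacle I expect is the rigorous passage to the limit in Almgren's quotient in the presence of both variable coefficients and, crucially, the singular interaction terms when $M_n\to-\infty$: this relies on carefully combining the sign of these terms with the uniform bound \eqref{eq:uniform_nonvariational} so that the inequality $N\le 1$ is preserved. A secondary obstacle is the Liouville step, which must treat the unbounded and bounded $M_n$ regimes separately via \eqref{unboundedEquation} and \eqref{boundedEquation}.
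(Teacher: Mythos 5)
Your reduction steps are fine as far as they go: the scale invariance $N(v_n,R)=N(\tilde u_{\beta_n},r_nR)$ is exactly the identity used in Lemma \ref{MonotonicityForBlowup}, and under your contradiction hypothesis $R_{\beta_n}\ge\rho$ one indeed gets $N(v_n,R)\le 1$ for every fixed $R$ and $n$ large, and (with the care you indicate about the interaction terms) $N(v,R)\le 1$ for all $R>0$ in the limit. The genuine gap is the final Liouville step: this information, together with everything in Proposition \ref{concludeProp}, does \emph{not} force $v_1$ to be affine or constant. Concretely, take (up to rotation) $v_1=(1+ax_1)^+$ and $v_2=b(1+ax_1)^-$ with $a=\theta^{1/2}$, $b>0$, all other components zero. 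This is a segregated pair of nonnegative functions, harmonic on their positivity sets, globally Lipschitz, with $v_1(0)+v_2(0)=1$ and $|\nabla v_1(0)|=\theta^{1/2}$; its blowdown is the homogeneous degree-one pair $(\gamma x_1^+,\gamma' x_1^-)$, so by the monotonicity of the frequency for segregated profiles (Proposition \ref{homogenousSegregatedLemma}) one has $N(v,R)\le N(v,\infty)=1$ for every $R$. Thus every property you derive is satisfied by a non-constant profile, and no contradiction can follow. This is precisely the phenomenon the paper emphasizes in the introduction: at the Lipschitz threshold the contradiction cannot be extracted from the blowup \emph{limit} alone, which is why the Alt--Caffarelli--Friedman formula along the sequence is needed later.

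The paper's proof of this lemma avoids the rescaled sequence entirely and works at the original scale with $\tilde u_{\beta_n}\to\tilde u_\infty$. The decisive input, which your route discards, is that $\tilde u_\infty(0)=0$ (Lemma \ref{remarkOnStuff}); after rescaling by $r_n$ the normalization \eqref{rnDefinition} forces $v_1(0)+v_2(0)=1$, so the origin is \emph{not} a zero of $v$ and the frequency of $v$ near $0$ is small rather than large. At the original scale, one shows $N(\tilde u_\infty,0^+)\ge 1$: if $N(\tilde u_\infty,r)\le 1-\delta$ near $0$, the doubling estimate gives $H(\tilde u_\infty,r)\ge c\,r^{2(1-\delta)}$, while the uniform $C^{0,\alpha}$ bounds from Theorem C together with $\tilde u_\infty(0)=0$ give $H(\tilde u_\infty,r)\le C r^{2\alpha}$ for any $\alpha<1$; choosing $\alpha>1-\delta$ yields a contradiction. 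Then, if $R_{\beta_n}\to R_\infty>0$, the uniform convergence of the monotone functions $r\mapsto(N(\tilde u_{\beta_n},r)+1)e^{\tilde Cr}$ and the definition \eqref{BigRdefinition} give $2>2-R_\infty\ge (N(\tilde u_\infty,R_\infty)+1)e^{\tilde CR_\infty}\ge N(\tilde u_\infty,0^+)+1\ge 2$, which is the desired contradiction. If you want to salvage your approach you would need to retain a quantitative \emph{lower} bound on the frequency coming from the vanishing of $\tilde u_\infty$ at $0$, which is exactly what the paper's argument does.
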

\begin{proof}
Recall from Lemma \ref{remarkOnStuff} that (up to a subsequence) $\tilde{u}_{\beta_n} \rightarrow \tilde{u}_\infty$ in $C^{0,\alpha}(B_{1/(2M^\frac{1}{2})})\cap H^1(B_{1/(2M^\frac{1}{2})})$ for every $\alpha\in (0,1)$, and that $\tilde{u}_\infty(0) = 0$. Moreover, by using Lemma \ref{lemmaForMatrixTilde} and Ascoli Arzela's Theorem, there exists $\tilde{A}(\cdot) \in C^1(B_{1/(2M^\frac{1}{2})},\text{Sym}^{N\times N})$ such that $\tilde{A}_n(x) \rightarrow \tilde{A}(x)$
	uniformly for $x\in B_{1/(2M^\frac{1}{2})}$.

	Let $\tilde{\mu}(x) = \langle \tilde{A}(x)\frac{x}{|x|}, \frac{x}{|x|} \rangle$, and consider the Almgren's quotient associated to \eqref{eq:systems_for_tildeu_rep}, namely
	\begin{align*}
	E(\tilde{u}_\infty,r) 
	&=
	\frac{1}{r^{N-2}}\sum_{i=1}^l
	\int_{B_r}
	\left(
	\langle \tilde{A}(x)\nabla \tilde{u}_{i,\infty}, \nabla \tilde{u}_{i,\infty} \rangle
	-
	f_i(x_\infty+A(x_\infty)^\frac{1}{2}x,\tilde{u}_{i,\infty})\tilde{u}_{i,\infty}
	\right)dx,\\
	H(\tilde{u}_\infty,r)
	&=
	\frac{1}{r^{N-1}}\sum_{i=1}^l\int_{\partial B_r} 
	\tilde{u}_{i,\infty}^2\tilde{\mu}(x)d\sigma(x),\qquad
	N(\tilde{u}_\infty,r)
	=
	\frac{E(\tilde{u}_\infty,r)}{H(\tilde{u}_\infty,r)}.
	\end{align*}
	We divide the rest of the proof in two steps.
	
\noindent	\textbf{Step 1.} $\lim_{r \rightarrow 0^+}N(\tilde{u}_\infty,r) \geq 1$.
		
	To prove this step, we notice that, because of the convergence of $\tilde{u}_{i,\beta_n} \rightarrow \tilde{u}_\infty$ in the spaces $C(B_{M^{-\frac{1}{2}}})\cap H^1(B_{M^{-\frac{1}{2}}})$, we have that, for each $r \in ]0,\tilde{r}[$: 
	\begin{equation}
	\label{EandHlimits}
	\lim_n
	E(\tilde{u}_{\beta_n},r)
	=
	E(\tilde{u}_{\infty},r),
	\qquad
	\lim_n
	H(\tilde{u}_{\beta_n},r) = H(\tilde{u}_\infty,r).
	\end{equation}
	Notice that the convergence of $E(\tilde{u}_{\beta_n},r)$ also comes from the fact that $
	\beta_n
        \sum_{j\neq i}
	\int_{B_r}
	\tilde{a}_n(x)
	|
	\tilde{u}_{i,\beta_n}
	|^2
	|
	\tilde{u}_{j,\beta_n}
	|^2
	dx
	\rightarrow 0
	$ (see for instance \cite[Theorem 1.4]{uniformHolderBoundsHugoTerraciniNoris}).

A direct computation (see for instance  \cite[Lemma C.5]{HugoHolderVariable} for the details) yields
	\begin{align*}
		H'(\tilde{u}_{\beta_n},r) &= \frac{1-N}{r}H(\tilde{u}_{\beta_n},r) + \frac{2}{r}E(\tilde{u}_{\beta_n},r) + 
		\sum_{i=1}^l
		\frac{1}{r^{N-1}}\int_{\partial B_r} \tilde{u}_{i,\beta_n}^2\div(\tilde{A}_n(x)\nabla|x|)
		d\sigma(x)
	\end{align*}
	and so, passing to the limit and using \eqref{EandHlimits}:
	$$
	H'(\tilde{u}_{\beta_n},r) \rightarrow
	\frac{1-N}{r}H(\tilde{u}_{\infty},r) + \frac{2}{r}E(\tilde{u}_{\infty},r) +
	\sum_{i=1}^l
	\frac{1}{r^{N-1}}\int_{\partial B_r} \tilde{u}_{i,\infty}^2\div(\tilde{A}(x)\nabla|x|)d\sigma(x).
	$$
On the other hand:
	\begin{align*}
	    H(\tilde{u}_\infty,s_2)
	    -
	    H(\tilde{u}_\infty, s_1)
	&=
	    \lim_n
	    H(\tilde{u}_{\beta_n},s_2)
	    -
	    H(\tilde{u}_{\beta_n}, s_1)
	=
	    \lim_n
	    \int_{s_1}^{s_2}
	    H'(\tilde{u}_{\beta_n},r)
	    dr\\
	&=
	    \int_{s_1}^{s_2}
	    \left(
    	    \frac{1-N}{r}H(\tilde{u}_{\infty},r) + \frac{2}{r}E(\tilde{u}_{\infty},r) +
        	\sum_{i=1}^l
        	\frac{1}{r^{N-1}}
        	\int_{\partial B_r} \tilde{u}_{\infty,i}^2\div(\tilde{A}(x)\nabla|x|)
        	d\sigma(x)
    	\right),	
     \end{align*}
    and from this we conclude that $\lim_n H'(\tilde{u}_{\beta_n},r) \to H'(\tilde{u}_\infty,r)$. Thus, applying these convergences, by passing to the limit the results in Lemma \ref{derivOfH}, we have that there exists $C>0$ such that:
	\begin{equation}
	\label{derivHBound}
	\Big|
	H'(\tilde{u}_\infty,r)-\frac{2}{r}E(\tilde{u}_\infty,r)
	\Big|
	\leq
	CH(\tilde{u}_\infty,r).
	\end{equation}
	Notice also that $\Big(N(\tilde{u}_{\beta_n},r) + 1\Big) e^{\tilde{C}r}$ is monotone nondecreasing, thus so is $(N(\tilde{u}_\infty,r)+1)e^{\tilde{C}r}$.
	
We now suppose by contradiction that $\lim_{r \rightarrow 0^+} N(\tilde{u}_{\infty}, r) < 1$. Then there exists $\delta>0$ such that
	\begin{equation}
	\label{contradictionBoundOnN}
	N(\tilde{u}_{\infty}, r)
	<
	1-\delta
	\qquad
	\forall r \in ]0,\overline{r}[.
	\end{equation}
	Now by using equation \eqref{derivHBound} and \eqref{contradictionBoundOnN}, for $r < \overline{r}$, we have:
	\begin{align}
	\frac{d}{dr}\log(H(\tilde{u}_\infty,r))
	&=
	\frac{H'(\tilde{u}_\infty,r)}
	{H(\tilde{u}_\infty,r)}
	\leq
	\frac{2}{r}
	\frac{E(\tilde{u}_\infty, r)}
	{H(\tilde{u}_\infty, r)}
+
    C
	=
	\frac{2N(\tilde{u}_\infty, r)}{r}
	+
	C
	\leq
	\frac{2(1-\delta)}{r} + C.
	\label{difEqOfHinfinity}
	\end{align}
	Integrating \eqref{difEqOfHinfinity} from $r \in ]0,\overline{r}[$ up to $\overline{r}$, we obtain:
	$$
	\frac{
		H(\tilde{u}_\infty,\overline{r})
	}{
		H(\tilde{u}_\infty,r)
	}
	\leq
	\Big(\frac{\overline{r}}{r}\Big)^{2(1-\delta)}
	e^{C(\overline{r}-r)},
	$$
	thus we conclude that there exists $c := \frac{H(\tilde{u}_\infty,\overline{r})}{\overline{r}^{2(1-\delta)}e^{C\overline{r}}}>0$ such that:
	$$
	cr^{2(1-\delta)}< H(\tilde{u}_\infty,r)
	\qquad 
	\forall r \in ]0,\overline{r}[.
    $$
On the other hand, since $\tilde{u}_\infty$ is bounded in $C^{0,\alpha}(B_{\overline{r}})$ for all $\alpha \in ]0,1[$, and since $\tilde{u}_\infty(0)=0$, there exists $C_\alpha>0$ such that:
	$$
	    |\tilde{u}_\infty(x)|   
	=
	    |\tilde{u}_\infty(x)
	    -
	    \tilde{u}_\infty(0)|
	    \leq
	    C_\alpha |x|^\alpha.
	$$
	From this we then have the following bound:
	\begin{align*}
	    H(\tilde{u}_\infty, r)
	 &=
	    \frac{1}{r^{N-1}}\sum_{i=1}^l\int_{\partial B_r} 
	\tilde{u}_{i,\infty}^2\mu(x)d\sigma(x)
	\leq
	 	    C_\alpha   |\partial B_1|
	    \|D\tilde{A}\|_{
	        L^\infty(B_{\overline{r}})
	    }
	    r^{2\alpha}
	\end{align*}
Therefore, for $C = |\partial B_1|
	    \cdot C_\alpha \cdot
	    \|D\tilde{A}\|_{
	        L^\infty(B_{\overline{r}})
	    }$, we have that:
	$$
	cr^{2(1-\delta)}<H(\tilde{u}_\infty,r)\leq Cr^{2\alpha}
	\qquad
	\forall r \in ]0,\overline{r}[
	$$
	which is a contradiction for $r$ small, by choosing $2\alpha > 2(1-\delta)$ . So we conclude that $\lim_{r\rightarrow 0}N(\tilde{u}_\infty,r) \geq 1$.
	
\noindent	\textbf{Step 2. } $R_{\beta_n} \rightarrow 0$.
	
	Since $r\mapsto (N(\tilde{u}_{\beta_n},r)+1)e^{\tilde{C}r}$ is a continuous monotone nondecreasing function, converging  pointwisely to  the continuous function$r\mapsto (N(\tilde{u}_{\infty}, r)+1)e^{\tilde{C}r}$ in $]0,\tilde{r}[$, then  the convergence is actually uniform over any compact subset in $]0,\tilde{r}]$ (see for example \cite[Lemma 4.3]{SoaveZilio}). We suppose by contradiction that $R_{\beta_n} \rightarrow R_\infty>0$. Then, using the definition \eqref{BigRdefinition} and the the uniform convergence:
	\begin{align*}
	2 > 2-R_\infty &= \lim_n (2-R_{\beta_n}) \geq \lim_{n} (N(\tilde{u}_{\beta_n}, R_{\beta_n}) + 1)e^{\tilde{C}R_{\beta_n}} \\
	&= 
	(N(\tilde{u}_{\infty},R_\infty)+1)e^{\tilde{C}R_{\infty}}\geq (N(\tilde{u}_\infty, 0^+)+1)\geq 2,
	\end{align*}
	which is a contradiction. In the last inequality we used Step 1, $N(\tilde{u}_\infty, 0^+)\geq 1$.
\end{proof}

Now we apply Almgren's monotonicity formula to the blowup sequence $\{v_n\}$ given by (\ref{blowUp}).

\begin{lemma}
	\label{MonotonicityForBlowup}
	Given the constants $\tilde{r}$ and $\tilde{C}>0$ of Lemma \ref{MonotonicityForu} for every $n$, let $\mu_n(x) = \langle A_n(x)\frac{x}{|x|}, \frac{x}{|x|}\rangle$. Then the functions:
	$$
	r\mapsto\left(N(v_n,r)+1\right)e^{\tilde{C}r_n r}, \qquad
	r
	\mapsto H_{i}(v_n,r)e^{\tilde C r_n r}=
	\big(
	\frac{1}{r^{N-1}}
	\int_{\partial B_r}
	\mu_n(x) 	v_{i,n}^2
	d\sigma(x)
	\big)
	e^{\tilde{C}r_nr},
	$$
	for all $i \in \{1,...,l\}$,
	are monotone nondecreasing for $r \in ]0,\frac{\tilde{r}}{r_n}[$.
\end{lemma}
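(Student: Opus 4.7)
The plan is to deduce this lemma directly from Proposition \ref{MonotonicityForu} (applied to $\tilde u_{\beta_n}$) via the scaling identity relating $v_n$ and $\tilde u_{\beta_n}$. Recall from \eqref{blowUp} and \eqref{Domain} that
\[
v_{i,n}(x) = c_n\,\tilde u_{i,\beta_n}(r_n x), \qquad c_n := \frac{\eta(x_n)}{L_n r_n},
\]
that $A_n(x)=\tilde A_n(r_n x)$, and therefore $\mu_n(x)=\tilde\mu_n(r_n x)$. So the plan is to transform Almgren's objects for $v_n$ into the same objects for $\tilde u_{\beta_n}$, evaluated at radius $r_n r$.

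First I would verify the scaling of $H_i$. A direct change of variables $y=r_n x$ on $\partial B_r\subset\R^N$ gives
\[
H_i(v_n,r) = \frac{1}{r^{N-1}}\int_{\partial B_r}\mu_n(x)\,v_{i,n}^2\,d\sigma(x) = \frac{c_n^2}{(r_n r)^{N-1}}\int_{\partial B_{r_n r}}\tilde\mu_n(y)\,\tilde u_{i,\beta_n}^2(y)\,d\sigma(y) = c_n^2\, H_i(\tilde u_{\beta_n},r_n r).
\]
Next, I would verify the analogous scaling for $E$. Using the chain rule $\nabla_y \tilde u_{i,\beta_n}(r_n x)=(c_n r_n)^{-1}\nabla_x v_{i,n}(x)$, the identification $f_{i,n}(x,v_{i,n})=c_n r_n^2 f_i(\cdot,v_{i,n}/c_n)$ coming from the definition in Section \ref{chapter:background}, and the identification $M_n a_n(x) = c_n r_n^2 a(\cdot)\beta_n c_n^{-(2\gamma+1)}$ arising from $M_n=\beta_n(L_n/\eta(x_n))^{2\gamma}r_n^{2\gamma+2}$, the change of variables $y=r_n x$ in $E(\tilde u_{\beta_n},r_n r)$ yields
\[
E(\tilde u_{\beta_n},r_n r) = c_n^2\, E(v_n,r),
\]
where $E(v_n,r)$ denotes the natural energy integrand associated to \eqref{eq:system_rescaled} (i.e.\ $\langle A_n\nabla v_{i,n},\nabla v_{i,n}\rangle - f_{i,n}(\cdot,v_{i,n})v_{i,n} - M_n a_n(x)\sum_{j\neq i}|v_{j,n}|^{\gamma+1}|v_{i,n}|^{\gamma+1}$, divided by $r^{N-2}$ and summed over $i$). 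Dividing, the constants $c_n^2$ cancel, giving the clean identity
\[
N(v_n,r) = N(\tilde u_{\beta_n}, r_n r) \qquad \text{for all } 0<r<\tilde r/r_n.
\]

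Finally, I would apply Proposition \ref{MonotonicityForu}: since $\rho\mapsto (N(\tilde u_{\beta_n},\rho)+1)e^{\tilde C\rho}$ is nondecreasing on $]0,\tilde r[$, composing with the linear reparameterisation $\rho=r_n r$ (which is strictly increasing) yields the monotonicity of $(N(v_n,r)+1)e^{\tilde C r_n r}$ on $]0,\tilde r/r_n[$. The same scaling identity $H_i(v_n,r)e^{\tilde C r_n r}=c_n^{-2}\cdot H_i(\tilde u_{\beta_n},r_n r)e^{\tilde C r_n r}\cdot c_n^2$ (with $c_n$ independent of $r$) transports the monotonicity of $H_i(\tilde u_{\beta_n},\cdot)e^{\tilde C\,\cdot}$ directly to $H_i(v_n,\cdot)e^{\tilde C r_n\,\cdot}$.

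There is no real obstacle here beyond a bookkeeping step: the only nontrivial part is verifying that the definitions of $f_{i,n}$, $M_n$ and $A_n$ in Section \ref{chapter:background} were chosen precisely so that the equation satisfied by $v_n$ is the rescaled equation of $\tilde u_{\beta_n}$, which then makes the energy and mass functionals scale identically up to the common factor $c_n^2$. Once that is in place, the result is essentially a restatement of Proposition \ref{MonotonicityForu} under a linear change of radius.
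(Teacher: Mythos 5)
Your proposal is correct and is essentially identical to the paper's own proof, which consists of the same scaling identities $E(v_n,r)=\frac{\eta^2(x_n)}{L_n^2r_n^2}E(\tilde u_{\beta_n},r_nr)$, $H_i(v_n,r)=\frac{\eta^2(x_n)}{L_n^2r_n^2}H_i(\tilde u_{\beta_n},r_nr)$ and $N(v_n,r)=N(\tilde u_{\beta_n},r_nr)$, followed by a direct application of Proposition \ref{MonotonicityForu}. Your more detailed verification of the change of variables is a faithful expansion of the same argument.
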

\begin{proof}
Recalling from \eqref{blowUp} that
	$v_n(x)=\frac{\eta(x_n)}{L_nr_n} \tilde{u}_{\beta_n}(r_n x)$, we have
\[
E(v_n,r) = \frac{\eta^2(x_n)}{L_n^2 r_n^2}E(\tilde{u}_{\beta_n}, r_nr),
\quad H_{i}(v_n,r)=  \frac{\eta^2(x_n)}{L_n^2r_n^2}H_i(\tilde{u}_{\beta_n},r) \text{ and } N(v_n,r) = N(\tilde{u}_{\beta_n},r_nr).
\]
	The claim follows as a direct application of Lemma \ref{MonotonicityForu}.
\end{proof}

Let $v_1 = \lim_n v_{1,n}$ and $v_2 = \lim_n v_{2,n}$ be the limits given by Proposition \ref{concludeProp} (the only two possible limiting components of the blowup sequence $v_n$). Next, we prove that \emph{both} $v_1$ and $v_2$ are nonconstant.

\begin{lemma}
	\label{nonTriviality}
	Let $\theta$ be the constant from hypothesis \textbf{(A1)}.
	Then there exists $C = C(\theta, N)>0$, independent of $n$, such that:
	\begin{equation}\label{nonTriviality_eq}
	\frac{1}{r^{N-1}}\int_{\partial B_r}\mu_n(y) v_{i,n}^2d\sigma(y) \geq C 
	\end{equation}
	for every $r \in [2N/\theta^\frac{1}{2},\frac{\tilde{r}}{r_n}]$ and $i=1,2$. In particular, both $v_1$ and $v_2$ are nonconstant in $B_r$ for every $r \in [2N/\theta^\frac{1}{2},\frac{\tilde{r}}{r_n}]$.
\end{lemma}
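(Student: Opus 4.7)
The plan is to combine the Almgren-type monotonicity of Lemma~\ref{MonotonicityForBlowup} with the limit structure from Proposition~\ref{concludeProp}. Setting $r_0:=2N/\theta^{1/2}$, note that for every $r\in[r_0,\tilde r/r_n]$ one has $r_n r\le\tilde r$, and Lemma~\ref{MonotonicityForBlowup} yields
\[
H_i(v_n,r)\;\ge\;H_i(v_n,r_0)\,e^{\tilde C r_n(r_0-r)}\;\ge\;H_i(v_n,r_0)\,e^{-\tilde C\tilde r}.
\]
Hence the statement reduces to producing a uniform lower bound $H_i(v_n,r_0)\ge c_0>0$ for $i\in\{1,2\}$ and $n$ large, with $c_0$ depending only on $\theta,N,M,d$ through $\tilde C,\tilde r$.

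To prove this, I would argue by contradiction: suppose $H_{i_0}(v_n,r_0)\to 0$ along a subsequence for some $i_0\in\{1,2\}$. The uniform Lipschitz bounds of Proposition~\ref{ListProp}-(3), the locally uniform convergence $v_n\to v$, and the uniform convergence $\mu_n\to 1$ on compact sets (Lemma~\ref{lemmaMatrixBlowLimit}) force $v_{i_0}\equiv 0$ on $\partial B_{r_0}$; passing the monotonicity of Lemma~\ref{MonotonicityForBlowup} to the limit, $r\mapsto r^{-(N-1)}\int_{\partial B_r}v_{i_0}^2$ is nondecreasing, hence $v_{i_0}\equiv 0$ throughout $B_{r_0}$. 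When $i_0=1$ this directly contradicts $|\nabla v_1(0)|\ge\theta^{1/2}>0$, which comes from \eqref{bounds_nabla_at_origin} together with the local $C^1$-convergence of $v_{1,n}$ to $v_1$ near the origin established inside the proof of Lemma~\ref{nonTrivial}.

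The delicate case is $i_0=2$. In the bounded regime of Proposition~\ref{concludeProp}(2), $v_2$ solves a linear equation $-\Delta v_2=c(x)v_2$ with locally bounded coefficient $c=M_\infty v_1^{\gamma+1}v_2^{\gamma-1}$, so classical unique continuation upgrades $v_2\equiv 0$ on $B_{r_0}$ to $v_2\equiv 0$ globally; then $v_1\ge 0$ is globally harmonic with linear growth on $\mathbb{R}^N$, hence constant by Liouville, contradicting the nonconstancy from Lemma~\ref{nonTrivial}. In the segregated regime ($M_n\to-\infty$), the limit $v$ belongs to the class $\mathcal{G}$ of Appendix~\ref{appendix:classG}; combining $v_1v_2\equiv 0$, $v_2\equiv 0$ on $B_{r_0}$ and $v_1(0)=1$ (which follows from $v_2(0)=0$ and $\sum_iv_i(0)=1$) with the nonconstancy of $v_1$, a Liouville argument on the positivity set forces $v_1$ to vanish at some point $x_*$, at which $v(x_*)=0$ and the multiplicity-one result of Appendix~\ref{chapter:mult1PointsApp} requires two nontrivial components of $v$ in every neighborhood of $x_*$, contradicting $v_2\equiv 0$ nearby. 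The main obstacle is guaranteeing that such a zero $x_*$ actually lies inside $B_{r_0}$: the quantitative choice $r_0=2N/\theta^{1/2}$ has to be calibrated, through the ellipticity constant $\theta$ and the Lipschitz bound on $v_1$, so that the Liouville argument located the zero in the right ball; an alternative is to first propagate $v_2\equiv 0$ from $B_{r_0}$ to the whole of $\mathbb{R}^N$ via the $\mathcal{G}$-class framework, reducing the segregated case to the same Liouville contradiction as in the bounded one.
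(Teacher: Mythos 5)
Your reduction to a lower bound at $r_0=2N/\theta^{1/2}$ via Lemma~\ref{MonotonicityForBlowup}, and your treatment of the cases $i_0=1$ and of the bounded regime $M_n\to M_\infty<0$, match the paper's argument (the paper passes from $\partial B_{r_0}$ to $B_{r_0}$ by subharmonicity rather than by the monotonicity of $H$, but both work, and the strong maximum principle it invokes for $-\Delta v_2=M_\infty v_2^\gamma v_1^{\gamma+1}$ is the same mechanism as your ``unique continuation'' for $-\Delta v_2=c(x)v_2$ with $c$ locally bounded). The problem is the segregated regime, and it is exactly the obstacle you flag yourself: a soft Liouville argument on $\{v_1>0\}$ only produces a zero $x_*$ of $v_1$ \emph{somewhere} in $\mathbb{R}^N$, whereas the multiplicity-two contradiction needs $x_*\in B_{r_0}$, where you know $v_2\equiv 0$. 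As written, this step does not close. Moreover, your proposed fallback --- propagating $v_2\equiv 0$ from $B_{r_0}$ to all of $\mathbb{R}^N$ inside the class $\mathcal{G}$ --- cannot work: segregated limiting profiles routinely have components vanishing on open sets without vanishing identically (e.g.\ $x_1^-$ in the blowdown limits of Lemma~\ref{blowDownSequence}), so there is no unique continuation for $v_2$ in \eqref{unboundedEquation}.

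The paper closes the gap by running the multiplicity argument in the opposite direction and then using a \emph{quantitative} Liouville-type statement on the ball, Lemma~\ref{harmonicLemma}. First, if $v_1(x_0)=0$ for some $x_0\in B_{r_0}$, Theorem~\ref{mult2Theoremyeah} forces a second nontrivial component near $x_0$, which must be $v_2$ by Proposition~\ref{concludeProp}, contradicting $v_2\equiv 0$ on $B_{r_0}$; hence $v_1>0$ and therefore harmonic on all of $B_{r_0}$. Second, Lemma~\ref{harmonicLemma} states that a harmonic function on $B_{2NM}$ with $u(0)=1$ and $|\nabla u(0)|\ge 1/M$ must change sign in $B_{2NM}$; applied with $M=\theta^{-1/2}$ (using $v_1(0)=1$ and $|\nabla v_1(0)|\ge\theta^{1/2}$, both of which you already have), this contradicts $v_1>0$ on $B_{2N/\theta^{1/2}}$. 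This is precisely the calibration of $r_0$ that you suspected was needed but did not supply; without this (or an equivalent quantitative estimate localizing the sign change of $v_1$ to $B_{r_0}$), the segregated case of your proof is incomplete.
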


\begin{remark}
The appearance of the constant $2N/\theta^{\frac{1}{2}}$ is directly related with Lemma \ref{harmonicLemma} in appendix: for a harmonic function $u$ such that $u(0) = 1$ and $|\nabla u (0)|\geq \theta^\frac{1}{2}$, such  lemma implies that $u$ necessarily changes sign in $B_{2N/\theta^{\frac{1}{2}}}$. This fact is used in the following proof.
\end{remark}

\begin{proof}[Proof of Lemma \ref{nonTriviality}] By the monotonicity formula Lemma \ref{MonotonicityForBlowup}, we know that:
	$$
	\frac{1}{r^{N-1}}\int_{\partial B_r}\mu_n(y) v_{i,n}^2
	d\sigma(y) 
	\geq 
	\left(
	\frac{1}{(2N/\theta^\frac{1}{2})^{N-1}} 
	\int_{\partial B_{2N/\theta^\frac{1}{2}}} 
	\mu_n(y) 
	v_{i,n}^2
	d\sigma(y)
	\right)
	e^{\tilde{C}r_n(2N/\theta^\frac{1}{2}-\frac{\tilde{r}}{r_n})}
	$$
	and so we only need to show that there exists $C>0$ such that $\int_{\partial B_{2N/\theta^\frac{1}{2}}}\mu_n(y) v_{i,n}^2d\sigma(y)>C$ for all $n \in \mathbb{N}$. We divide according to the asymptotic behaviour of $M_n$ (recall Proposition \ref{concludeProp}).

\noindent	\textbf{Case (i)} Assume that $M_n$ is bounded. By Proposition \ref{concludeProp} we know that $v_n \rightarrow v$ in $H^1_{loc}(\mathbb{R}^N)\cap C_{loc}(\mathbb{R}^N)$ and that $v_1,v_2$ are nonnegative functions satisfying the system \eqref{boundedEquation} and in particular are subharmonic in $\mathbb{R}^N$; moreover, $v_1(0)+v_2(0) = 1$. From this last fact, we may assume without loss of generality that there exists $C\geq \frac{1}{2}$ such that $v_1(0)\geq C>0$. Using the subharmonicity of $v_1$, we obtain:
	$$
	\int_{\partial B_{2N/\theta^\frac{1}{2}} }  v_1^2
	d\sigma(y)
	> |\partial B_{2N/\theta^\frac{1}{2}}|
	v_1^2(0)
	= 
	C>0,
	$$
	and so, since $v_n \rightarrow v$ in $C_{loc}(\mathbb{R}^N)$, for $n$ large enough we know that:
	$$
	\int_{\partial B_{2N/\theta^\frac{1}{2}}} v_{1,n}^2
	d\sigma(y)
	\geq 
	\frac{1}{2}C.
	$$
	Now we suppose by contradiction that:
	\begin{equation}\label{eq_anothercontradassumption}
	\int_{\partial B_{2N/\theta^\frac{1}{2}}}v_{2,n}^2
	d\sigma(y)
	\rightarrow
	0.
	\end{equation}
	Using the subharmonicity of $v_2$, we conclude that $ v_2(x) = 0$ for $x \in B_{2N/\theta^\frac{1}{2}}$. Since $\gamma\geq 1$, $M_n<0$ and  $v_2(x) = 0$ for all $x \in B_{2N/\theta^\frac{1}{2}}$, by the strong maximum principle we conclude that $v_2(x) = 0$ for all $x \in \mathbb{R}^N$. Going back to the system \eqref{boundedEquation}, then we conclude that $v_1$ is a nonnegative, nontrivial harmonic function in $\mathbb{R}^N$, which is a contradiction. Therefore \eqref{nonTriviality_eq} holds true.
	
		Function $v_1$ is nonconstant since $|\nabla v_1(0)|\geq \theta^{\frac{1}{2}}$, while $v_2$ is nonconstant by \eqref{boundedEquation}.

\noindent	\textbf{Case (ii)} If $M_n \rightarrow - \infty$, we know that $v_n \rightarrow v = (v_1,...,v_l)$ and $v_1, v_2$ satisfy the system \eqref{unboundedEquation}. Doing the same proof by contradiction as in Case (i), assume that $v_1(0)\geq C>0$ and that \eqref{eq_anothercontradassumption} holds true; then, as before, we conclude that: 
	\begin{equation}
	\label{V2IsZero}
	v_2(y) = 0 
	\qquad 
	\forall y \in B_{2N/\theta^\frac{1}{2}}.
	\end{equation}
	Now if $v_1(x_0) = 0$ for some $x_0 \in B_{2N/\theta^\frac{1}{2}}$, then $x_0$ is a zero of the limit profile $v = (v_1,...,v_l)$. The function $v$ satisfies the hypothesis of Theorem \ref{mult2Theoremyeah} in appendix, thus we have that $x_0$ is a zero of multiplicity at least 2. Since $v_1$ and $v_2$ are the only nontrivial components of $v$, that would imply there exists $y \in B_{2N/\theta^\frac{1}{2}}$ such that $v_2(y) > 0$, which is a contradiction with \eqref{V2IsZero}. Thus:
	\begin{equation*}
	    v_1(y)
	    >
	    0
	    \qquad
	    \forall
	    y \in
	    B_{2N/\theta^\frac{1}{2}}
	\end{equation*}
	and, by the system \eqref{unboundedEquation}, this implies that $v_1$ is harmonic in the set $B_{2N/\theta^\frac{1}{2}}$.
	Now $v_1$ is nonnegative harmonic and satisfies $v_1(0) = 1$ and $|\nabla v_1(0)|\geq \theta^\frac{1}{2}$. A harmonic function with these properties by Lemma \ref{harmonicLemma} changes sign in $B_{2N/\theta^{\frac{1}{2}}}$, in contradiction with the fact that $v_1$ is positive in $B_{2N/\theta^{\frac{1}{2}}}$.

In conclusion, we have  deduced \eqref{nonTriviality_eq}.	From \eqref{unboundedEquation} we know that $v_1v_2 = 0$, and using this fact we conclude that both $v_1$ and $v_2$ are nonconstant.
\end{proof}

We now introduce the following quantity:
\begin{equation}
\label{defOfBlowUpRadius}
\overline{r}_n 
:= 
\frac{R_{\beta_n}}{r_n} 
= 
\sup \left\{
r \in ]0, \frac{\tilde{r}}{r_n}[:
\left(
N(v_n,r) +1
\right)e^{\tilde{C}r_nr}
\leq
2-rr_n
\right\}.
\end{equation}
By Lemma \ref{radGoToZero} we know that: 
\begin{equation}
\label{productLimit}
\overline{r}_nr_n = R_{\beta_n} \rightarrow 0.
\end{equation}

The term $\overline{r}_n$ is a kind of threshold between sublinear  (see for instance Lemma \ref{blowDownSequence}) and superlinear behavior for $v_n$.  We refer to \cite[p. 640]{SOAVE2016388}  for more insights. Recall that $\{v_n\}$ satisfies an Almgren monotonicity formula for $r\in ]0,\frac{\tilde r}{r_n}[$ (Lemma \ref{MonotonicityForBlowup}). For $r \in [\overline{r}_n, \frac{\tilde{r}}{r_n}]$, the function
\[
\frac{
	E(v_n,r) + H(v_n,r)
}{
	r^2
}
\]
is almost monotone (Lemma \ref{squaredMonotonicity} below). If $\overline{r}_n$ is bounded, it is not hard to obtain a contradiction (see the proof of Lemma \ref{radBlowUp}). Instead, if $\overline{r}_n\to \infty, $, then for $r \in [\overline{r}_n, \frac{\tilde{r}}{r_n}]$, we will see that $\{v_n\}$ satisfies an Alt-Caffarelli-Friedman type monotonicity formula (Lemma \ref{caffMonot}).  All this information is combined to provide a contradiction also in this situation..

\begin{lemma}
	\label{sharpEstimate}
	Let $C$ be the constant from Lemma \ref{derivOfH}. Then:
	$$\Big|
	\frac{d}{dr}H(v_n,r)
	-
	\frac{2}{r}E(v_n,r)
	\Big|
	\leq 
	C r_nH(v_n,r)\quad \forall r \in]0,\frac{\tilde{r}}{r_n}[$$
\end{lemma}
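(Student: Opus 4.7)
The approach is to reduce the claim to a direct scaling of Lemma \ref{derivOfH} applied to the sequence $\{\tilde{u}_{\beta_n}\}$. Recall that $v_n(x)=\frac{\eta(x_n)}{L_n r_n}\tilde{u}_{\beta_n}(r_n x)$, so $v_n$ is essentially a rescaling of $\tilde{u}_{\beta_n}$ by the factor $r_n$. The key observation, already used in the proof of Lemma \ref{MonotonicityForBlowup}, is that the Almgren quantities scale as
\[
H(v_n,r)=\frac{\eta^2(x_n)}{L_n^2 r_n^2}\,H(\tilde{u}_{\beta_n},r_n r),
\qquad
E(v_n,r)=\frac{\eta^2(x_n)}{L_n^2 r_n^2}\,E(\tilde{u}_{\beta_n},r_n r).
\]

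The plan is then to differentiate the first identity with respect to $r$ (which brings out a factor $r_n$) and rewrite the combination $\frac{d}{dr}H(v_n,r)-\frac{2}{r}E(v_n,r)$ as a multiple of $H'(\tilde{u}_{\beta_n},r_nr)-\frac{2}{r_nr}E(\tilde{u}_{\beta_n},r_nr)$, namely
\[
\frac{d}{dr}H(v_n,r)-\frac{2}{r}E(v_n,r)=\frac{\eta^2(x_n)}{L_n^2\,r_n}\Big(H'(\tilde{u}_{\beta_n},r_nr)-\frac{2}{r_nr}E(\tilde{u}_{\beta_n},r_nr)\Big).
\]
After this, it suffices to apply Lemma \ref{derivOfH} to $\tilde{u}_{\beta_n}$ at the radius $s=r_n r\in ]0,\tilde r[$, which bounds the right-hand side by $C\,\frac{\eta^2(x_n)}{L_n^2\,r_n}\,H(\tilde{u}_{\beta_n},r_n r)$. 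Using the $H$-scaling identity once more in reverse turns this bound into $C r_n H(v_n,r)$, which is exactly the claim.

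The only point to check is that Lemma \ref{derivOfH} does apply to $\tilde{u}_{\beta_n}$ with a uniform constant $C$: this is covered by Lemma \ref{lemmaForMatrixTilde}, since $\tilde{u}_{\beta_n}$ solves an equation of the form \eqref{AlmgrenEq} with coefficient matrices $\tilde{A}_n$ satisfying the ellipticity and Lipschitz bounds uniformly in $n$, and the zero-order coefficient $d$ in \eqref{boundForF} can be chosen uniformly as well. I expect no real obstacle: the proof is essentially a bookkeeping argument, and the factor $r_n$ in the right-hand side appears naturally as the chain-rule factor coming from evaluating $H'(\tilde{u}_{\beta_n},\cdot)$ at $r_n r$ rather than at $r$. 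This factor is crucial for later applications, since it is precisely the reason why $\{v_n\}$ satisfies a sharper Almgren-type inequality than $\tilde{u}_{\beta_n}$.
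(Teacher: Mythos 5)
Your proposal is correct and follows essentially the same route as the paper: both proofs use the scaling identities $H(v_n,r)=\tfrac{\eta^2(x_n)}{L_n^2r_n^2}H(\tilde u_{\beta_n},r_nr)$ and $E(v_n,r)=\tfrac{\eta^2(x_n)}{L_n^2r_n^2}E(\tilde u_{\beta_n},r_nr)$, differentiate to extract the chain-rule factor $r_n$, and then apply Lemma \ref{derivOfH} to $\tilde u_{\beta_n}$ at the radius $r_nr\in\,]0,\tilde r[$. The bookkeeping is exactly as in the paper and the remark that the factor $r_n$ is what makes the estimate sharper for the blowup sequence is on point.
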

\begin{proof}
    Since $v_n = \tilde{u}_{\beta_n}(r_nx)$ is a scaling, we just use Lemma \ref{derivOfH} and the identities of the proof of Lemma \ref{MonotonicityForBlowup} to conclude that:
    \begin{align*}
        \Big| 
    	\frac{d}{dr}&H(v_n,r)
    	-
    	\frac{2}{r}E(v_n,r)
    	\Big|
    =
        \frac{\eta^2(x_n)}
        {L_n^2 r_n^2}
        \Big|
    	\frac{d}{dr}
    	\big(
        	H(\tilde{u}_{\beta_n},
        	rr_n)
    	\big)
    	-
    	\frac{2}{r}E(\tilde{u}_{\beta_n},rr_n)
    	\Big|\\
   & =
        \frac{\eta^2(x_n)}
        {L_n^2 r_n^2}
        \Big|
    	r_n
    	H'(\tilde{u}_{\beta_n},
    	rr_n)
    	-
    	\frac{2}{r r_n}r_n E(\tilde{u}_{\beta_n},rr_n)
    	\Big| \leq
        C
        r_n
        \frac{\eta^2(x_n)}
        {L_n^2 r_n^2}
        \Big|
    	H(\tilde{u}_{\beta_n},
    	rr_n)
    	\Big|
    =
        C
        r_n
        H(v_n,r).\qedhere
    \end{align*} 
\end{proof}

\begin{lemma}
	\label{doublingDownRefined}. 
There exists $C>0$ such that:
	\begin{enumerate}
\item If there exists $\tilde{r}$ and $R$ such that $N(v_n,r)\leq d$ for all $0 \leq \tilde{r} \leq r \leq R \leq \frac{\tilde{r}}{r_n}$, then:
	$$
	r \mapsto \frac{H(v_n,r)}{r^{2d}}e^{-Cr_nr}\quad \text{ 	is monotone nonincreasing for $r \in ]\tilde{r}, R[$.}
	$$

	\item If there exists $\tilde{r}$ and $R$ such that $N(v_n,r)\geq \gamma$ for all $0 \leq \tilde{r} \leq r \leq R \leq \frac{\tilde{r}}{r_n}$, then:
	$$
	r \mapsto \frac{H(v_n,r)}{r^{2\gamma}}e^{Cr_nr}\quad \text{	is monotone nondecreasing for $r \in ]\tilde{r},R[$.}
	$$

	\end{enumerate}
\end{lemma}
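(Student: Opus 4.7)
The plan is to mirror the proof of Lemma~\ref{doublingLemma}, but invoking the sharper bound of Lemma~\ref{sharpEstimate} in place of Lemma~\ref{derivOfH}. The essential improvement is that, for the blowup sequence $v_n$, the error term on the right-hand side of the differential inequality for $H(v_n,\cdot)$ picks up an extra factor of $r_n$: this factor propagates through the computation and is exactly what produces the exponentials $e^{\pm C r_n r}$ instead of $e^{\pm C r}$ as in the unrescaled statement.

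Concretely, assuming $H(v_n,r)>0$ on the interval (which is implicit in the fact that $N(v_n,r)$ appears in the hypotheses, and which in any case follows from the positivity of the components $v_{i,n}$ together with the uniform ellipticity $\mu_n\geq \theta/M$ of $A_n$), I would divide the estimate
\[
\Big|\tfrac{d}{dr} H(v_n,r) - \tfrac{2}{r}E(v_n,r)\Big|\leq C r_n H(v_n,r)
\]
provided by Lemma~\ref{sharpEstimate} by $H(v_n,r)$, obtaining
\[
\frac{2 N(v_n,r)}{r} - C r_n \;\leq\; \frac{d}{dr}\log H(v_n,r) \;\leq\; \frac{2 N(v_n,r)}{r} + C r_n
\]
for every $r\in\,]0,\tilde r/r_n[$.

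For part~(1), the hypothesis $N(v_n,r)\leq d$ combined with the upper inequality above gives $\frac{d}{dr}\log H(v_n,r)\leq \frac{2d}{r}+C r_n$ on $[\tilde r,R]$, which rearranges into
\[
\frac{d}{dr}\bigl(\log H(v_n,r) - 2d\log r - C r_n r\bigr)\leq 0,
\]
i.e. $r\mapsto H(v_n,r)\,r^{-2d}e^{-C r_n r}$ is monotone nonincreasing. For part~(2), the hypothesis $N(v_n,r)\geq \gamma$ combined with the lower inequality gives $\frac{d}{dr}\log H(v_n,r)\geq \frac{2\gamma}{r}-C r_n$, and the analogous rearrangement shows that $r\mapsto H(v_n,r)\,r^{-2\gamma}e^{C r_n r}$ is monotone nondecreasing.

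No real obstacle is expected in this argument: the statement is essentially a quantitative, rescaling-adapted version of Lemma~\ref{doublingLemma}, and Lemma~\ref{sharpEstimate} already isolates the only nontrivial ingredient (the appearance of $r_n$ in the remainder). The constant $C$ in the statement can be taken to be the same as the constant in Lemma~\ref{sharpEstimate}.
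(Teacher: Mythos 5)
Your proposal is correct and coincides with the paper's proof: the paper simply remarks that the lemma ``follows from the proof of Lemma~\ref{doublingLemma}, using the same scaling argument as in Lemma~\ref{sharpEstimate}'', which is exactly the computation you carry out (dividing the sharp estimate by $H(v_n,r)$ and integrating the resulting logarithmic derivative bound). Your explicit treatment of the positivity of $H(v_n,r)$ is a welcome detail the paper leaves implicit.
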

\begin{proof}
	Follows from the proof of Lemma \ref{doublingLemma}, using the same scaling argument as in Lemma \ref{sharpEstimate}.
\end{proof}

We now define an auxiliary function that will be used in the next lemma. Given the constant $C$ of Lemma \ref{sharpEstimate} and $\tilde{C}$ of Lemma \ref{MonotonicityForBlowup} we define:
\begin{equation*}
\label{DefOfPhiAuxilliary}
	\varphi_n(r):=
	2
	\int_{\overline{r}_n}^{r}
	\left(
	2\frac{
		e^{-\tilde{C}r_nt}-1
	}{
		t
	}
	-
	\frac{
		r_n\overline{r}_ne^{-\tilde{C}r_nt}
	}{
		t
	}
	-
	\frac{Cr_n}{2}
	\right)
	dt
	.
\end{equation*}

\begin{remark}
\label{uniformBoundOnPhi}
	We notice that this sequence is uniformly bounded in $L^\infty([0,\frac{\tilde{r}}{r_n}])$: there exists $\tilde{K}$ such that:
	\begin{align*}
		\varphi_n(r) &\leq 2 \int_{\overline{r}_n}^{r} \left( 2\frac{1-e^{-\tilde{C}r_nt}}{t} + \frac{ r_n \overline{r}_n e^{-\tilde{C}r_nt} }{	t } + \frac{	Cr_n }{ 2 } \right) dt \leq 2\left( \int_{\overline{r}_n}^{\frac{\tilde{r}}{r_n}} 2 \tilde{C} r_n dt+	\overline{r}_nr_n \int_{\overline{r}_n}^{\frac{\tilde{r}}{r_n}}\frac{dt}{t} + \frac{C\tilde{r}}{2}\right)\\
		&\leq 4\tilde{C}\tilde{r} + 2r_n\overline{r}_n\left(|\log(\tilde{r})|+|\log(\overline{r}_nr_n)|\right)\leq \tilde{K},
	\end{align*}
	since, by \eqref{productLimit}, we have $r_n\overline{r}_n \rightarrow 0$ and thus $r_n\overline{r}_n|\log(\overline{r}_nr_n)| \rightarrow 0$.
\end{remark}

\begin{lemma}
	\label{squaredMonotonicity}
	Let $\tilde{C}>0$ be the constant from Lemma \ref{MonotonicityForBlowup}.
	For every $n \in \mathbb{N}$, the function:
	$$
	r 
	\mapsto 
	\frac{
		E(v_n,r) + H(v_n,r)
	}{
		r^2
	}
	e^{\tilde{C}r_nr-\varphi_n(r)}
	$$
	is monotone nondecreasing for $r \in [\overline{r}_n, \frac{\tilde{r}}{r_n}]$.
\end{lemma}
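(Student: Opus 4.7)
Write $G(r) := E(v_n,r)+H(v_n,r) = H(v_n,r)(N(v_n,r)+1)$. Our goal is to show that
\[
\frac{d}{dr}\log\!\left(\frac{G(r)}{r^2}e^{\tilde{C}r_nr-\varphi_n(r)}\right) = \frac{G'(r)}{G(r)} -\frac{2}{r} + \tilde{C}r_n - \varphi_n'(r) \geq 0
\]
for $r\in [\overline{r}_n,\tilde r/r_n]$. The plan is to bound $G'/G$ from below by combining the Almgren monotonicity (Lemma \ref{MonotonicityForBlowup}), the ``sharp estimate'' on $H'$ (Lemma \ref{sharpEstimate}), and the defining lower bound of $\overline{r}_n$. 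In particular, since $(N(v_n,\cdot)+1)e^{\tilde{C}r_n\cdot}$ is monotone nondecreasing and attains the value $2-\overline{r}_nr_n$ at $r=\overline{r}_n$ (by continuity and the sup in \eqref{defOfBlowUpRadius}), we obtain the essential pointwise bound
\[
N(v_n,r)+1 \geq (2-\overline{r}_nr_n)e^{-\tilde{C}r_nr} \qquad \text{for every }r\in[\overline{r}_n,\tilde r/r_n].
\]
Note that this guarantees $G(r)>0$ on this interval (for $n$ large, since $\overline{r}_nr_n\to 0$), so the logarithmic derivative above is well-defined.

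From $G = H(N+1)$ we get $G'/G = H'/H + N'/(N+1)$. Almgren's formula (Lemma \ref{MonotonicityForBlowup}) gives directly $N'(v_n,r) \geq -\tilde{C}r_n(N(v_n,r)+1)$, i.e.\ $N'/(N+1) \geq -\tilde{C}r_n$. Meanwhile, Lemma \ref{sharpEstimate} yields
\[
\frac{H'(v_n,r)}{H(v_n,r)} \geq \frac{2E(v_n,r)}{rH(v_n,r)} - Cr_n = \frac{2N(v_n,r)}{r} - Cr_n.
\]
Combining these two,
\[
\frac{G'(r)}{G(r)} - \frac{2}{r} \geq \frac{2N(v_n,r)}{r} - (C+\tilde{C})r_n - \frac{2}{r} = \frac{2(N(v_n,r)+1)}{r} - (C+\tilde{C})r_n - \frac{4}{r}.
\]
Plugging in the pointwise lower bound $N(v_n,r)+1\geq (2-\overline{r}_nr_n)e^{-\tilde{C}r_nr}$ obtained from Almgren's monotonicity and the definition of $\overline{r}_n$, this becomes
\[
\frac{G'(r)}{G(r)}-\frac{2}{r} \geq \frac{4e^{-\tilde{C}r_nr}}{r} - \frac{2\overline{r}_nr_n e^{-\tilde{C}r_nr}}{r} - \frac{4}{r} - (C+\tilde{C})r_n = 4\,\frac{e^{-\tilde{C}r_nr}-1}{r} - \frac{2\overline{r}_nr_n e^{-\tilde{C}r_nr}}{r} - Cr_n - \tilde{C}r_n.
\]

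Recognising the right-hand side as $\varphi_n'(r) - \tilde{C}r_n$ (directly from the definition of $\varphi_n$), we conclude
\[
\frac{G'(r)}{G(r)} - \frac{2}{r} + \tilde{C}r_n - \varphi_n'(r) \geq 0,
\]
which is exactly the desired monotonicity. Not much is expected to go wrong in this argument since each ingredient is already in hand; the only subtle point is the judicious use of Almgren's monotonicity \emph{twice}: once to control $N'/(N+1)$ from below (the quantitative bound $-\tilde{C}r_n$), and once again to propagate the threshold identity at $r=\overline{r}_n$ forward to a pointwise lower bound on $N+1$ on the whole interval $[\overline{r}_n,\tilde r/r_n]$. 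It is this second application that makes the $\overline{r}_n r_n$-term in $\varphi_n'$ match exactly and produces the equality (rather than mere inequality) in the final comparison.
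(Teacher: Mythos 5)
Your argument is correct and is essentially the paper's own proof: the paper factors $\frac{E+H}{r^2}e^{\tilde Cr_nr-\varphi_n(r)}$ as the product $\bigl[(N(v_n,r)+1)e^{\tilde Cr_nr}\bigr]\cdot\bigl[\frac{H(v_n,r)}{r^2}e^{-\varphi_n(r)}\bigr]$ and shows each factor is nondecreasing, while you add the two corresponding logarithmic derivatives in a single chain — the same three ingredients (Almgren's monotonicity giving $N'/(N+1)\geq-\tilde Cr_n$, Lemma \ref{sharpEstimate} for $H'/H$, and the threshold identity at $\overline r_n$ propagated forward to the pointwise lower bound on $N+1$) appear in exactly the same roles.
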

\begin{proof}
	By the definition of $\overline{r}_n$ in \eqref{defOfBlowUpRadius}, we know that:
	$$
	(N(v_n,\overline{r}_n)+1)e^{\tilde{C}r_n\overline{r}_n} = 2-\overline{r}_nr_n.
	$$
	Thus, for $r\in [\overline{r}_n, \frac{\tilde{r}}{r_n}]$, using the monotonicity of $(N(v_n,r)+1)e^{\tilde{C}r_nr}$ from Lemma \ref{MonotonicityForBlowup}, we obtain:
	$$
	(N(v_n,r)+1)e^{\tilde{C}r_nr}
	\geq                     \left(
	N(v_n,\overline{r}_n)+1
	\right)e^{\tilde{C}r_n\overline{r}_n} 
	=
	2-\overline{r}_nr_n
	$$
and
	$$
	N(v_n,r)-1
	\geq
	2\left( e^{-\tilde{C}r_nr}-1
	\right)
	-
	r_n\overline{r}_ne^{-\tilde{C}r_nr}.
	$$
	Now, using Lemma \ref{sharpEstimate}, there exists $C>0$ independent of $n$ such that $H'(v_n,r)\geq \frac{2}{r}E(v_n,r)-Cr_nH(v_n,r)$, so:
\begin{multline*}
	\frac{d}{dr}\log\left(
	\frac{
		H(v_n,r)
	}
	{
		r^2
	}
	\right)
	=
	\frac{H'(v_n,r)}{H(v_n,r)}
	-
	\frac{2}{r}
	\geq
	\frac{2}{r}\left(
	\frac{E(v_n,r)}{H(v_n,r)}-1
	\right) - Cr_n\\
	= 
	\frac{2}{r}\left(
	N(v_n,r) - 1
	\right) - Cr_n
	\geq 
	\frac{4}{r}\left(
	e^{-\tilde{C}r_nr}-1
	\right)
	-
	\frac{2}{r}r_n\overline{r}_ne^{-\tilde{C}r_nr}
	-
    Cr_n = \varphi'_n(r)
	\end{multline*}
	for $r \in [\overline{r}_n, \frac{\tilde{r}}{r_n}]$. This is equivalent to:
	$$
	    \frac{d}{dr}
	    \left(
	        \frac{H(v_n,r)}{r^2}
	    \right)
	    -
	    \varphi_n'(r)\frac{H(v_n,r)}{r^2}
	\geq
	    0
	$$
	and  integrating we deduce that
	$$
	r\mapsto \frac{H(v_n,r)}{r^2}e^{-\varphi_n(r)}
	$$
	is monotone nondecreasing for $r \in [\overline{r}_n, \frac{\tilde{r}}{r_n}]$. To conclude the proof we observe that, by Lemma \ref{MonotonicityForBlowup} and the above observations, for $r \in [\overline{r}_n, \frac{\tilde{r}}{r_n}]$:
\begin{multline*}
	\frac{d}{dr}\log\left(
	\frac{E(v_n,r)+H(v_n,r)}{r^2}e^{\tilde{C}r_nr-\varphi_n(r)}
	\right) = 
	\frac{d}{dr}\log\left(
	(N(v_n,r)+1)e^{\tilde{C}r_nr}
	\frac{H(v_n,r)}{r^2}e^{-\varphi_n(t)}
	\right)\\
	=
	\frac{d}{dr}\log\left(
	(N(v_n,r)+1)e^{\tilde{C}r_nr}
	\right)
	+
	\frac{d}{dr}\log\left(
	\frac{H(v_n,r)}{r^2}e^{-\varphi_n(t)}
	\right)
	\geq
	0\qedhere
\end{multline*}
\end{proof}
\begin{lemma}
	\label{radBlowUp}
	It holds that $\overline{r}_n \rightarrow \infty$ as $n \rightarrow \infty$.
\end{lemma}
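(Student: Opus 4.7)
The proof is by contradiction: suppose, along a subsequence, $\overline{r}_n\to r_*\in[0,\infty)$. From the defining equation $(N(v_n,\overline{r}_n)+1)e^{\tilde Cr_n\overline{r}_n}=2-r_n\overline{r}_n$ at $\overline{r}_n$ together with $r_n\overline{r}_n=R_{\beta_n}\to 0$ (Lemma \ref{radGoToZero}), we get $N(v_n,\overline{r}_n)\to 1$. The supremum property in \eqref{defOfBlowUpRadius} combined with the monotonicity from Lemma \ref{MonotonicityForBlowup} yields $(N(v_n,r)+1)e^{\tilde Cr_nr}\leq 2-rr_n$ for $r<\overline{r}_n$ and $(N(v_n,r)+1)e^{\tilde Cr_nr}\geq 2-\overline{r}_n r_n$ for $r\geq\overline{r}_n$. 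Passing to the $H^1_{loc}$-limit $v_n\to v$ provided by Proposition \ref{ListProp}(5) and using continuity of the Almgren quotient for the nontrivial limit, we obtain
\[
N(v,r)\leq 1\text{ for }r\in(0,r_*),\qquad N(v,r)\geq 1\text{ for }r>r_*.
\]

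The case $r_*=0$ is immediate: then $N(v,r)\geq 1$ for all $r>0$ would force $N(v,0^+)\geq 1$. However, the Lipschitz continuity of $v$ combined with $v(0)\neq 0$ (from $v_1(0)+v_2(0)=1$ in Proposition \ref{concludeProp}) gives $H(v,r)\to|\partial B_1|\,|v(0)|^2>0$ and $E(v,r)=O(r^2)$ as $r\to 0^+$, hence $N(v,0^+)=0$, a contradiction. For $r_*>0$, the uniform Lipschitz bound \eqref{eq:Lipschitzbound_limit} gives $H(v,r)\leq C(1+r^2)$ for all $r>0$; if one had $N(v,r_1)\geq 1+\epsilon$ for some $r_1>r_*$ and $\epsilon>0$, Lemma \ref{doublingDownRefined}(2) passed to the limit $n\to\infty$ (the exponential factor $e^{Cr_n r}\to 1$) would yield $H(v,R)\geq cR^{2(1+\epsilon)}$ for all $R\geq r_1$, contradicting the quadratic upper bound. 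Hence $N(v,r)\equiv 1$ on $[r_*,\infty)$.

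An Almgren-type rigidity (in the class $\mathcal{G}$ from Appendix \ref{appendix:classG} in the segregated case $M_n\to-\infty$ of Proposition \ref{concludeProp}(1), or via the nonlinear system \eqref{boundedEquation} in Proposition \ref{concludeProp}(2)) then forces $v$ to be positively one-homogeneous in $\mathbb{R}^N\setminus\overline{B_{r_*}}$. Combining this homogeneity at infinity with the nontriviality of both $v_1$ and $v_2$ from Lemma \ref{nonTriviality}, the normalization $v_1(0)+v_2(0)=1$, and the derivative lower bound $|\nabla v_1(0)|\geq\theta^{1/2}>0$, a Liouville-type analysis of one-homogeneous solutions of the respective limit system (a segregated harmonic pair in Case (1), or the bounded coupled system \eqref{boundedEquation} in Case (2)) with a nontrivial value at the origin provides the sought contradiction. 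The main obstacle is precisely this final rigidity/classification step when $r_*>0$: translating $N(v,\cdot)\equiv 1$ on $[r_*,\infty)$ into an explicit contradiction with the Cauchy data $v(0)\neq 0$ and $|\nabla v_1(0)|\neq 0$ requires the full classification of one-homogeneous limits, handled separately in the two regimes for $(M_n)$.
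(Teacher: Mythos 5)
Your argument is incomplete exactly where you say it is: in the main case $r_*>0$ you reduce matters to ``$N(v,\cdot)\equiv 1$ on $[r_*,\infty)$ yields a contradiction'', but the rigidity/classification step you invoke is not supplied, and it is not routine. Constancy of the Almgren quotient on $[r_*,\infty)$ only gives one-homogeneity of $v$ on the exterior region $\{|x|\ge r_*\}$; to contradict $v(0)\neq 0$ or $|\nabla v_1(0)|\ge\theta^{1/2}$ you would still need to propagate that structure into $B_{r_*}$, via a unique-continuation or classification argument for segregated harmonic pairs (case $M_n\to-\infty$) and for solutions of \eqref{boundedEquation} (case $M_n$ bounded). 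None of the tools in the appendices covers this: Proposition \ref{homogenousSegregatedLemma} gives rigidity only for constancy on intervals of the form $[0,\overline r)$, and Lemma \ref{harmonic2compsystemCharacter} assumes homogeneity about the origin, which is precisely what is unavailable here. There are also smaller steps taken for granted (the pointwise convergence $N(v_n,r)\to N(v,r)$, and passing Lemma \ref{doublingDownRefined} to the limit), but those are fixable; the classification step is the genuine gap.

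The paper's proof avoids any classification of the limit. Assuming $\overline r_n\le\overline r$, it invokes Lemma \ref{squaredMonotonicity}: $\frac{E(v_n,r)+H(v_n,r)}{r^2}e^{\tilde Cr_nr-\varphi_n(r)}$ is nondecreasing on $[\overline r_n,\tilde r/r_n]$, so its value at any fixed $r\ge\overline r+1$ is controlled by its value at the right endpoint $r=\tilde r/r_n$, which by the scaling identities equals $C\eta^2(x_n)\bigl(E(\tilde u_{\beta_n},\tilde r)+H(\tilde u_{\beta_n},\tilde r)\bigr)/(L_n^2\tilde r^2)$ and tends to $0$ because $E(\tilde u_{\beta_n},\tilde r)$ and $H(\tilde u_{\beta_n},\tilde r)$ are uniformly bounded while $L_n\to\infty$. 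Since $E(v,r)+H(v,r)\ge 0$ for the limit in either regime of $M_n$, this forces $v\equiv 0$ on $B_{\overline r+1}$, contradicting Lemma \ref{nonTriviality}. The ingredient your proposal never uses is this $1/L_n^2$ decay at the outer scale $\tilde r/r_n$; once it is brought in, the contradiction is immediate and no Liouville-type analysis of one-homogeneous profiles is needed.
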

\begin{proof}
	We suppose by contradiction that there exists $\overline{r}$ such that, up to a subsequence, $\overline{r}_n \leq \overline{r}$. We know by Proposition \ref{concludeProp} that $v_n \rightarrow v$ in $C_{loc}(\mathbb{R}^n)$ and $H^1_{loc}(\mathbb{R}^n)$. We claim that the limit $v$ satisfies $E(v,r) \geq 0$. We divide the proof of this claim in two cases, according to Proposition \ref{concludeProp}. in case $M_n \rightarrow -\infty$, then:
	$$
	\lim_n
	E(v_n,r)
	=
	E(v,r) 
	= 
	\frac{1}{r^{N-2}}
	\sum_{i=1}^l 
	\int_{B_r}
	|\nabla v_i|^2 \geq 0,
	$$
while, if $M_n \rightarrow M_\infty < 0$, then:
	$$
	\lim_n
	E(v_n,r)
	=
	E(v,r)
	=
	\frac{1}{r^{N-2}}\sum_{i=1}^l\left(
	\int_{B_r}
	|\nabla v_i|^2
	-
	M_\infty\sum_{j\neq i}
	v_i^\gamma
	v_j^{\gamma+1}
	\right) \geq 0.
	$$

	Then, for all $r \in [\overline{r}+1, \frac{\tilde{r}}{r_n}]$, using the monotonicity formula from Lemma \ref{squaredMonotonicity}, we obtain:
	\begin{align}
	0  &\leq  \frac{E(v,r) + H(v,r)}{r^2}  = \lim_{n \rightarrow \infty} \frac{E(v_n,r) + H(v_n,r)}{r^2}\leq \lim_{n \rightarrow \infty} \frac{E(v_n,r) + H(v_n,r)}{r^2}	e^{\tilde{C} r_nr - \varphi_n(r)} e^{\varphi_n(r)- \tilde{C}r_nr } \nonumber \\
	&\leq \lim_{n \rightarrow \infty} r_n^2\frac{E(v_n,\frac{\tilde{r}}{r_n})+H(v_n,\frac{\tilde{r}}{r_n})}{\tilde{r}^2} \sup_{r\in[\overline{r}_n,\frac{\tilde{r}}{r_n}]} e^{\tilde{C}(\tilde{r}-r_nr)+\varphi_n(r)-\varphi_n(\frac{\tilde{r}}{r_n})}  \nonumber \\
	&\leq\lim_{n \rightarrow \infty}  C' r_n^2\frac{E(v_n,\frac{\tilde{r}}{r_n}) + H(v_n,\frac{\tilde{r}}{r_n})}{\tilde{r}^2} = \lim_{n \rightarrow \infty} C'\eta^2(x_n)\frac{E(\tilde{u}_{\beta_n}, \tilde{r}) + H(\tilde{u}_{\beta_n}, \tilde{r})}{L_n^2\tilde{r}^2}, \label{eq:another_chain_of_inequalities_}
	\end{align}
	where we used the bound $e^{\tilde{C}(\tilde{r}-r_nr)+\varphi_n(r)-\varphi_n(\frac{\tilde{r}}{r_n})}\leq e^{2C\tilde{r}+2\tilde{K}} = C'$ (by Remark \ref{uniformBoundOnPhi}, $\varphi_n$ is uniformly bounded by $\tilde{K}$ in $[0,\frac{\tilde{r}}{r_n}]$).
	Now both $E(\tilde{u}_{\beta_n}, \tilde{r})$ and $H(\tilde{u}_{\beta_n}, \tilde{r})$ are uniformly bounded (for $E(\tilde{u}_{\beta_n},\tilde{r})$ proceed as in equation \eqref{boundOnDerivativeProp2} of Proposition \ref{ListProp}, using Lemma \ref{lemmaForMatrixTilde}). Thus, since $L_n \rightarrow \infty$, the last term in the chain of inequalities \eqref{eq:another_chain_of_inequalities_} to $0$, and so we conclude that $v(x)= 0$ for all $x \in B_{\overline{r}+1}(0)$, in contradiction with Lemma \ref{nonTriviality}.
\end{proof}

Up to this point, we just applied the results of Section \ref{chapter:implementation} (Almgren's monotonicity formula) to the blow up sequence $\{v_n\}$. It is now time to use the results of Section \ref{chapter:resultsChap4} (Alt-Caffarelli-Friedman-type monotonicity formula) to $\{v_n\}$ in the interval $r \in [2N/\theta^\frac{1}{2},\frac{\overline{r}_n}{3}]$.

Given the functionals:
$$
J_{1,n}(r) 
:= 
\int_{B_r}\left(
\langle A_n(y)\nabla v_{1,n} , \nabla v_{1,n} \rangle
- M_na_n(y)v_{1,n}^2v_{2,n}^2 - v_{1,n}f_{1,n}(y,v_{1,n})
\right)|y|^{2-N}dy
$$
$$
J_{2,n}(r) 
:= 
\int_{B_r}\left(
\langle A_n(y)\nabla v_{2,n}, \nabla v_{2,n} \rangle
- M_na_n(y)v_{1,n}^2v_{2,n}^2 - v_{2,n}f_{2,n}(y,v_{2,n})
\right)|y|^{2-N}dy,
$$
we also define:
$$J_n(r) = \frac{J_{1,n}(r)J_{2,n}(r)}{r^4}.$$

\begin{lemma}
	\label{caffMonot}
	There exists $c,C>0$, independent of $n$, such that for every $0<\eta<\frac{1}{4}$ we have 
	$$r\mapsto J_n(r)e^{-C|M_n|^{-\eta}r^{-2\eta} + Cr_n^2r^2 + Cr_nr}$$
	is monotone nondecreasing in the interval $[2N/\theta^{-\frac{1}{2}}, \overline{r}_n/3]$ and $
	J_n(2N/\theta^\frac{1}{2})
	=
	\frac
	{
    	J_{1,n}(2N/\theta^\frac{1}{2}) J_{2,n}(2N/\theta^\frac{1}{2})
	}
	{
	    (2N/\theta^\frac{1}{2})^4
	}
	\geq c$.
\end{lemma}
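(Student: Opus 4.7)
The plan is to apply Theorem \ref{AltCaffMonotonicity} to the blowup sequence $\{v_n\}$ solving \eqref{eq:system_rescaled}, with the parameter choices $c_n := C r_n$, $R_n := \overline{r}_n/3$, and $\epsilon_n := C' r_n^2$ for $C'$ sufficiently large depending on $d$, $\theta$, $M$; the constants $\lambda$ and $w$ will come from uniform estimates on the spherical averages. The bulk of the work is verifying hypotheses $(h_0)$--$(h_6)$. Several are straightforward: $(h_0)$ is immediate from Lemma \ref{lemmaMatrixBlowLimit}; $(h_4)$ and $(h_1)$ follow from \eqref{productLimit}, since $c_n R_n = O(r_n \overline{r}_n) \to 0$ and $\epsilon_n R_n^2 = O((r_n \overline{r}_n)^2) \to 0$, so the latter is eventually below any fixed $((N-2)/2)^2 - \delta$; $(h_2)$ combines the pointwise bound $|f_{i,n}(y,v_{i,n})| \leq d r_n^2 |v_{i,n}|$ from Lemma \ref{lemmaMatrixBlowLimit} with the ellipticity $\mu_n \geq \theta/M$; and $(h_5)$ follows by scaling from Lemma \ref{MonotonicityForBlowup}, which gives $H_i(v_n, s) \leq H_i(v_n, r) e^{\tilde C r_n(r-s)}$ for $s \leq r \leq R_n$, so that with $r_n R_n \to 0$ and $\mu_n$ bounded above and below one obtains the doubling estimate required in $(h_5)$.

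The delicate hypotheses are $(h_3)$ and $(h_6)$. The lower bound $\tfrac{1}{r^{N-1}} \int_{\partial B_r} \mu_n v_{i,n}^2 \, d\sigma \geq w$ on $[2N/\theta^{1/2}, \overline{r}_n/3]$ for $i = 1, 2$ is exactly Lemma \ref{nonTriviality}. The two-sided control on the ratio $H_1(v_n, r)/H_2(v_n, r)$, however, is not immediate: one must rule out that one of the two nontrivial limiting components dominates the other throughout the entire scale $[2N/\theta^{1/2}, \overline{r}_n/3]$, and I expect this to be the main obstacle of the argument. It is precisely the kind of blowup/blowdown analysis of $v_n$ combined with Almgren's formula that is carried out in Section \ref{sec:conditions}. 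As for $(h_6)$, the positivity of $J_{i,n}$ follows from the lower bound
\[
J_{i,n}(r) \;\geq\; \tfrac{\theta}{M}\int_{B_r} |\nabla v_{i,n}|^2 |y|^{2-N}\,dy \;-\; d r_n^2 \int_{B_r} v_{i,n}^2 |y|^{2-N}\,dy,
\]
together with the fact that $v_{i,n}$ is close to a nonconstant limit $v_i$; the positivity of $\Lambda_{i,n}$ then follows by inspection of \eqref{eq:equivalent_LAMBDA} using the ellipticity of $B_n$ on $T_y \partial B_{|y|}$ from Lemma \ref{MatrixBoundsOnB}. Once $(h_0)$--$(h_6)$ are checked, Theorem \ref{AltCaffMonotonicity} yields the desired monotonicity statement.

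For the uniform lower bound $J_n(2N/\theta^{1/2}) \geq c$, set $r_0 := 2N/\theta^{1/2}$ and use the inequality above for $J_{i,n}(r_0)$. By Proposition \ref{ListProp}-(5), $v_n \to v$ strongly in $H^1_{\mathrm{loc}}(\mathbb{R}^N)$, and by Lemma \ref{nonTriviality} both $v_1$ and $v_2$ are nonconstant in $B_{r_0}$; since the weight $|y|^{2-N}$ is integrable on $B_{r_0}$ (as $N \geq 3$) and the uniform Lipschitz bound from Proposition \ref{ListProp}-(3) handles the singularity at the origin, the weighted integrals pass to the limit, giving
\[
\int_{B_{r_0}} |\nabla v_{i,n}|^2 |y|^{2-N}\,dy \;\longrightarrow\; \int_{B_{r_0}} |\nabla v_i|^2 |y|^{2-N}\,dy \;>\;0.
\]
Combined with the vanishing remainder $d r_n^2 \int v_{i,n}^2 |y|^{2-N}\,dy = O(r_n^2)$, we conclude $J_{i,n}(r_0) \geq \tilde c > 0$ uniformly in $n$ for $n$ large, and hence $J_n(r_0) \geq c > 0$, completing the proof.
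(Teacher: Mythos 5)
Your overall strategy is the paper's: verify $(h_0)$--$(h_6)$ for $\{v_n\}$ with $c_n\sim r_n$, $R_n=\overline{r}_n/3$, $\epsilon_n\sim r_n^2$, and invoke Theorem \ref{AltCaffMonotonicity}; your handling of $(h_0)$, $(h_1)$, $(h_2)$, $(h_4)$, $(h_5)$ and of the lower bound $J_n(2N/\theta^{1/2})\geq c$ matches the paper. However, your treatment of $(h_6)$ has a genuine gap. The claim that $J_{i,n}(r)>0$ on all of $[2N/\theta^{1/2},\overline{r}_n/3]$ follows from the bound $J_{i,n}(r)\geq \frac{\theta}{M}\int_{B_r}|\nabla v_{i,n}|^2|y|^{2-N}-dr_n^2\int_{B_r}v_{i,n}^2|y|^{2-N}$ ``together with closeness to a nonconstant limit'' does not work: the convergence $v_n\to v$ is only \emph{locally} uniform and cannot be invoked at radii $r\sim\overline{r}_n/3\to\infty$, while the negative term is of size $d r_n^2\int_0^r s\,H(v_n,s)\,ds\lesssim d(r_nr)^2r^2$, which need not be small compared with the (merely constant) lower bound one gets for the gradient term from the fixed ball $B_{2N/\theta^{1/2}}$ (note $(r_n\overline{r}_n)^2\overline{r}_n^{\,2}$ may diverge even though $r_n\overline{r}_n\to 0$). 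The paper closes this with a continuation argument (Lemma \ref{LastLemma}): positivity holds on an initial interval by local convergence, and on the maximal interval of positivity the Alt--Caffarelli--Friedman monotonicity itself yields the quantitative bound $J_{1,n}(r)J_{2,n}(r)\geq \overline{C}^2C>0$, which by continuity forces the interval to reach $\overline{r}_n/3$. Likewise, positivity of $\Lambda_{i,n}(r)$ does \emph{not} follow ``by inspection using the ellipticity of $B_n$'': the numerator in \eqref{eq:equivalent_LAMBDA} contains the possibly negative term $u_{i,n}f_{i,n}$ of relative size $d(r_nr)^2$, and the tangential-gradient and interaction terms carry no a priori positive lower bound on an arbitrary sphere; the paper's Lemma \ref{LambdasMoreThanZeroAltCafCond} obtains the needed uniform bound $\Lambda_{i,n}(r)\geq C$ by a contradiction/blowdown argument in which the rescaled profiles converge to $\gamma_hx_1^+$, $\gamma_kx_1^-$.

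Separately, you correctly identify the two-sided ratio bound in $(h_3)$ as a main obstacle, but you do not supply an argument; the paper proves it (Lemma \ref{sameWeightAltCafCond}) by combining the blowdown characterization of Lemma \ref{blowDownSequence} with the simplex-type Lemma \ref{simplexLemma}, ruling out both a dominating component and a third nontrivial one. As it stands, your proposal establishes the easy hypotheses and the endpoint bound but leaves the two genuinely delicate hypotheses, $(h_3)$ and $(h_6)$, unproved, and the shortcut you propose for $(h_6)$ would fail at large radii.
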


The proof of this lemma consists in showing that the sequence $\{v_n\}$ satisfies conditions $(h0)$--$(h6)$ from Section \ref{chapter:resultsChap4} in the interval $[2N/\theta^{-\frac{1}{2}}, \overline{r}_n/3]$. From this, Lemma \ref{caffMonot} is a direct consequence of Theorem \ref{AltCaffMonotonicity}. However, the proof of such conditions is a delicate and long process. In order not to break the pace of this section, we leave it for later (it will be the content of Section \ref{sec:conditions} below). Instead, assuming the validity of Lemma \ref{caffMonot}, we immediatly pass to the proof of the main result of this paper.  As previously done in Section \ref{chapter:resultsChap4}, for simplicity we focus on the case of $\gamma = 1$, remembering that the proof for general $\gamma$ follows from using Lemma \ref{estimateLemma}-(1) whenever we use Lemma  \ref{estimateLemma}-(2).

\begin{proof}[Proof of Theorem \ref{DesiredTheorem}]
	The claim that the following chain of inequalities hold true:
	\begin{align}
	\label{firstOfTheLast}
	0
	<
	c
	\leq
	J_n(2N/\theta^\frac{1}{2})
	&\leq
	CJ_n(\frac{\overline{r}_n}{3})
	\leq
	C\left(
	\frac{E(v_n,\overline{r}_n) + H(v_n,\overline{r}_n)}{\overline{r}_n^2} + o_n(1)
	\right)^2\\
	&\leq
	C\left(
	r_n^2\frac{E(v_n,\frac{\tilde{r}}{r_n}) + H(v_n,\frac{\tilde{r}}{r_n})}{\tilde{r}^2} + o_n(1)
	\right)^2
	\label{LatsOfTheLast}
	\end{align}
	where $o_n(1)\rightarrow 0$.  A 	contradiction follows as soon as we prove this claim; indeed,  using the same arguments as in Lemma \ref{radBlowUp}, we have that:
	$$
	0
	<
	c
	\leq
	C\left(
	r_n^2\frac{E(v_n,\frac{\tilde{r}}{r_n}) + H(v_n,\frac{\tilde{r}}{r_n})}{\tilde{r}^2} + o_n(1)
	\right)^2
=
    C\left(
	\eta^2(x_n)\frac
	{E(\tilde{u}_{\beta_n},\frac{\tilde{r}}{r_n}) + H(\tilde{u}_{\beta_n},\frac{\tilde{r}}{r_n})
	}
	{
	L_n^2\tilde{r}^2
	} 
	+ 
	o_n(1)
	\right)^2
	\rightarrow 0,
	$$ 
	which results in contradiction, and the theorem is proved.

Now we prove the claim.	 The first two inequalities of \eqref{firstOfTheLast} follow from Lemma \ref{caffMonot}, since:
	$$
	    0
	<
	    c
	\leq
	    J_n(2N/\theta^\frac{1}{2})
	\leq
	    J_n(\frac{\overline{r}_n}{3})
	    e^{
	        -C|M_n|^{-\eta}
	        \left(
	        (\frac{\overline{r}_n}{3})^{-2\eta}
	        -
	        (
	        2N/\theta^\frac{1}{2}
	        )^{-2\eta}
	        \right)
	        +
	        Cr_n^2
	        \left(
	            (
	            \frac
	            {\overline{r}_n}
	            {3}
	            )^2
	            -
	            (2N/\theta^\frac{1}{2})^2
	        \right)
	        +
	        Cr_n
	        \left(
	            \frac
	            {\overline{r}_n}
	            {3}
	            -
	            (2N/\theta^\frac{1}{2})
	        \right)
	    }.
	$$
	By equation \eqref{productLimit} we have that $r_n \overline{r}_n \rightarrow 0$ and, by \eqref{Domain}, $r_n \rightarrow 0$. Furthermore, since by Proposition \ref{concludeProp} we either have $|M_n| \rightarrow \infty$ or $|M_n| \rightarrow |M_\infty|>0$, then this implies that
	$|M_n|^{-2\eta}(2N/\theta^\frac{1}{2})^{-2\eta}$ is bounded in $n$. With these observations, we conclude that:
	$$
	    e^{
	        -C|M_n|^{-2\eta}
	        \left(
	        \frac{\overline{r}_n}{3}^{-2\eta}
	        -
	        (
	        2N/\theta^\frac{1}{2}
	        )^{-2\eta}
	        \right)
	        +
	        Cr_n^2
	        \left(
	            \frac
	            {\overline{r}_n}
	            {3}^2
	            -
	            (2N/\theta^\frac{1}{2})^2
	        \right)
	        +
	        Cr_n
	        \left(
	            \frac
	            {\overline{r}_n}
	            {3}
	            -
	            (2N/\theta^\frac{1}{2})
	        \right)
	    }
	$$
	is bounded in $n$, and so the second inequality of \eqref{firstOfTheLast} follows. The last inequality \eqref{LatsOfTheLast} follows from Lemma \ref{squaredMonotonicity}. 
	
	The only thing left to prove is the middle inequality
	\begin{equation}
		\label{IneqProof}
		J_n(\frac{\overline{r}_n}{3})
		\leq
		C\left(
		\frac{E(v_n,\overline{r}_n) + H(v_n,\overline{r}_n)}{\overline{r}_n^2} + o_n(1)
		\right)^2
	\end{equation}
and we proceed to prove it. First, we notice that:
	\begin{align*}
	\frac{1}{\overline{r}_n^2} \int_{B_{\overline{r}_n}/3} &\left(
	\langle A_n(y)\nabla v_{1,n},\nabla v_{1,n} \rangle
	- 
	M_na_n(y)v_{1,n}^2v_{2,n}^2
	- 
	v_{1,n}f_{1,n}(y,v_{1,n})
	\right)
	|y|^{2-N}dy
	\\
	\leq&
	\frac{1}{\overline{r}_n^2}\int_{B_{\overline{r}_n}}\left(
	\langle A_n(y)\nabla v_{1,n},\nabla v_{1,n} \rangle
	- 
	M_na_n(y)v_{1,n}^2v_{2,n}^2
	- 
	v_{1,n}f_{1,n}(y,v_{1,n})
	\right)
	|y|^{2-N}dy
	\\
	&+
	\frac{1}{\overline{r}_n^2}\int_{B_{\overline{r}_n}\setminus B_{\overline{r}_n/3}} v_{1,n}f_{1,n}(y,v_{1,n})|y|^{2-N}dy.
	\end{align*}
	We divide the proof of \eqref{IneqProof} in two steps.
	
\noindent	\textbf{Step 1.} We show that:
	\begin{equation}
		\label{vanishTerm}
		\lim_{n \rightarrow \infty}
		\Big|
		\frac{1}{\overline{r}_n^2}\int_{B_{\overline{r}_n}-B_{\overline{r}_n/3}} v_{1,n}f_{1,n}(y,v_{1,n})|y|^{2-N}dy
		\Big| 
		= 
		0.
	\end{equation}
	Indeed,	using the bound $|f_{1,n}(x,v_{1,n})|\leq dr_n^2|v_{1,n}|$, obtained in Lemma \ref{lemmaMatrixBlowLimit}, we have that:
	\begin{multline*}
	\Big|
	\frac{1}{\overline{r}_n^2}
	\int_{B_{\overline{r}_n}(0)-B_{\overline{r}_n/3}(0)} v_{1,n}
	f_{1,n}(y,v_{1,n})
	|y|^{2-N}dy
	\Big|
	\leq
	\frac{d r_n^2}{\overline{r}_n^2}\int_{B_{\overline{r}_n}(0)-B_{\overline{r}_n/3}(0)}v^2_{1,n}|y|^{2-N}dy\\
	=
	\frac{d\eta^2(x_n)}{L_n^2\overline{r}_n^2r_n^2}
	\int_{B_{\overline{r}_nr_n}(0)-B_{\overline{r}_nr_n/3}(0)}
	\frac{\tilde{u}_{1,\beta_n}^2}{|y-x_n|^{N-2}}dy
	\leq
	\frac{
		Cm^2
	}{
		L^2_n\overline{r}_n^Nr_n^N
	}\int_{B_{\overline{r}_nr_n}(0)}1dy
	\leq
	\frac{C}{L_n^2} \rightarrow 0,
	\end{multline*}
	where we use the uniform $L^\infty$-bound, $|\tilde{u}_{1,\beta_n}| \leq m$.
	
\noindent	\textbf{Step 2.} Show that:
	$$
	\frac{1}{\overline{r}_n^2}
	\int_{B_{\overline{r}_n}(0)}\left(
	\langle A_n(y)\nabla v_{1,n}, \nabla v_{1,n} \rangle
	-
	M_na_n(y)v_{1,n}^2v_{2,n}^2
	-
	v_{1,n}f_{1,n}(y,v_{1,n})
	\right)|y|^{2-N}
	dy
	$$
	$$
	\leq
	C\frac{
		E(v_n,\overline{r}_n) 
		+
		H(v_n,\overline{r}_n)}{\overline{r}_n^2}.
	$$
	To prove this, we use equation \eqref{randomEqJ} to conclude that there exists $\alpha>0$ such that:
	\begin{align}
		    J_{1,n}(r_n)
		    &=\int_{B_{\overline{r}_n}}
			\left(
			\langle A_n(y)\nabla v_{1,n}, \nabla v_{1,n} \rangle
			-
			M_na_n(y)v_{1,n}^2v_{2,n}^2
			-
			v_{1,n}
			f_{1,n}(y,v_{1,n})
			\right)|y|^{2-N}
			\nonumber \\
			&\leq
			\frac{1}{\overline{r}_{n}^{N-2}}
        	\int_{B_{\overline{r}_n}}\left(
			\langle A_n(y)\nabla v_{1,n}, \nabla v_{1,n} \rangle
			-
			M_na_n(y)v_{1,n}^2v_{2,n}^2
			-
			v_{1,n}
			f_{1,n}(y,v_{1,n})
			\right)dy
			\nonumber \\
			&\qquad+
			\frac{(N-2)(1+\alpha r_n\overline{r}_n)}{2\overline{r}_n^{N-1}}\int_{\partial B_{\overline{r}_n}}\mu_n(y)v_{1,n}^2d\sigma(y). 		\label{ineqFinal}
	\end{align}
	Now we notice that for all $n$ and $i\in\{1,...,l\}$, since $M_n<0$ and $a_n(y)>0$, we have that:
	\begin{multline*}
		\frac{1}{\overline{r}_n^{N-2}}
		\int_{B_{\overline{r}_n}}\left(
		\langle A_n(y)\nabla v_{i,n}, \nabla v_{i,n} \rangle
		-
		M_na_n(y)v_{i,n}^2
		\mathop{\sum_{j=1}^l}_{j\neq i}
		v_{j,n}^2
		-
		v_{i,n}f_{i,n}(y,v_{i,n})
		\right)
		dy
		\\
		\qquad+
		\frac{(N-2)(1+\alpha r_n\overline{r}_n)}{2\overline{r}_n^{N-1}}\int_{\partial B_{\overline{r}_n}}\mu_n(y)v_{i,n}^2d\sigma(y)\\
		\geq
		\frac{\eta^2(x_n)}{L_n^2r_n^2}\left(
		\int_{B_{{r_n\overline{r}_n}}}
		\left(
		    \frac{1}{(r_n\overline{r}_n)^{N-2}}
    		\langle \tilde{A}_n(y)\nabla \tilde{u}_{i,\beta_n},
    		\nabla \tilde{u}_{i,\beta_n}
    		\rangle
    		-
    		\frac{d(\overline{r}_nr_n)^2}{(\overline{r}_nr_n)^N} \tilde{u}_{i,\beta_n}^2
		\right)
		dy
		\right.\\
		\qquad+
		\left.
		\frac{(N-2)(1+\alpha r_n\overline{r}_n)}{2(\overline{r}_nr_n)^{N-1}}
		\int_{\partial B_{\overline{r}_nr_n}}
		\mu_n(\frac{y}{r_n})
		\tilde{u}_{i,\beta_n}^2
		d\sigma(y)
		\right)\\
		\geq
		\frac{\eta^2(x_n)}{L_n^2r_n^2}\left(
		\int_{B_{{r_n\overline{r}_n}}}
		\left(
    		\frac{1}{(r_n\overline{r}_n)^{N-2}}
    		\frac{
    			\theta
    		}{
    			M
    		}
    		|\nabla \tilde{u}_{i,\beta_n}|^2
    		-
    		\frac{d(\overline{r}_nr_n)^2}{(\overline{r}_nr_n)^N} \tilde{u}_{i,\beta_n}^2
		\right)
		dy
		\right.\\
		\qquad
		\left.
		+
		\frac{
			(N-2)(1+\alpha r_n\overline{r}_n)(1-Cr_n\overline{r}_n)
		}{
			2(\overline{r}_nr_n)^{N-1}
		}
		\int_{
			\partial B_{\overline{r}_nr_n}
		}
		\tilde{u}_{i,\beta_n}^2
		d\sigma(y)
		\right).
		\label{starEquation123}
	\end{multline*}
	In the first inequality we made a change of variables using the definition of $v_{i,n}$ in \eqref{blowUp} and the inequality of $f_{i,n}$ from Lemma \ref{lemmaMatrixBlowLimit}. 
	In the last inequality, we used the ellipticity constant for $\tilde{A}_n(y) = A_n(\frac{y}{r_n})$ given in Lemma \ref{lemmaForMatrixTilde} and that:
	\begin{align}
	    \mu_n(\frac{y}{r_n})
	    =&
	    \langle
            A_n(\frac{y}{r_n})
            \frac{y}{|y|}
            ,
            \frac{y}{|y|}
	    \rangle
	    =
	    1
	    +
	    \langle
	        (
	        I
	        -
            A_n(\frac{y}{r_n})
            ) 
            \frac{y}{|y|}
            ,
            \frac{y}{|y|}
	    \rangle=
	    1
	    +
	    \langle
	        (
	        I
	        -
            \tilde{A}_n(y)
            ) 
            \frac{y}{|y|}
            ,
            \frac{y}{|y|}
	    \rangle    
	    \geq 
	    (1-Cr_n\overline{r}_n)
	\end{align}
	for $y \in B_{r_n\overline{r}_n}$.
	Now, using Poincar\'e's inequality (Lemma \ref{PoincareIneq}) we have that \eqref{starEquation123} above is larger than or equal to:
	\begin{multline*}
		\frac{\eta^2(x_n)}{L_n^2r_n^2}
		\Bigg[
		\frac{1}{(\overline{r}_nr_n)^{N-2}}
		\left(
		\frac{\theta}{M} 
		- 
		\frac{
			d(r_n\overline{r}_n)^2
		}{
			N-1
		}
		\right)
		\int_{B_{r_n\overline{r}_n}}
		|\nabla \tilde{u}_{i,\beta_n}|^2
		dy
		\\
		 +
		\left(
		\frac{
			(N-2)(1+\alpha r_n\overline{r}_n)(1-Cr_n\overline{r}_n)
		}{
			2
		}
		-
		\frac{d(r_n\overline{r}_n)^2}{N-1}
		\right)
		\frac{1}{(\overline{r}_nr_n)^{N-1}}
		\int_{
			\partial B_{\overline{r}_nr_n}(x_n)
		}
		\tilde{u}_{i,\beta_n}^2
		d\sigma(y)
		\Bigg].
	\end{multline*}
	Since $r_n\overline{r}_n \rightarrow 0$ the above is positive for $n$ large enough, thus we can assume that:
	\begin{align}
			&\frac{1}{\overline{r}_n^{N-2}}
			\int_{B_{\overline{r}_n}}\left(
			\langle A_n(y)\nabla v_{i,n}, 
			\nabla
			v_{i,n} \rangle
			-
			M_na_n(y)v_{i,n}^2
			\mathop{\sum_{j=1}^l}_{j\neq i}
			v_{j,n}^2
			-
			v_{i,n}f_{i,n}(y,v_{i,n})
			\right)dy
\nonumber			\\
			&\qquad+
			\frac{(N-2)(1+\alpha r_n\overline{r}_n)}{2\overline{r}_n^{N-1}}\int_{\partial B_{\overline{r}_n}}\mu_n(y)v_{i,n}^2(y)d\sigma(y)
			\geq 0 		\label{ineqForAll}
	\end{align}
	for all $i \in \{1,...,l\}$ and large $n$. Now, coming back to equation \eqref{ineqFinal}, using inequality \eqref{ineqForAll} for $i=2,...,l$ we obtain:
	\begin{align*}
		&
		\int_{B_{\overline{r}_n}}\left(
		\langle A_n(y)\nabla v_{1,n}, \nabla v_{1,n} \rangle
		-
		M_na_n(y)v_{1,n}^2v_{2,n}^2
		-
		v_{1,n}f_{1,n}(y,v_{1,n})
		\right)|y|^{2-N}
		dy\\
		&\leq \sum_{i=1}^l\left[
		\frac{1}{\overline{r}_n^{N-2}}
		\int_{B_{\overline{r}_n}}\Big(
		\langle A_n(y)\nabla v_{i,n}, \nabla v_{i,n} \rangle
		-
		M_na_n(y)v_{i,n}^2
		\mathop{\sum_{j=1}^l}_{j\neq i}
		v_{j,n}^2
		-
		v_{i,n}f_{i,n}(y,v_{i,n})
		\Big)dy
		\right.\\
		\qquad & 
		\qquad+ 
		\frac{(N-2)(1+\alpha r_n\overline{r}_n)}{2\overline{r}_n^{N-1}}\int_{\partial B_{\overline{r}_n}}\mu_n(y)v_{i,n}^2d\sigma(y)
		\Bigg]\\
		&\leq
		E(v_n,\overline{r}_n) + \frac{(N-2)(1+\alpha r_n\overline{r}_n)}{2}H(v_n,\overline{r}_n)\\
		&\leq C\left(E(v_n,\overline{r}_n) + H(v_n,\overline{r}_n)\right).
	\end{align*}
	
\noindent	\textbf{Step 3.} $J_n(\frac{\overline{r}_n}{3})\leq C\left(
		\frac{E(v_n,\overline{r}_n) + H(v_n,\overline{r}_n)}{\overline{r}_n^2} + o_n(1)
		\right)^2.$
	
	We can also do the same calculation for $v_{2,n}$ obtaining a vanishing term as in \eqref{vanishTerm} and inequalities similar to the ones from Step 1. and Step 2, and so we have:
	$$
	    J_{1,n}
	    (\frac{\overline{r}_n}{3})
	\leq
	    C\left(
		\frac{E(v_n,\overline{r}_n) + H(v_n,\overline{r}_n)}{\overline{r}_n^2} + o_n(1)
		\right),
		\qquad
		J_{2,n}
	    (\frac{\overline{r}_n}{3})
	\leq
	    C\left(
		\frac{E(v_n,\overline{r}_n) + H(v_n,\overline{r}_n)}{\overline{r}_n^2} + o_n(1)
		\right).
	$$
Taking the product of these inequalities we obtain the desired inequality \eqref{IneqProof}, concluding the proof.
\end{proof}

\section{Conditions for the Alt-Caffarelli-Friedman-type monotonicity formula: proof of Lemma \ref{caffMonot}}\label{sec:conditions}

In this section we prove that Lemma \ref{caffMonot} is true (which was already used in the previous section to show Theorem \ref{DesiredTheorem}). This is a consequence of showing that conditions $(h_0)$-$(h_6)$ are satisfied by the blowup sequence $v_n$ defined in \eqref{blowUp}, to which we apply the Alt-Caffarelli-Friedman-type monotonicity formula, Theorem \ref{AltCaffMonotonicity}. Similarly to that section, for simplicity of notation we restrict our attention to the case $\gamma=1$. Remembering the main quantities at the beginning of Section \ref{chapter:resultsChap4}, we will set what some of them are in this context, while others will only be obtained through the proof of certain lemmas.
We set
\[
c_n := Cr_n,\qquad R_n := \frac{\overline{r}_n}{3},
\]
where $r_n$ is defined in \eqref{Domain} and $\overline{r}_n$ in \eqref{defOfBlowUpRadius}. By Lemma \ref{radBlowUp}, for $n$ large enough, $R_n>1$. The constant $C$ is obtained from Lemma \ref{lemmaMatrixBlowLimit} in such a way that:
$$
\sup_{y \in B_r(0)}\|A_n(y)-Id\|\leq Cr_n r.
$$
With this, conditions ($h_0$) and ($h_4$) are automatically satisfied, since $r_n \overline{r}_n \rightarrow 0$.

We now proceed to prove conditions $(h_1)$, $(h_2)$. With this we also define the quantity of Section \ref{chapter:resultsChap4} 
$$\epsilon_n := dr_n^2$$

\begin{lemma}
\label{easyLemmaForConditions123}
    Provided $n$ is sufficiently large, there holds:
    $$
        |f_{i,n}(x,v_{i,n})|
    \leq
        dr_n^2v_{i,n},
    \qquad
        R_n^2\epsilon_n=\frac{dr_n^2\overline{r}_n^2}{9}
    \leq
        \big(\frac{N-2}{2}\big)^2-\delta,
        \qquad
        \text{ for small }
        \delta>0.
    $$
    In particular, conditions $(h_1)$ and $(h_2)$ are satisfied.
\end{lemma}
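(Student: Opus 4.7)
The plan is to leverage two facts that have essentially already been established elsewhere: the pointwise bound on $f_{i,n}$ from Lemma \ref{lemmaMatrixBlowLimit}, and the vanishing of the product $r_n \overline{r}_n$ from equation \eqref{productLimit} (which was a consequence of Lemma \ref{radGoToZero}). The first inequality $|f_{i,n}(x,v_{i,n})| \leq d r_n^2 v_{i,n}$ is literally the last statement of Lemma \ref{lemmaMatrixBlowLimit} together with the nonnegativity of $v_{i,n}$; no new work is required.

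For the second estimate, my plan is to just compute: since $R_n = \overline{r}_n/3$ and $\epsilon_n = d r_n^2$, one has
\[
R_n^2 \epsilon_n \;=\; \frac{d\,r_n^2 \overline{r}_n^2}{9} \;=\; \frac{d\,(r_n \overline{r}_n)^2}{9}.
\]
By \eqref{productLimit}, $r_n \overline{r}_n = R_{\beta_n} \to 0$, so the right-hand side tends to $0$. Because we are in the regime $N \geq 3$ (cf.\ Remark \ref{rem:dimension}), we have $\bigl(\tfrac{N-2}{2}\bigr)^2 \geq \tfrac{1}{4} > 0$; thus one can fix any $\delta \in \bigl(0,\bigl(\tfrac{N-2}{2}\bigr)^2\bigr)$ and the required inequality holds for all sufficiently large $n$. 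This is exactly $(h_1)$.

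To deduce $(h_2)$, I would combine the first inequality with the fact that $\mu_n(y) \to 1$ uniformly on $B_{R_n}$. Indeed, from $(h_0)$ (already verified earlier in the section using Lemma \ref{lemmaMatrixBlowLimit}, with $c_n = C r_n$) and the fact that $c_n R_n = \tfrac{C}{3} r_n \overline{r}_n \to 0$, one has
\[
|\mu_n(y) - 1| \;=\; \bigl|\langle (A_n(y) - \mathrm{Id})\nu_y, \nu_y \rangle\bigr| \;\leq\; c_n |y| \;\leq\; c_n R_n \;\longrightarrow\; 0
\]
uniformly for $y \in B_{R_n}$. In particular $\mu_n(y) \geq 1/2$ for $n$ large, so after replacing the constant $d$ in the definition of $\epsilon_n$ by $4d$ (which does not affect the arguments above, since $r_n\overline{r}_n \to 0$ regardless), the first inequality yields $|f_{i,n}(y,v_{i,n})| \leq d r_n^2 v_{i,n} \leq \tfrac{1}{2} \epsilon_n \mu_n(y) v_{i,n}$, proving $(h_2)$.

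There is no real obstacle: every ingredient is already in place, and the proof is essentially a two-line verification plus a constant adjustment. The only thing to be mindful of is to justify that the (implicit) redefinition of $\epsilon_n$ by a harmless constant factor does not disturb $(h_1)$, which is immediate since what really matters is $\epsilon_n = O(r_n^2)$ and $r_n \overline{r}_n \to 0$.
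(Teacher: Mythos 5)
Your proof is correct and takes essentially the same route as the paper, whose proof is a one-line appeal to Lemma \ref{lemmaMatrixBlowLimit} and the fact that $r_n\overline{r}_n\to 0$. Your additional care with the factor $\tfrac{1}{2}\mu_n(y)$ in $(h_2)$ — which genuinely requires rescaling $\epsilon_n$ by a harmless constant, since $\mu_n\to 1$ rather than $\mu_n\ge 2$ — addresses a detail the paper glosses over, and your observation that this does not disturb $(h_1)$ (which only needs $\epsilon_n R_n^2\to 0$) is exactly the right justification.
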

\begin{proof}
This is an easy consequence of Lemma \ref{lemmaMatrixBlowLimit} and the fact that $r_n\overline{r}_n \rightarrow 0$ as $n \rightarrow \infty$.
\end{proof}

The rest of the conditions are proved in different lemmas below. Condition $(h_3)$ is proved in Lemma \ref{sameWeightAltCafCond}, $(h_5)$ is proved in Lemma \ref{almgrenLemmaHypoithesisStuff} and $(h_6)$ is proved in Lemmas \ref{LambdasMoreThanZeroAltCafCond} and \ref{LastLemma}.

Next, we are going to prove a couple of lemmas that will be the main tools for the rest of this section. In particular, Lemma \ref{blowDownSequence} concerns the characterization of certain blowdown-sequences.

\begin{lemma}
	\label{doublingForAlt}
	There exists $\sigma \in ]0,1[$ such that 
	\[
\text{	$\sigma \leq N(v_n,r) \leq 1$ for every $r \in [2N/\theta^\frac{1}{2},\overline{r}_n]$ for every $n$. }
	\]As a consequence, from Lemma \ref{doublingDownRefined}, there exists $C>0$ such that:	
$$
	r \mapsto \frac{H(v_n, r)}{r^2}e^{-Crr_n}\quad \text{	is monotone nonincreasing for $r \in [2N/\theta^\frac{1}{2}, \overline{r}_n]$,}
	$$
and:
	$$
	r \mapsto \frac{H(v_n,r)}{r^{2\sigma}}e^{Crr_n}\quad \text{ 	is monotone nondecreasing for $r \in [2N/\theta^\frac{1}{2},\overline{r}_n]$. }
	$$
\end{lemma}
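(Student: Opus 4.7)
The upper bound is immediate from the definition of $\overline{r}_n$. Indeed, \eqref{defOfBlowUpRadius} gives, for any $r \in [2N/\theta^{1/2}, \overline{r}_n]$, that $(N(v_n,r)+1)e^{\tilde{C} r_n r} \leq 2 - r r_n \leq 2$, so $N(v_n, r) \leq 2 e^{-\tilde C r_n r} - 1 \leq 1$. Combined with Lemma \ref{MonotonicityForBlowup}, which tells us $(N(v_n, r) + 1)e^{\tilde C r_n r}$ is monotone nondecreasing, we obtain that whenever $N(v_n, s_n) \to 0$ for some $s_n \leq \overline{r}_n$, then $N(v_n, r) \to 0$ uniformly on $[2N/\theta^{1/2}, s_n]$ because $r_n s_n \leq r_n \overline r_n \to 0$ by \eqref{productLimit}.

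The substance lies in proving the lower bound $N(v_n, r) \geq \sigma > 0$. I would argue by contradiction: assume there exists a subsequence $s_n \in [2N/\theta^{1/2}, \overline r_n]$ with $N(v_n, s_n) \to 0$. I would split into two cases according to the asymptotic behaviour of $s_n$. If $s_n$ is bounded, up to a subsequence $s_n \to s_\infty \geq 2N/\theta^{1/2}$; using the convergence $v_n \to v$ in $H^1_{\mr{loc}} \cap C_{\mr{loc}}$ from Proposition \ref{ListProp}, together with $A_n \to Id$ uniformly on compacts (Lemma \ref{lemmaMatrixBlowLimit}), one passes to the limit to get $N(v, s_\infty) = 0$, hence $E(v, s_\infty) = 0$. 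Since in both subcases of Proposition \ref{concludeProp} the energy $E(v,r)$ is a sum of nonnegative terms ($|\nabla v_i|^2$ plus $-M_\infty$ times nonnegative contributions when $M_n$ is bounded), each $v_i$ must be constant on $B_{s_\infty}$, contradicting Lemma \ref{nonTriviality}.

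The main obstacle is the unbounded case $s_n \to \infty$, which requires a \emph{blow-down} argument. I would set
\[
w_n(y) := \frac{v_n(s_n y)}{\sqrt{H(v_n, s_n)}},
\]
noting that $H(v_n, s_n) > 0$ by Lemma \ref{nonTriviality}. By scale-invariance of the Almgren quotient we have $N(w_n, r) = N(v_n, s_n r)$, so $N(w_n, 1) \to 0$ and, via the rescaled monotonicity (whose weight $e^{\tilde C r_n s_n r}$ converges to $1$), $N(w_n, r) \to 0$ uniformly on every compact subset of $(0,1]$. The functions $w_n$ satisfy a system of the same form as \eqref{eq:system_rescaled}, with leading matrix $A_n(s_n \cdot) \to Id$ on compacts (by $(h_0)$), vanishing linear term (by Lemma \ref{lemmaMatrixBlowLimit} and $r_n s_n \to 0$), and an effective coupling $s_n^2 M_n H(v_n, s_n)^\gamma < 0$. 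An analogue of Proposition \ref{concludeProp} applied to $\{w_n\}$ yields a nontrivial limit $w$ (nontriviality coming from the normalization $H(w_n, 1) = 1$) whose nontrivial components are either subharmonic segregated functions (effective coupling $\to -\infty$) belonging to the class $\mathcal{G}$ of Appendix \ref{appendix:classG}, or solve a fixed competition system. In either case, the theory of such limits (harmonicity in positivity sets together with the structure of the free boundary) forces $N(w, r) \geq 1$ at any common zero and $N(w, r) \geq 1$ on small balls where $w$ is a non-constant harmonic-type function, contradicting $N(w, r) \equiv 0$ on $(0,1]$.

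\textbf{Conclusion.} Once the two-sided bound $\sigma \leq N(v_n, r) \leq 1$ is in hand on $[2N/\theta^{1/2}, \overline r_n]$, the two monotonicity statements are immediate from Lemma \ref{doublingDownRefined}: part (1), applied with $d = 1$, yields that $H(v_n, r)/r^{2} e^{-C r_n r}$ is nonincreasing, and part (2), applied with the lower bound $\sigma$, yields that $H(v_n, r)/r^{2\sigma} e^{C r_n r}$ is nondecreasing, on the relevant interval. The delicate point of the whole argument is calibrating the blow-down so the resulting equation is non-degenerate and the limit $w$ is genuinely nontrivial; this is the step I would expect to require the most care, especially in controlling the asymptotic size of $s_n^2 M_n H(v_n, s_n)^\gamma$ to determine which of the limit regimes of Proposition \ref{concludeProp} applies to $w$.
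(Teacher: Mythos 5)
Your upper bound and the final application of Lemma \ref{doublingDownRefined} are exactly as in the paper. For the lower bound, however, you take a much longer route than necessary, and the hardest piece of your argument is both redundant and incompletely justified. The paper's proof is direct, not by contradiction: it first shows $N(v_n,2N/\theta^{1/2})\geq \tilde C/(2\alpha)>0$ for $n$ large, using only the convergence $v_n\to v$ in $H^1_{loc}\cap C_{loc}$ and the nontriviality of $v_1,v_2$ in $B_{2N/\theta^{1/2}}$ from Lemma \ref{nonTriviality} (which give $E(v_n,2N/\theta^{1/2})\geq\tilde C>0$ and $H(v_n,2N/\theta^{1/2})\to\alpha$). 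It then propagates this single-radius bound to the whole interval via the Almgren monotonicity of $(N(v_n,r)+1)e^{\tilde C r_n r}$, which yields $N(v_n,r)\geq (N(v_n,2N/\theta^{1/2})+1)e^{-\tilde C r_n r}-1$; since $r_n r\leq r_n\overline r_n\to 0$, the exponential is uniformly close to $1$ and a uniform $\sigma>0$ follows. Your own ``Plan'' paragraph contains precisely the dual observation (monotonicity transports smallness of $N$ at $s_n$ down to the fixed left endpoint $2N/\theta^{1/2}$), which already reduces your contradiction argument to your bounded case evaluated at that single fixed radius — so the case distinction on $s_n$, and in particular the entire blow-down analysis, is superfluous.

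Moreover, the blow-down case as sketched has a genuine gap. To get $N(w,0^+)\geq 1$ from Theorem \ref{theoremForNandFreeStuff} you need $0\in\{w=0\}$, i.e.\ $v_n(0)/\sqrt{H(v_n,s_n)}\to 0$, which requires $H(v_n,s_n)\to\infty$; in the paper this growth of $H$ is deduced \emph{from} the present lemma (Lemma \ref{doublingForAlt} feeds into the proof of Lemma \ref{blowDownSequence}), so invoking it here risks circularity. And if the blow-down limit $w$ were a nonzero constant, then $N(w,\cdot)\equiv 0$ is not by itself contradictory — $0$ need not lie on the free boundary — so one would need the segregation and multiplicity-two input of Lemma \ref{blowDownSequence} to exclude it. None of this is needed: delete the unbounded case and your argument collapses to the paper's.
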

\begin{proof}
 	From the Almgren monotonicity formula (Lemma \ref{MonotonicityForBlowup}) and the definition of $\overline{r}_n$ in \eqref{defOfBlowUpRadius}, we obtain:
	\begin{align*}
    	N(v_n,r)
    	+1
    	\leq
		\left(
		N(v_n,r)
		+
		1
		\right)
		e^{\tilde{C}r_nr}
		\leq
		\left(
		N(v_n,\overline{r}_n)
		+
		1
		\right)e^{\tilde{C}r_n\overline{r}_n}
		=
		2
		-
		r_n\overline{r}_n,
	\end{align*}
	so that
	$$
	    N(v_n,r)
	\leq
	    (2-r_n\overline{r}_n)
	-
	    1
	=
	    1-r_n\overline{r}_n
	$$
	for all $r \in [0,\overline{r}_n]$. This gives the upper bound on $N(v_n,r)$.

	For the lower bound, we use again the Almgren monotonicity formula to conclude that:
	$$
	\left(
	N(v_n,r)
	+
	1	
	\right)
	e^{\tilde{C}r_nr}
	\geq
	\left(
	N(v_n,2N/\theta^{\frac{1}{2}})
	+
	1
	\right)
	e^{\tilde{C}r_n2N/\theta^{\frac{1}{2}}}
	$$
	for every $r \in [2N/\theta^\frac{1}{2},\overline{r}_n]$, which implies:
	\begin{equation}
		\label{eqinN}
		N(v_n,r) 
		\geq 
		\left(
		N(v_n,2N/\theta^{\frac{1}{2}})
		+
		1
		\right)
		e^{-\tilde{C}r_nr}
		-
		1.
	\end{equation}
	By Lemma \ref{nonTriviality}, the limits $v_{1}$ and $v_{2}$ are nonconstant in $B_{2N/\theta^\frac{1}{2}}$, thus there exists $\tilde{C}>0$ such that:
	$
	    \tilde{C}
	    \leq
	    \sum_{i=1}^l
	    \int_{B_{2}}
	    |\nabla v_i|^2
	    dx.
	$
	Now, by Lemma \ref{lemmaMatrixBlowLimit}, we know that $f_{i,n}(x,v_{i,n})$ converges locally uniformly to zero, thus by the convergence of $v_n$ in $H^1(B_{2N/\theta^\frac{1}{2}})\cap C(\overline{B}_{2N/\theta^\frac{1}{2}})$ and that $M_n < 0$, $a_n(x)>0$ we obtain:
	\begin{align*}
	    0<\tilde{C}
	&\leq
	    \sum_{i=1}^l
	    \int_{B_{2}}
	    |\nabla v_i|^2
	    dx
	\leq
	    E(v,2N/\theta^\frac{1}{2})\\
	&\leq
	    \lim_n
	    \sum_{i=1}^l
    	\int_{B_{2N/\theta^{\frac{1}{2}}}}\left(
    	\langle A_n(x)\nabla v_{i,n}, \nabla v_{i,n} \rangle
    	-
    	2M_n\sum_{i<j}a_n(x)v_{i,n}^2v_{j,n}^2
    	\right)dx\leq
        \lim_n
        E(v_n,2N/\theta^\frac{1}{2}).
	\end{align*}
	Also by the local uniform convergence $v_n \rightarrow v$ we have $\alpha =H(v,2N/\theta^\frac{1}{2}) = \lim_n H(v_n,2N\theta^\frac{1}{2})$ thus:	
	$N(v_n,2N/\theta^{\frac{1}{2}})\geq \frac{\tilde{C}}{2\alpha}>0$
	for $n$ large enough.
	Since $r_nr \leq r_n \overline{r}_n \rightarrow 0$ as $n \rightarrow \infty$, coming back to equation \eqref{eqinN} we obtain:
	\[
	N(v_n,r)
	\geq
	(\frac{\tilde{C}}{2\alpha}+1)e^{-\tilde{C}r_nr}-1
	\geq
	\sigma
	>
	0\quad  \text{	for all $r \in [2N/\theta^\frac{1}{2},\overline{r}_n]$. } \qedhere
	\]
\end{proof}

We will now show that the limit of the blowdown sequence defined below in \eqref{linearLimit} behaves linearly in a ball $B_1$. For the next lemma, we recall that $\overline{r}_n \rightarrow \infty$ by Lemma \ref{radBlowUp}.

\begin{lemma}
	\label{blowDownSequence}
	Let $(\rho_n)$ be a sequence such that $\rho_n \rightarrow \infty$ and $\rho_n \leq \frac{\overline{r}_n}{3}$. Then there exists $h,k \in \{1,...l\}$ and $\gamma_h,\gamma_k>0$ such that the blowdown sequence:
	\begin{equation}
	\label{linearLimit}
	\tilde{v}_{i,n}(x)
	:=
	\frac{
		v_{i,n}(\rho_n x)
	}{
		\sqrt{H(v_n,\rho_n)}
	} 
	\end{equation}
	converges in $H^1(B_1)\cap C(\overline{B_1})$, up to a rotation, to a function $\tilde{v}=(\tilde{v}_1,\ldots, \tilde{v}_l)$ defined by:
	$$
    	\tilde{v}_h(x) = \gamma_h x_1^+,
    	\qquad
    	\tilde{v}_k(x) = \gamma_k x_1^-,
    	\qquad
    	\tilde{v}_j(x) = 0,
    	\qquad
    	\forall j \neq h,k.
	$$
\end{lemma}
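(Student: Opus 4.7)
The plan is to adapt the strategy of \cite[Lemma 4.11]{SoaveZilio} to our variable-coefficient setting. First I would introduce the scaled quantities $\tilde A_n(x) := A_n(\rho_n x)$, $\tilde a_n(x) := a_n(\rho_n x)$, and $\tilde M_n := M_n \rho_n^2 H(v_n,\rho_n)$, and verify that $\tilde v_n$ solves a system analogous to \eqref{eq:system_rescaled} with these rescaled coefficients and coupling. A direct change of variables yields the crucial normalization $H_n(\tilde v_n, 1) = 1$, where $H_n$ is the analog of $H(\cdot,\cdot)$ computed using $\tilde A_n$, and more generally $H_n(\tilde v_n, r) = H(v_n, \rho_n r)/H(v_n,\rho_n)$. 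Plugging this identity into the two-sided doubling estimates of Lemma \ref{doublingForAlt} gives $c r^{2} \leq H_n(\tilde v_n, r) \leq C r^{2\sigma}$ on any compact subinterval of $(0, \overline r_n/\rho_n]$, where $\overline r_n/\rho_n \geq 3$. Since by the same lemma $H(v_n,\rho_n) \geq c\rho_n^{2\sigma}$ while $|M_n|$ is bounded away from zero (Proposition \ref{concludeProp}), we also obtain $\tilde M_n \to -\infty$. Condition $(h_0)$ together with $r_n\rho_n \leq r_n\overline r_n \to 0$ then ensures $\tilde A_n \to Id$ locally uniformly and $\tilde a_n \to a(x_\infty)$.

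I would then apply Theorem C to the rescaled system to obtain uniform $C^{0,\alpha}_{loc}$ bounds, and extract a subsequence $\tilde v_n \to \tilde v$ in $C^{0,\alpha}_{loc}\cap H^1_{loc}$ by the usual test-function argument (compare Proposition \ref{ListProp}-(5)). Because $\tilde M_n\to-\infty$, that same argument yields segregation $\tilde v_i\tilde v_j \equiv 0$ for $i\neq j$; because $\tilde A_n\to Id$ locally uniformly and the rescaled lower-order terms vanish in the limit, each nontrivial $\tilde v_i$ is harmonic on $\{\tilde v_i > 0\}$, and thus $\tilde v \in \mathcal{G}(\mathbb{R}^N)$ (Appendix \ref{appendix:classG}).

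The core of the proof is to identify the Almgren frequency of the limit. The scaled Lemma \ref{MonotonicityForBlowup} makes $r \mapsto (N(\tilde v_n, r)+1)e^{\tilde C r_n\rho_n r}$ monotone nondecreasing, and Lemma \ref{doublingForAlt} yields $\sigma \leq N(\tilde v_n, r) \leq 1$ on a range that, using $r_n\rho_n\to 0$ and $\overline r_n/\rho_n\geq 3$, fills $(0, 3]$ in the limit. Passing to the limit, one obtains the classical Almgren formula for $\tilde v$, with $\sigma \leq N(\tilde v, \cdot) \leq 1$ on $(0, 3]$. Since $\tilde v(0)=0$, Appendix \ref{chapter:mult1PointsApp} guarantees that at least two components of $\tilde v$ are nontrivial in every neighborhood of the origin; hence the classical Alt--Caffarelli--Friedman spectral lower bound \eqref{minimizationOnSphereBasic123} for segregated harmonic functions forces $N(\tilde v, 0^+)\geq 1$. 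Combined with $N(\tilde v,\cdot)\leq 1$ and Almgren monotonicity, this yields $N(\tilde v, \cdot)\equiv 1$ on $(0,3]$, so $\tilde v$ is $1$-homogeneous. The classification of $1$-homogeneous segregated nonnegative harmonic elements of $\mathcal{G}(\mathbb{R}^N)$ with at least two nontrivial components (see Appendix \ref{appendix:classG} and \cite{HugoTerraciniWeakReflectionLaw}) then gives exactly $\tilde v_h = \gamma_h (e\cdot x)^+$, $\tilde v_k = \gamma_k (e\cdot x)^-$ for some unit vector $e$ and indices $h \neq k$, with all other components vanishing; up to a rotation, $e = e_1$.

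The main obstacle will be precisely this third step. Transferring the defining structural conditions of $\mathcal{G}(\mathbb{R}^N)$ from the approximating sequence to the limit, and then invoking the free-boundary machinery (the Alt--Caffarelli--Friedman bound for the lower frequency estimate at the origin, and the Liouville-type classification of homogeneous $\mathcal{G}$-functions for the final form) are the delicate points. The variable-coefficient character of the problem is essentially absorbed through the fact that $\tilde A_n\to Id$ locally uniformly on compact subsets, so that the analysis of the limit reduces to the same $\mathcal{G}$-class framework as in the constant-coefficient proof of \cite{SoaveZilio}; all other steps are essentially routine rescaling once the correct scaled quantities $\tilde A_n, \tilde a_n, \tilde M_n$ have been set up.
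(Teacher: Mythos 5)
Your proposal follows essentially the same route as the paper: rescale the system, identify the new coupling $\tilde M_n=M_n\rho_n^2H(v_n,\rho_n)\to-\infty$, pass to a segregated limit in $\mathcal G$, squeeze the Almgren frequency between the lower bound at a free boundary point and the upper bound $N\le 1$ inherited from Lemma \ref{doublingForAlt}, conclude $1$-homogeneity, and classify. Your alternative justification of $N(\tilde v,0^+)\ge 1$ via the multiplicity-two result plus the spectral bound \eqref{minimizationOnSphereBasic123} is a legitimate substitute for the paper's direct citation of Theorem \ref{theoremForNandFreeStuff}.

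There is, however, one step you gloss over that is where the paper does real work: the uniform $L^\infty$ bound on $\tilde v_n$ needed \emph{before} you may invoke Theorem C (whose hypothesis is precisely a uniform $L^\infty$ bound) or run the test-function compactness argument of Proposition \ref{ListProp}-(5). This bound is not automatic here: the normalization of $\tilde v_n$ is an $L^2(\partial B_1)$-type normalization, and the Lipschitz seminorm of $\tilde v_{i,n}$ scales like $\rho_n/\sqrt{H(v_n,\rho_n)}\lesssim \rho_n^{1-\sigma}$, which may diverge, so neither pointwise nor gradient control comes for free. The paper closes this by combining $N(\tilde v_n,3)=N(v_n,3\rho_n)\le 1$ with the doubling upper bound $H(\tilde v_n,3)\le Ce^{-3Cr_n\rho_n}3^2$ to get $E(\tilde v_n,3)=N(\tilde v_n,3)H(\tilde v_n,3)\le C$, then uses ellipticity, the bound on $f_{i,n}$ and Poincar\'e to convert this into a uniform $H^1(B_3)$ bound, and finally a Brezis--Kato argument to obtain the uniform $L^\infty(B_2)$ bound; only then do the H\"older estimates and Proposition \ref{propOfBlowUpInNiceSpace} apply. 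You have all the ingredients on the table (you derive the $H$-bounds from Lemma \ref{doublingForAlt}), but your stated order of operations presupposes the $L^\infty$ bound rather than deriving it. A small related slip: your two-sided estimate $cr^2\le H_n(\tilde v_n,r)\le Cr^{2\sigma}$ is only correctly oriented for $r\le 1$; for $r\in[1,3]$ the doubling lemma gives $H_n(\tilde v_n,r)\le Cr^2$ and $H_n(\tilde v_n,r)\ge cr^{2\sigma}$, and it is the upper bound $\le Cr^2$ at $r=3$ that feeds the energy estimate.
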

\begin{proof}
	First we observe that the sequence $\tilde{v}_{n}$ satisfies the system:
	\begin{equation}
		\label{blowDownEq}
		-\div(A_n(\rho_n x)\nabla \tilde{v}_{i,n})
		=
		\frac{
			\rho_n^2
		}{
			\sqrt{H(v_n,\rho_n)}
		}
		f_{i,n}(\rho_n x,v_{i,n}(\rho_n x))
		+
		\rho_n^2
		H(v_n,\rho_n)
		M_n
		\tilde{v}_{i,n}
		\sum_{j \neq i}
		a_n(\rho_n x)\tilde{v}_{j,n}^2
	\end{equation}
	in a set $B_3 \subset \tilde{\Omega}_n = \frac{\Omega_n}{\rho_n}$ (this follows from Proposition \ref{ListProp}, since $B_{1/M^\frac{1}{2}r_n} \subset \Omega_n$ and $r_n\rho_n\rightarrow 0$).
	Since $\rho_n \rightarrow \infty$, by Lemma \ref{nonTriviality} and by Proposition \ref{concludeProp} we know there exists $C>0$ small enough such that $H(v_n,\rho_n)\geq C$ and $M_n \leq -C<0$ for all $n$.
	
	With this we can conclude that the competition parameter in equation \eqref{blowDownEq} satisfies $\rho_n^2H(v_n,\rho_n)M_n\rightarrow -\infty$. Also, by Lemma \ref{lemmaMatrixBlowLimit}, it follows that:
	\begin{align*}
		-\div(A_n(\rho_n x)\nabla \tilde{v}_{i,n}) 
		&\leq
		\frac{
			\rho_n^2
		}{
			\sqrt{H(v_n,\rho_n)}
		}
		f_{i,n}(\rho_nx, v_{i,n}(\rho_n x))\leq
			\frac{
			\rho_n^2
		}{
			\sqrt{H(v_n,\rho_n)}
		}
		dr_n^2v_{i,n}(\rho_n x)
		\leq
		d(\rho_n r_n)^2 \tilde{v}_{i,n}(x)
	\end{align*}
	in $B_3$. By the ellipticity of $A_n(\rho_n x)$ and a Brezis-Krato-type argument (see for instance \cite[Appendix B.2, B.3]{Struwe}), if we show that $\tilde{v}_{i,n}$ has a uniform bound in $H^1(B_3)$ then it follows that the sequence $\tilde{v}_{i,n}$ has a uniform bound in $L^\infty(B_2)$. 
	
	Now by Lemma \ref{doublingForAlt} we know that:
	\begin{equation}
	\label{upBoundOnN1236}
	N(\tilde{v}_n,\rho) 
	= 
	N(v_n,\rho \rho_n)
	\leq
	1
	\end{equation}
	for every $0\leq \rho\leq 3$. Thus, by Lemma \ref{doublingForAlt}, for all $1 \leq \rho \leq 3$ there exists $C>0$ such that:
	\begin{align*}
	H(\tilde{v}_n,\rho)
&=
    \frac{1}{\rho^{N-1}}
    \sum_{i=1}^l
    \int_{\partial B_\rho}
    \langle
        A_n(\rho_ny)
        \frac{y}{|y|}
        ,
        \frac{y}{|y|}
    \rangle
    \tilde{v}_{i,n}^2
    d\sigma(y)=
    \frac{1}{(\rho\rho_n)^{N-1}H(v_n,\rho_n)}
    \sum_{i=1}^l
    \int_{\partial B_{\rho\rho_n}}
    \mu_n(y)
    v_{i,n}^2
    d\sigma(y)\\
&=
	\frac{
		H(v_n,\rho_n\rho)
	}{
		H(v_n,\rho_n)
	}
\leq
	e^{-C(r_n\rho_n\rho-r_n\rho_n)}\rho^2.
	\end{align*}
	Therefore, 	since $r_n\rho_n \leq r_n \overline{r}_n \rightarrow 0$,
	$$
	E(\tilde{v}_n,3)
	=
	N(\tilde{v}_n,3)
	H(\tilde{v}_n,3)
	\leq
	9e^{-3Cr_n\rho_n}.
	$$
	
	With this upper bound for the energy we are able to show the upper bound for the $H^1(B_3)$ norm. Indeed, using the ellipticity constant for $A_n$ and the property for $f_i$ from Lemma \ref{lemmaMatrixBlowLimit}, and Poincar\'e's inequality (Lemma \ref{PoincareIneq}): 
	\begin{align*}
		E(\tilde{v}_n,3)
		&\geq
		\sum_{i=1}^l
		\frac{1}{3^{N-2}}
		\int_{B_3}	
		\left(
		\langle 
		A_n(\rho_n x)\nabla \tilde{v}_{i,n}
		, 
		\nabla \tilde{v}_{i,n}
		\rangle
		+
		\frac{\rho_n^2}{\sqrt{H(v_n,\rho_n)}}
		f_{i,n}(\rho_nx, v_{i,n}(\rho_nx))
		\tilde{v}_{i,n}
		\right)
		dx\\
		&\geq
		\sum_{i=1}^l
		\int_{B_3}	
		\left(
		\frac{\theta}{M 3^{N-2}}
		|\nabla \tilde{v}_{i,n}|^2
		-	
		\frac{d3^2(\rho_n r_n)^2}{3^{N}}
		\tilde{v}^2_{i,n}
		\right)
		dx\\
		&\geq
		\sum_{i=1}^l
		\int_{B_3}
		\frac{\theta}{M 3^{N-2}}
		|\nabla \tilde{v}_{i,n}|^2
		dx
		-	
		\frac{d3^2(\rho_n r_n)^2}{(N-1)}
		\left(
		\int_{B_3}
		\frac{1}{3^{N-2}}
		|\nabla \tilde{v}_{i,n}|^2
		dx
		+
		\int_{\partial B_3}
		\frac{1}{3^{N-1}}
		\tilde{v}_{i,n}^2
		d\sigma(x)
		\right)
		\\
		&\geq
		\frac{1}{3^{N-2}}
		(\frac{\theta}{M}-\frac{d3^2(\rho_n r_n)^2}{(N-1)})
		\int_{B_3}
		\sum_{i=1}^l
		|\nabla \tilde{v}_{i,n}|^2
		dx
		-
		\frac{d3^2M}{\theta(N-1)}(\rho_n r_n)^2
		H(\tilde{v}_n,3)\\
		&\geq
		C\int_{B_3}\sum_{i=1}^l|\nabla \tilde{v}_{i,n}|^2dx - o_n(1)
	\end{align*}
	by the observations above and since $\rho_nr_n\to 0$. This gives the desired $H^1(B_3)$ bound, and so $\tilde{v}_n$ is uniformly bounded in $L^\infty(B_2)$.

	With this, using Proposition \ref{propOfBlowUpInNiceSpace} in appendix and $\rho_n \rightarrow \infty$, we conclude that there exists $\tilde{v} \in C(B_{\frac{3}{2}}) \cap H^1(B_{\frac{3}{2}})$ such that $\tilde{v}_n \rightarrow \tilde{v}$ in both $C(B_{\frac{3}{2}}) \cap  H^1(B_{\frac{3}{2}})$. Moreover, we have that $\tilde{v} \in \mathcal{G}(B_{\frac{3}{2}})$ (see Definition \ref{GspaceDef} below). By Lemma \ref{lemmaMatrixBlowLimit}, we have:
	$$
	\Big|
	\frac{\rho_n^2}{\sqrt{H(v_n,\rho_n)}}
	f_{i,n}(\rho_n x, v_{i,n}(\rho_nx))
	\Big|
	\leq
	C(\rho_n r_n)^2
	\|\tilde{v}_{i,n}\|_{L^\infty(B_2)}
	\rightarrow
	0
	$$
	By a proof like the one of Proposition \ref{concludeProp},  since $\|I-A_n(\rho_n y)\| \leq Cr_n\rho_n|y| \rightarrow 0$ and $\rho_n^2
		H(v_n,\rho_n)
		M_n \rightarrow -\infty$, we obtain:
	$$
	\Delta \tilde{v}_i(x) = 0\quad \text{     	for $x \in \{\tilde{v}_i>0\}$.}
	$$
Moreover, we have that $0 \in \{\tilde{v}=0\}$, since the sequence $(v_n(0))$ is bounded, while by Lemma \ref{doublingForAlt},
	$$
	H(v_n,\rho_n)
	\geq
	\frac{
		H(v_n,2)
	}{
		4^\sigma
	}
	\rho_n^{2\sigma}
	e^{Cr_n(2-\rho_n)}
	\rightarrow
	+\infty
	$$
	as $n \rightarrow \infty$ since $r_n\rho_n \rightarrow 0$.
	This shows that $\tilde{v}(0) 
	= \lim \tilde{v}_n(0)
	=\lim \frac{v_n(0)}{\sqrt{H(v_n,\rho_n)}} = 0$.
	
	Using Proposition \ref{theoremForNandFreeStuff}, since $\tilde{v}(0) = 0$, that is $0 \in \{\tilde{v} = 0\}$, we have that $1\leq N(\tilde{v},0^+)$. Also, since $\tilde{v} \in \mathcal{G}(B_{\frac{3}{2}})$, by Proposition \ref{homogenousSegregatedLemma}, we conclude that the function $N(\tilde{v},r)$ is monotone increasing for $r \in [0,\frac{3}{2}]$. Also by equation \eqref{upBoundOnN1236} we have that $N(\tilde{v},r)\leq 1$ for all $r \in [0,\frac{3}{2}]$. From this we conclude the chain of inequalities
	$$
	1
	\leq
	N(\tilde{v},0^+)
	\leq
	N(\tilde{v},r)
	\leq 
	1
	$$
	for all $r \in ]0,\frac{3}{2}[$. This implies that $N(\tilde{v},r)$ is constant equal to $1$, thus by Proposition \ref{homogenousSegregatedLemma} we conclude that $\tilde{v}$ is a homogenous function of degree $1$ at zero. By Theorem \ref{mult2Theoremyeah} we conclude that there must exist two nontrivial components of $\tilde{v}$ around zero, since it is the limit of $\tilde{v}_n$, solutions of competition systems. 
	
	By Lemma \ref{harmonic2compsystemCharacter} there must exist indices $h,k \in \{1,...,l\}$ and constants $\gamma_h, \gamma_k>0$ such that up to a rotation:
\[
	\tilde{v}_h(x)
	=
	\gamma_h x_1^+
	,\qquad
	\tilde{v}_k(x)
	=
	\gamma_k x_1^-
	,\qquad
	\tilde{v}_j(x)
	=
	0 \quad \forall j \neq h,k. \qedhere
\]
\end{proof}

\begin{lemma}
\label{sameWeightAltCafCond}
	There exists $\lambda > 0$ independent of $n$ such that:
	$$
	\frac{1}{\lambda}
	\leq
	\frac{
		\int_{\partial B_r} \mu_n(y)v_{1,n}^2
		d\sigma(y)
	}{
		\int_{\partial B_r} \mu_n(y)v_{2,n}^2 d\sigma(y)
	}
	\leq
	\lambda
	$$
	for every $2N/\theta^{\frac{1}{2}} \leq r \leq \frac{\overline{r}_n}{3}$. On the contrary for $j = 3,...,N$ we have:
	$$
	\sup_{r \in [2N/\theta^\frac{1}{2},\overline{r}_n/3]}
	\frac{
		\int_{\partial B_r}
		\mu_n(y)
		v_{j,n}^2
		d\sigma(y)
	}{
		\int_{\partial B_r} 
		\mu_n(y)
		v_{1,n}^2
		d\sigma(y)
	}
	\rightarrow 0
	$$
	as $n \rightarrow \infty$. This combined with Lemma \ref{nonTriviality} shows hypothesis $(h_3)$ of Section \ref{chapter:resultsChap4}. 
\end{lemma}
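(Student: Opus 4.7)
The plan is to argue by contradiction for each part, combining the local convergence $v_n\to v$ from Proposition \ref{concludeProp} (at most $v_1,v_2$ nontrivial, and $v_j\equiv 0$ for $j\geq 3$) with the rigidity of blowdown limits given by Lemma \ref{blowDownSequence}: for any scale $\rho_n\to\infty$ with $\rho_n\leq\overline r_n/3$, the rescaled sequence $\tilde v_n(x)=v_n(\rho_n x)/\sqrt{H(v_n,\rho_n)}$ converges, up to an orthogonal rotation, to a profile $\tilde v$ with exactly two nontrivial linear components $\tilde v_h(x)=\gamma_h x_1^+$ and $\tilde v_k(x)=\gamma_k x_1^-$, for some $h\neq k$ and $\gamma_h,\gamma_k>0$.

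For part (1), suppose toward contradiction there is $\rho_n\in[2N/\theta^{1/2},\overline r_n/3]$ with $H_1(v_n,\rho_n)/H_2(v_n,\rho_n)\to 0$ (the reverse case being symmetric). If $(\rho_n)$ has a bounded subsequence converging to $\rho_\infty$, then by the $C_{loc}\cap H^1_{loc}$ convergence $v_n\to v$ we have $H_i(v_n,\rho_n)\to H_i(v,\rho_\infty)$, and Lemma \ref{nonTriviality} gives $H_1(v,\rho_\infty),H_2(v,\rho_\infty)\geq c>0$, so the ratio tends to a positive finite number, contradiction. If instead $\rho_n\to\infty$, Lemma \ref{blowDownSequence} furnishes the blowdown limit $\tilde v$, and
\[
\frac{H_1(v_n,\rho_n)}{H_2(v_n,\rho_n)} = \frac{\tilde H_1(\tilde v_n,1)}{\tilde H_2(\tilde v_n,1)} \longrightarrow \frac{\tilde H_1(\tilde v,1)}{\tilde H_2(\tilde v,1)}.
\]
Provided the key identification $\{h,k\}=\{1,2\}$ holds, the limit equals $\gamma_1^2/\gamma_2^2\in(0,\infty)$, contradicting the assumption. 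Part (2) is parallel: if $H_j(v_n,\rho_n^\ast)/H_1(v_n,\rho_n^\ast)\geq\delta>0$ for some $j\geq 3$ and scale $\rho_n^\ast$, then the bounded-scale case is immediate from $v_j\equiv 0$ and $H_1(v,\rho_\infty)\geq c$, while the unbounded case uses the blowdown: once $\{h,k\}=\{1,2\}$ is known, $\tilde v_j\equiv 0$, and the ratio $\tilde H_j(\tilde v_n,1)/\tilde H_1(\tilde v_n,1)$ is forced to zero.

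The main obstacle is the identification $\{h,k\}=\{1,2\}$ when $\rho_n\to\infty$. The available tools are the uniform lower bounds $H_1(v_n,r),H_2(v_n,r)\geq c$ for $r\in[2N/\theta^{1/2},\overline r_n]$ (Lemma \ref{nonTriviality}), the individual monotonicity of $H_i(v_n,r)e^{Cr_n r}$ (Lemma \ref{MonotonicityForu} rescaled to $v_n$), and the growth bound $H(v_n,r)\leq Cr^2$ coming from $N(v_n,r)\leq 1$ (Lemma \ref{doublingForAlt}). The easier subcase is when $H(v_n,\rho_n)$ stays bounded: then $\tilde H_i(\tilde v_n,1)\geq c/H(v_n,\rho_n)\geq c'>0$ for $i=1,2$, forcing $\{1,2\}\subseteq\{h,k\}$, hence equality since only two components can be nontrivial. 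The delicate subcase is $H(v_n,\rho_n)\to\infty$, when the normalization may annihilate the $v_1,v_2$ contributions in the limit. Here the strategy is a continuity-in-scale argument: the pair $\{h,k\}$ is a discrete invariant of the blowdown scale $\rho$, equal to $\{1,2\}$ at bounded $\rho$ by $C_{loc}$-convergence of $v_n\to v$, and supposed to differ at $\rho=\rho_n$; one identifies a transition scale along which the blowdown would have either strictly more than two nontrivial components or a non-linear surviving profile, contradicting the rigidity in Lemma \ref{blowDownSequence}. This continuity argument, together with the interplay between the monotonicity of $H_i$ and the growth bound on $H$, is expected to be the technical heart of the proof.
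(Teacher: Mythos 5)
Your overall strategy (bounded scales handled by the local convergence $v_n\to v$ plus Lemma \ref{nonTriviality}, unbounded scales handled by the blowdown rigidity of Lemma \ref{blowDownSequence}) matches the paper's, but you have correctly located — and then not closed — the essential gap: the identification $\{h,k\}=\{1,2\}$ for blowdowns at scales $\rho_n\to\infty$ with $H(v_n,\rho_n)\to\infty$. Your proposed ``continuity-in-scale'' argument is not a proof as stated: the pair $\{h,k\}$ is only defined subsequentially, for a \emph{sequence} of scales, not as a function of $\rho$ for fixed $n$, so there is no discrete invariant varying continuously in $\rho$ and no well-defined ``transition scale'' to analyze. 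Moreover, since only the normalized quantities $H_i(v_n,\cdot)/H(v_n,\cdot)$ are controlled and $H(v_n,\cdot)$ may blow up, the uniform lower bounds $H_1,H_2\geq c$ from Lemma \ref{nonTriviality} alone do not prevent the mass fractions of $v_1$ or $v_2$ from vanishing at some intermediate scale.

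The paper closes this gap with a topological device, Lemma \ref{simplexLemma}: one introduces the continuous simplex-valued paths $\rho\mapsto g_{i,n}(\rho)=H_i(v_n,\rho\overline r_n/3)/H(v_n,\rho\overline r_n/3)$ (frozen at the value corresponding to $r=2N/\theta^{1/2}$ for small $\rho$), and verifies two hypotheses by exactly the dichotomy you describe: (i) no single component ever carries all the mass ($g_{i,n}\leq 1-\epsilon$), because a blowdown limit always has \emph{two} nontrivial components; and (ii) $\dist(g_n([0,1]),\Sigma_{2,l})\to 0$, because a blowdown limit has \emph{at most} two nontrivial components. Lemma \ref{simplexLemma} then yields, along a subsequence, a single pair $\{i,j\}$ with $g_{i,n},g_{j,n}\in(\epsilon/2,1-\epsilon/2)$ uniformly on $[0,1]$ and $g_{h,n}\to 0$ \emph{uniformly} for $h\notin\{i,j\}$. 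The identification $\{i,j\}=\{1,2\}$ is then read off at the bottom scale $\rho=0$, where $g_{1,n}(0),g_{2,n}(0)$ are bounded below by the $C_{loc}$ convergence $v_n\to v$ and the nontriviality of $v_1,v_2$ on $\partial B_{2N/\theta^{1/2}}$; a uniform limit $g_{h,n}\to 0$ with $h\in\{1,2\}$ would contradict this. Both assertions of the lemma then follow at once from the uniform two-sided bounds on $g_{1,n},g_{2,n}$ and the uniform decay of the others. You should either invoke this simplex lemma or supply an equivalent argument; without it, the case $H(v_n,\rho_n)\to\infty$ remains open and the proof is incomplete.
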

\begin{proof}
For this we use Lemma \ref{simplexLemma} from the appendix.
	Given the sequence $\{v_n\}$, we consider the auxiliary functions:
	$$
	g_{i,n}(\rho)
	:=
	\begin{cases}
		\frac{1}{H(v_n,\rho \overline{r}_n/3)}
		\frac{1}{(\rho \overline{r}_n/3)^{N-1}}
		\int_{\partial B_{\rho \overline{r}_n/3}}
		\mu_n(y) v_{i,n}^2
		d\sigma(y)
		& \text{ for } 
		\frac{6N/\theta^\frac{1}{2}}{\overline{r}_n}
		\leq
		\rho
		\leq
		1\\
		\frac{1}{H(v_n,2N/\theta^{\frac{1}{2}})}
		\frac{1}{(2N/\theta^\frac{1}{2})^{N-1}}
		\int_{
		\partial B_{2N/\theta^\frac{1}{2}}
		}
		\mu_n(y) v_{i,n}^2
		d\sigma(y)
		& \text{ for }
		0\leq \rho \leq 
		\frac
		{6N/\theta^\frac{1}{2}}
		{\overline{r}_n}.
	\end{cases}
	$$
	The proof is finished once we have proved the assumptions of Lemma \ref{simplexLemma}, that is $\lim_n \dist(g_n([0,1]),\Sigma_{2,l}) = 0$, where
    $
        \Sigma_{2,l}
	:=
	\Big\{
	x \in \mathbb{R}^l: 
	\exists i,j \in \{1,...l\},\ i\neq j,\ 
	\text{ such that }
	x_h = 0 \quad
	\forall h \neq i,j
	\Big\}.
    $
    By Lemma \ref{nonTriviality}, we must have that the indices for the nontrivial components should be $i=1,2$.
	
	By construction, each $g_{i,n}$ is continuous, $g_{i,n} \geq 0$, and $\sum_{i=1}^lg_{i,n}(x) = 1$ for all $x \in [0,1]$. We divide the proof into two steps:

\noindent	\textbf{(i)} First, we prove that there exists $\epsilon \in ]0,1[$ such that $g_{i,n}(x)\leq 1-\epsilon$ for all $x \in [0,1]$, $n \in \mathbb{N}$, $i \in \{1,...,l\}$. By contradiction, we assume there exists an index $i \in \{1,...,l\}$ and a sequence $s_n \in [0,1]$ such that:
	\begin{equation}
		\label{contradHypNonTriv}
			g_{i,n}(s_n) \rightarrow 1, \qquad
			g_{j,n}(s_n) \rightarrow 0
			\qquad
			i,j \in \{1,...,l\}
			\text{ and }
			i \neq j.
	\end{equation}
	By Lemma \ref{nonTriviality} and the local uniform convergence $v_n \rightarrow v$, we conclude that $s_n\overline{r}_n \rightarrow \infty$. Indeed, were this not true and \eqref{contradHypNonTriv} would not be possible, since if $s_n \overline{r}_n \rightarrow \tilde{r}$ then:
	$$
	    \lim_n
	    g_{i,n}(s_n)
	    =
	    \int_{\partial B_{\tilde{r}/3}}
	    \frac{
	    \mu_n(y)
	    v_{i}^2
	    }
	    {
	        H(v,\tilde{r}/3)
	        (\tilde{r}/3)^{N-1}
	    }
	    d\sigma(y)
	    >0
	$$
for $i=1,2$ in case $\tilde{r}\geq 6N/\theta^\frac{1}{2}$, while	in case $\tilde{r}< {6N}/{\theta^\frac{1}{2}}$ we also conclude that
	$$
	    \lim_n
	    g_{i,n}(s_n)
	    =
	    \int_{\partial B_{2N/\theta^\frac{1}{2}}}
	    \frac{\mu_n(y)
	    v_{i}^2
	    }
	    {
	        H(v,2N/\theta^\frac{1}{2})
	    (2N/\theta^\frac{1}{2})^{N-1}
	    }
	    d\sigma(y)
	    >0\qquad  \text{for $i=1,2$.}
	    $$	
We consider the blowdown sequence given by
	\begin{equation}\label{blowdownapplied}
	\tilde{v}_{i,n}(x)
	:=
	\frac{
		v_{i,n}(s_n\overline{r}_nx/3)
	}{
		\sqrt{H(v_n,s_n\overline{r}_n/3)}
	},
	\end{equation}
to which we apply Lemma \ref{blowDownSequence} with $\rho_n:=s_n\overline{r}_n/3$, concluding that the uniform limit of $\tilde{v}_n$ contains two two nontrivial components, in contradiction with (\ref{contradHypNonTriv}).
	
\noindent	\textbf{(ii)} Now we prove $\lim_{n} \dist(g_n([0,1]), \Sigma_{2,l}) = 0$. We assume by contradiction that there exists $\epsilon>0$ and three different indices $i,j,k$ and a sequence $s_n \in ]0,1[$ such that up, to a subsequence,
	\begin{align}
		\label{contrad2NonTriv}
		g_{i,n}(s_n)\geq \epsilon ,
		\quad\quad
		g_{j,n}(s_n)\geq \epsilon ,
		\quad\quad
		g_{k,n}(s_n)\geq \epsilon.
	\end{align} 
	Again, we must have $s_n\overline{r}_n \rightarrow \infty$, otherwise, since by Proposition \ref{concludeProp} the limit $v = \lim v_n$ has a maximum of two nontrivial components, we would have	
$  \lim_n
	    g_{i,n}(s_n)
	=0$ for $i \neq 1,2$.
	
	Exactly as before, considering again the blowdown sequence \eqref{blowdownapplied} and since $s_n\overline{r}_n/3 \rightarrow \infty$, we apply Lemma \ref{blowDownSequence} to conclude that the uniform limit of $v_n$ contains exactly two nontrivial components, in contradiction with equation \eqref{contrad2NonTriv}.
\end{proof}

\begin{lemma}
\label{LambdasMoreThanZeroAltCafCond}
	There exists $C>0$ independent of $n$ such that:
	$$
	\Lambda_{1,n}(r), \\ \Lambda_{2,n}(r)\geq C
	$$
	for $r \in [2N/\theta^\frac{1}{2},\frac{\overline{r}_n}{3}]$. In particular the second condition of $(h_6)$ holds true.
\end{lemma}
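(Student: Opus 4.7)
My plan is to argue by contradiction: suppose there exist a subsequence (still indexed by $n$) and radii $r_n^\star \in [2N/\theta^{1/2}, \overline{r}_n/3]$ with $\Lambda_{1,n}(r_n^\star)\to 0$ (the case of $\Lambda_{2,n}$ is symmetric). Using the equivalent expression \eqref{eq:equivalent_LAMBDA}, the ellipticity of $B_n$ from Lemma \ref{MatrixBoundsOnB}, the negativity of $M_n$, and the bound $|v_{1,n}\, f_{1,n}(y,v_{1,n})| \leq dr_n^2 v_{1,n}^2$ from Lemma \ref{lemmaMatrixBlowLimit}, the hypothesis forces the ratio of spherical Dirichlet energy to $L^2$-mass on $\partial B_{r_n^\star}$ to vanish. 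Poincaré's inequality on the sphere then forces $v_{1,n}|_{\partial B_{r_n^\star}}$ to be asymptotically constant (after normalization). I would split into two cases, depending on whether $r_n^\star$ stays bounded.

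If $r_n^\star$ stays bounded, up to a subsequence $r_n^\star \to r_\star \geq 2N/\theta^{1/2}$. The $C_{loc}$-convergence $v_n \to v$ from Proposition \ref{ListProp} gives uniform convergence of traces on $\partial B_{r_n^\star}$, so $v_1$ would have to be constant on $\partial B_{r_\star}$, with the constant strictly positive by Lemma \ref{nonTriviality}. The vanishing of $\Lambda_{1,n}(r_n^\star)$ also forces $(-M_n)\int_{\partial B_{r_n^\star}} a_n v_{1,n}^2 v_{2,n}^2\, d\sigma(y) \to 0$; in the bounded-$M_n$ subcase this directly yields $v_1 v_2 \equiv 0$ on $\partial B_{r_\star}$, while in the $|M_n|\to\infty$ subcase the same conclusion follows from Proposition \ref{concludeProp}. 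Combined with $v_1 \equiv c > 0$, this forces $v_2 \equiv 0$ on $\partial B_{r_\star}$, contradicting Lemma \ref{nonTriviality}.

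If instead $r_n^\star \to \infty$, since $r_n^\star \leq \overline{r}_n/3$ I apply Lemma \ref{blowDownSequence} to the blowdown $\tilde v_{i,n}(x) := v_{i,n}(r_n^\star x)/\sqrt{H(v_n, r_n^\star)}$, obtaining convergence in $H^1(B_1) \cap C(\overline{B}_1)$ (up to a rotation) to a two-component linear profile $\tilde v_h = \gamma_h x_1^+$, $\tilde v_k = \gamma_k x_1^-$, with $\gamma_h,\gamma_k>0$. Lemma \ref{sameWeightAltCafCond} forces $\{h,k\} = \{1,2\}$, so (up to a reflection) $\tilde v_1 = \gamma_1 x_1^+$. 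Writing $\Lambda_{1,n}(r_n^\star)$ in the rescaled variables $y = r_n^\star z$, its numerator becomes
\[
\int_{\partial B_1} \langle B_n(r_n^\star z)\nabla_\theta \tilde v_{1,n}, \nabla_\theta \tilde v_{1,n}\rangle\, d\sigma(z) + (r_n^\star)^2 H(v_n,r_n^\star)|M_n|\int_{\partial B_1} a_n \tilde v_{1,n}^2 \tilde v_{2,n}^2\, d\sigma(z) + O((r_n r_n^\star)^2),
\]
where the interaction term is nonnegative and the error vanishes by \eqref{productLimit}; the denominator is uniformly bounded below thanks to Lemma \ref{sameWeightAltCafCond}. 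Hence $\Lambda_{1,n}(r_n^\star)\to 0$ and the ellipticity of $B_n$ force $\int_{\partial B_1}|\nabla_\theta \tilde v_{1,n}|^2 \to 0$; Poincaré on $\partial B_1$ then makes $\tilde v_{1,n}|_{\partial B_1}$ asymptotically constant in $L^2$, while the $C(\overline{B}_1)$-convergence from Lemma \ref{blowDownSequence} identifies the limit as the trace of $\gamma_1 x_1^+$, which is not constant, giving the desired contradiction.

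The main technical obstacle is to keep all rescaled quantities under control along the blowdown: the denominator must be bounded below uniformly (for which I use the comparability $H_1/H \in [1/(1+\lambda)+o(1),\lambda/(1+\lambda)+o(1)]$ from Lemma \ref{sameWeightAltCafCond}), the $f$-error must truly vanish under rescaling (via $r_n \overline{r}_n \to 0$), and, crucially, the Poincaré trace argument must close without needing convergence of tangential gradients at a fixed radius; this last point is exactly what the $C(\overline{B}_1)$-convergence from Lemma \ref{blowDownSequence} allows.
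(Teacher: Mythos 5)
Your proposal is correct, and it shares the paper's overall skeleton: argue by contradiction, split according to whether the bad radii $r_n^\star$ stay bounded, and in the unbounded case invoke the blowdown Lemma \ref{blowDownSequence}. Where it genuinely differs is in how the contradiction is extracted at the end. The paper upgrades the uniform convergence of the blowdown to $C^{1,\alpha}$ convergence on a region $\{\gamma_1 x_1>2\tilde\delta\}$ away from the free boundary (via the decay estimate of Lemma \ref{estimateLemma} and elliptic regularity), so as to bound $\liminf_n$ of the tangential Dirichlet quotient from below by the corresponding quotient of the limit profile restricted to that region; the bounded-radius case is treated analogously ($C^{1,\alpha}$ convergence on $\{v_1>0\}$ when $M_n\to-\infty$, strong maximum principle when $M_n$ is bounded). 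You instead note that every term in the numerator of $\Lambda_{1,n}$ is nonnegative up to the $O\bigl((r r_n)^2\bigr)$ error coming from $f_{1,n}$, and that the suitably normalized denominator is bounded above and below (by Lemmas \ref{sameWeightAltCafCond} and \ref{nonTriviality}), so $\Lambda_{1,n}(r_n^\star)\to 0$ forces the full tangential Dirichlet energy and the interaction term to vanish separately; Poincar\'e--Wirtinger on the sphere together with uniform convergence of the traces (from locally uniform convergence in the bounded case, and from the $C(\overline{B_1})$-convergence of Lemma \ref{blowDownSequence} in the unbounded case) then makes the limiting trace constant, which contradicts the nonconstancy of $\gamma_1 x_1^+$ on $\partial B_1$ in the unbounded case, and, via the vanishing interaction term and Lemma \ref{nonTriviality}, the nontriviality of $v_2$ on $\partial B_{r_\star}$ in the bounded case. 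Your route avoids the $C^{1,\alpha}$ upgrade entirely, at the cost of having to identify the limit of the whole trace rather than just bounding part of the Dirichlet term from below; both arguments close.
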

\begin{proof}
	By contradiction, we assume there exists $\rho_n \in [2N/\theta^\frac{1}{2},\overline{r}_n/3]$ such that $\lim_n\Lambda_{1,n}(\rho_n) \leq 0$, that is:
	\begin{equation}
		\label{LambdaContradiction}
		\lim_n
		\rho_n^2\frac{
			\int_{\partial B_{\rho_n}}
			\left(
			\langle
			B_n(x)\nabla_{\theta}v_{1,n},
			\nabla_{\theta} v_{1,n}
			\rangle
			-
			M_na_n(x)v_{2,n}^2 v_{1,n}^2
			+
			v_{1,n}f_{1,n}(x,v_{1,n})
			\right)
			d\sigma(x)
		}{
		\int_{\partial B_{\rho_n}} (1+\alpha \rho_n r_n)\mu_n(x) v_{1,n}^2
		d\sigma(x)
		}
		\leq
		0.
	\end{equation}
	We either have that $\rho_n$ is bounded or $\rho_n \rightarrow \infty$.

\noindent	\textbf{(i)} If $\rho_n \rightarrow \infty$, then we consider the scaled blowdown sequence:
	$$
	\tilde{v}_{i,n}(x)
	:=
	\frac{
		v_{i,n}(\rho_nx)
	}{
		\sqrt{H(v_n,\rho_n)}
	}
	$$
	where $\rho_n \leq \overline{r}_n/3$. From Lemma \ref{blowDownSequence} we know that $\tilde{v}_n \rightarrow \tilde{v}$ uniformly, such that (up to a rotation):
	$$
	\tilde{v}_i = \gamma_i x_1^+ ,\,\,
	\tilde{v}_j = \gamma_j x_1^-,\,\,
	\tilde{v}_k = 0
	$$
	for all $k \neq i,j$ and $\gamma_i, \gamma_j > 0$. Due to Lemma \ref{nonTriviality} we conclude that $i = 1$ and $j =2$.
	
	The idea is to turn the uniform convergence of $\tilde{v}_{1,n} \rightarrow \tilde{v}_1$ into a $C^{1,\alpha}$ convergence for $0<\alpha<1$ in a set away from the free boundary given by $\{\gamma_1x_1 > 2\tilde{\delta}\}$ for some $\tilde{\delta} > 0$. We take $\tilde{\delta}$ sufficiently small so that $\{\gamma_1x_1 > 2\tilde{\delta}\} \cap \partial B_1 \neq \emptyset$.
	
	If $x_0 \in B_2(0) \cap \{\gamma_1 x_1 >\tilde{\delta}\}$ and $\rho>0$ is small enough so that $B_\rho(x_0) \subset \{\gamma_1x_1>\tilde{\delta}/2\}$, then by uniform convergence of $\tilde{v}_{1,n}$ to $\tilde{v}_1$:
	\begin{equation}
	\label{lowerBoundForBlowDown}
	\tilde{v}_{1,n}\geq \frac{\tilde{\delta}}{4}>0 
	\qquad
	\text{ in } B_2\cap\{\gamma_1x_1>\tilde{\delta}/2\}.
	\end{equation}
	
	Equation \eqref{lowerBoundForBlowDown}, equation \eqref{blowDownEq} and \textbf{(a)} -- which provides  $a_n(x)\geq \delta$ --, imply the following inequality for $j\neq 1 = i$:
	\begin{align}
		-\div &(A_n(\rho_n x)\nabla \tilde{v}_{j,n})
		\leq
		\left(
		d\rho_n^2r_n^2
		-
		2
		Ca_n
		(\rho_n x)
		H(v_n,\rho_n)\rho_n^2
		|M_n|
		\right)
		\tilde{v}_{j,n}(x) \nonumber \\
		& \leq
		\left(
		d\rho_n^2r_n^2
		-
		2
		C
		\delta
		H(v_n,\rho_n)\rho_n^2|M_n|
		\right)
		\tilde{v}_{j,n}(x)
		\leq
		-C
		\delta
		H(v_n,\rho_n)\rho_n^2|M_n|\tilde{v}_{j,n}(x), 	    \label{boundOnDivergenceAltCafConditions}
	\end{align}
	since $\rho_nr_n \rightarrow 0$ and $H(v_n,\rho_n)\rho_n^2 M_n \rightarrow -\infty$ using Proposition \ref{concludeProp} and Lemma \ref{nonTriviality}.

	Applying Lemma \ref{estimateLemma}-1 to \eqref{boundOnDivergenceAltCafConditions}, we conclude the uniform bound: 
	$$
	|H(v_n,\rho_n)\rho_n^2M_n\tilde{v}_{j,n}(x_0)| \leq C \quad \text{	for every $x_0 \in B_{2-\rho}(0) \cap \{\gamma_1x_1>2\tilde{\delta}\}$},
	$$
 which implies the uniform boundedness of $\div(A_n(\rho_n x)\nabla \tilde{v}_{1,n})$. This together with the uniform convergence $\tilde{v}_{1,n} \rightarrow \tilde{v}_1$ implies that it also converges in $C^{1,\alpha}(B_{2-\rho}(0) \cap \{\gamma_1x_1>2\tilde{\delta}\})$ for all $0< \alpha <1$ by standard elliptic estimates.
	
	Now we reach contradiction since, using equation \eqref{LambdaContradiction}, $\rho_n\rightarrow \infty$ and $M_n \leq 0$, we have:
	\begin{align*}
		0
		&\geq
		\lim_n
		\rho_n^2\frac{
			\int_{\partial B_{\rho_n}}
			\left(
			\langle
			B_n(x)\nabla_{\theta}v_{1,n},
			\nabla_{\theta} v_{1,n}
			\rangle
			-
			M_na_n(x)v_{2,n}^2 v_{1,n}^2
			+
			v_{1,n}f_{1,n}(x,v_{1,n})
			\right)
			d\sigma(x)
		}{
			\int_{\partial B_{\rho_n}} 
			(1+\alpha \rho_n r_n)
			\mu_n(x)
			v_{1,n}^2
			d\sigma(x)
		}
		\\
		&\geq 
		\lim_n\frac{
			\int_{\partial B_1}
			\langle
			B_n(\rho_n x)
			\nabla_\theta \tilde{v}_{1,n},
			\nabla_\theta \tilde{v}_{1,n}
			\rangle
			d\sigma(x)
		}{
			\int_{\partial B_{1}}(1+\alpha \rho_nr_n)\mu_n(x)
			\tilde{v}_{1,n}^2
			d\sigma(x)
		}
		-
		\lim_n
		\rho_n^2
		\frac{
			\int_{\partial B_{\rho_n}}
			v_{1,n}f_{1,n}(x,v_{1,n})
			d\sigma(x)
		}{
			\int_{\partial B_{\rho_n}}(1+\alpha \rho_nr_n)\mu_n(x)v_{1,n}^2
			d\sigma(x)
		}\\
		& \geq
		\frac{
			\int_{\partial B_1\cap \{\gamma_1x_1>2\tilde{\delta}\}}
			|\nabla_\theta\big(\gamma_1 x_1\big)|^2
			d\sigma(x)
		}{
			\int_{\partial B_1}\big(\gamma_1x_1\big)^2
			d\sigma(x)
		}
-2d\rho_n^2r_n^2\geq 
		C
		>
		0,
	\end{align*}
where in the second to last inequality we used the $C^{1,\alpha}$--convergence of $\tilde{v}_n$,  the fact that $B_n(\rho_n x) \rightarrow I$ uniformly over compact sets, and the bound for $f_{i,n}$ given by Lemma \ref{lemmaMatrixBlowLimit}.
	
\noindent	\textbf{(ii)} In the case where $\rho_n$ is bounded, there exists $\overline{\rho}$ such that $\rho_n \rightarrow \overline{\rho}$.
	
	If $M_n \rightarrow -\infty$ then $v_n \rightarrow v$ where $v$ satisfies the system \eqref{unboundedEquation} of Proposition \ref{concludeProp}. Similarly to above we have that
	$$
	    	\lim_n
		\Big|
		\rho_n^2
		\frac{
			\int_{\partial B_{\rho_n}}
			v_{1,n}f_{1,n}(x,v_{1,n})
			d\sigma(x)
		}{
			\int_{\partial B_{\rho_n}}(1+\alpha \rho_nr_n)\mu_n(x)v_{1,n}^2
			d\sigma(x)
		}
		\Big|
		\leq
		2d\rho_n^2r_n^2
		\rightarrow 0
	$$
	and also,
	$$
	    \frac{
		-\int_{\partial B_{\overline{\rho}}}
		M_na_n(x)v_{2,n}^2 v_{1,n}^2
		d\sigma(x)
	}{
		\int_{\partial B_{\overline{\rho}}}
		(1+\alpha \rho_n r_n)\mu_n(x) v_{1,n}^2
		d\sigma(x)
	}
	\geq
	0.
	$$
	Thus we must have that:
	\begin{equation}
	\label{contradictComplication123}
	    \lim_n
		\frac{
			\int_{\partial B_{\overline{\rho}}}
			\langle
			B_n(x)
			\nabla_\theta 
			v_{1,n},
			\nabla_\theta 
			v_{1,n}
			\rangle
			d\sigma(x)
		}{
			\int_{\partial B_{\overline{\rho}}}
			(1+\alpha \rho_nr_n)
			\mu_n(x)
			v_{1,n}^2
			d\sigma(x)
		}
		=
		0
	\end{equation}
	By an argument similar to the one above in \textbf{(i)} we can conclude $C^{1,\alpha}$ convergence in sets where $\{v_1>0\}$, thus:
	$$
	    \lim_n
		\frac{
			\int_{\partial B_{\rho_n}}
			\langle
			B_n( x)
			\nabla_\theta v_{1,n},
			\nabla_\theta v_{1,n}
			\rangle
			d\sigma(x)
		}{
			\int_{\partial B_{\rho_n}}
			(1+\alpha \rho_nr_n)
			\mu_n(x)
			\tilde{v}_{1,n}^2
			d\sigma(x)
		}
		\geq
		\frac{
    	    \int_{\partial B_{\overline{\rho}}\cap \{v_1>0\}}
    	    |\nabla_\theta v_1|^2
    	    d\sigma(x)
	    }
	    {
    	    \int_{\partial B_{\overline{\rho}}}
    	    v_1^2
    	    d\sigma(x)
	    }.
	$$
	By Lemma \ref{nonTriviality} we must have that $\int_{\partial B_{\overline{\rho}}}v_1^2d\sigma(x) > 0$ and $\int_{\partial B_{\overline{\rho}}}v_2^2d\sigma(x)>0$ and also $v_1\cdot v_2 = 0$. This implies that the set $\{v_1>0\}$ is non-empty and that $|\nabla_\theta v_1|$ must be different from zero since otherwise $v_1$ would be constant different from zero in $B_{\overline{\rho}}$ and since $\int_{B_{\overline{\rho}}}v_2^2d\sigma(x) > 0$ and $v_1\cdot v_2 = 0$ this can't happen. Thus:
	$$
	    \frac{
    	    \int_{\partial B_{\overline{\rho}}\cap \{v_1>0\}}
    	    |\nabla_\theta v_1|^2
    	    d\sigma(x)
	    }
	    {
    	    \int_{\partial B_{\overline{\rho}}}
    	    v_1^2
    	    d\sigma(x)
	    }>0,
	$$
	in contradiction with \eqref{contradictComplication123}.

	On the other hand, if $M_n$ is bounded, then $v_n \rightarrow v$ in $C^{1,\alpha}_{loc}(\mathbb{R}^N)$. By Lemma \ref{nonTriviality} we know that both $v_{1}$ and $v_2$ are nonnegative nontrivial, and by the strong maximum principle we have $v_1, v_2 > 0$ in $\mathbb{R}^N$. This implies that:
	$$
	\frac{
		-\int_{\partial B_{\overline{\rho}}}
		M_na_n(x)v_{2,n}^2 v_{1,n}^2
		d\sigma(x)
	}{
		\int_{\partial B_{\overline{\rho}}}
		(1+\alpha \rho_n r_n)\mu_n(x) v_{1,n}^2
		d\sigma(x)
	}
	\geq
	C
	>
	0
	$$
	and this allows us to reach contradiction.
\end{proof}

\begin{lemma}
\label{almgrenLemmaHypoithesisStuff}
    There exists $C$ such that, for $r,s \in ]0,R_n[ = ]0,\frac{\overline{r}_n}{3}[$ such that $r \leq s$, then
    \begin{equation*}
        \frac{1}{r^{N-1}}
        \int_{\partial B_r}
        v_{i,n}^2
        d\sigma(y)
    \leq
        \frac{C}{s^{N-1}}
        \int_{\partial B_s}
        v_{i,n}^2
        d\sigma(y).
    \end{equation*}
    In particular, this proves $(h_5)$.
\end{lemma}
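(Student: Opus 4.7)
The plan is to reduce the statement to the component-wise monotonicity of $H_{i,\beta}(u_\beta,r)\,e^{Cr}$ established in Lemma~\ref{derivOfH}, transferred to the blowup sequence $v_n$ via the same rescaling argument already used in Lemmas~\ref{MonotonicityForBlowup} and~\ref{sharpEstimate}, and then absorb the difference between $H_i(v_n,r)$ and $\frac{1}{r^{N-1}}\int_{\partial B_r}v_{i,n}^2\,d\sigma$ using that $\mu_n\to 1$ uniformly on the relevant scale.

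First, I would apply Lemma~\ref{derivOfH} to the sequence $\tilde u_{\beta_n}$ (which satisfies all the hypotheses of Section~\ref{chapter:implementation} with $\tilde A_n(0)=Id$) to obtain, for each fixed $i$ and $n$, monotonicity of $r\mapsto H_i(\tilde u_{\beta_n},r)\,e^{\tilde C r}$ on $]0,\bar r[$. A change of variables $z=r_n y$, exactly as in the proof of Lemma~\ref{MonotonicityForBlowup}, shows that
\[
H_i(v_n,r)=\frac{\eta^2(x_n)}{L_n^2 r_n^2}\,H_i(\tilde u_{\beta_n},r_n r),
\]
so each individual $r\mapsto H_i(v_n,r)\,e^{\tilde C r_n r}$ is monotone nondecreasing on $\bigl]0,\tfrac{\bar r}{r_n}\bigr[$. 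Since $\overline r_n r_n\to 0$ by \eqref{productLimit}, we have $R_n=\overline r_n/3 < \bar r/r_n$ for $n$ large, and the monotonicity is available throughout $]0,R_n[$.

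Given then $0<r\le s<R_n$, this monotonicity yields
\[
H_i(v_n,r)\le H_i(v_n,s)\,e^{\tilde C r_n(s-r)}\le H_i(v_n,s)\,e^{\tilde C r_n R_n}\le C\,H_i(v_n,s),
\]
where $C$ is independent of $n$ because $r_n R_n\le r_n\overline r_n/3\to 0$.

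Finally, to remove the weight $\mu_n$, I invoke Lemma~\ref{lemmaMatrixBlowLimit}: $|\mu_n(y)-1|\le \|A_n(y)-Id\|\le Cr_n|y|\le Cr_n R_n$ for every $y\in B_{R_n}$. Hence, for $n$ large, $\tfrac12\le\mu_n(y)\le 2$ on $B_{R_n}$, which gives
\[
\tfrac12\,\frac{1}{r^{N-1}}\!\int_{\partial B_r}v_{i,n}^2\,d\sigma\ \le\ H_i(v_n,r)\ \le\ 2\,\frac{1}{r^{N-1}}\!\int_{\partial B_r}v_{i,n}^2\,d\sigma,
\]
and similarly at radius $s$. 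Chaining these equivalences with the previous inequality produces the desired doubling constant. No real obstacle is expected here: the only small care needed is checking that the exponentials coming from Lemma~\ref{derivOfH} and the error from the weight $\mu_n$ both stay bounded as $n\to\infty$, which is guaranteed by $r_n\overline r_n\to 0$.
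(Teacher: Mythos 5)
Your proof is correct and follows essentially the same route as the paper: the paper likewise invokes the component-wise monotonicity of $r\mapsto H_i(v_n,r)e^{\tilde C r_n r}$ (Lemma \ref{MonotonicityForBlowup}, itself obtained by rescaling Lemma \ref{derivOfH}), uses $r_n\overline r_n\to 0$ to bound the exponential factor uniformly in $n$, and removes the weight $\mu_n$ via the uniform two-sided bounds $\theta/M\le\mu_n\le C'$ coming from Lemma \ref{lemmaMatrixBlowLimit}. Your variant of the last step (pinching $\mu_n$ between $1/2$ and $2$ from $|\mu_n-1|\le Cr_nR_n$) is an equally valid way to close the argument.
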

\begin{proof}
    Using Lemma \ref{MonotonicityForBlowup} there exists $\tilde{C}>0$ such that for each $i \in \{1,...,l\}$, the function:
    $$
        r
        \mapsto
        \left(
            \frac{1}{r^{N-1}}
            \int_{\partial B_r}
            \mu_n(y)
            v_{i,n}^2
            d\sigma(y)
        \right)
        e^{\tilde{C}r_nr}
    $$
    is monotone nondecreasing for $r \in ]0,R_n[\subset ]0, \frac{\tilde{r}}{r_n}[$.  Using the matrix bounds from Lemma \ref{lemmaMatrixBlowLimit} and $r_n\overline{r}_n\rightarrow 0$, we conclude $\frac{\theta}{M} \leq \mu_n(y) \leq Cr_nR_n\leq C'$. Thus, given $r,s \in ]0, R_n[$ and $r < s$, we conclude:
    \begin{align*}
        \frac{1}{r^{N-1}}
        \int_{\partial B_r}
        v_{i,n}^2
        d\sigma(y)
    &\leq
        \frac{1}{\theta}
        \frac{1}{r^{N-1}}
        \int_{\partial B_r}
        \mu_n(y)
        v_{i,n}^2
        d\sigma(y)
    \leq
        \frac{1}{\theta}
        \left(
            \frac{1}{s^{N-1}}
        \int_{\partial B_s}
        \mu_n(y)
        v_{i,n}^2
        d\sigma(y)
        \right)
        e^{\tilde{C}r_n(s-r)}\\
    &\leq
        \frac{M}{\theta}
        \left(
            \frac{1}{s^{N-1}}
        \int_{\partial B_s}
        v_{i,n}^2
        d\sigma(y)
        \right)
        e^{\tilde{C}r_n\overline{r}_n/3}
    \leq
        C
        \left(
            \frac{1}{s^{N-1}}
        \int_{\partial B_s}
        v_{i,n}^2
        d\sigma(y)
        \right)
    \end{align*}
    since $r_n\overline{r}_n \rightarrow 0$, taking $C = \sup_{n}\frac{M}{\theta}e^{\tilde{C}r_n\overline{r}_n}$.
\end{proof}

It remains to show that also $J_{1,n}(r)$ and $J_{2,n}(r)$ are positive in the whole range $[2N/\theta^\frac{1}{2},\frac{\overline{r}_n}{3}]$, which is condition ($h_6$).

\begin{lemma}
\label{LastLemma}
We have that:
$$
    J_{i,n}(r)
>
    0
    \qquad
    \forall r \in [2N/\theta^\frac{1}{2}, \frac{\overline{r}_n}{3}],
$$
for all $n \in \mathbb{N}$ and $i=1,2$. In particular, this together with Lemma \ref{LambdasMoreThanZeroAltCafCond} implies that $(h_6)$ holds true. Also there exists $c>0$ such that $J_n(2N/\theta^\frac{1}{2}) 
= 
\frac
{
J_{1,n}(2N/\theta^\frac{1}{2})
J_{2,n}(2N/\theta^\frac{1}{2})
}
{(2N/\theta^\frac{1}{2})^4}
>
c.
$
\end{lemma}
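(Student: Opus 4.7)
The plan is to argue by contradiction: suppose there exist a subsequence (not relabeled) and $\rho_n \in [2N/\theta^{1/2}, \overline{r}_n/3]$ with $J_{1,n}(\rho_n) \leq 0$ (the $J_{2,n}$ case is symmetric). Since $M_n < 0$, $a_n > 0$ and $|v_{i,n}f_{i,n}(y, v_{i,n})| \leq dr_n^2 v_{i,n}^2$ by Lemma \ref{lemmaMatrixBlowLimit}, the ellipticity of $A_n$ yields the key lower bound
$$
J_{1,n}(\rho_n) \geq \frac{\theta}{M}\int_{B_{\rho_n}}|\nabla v_{1,n}|^2 |y|^{2-N} dy - dr_n^2\int_{B_{\rho_n}} v_{1,n}^2 |y|^{2-N} dy,
$$
after discarding the nonnegative competition contribution $-M_n a_n v_{1,n}^2 v_{2,n}^2 \geq 0$. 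The argument then splits according to whether $\rho_n$ remains bounded or diverges.

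In the bounded case, extract $\rho_n \to \rho_\infty \in [2N/\theta^{1/2}, \infty)$. The $r_n^2$-term vanishes (since $r_n \to 0$ and $v_{1,n}$ is uniformly bounded on compact sets, making the weighted integral uniformly controlled). To extract a positive liminf from the gradient term despite the singular weight $|y|^{2-N}$ at the origin, I will decompose $B_{\rho_n} = B_\delta \cup (B_{\rho_n}\setminus B_\delta)$ for small $\delta > 0$. On the outer annulus the weight is bounded and the strong $H^1_{loc}$ convergence $v_n \to v$ from Proposition \ref{ListProp} applies, giving
$$
\liminf_n \int_{B_{\rho_n}}|\nabla v_{1,n}|^2 |y|^{2-N} dy \geq \int_{B_{\rho_\infty}\setminus B_\delta}|\nabla v_1|^2 |y|^{2-N} dy,
$$
and monotone convergence in $\delta \to 0^+$ recovers the full integral $\int_{B_{\rho_\infty}}|\nabla v_1|^2 |y|^{2-N}$, which is strictly positive because Lemma \ref{nonTriviality} guarantees that $v_1$ is nonconstant in $B_{\rho_\infty} \supset B_{2N/\theta^{1/2}}$; this contradicts $J_{1,n}(\rho_n) \leq 0$.

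For the divergent case $\rho_n \to \infty$, I will exploit the blowdown $\tilde v_{i,n}(x) := v_{i,n}(\rho_n x)/\sqrt{H(v_n,\rho_n)}$ of Lemma \ref{blowDownSequence}, whose uniform and $H^1(B_1)$ limit is, up to rotation, $\tilde v_1 = \gamma_1 x_1^+$ and $\tilde v_2 = \gamma_2 x_1^-$ with $\gamma_1, \gamma_2 > 0$. A direct change of variables shows
$$
\frac{J_{1,n}(\rho_n)}{H(v_n,\rho_n)} \geq \frac{\theta}{M}\int_{B_1}|\nabla \tilde v_{1,n}|^2 |x|^{2-N} dx - C(r_n \rho_n)^2,
$$
where the nonnegative competition contribution is again dropped and the nonlinear error is controlled using $r_n \rho_n \leq r_n\overline{r}_n/3 \to 0$, the uniform $L^\infty$ bound on $\tilde v_{1,n}$, and integrability of $|x|^{2-N}$ on $B_1$. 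The same annulus-exhaustion plus monotone convergence, combined with the explicit blowdown profile, yields $\liminf_n \int_{B_1}|\nabla \tilde v_{1,n}|^2 |x|^{2-N} \geq \gamma_1^2 \int_{B_1 \cap \{x_1 > 0\}}|x|^{2-N} dx > 0$, again contradicting the assumption. The uniform lower bound $J_n(2N/\theta^{1/2}) \geq c > 0$ is a direct consequence of the bounded-case analysis applied at the fixed radius $\rho = 2N/\theta^{1/2}$, yielding $\liminf_n J_{i,n}(2N/\theta^{1/2}) > 0$ for $i=1,2$. The principal technical obstacle throughout is handling the singular weight $|y|^{2-N}$ in the limit, and the exhaustion-by-annuli device is precisely what makes this possible by reducing everything to integrals on compact subsets where the weight is bounded and the available strong $H^1_{loc}$ convergence can be applied.
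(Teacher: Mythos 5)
Your proof is correct, but it takes a genuinely different route from the paper's. The paper argues by continuation: it first obtains $J_{i,n}(r)\geq \overline{C}>0$ on the initial window $[2N/\theta^{1/2},10N/\theta^{1/2}]$ from the local convergence $v_n\to v$ and the nontriviality of $v_1,v_2$, then sets $s_n:=\sup\{s:\ J_{i,n}>0 \text{ on } ]2N/\theta^{1/2},s[\}$; on that interval all of $(h_0)$--$(h_6)$ hold, so Theorem \ref{AltCaffMonotonicity} applies and the (corrected) monotonicity of $J_{1,n}J_{2,n}/r^4$ propagates the bound $J_{1,n}(r)J_{2,n}(r)\geq \overline{C}^2C>0$ up to $s_n$, whence continuity of each $J_{i,n}$ forces $s_n=\overline{r}_n/3$. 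You instead prove positivity directly: drop the competition term (good sign), absorb the $f_{i,n}$-term into a $dr_n^2$-weighted mass, and show the weighted Dirichlet energy cannot degenerate, via the $H^1_{loc}$ limit $v$ when the bad radii stay bounded and via the blowdown of Lemma \ref{blowDownSequence} when they diverge. Your route avoids the slightly circular flavour of invoking the ACF formula whose hypothesis $(h_6)$ is exactly what is being established, and it controls each factor $J_{i,n}$ individually rather than only their product; what it gives up is the uniform quantitative lower bound on $J_{1,n}(r)J_{2,n}(r)$ over the whole interval that the paper's monotonicity argument produces (not needed for the statement). One point you should make explicit in the divergent case: Lemma \ref{blowDownSequence} only asserts that the blowdown limit has two nontrivial components $h,k\in\{1,\dots,l\}$; to conclude $\{h,k\}=\{1,2\}$, hence $\gamma_1,\gamma_2>0$, you must invoke Lemma \ref{sameWeightAltCafCond} (or Lemma \ref{nonTriviality}) to rule out that component $1$ or $2$ degenerates under the normalization by $\sqrt{H(v_n,\rho_n)}$ --- exactly as the paper does at the analogous step in the proof of Lemma \ref{LambdasMoreThanZeroAltCafCond}.
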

\begin{proof}
    First of all, there exists $\overline{C}>0$ such that $J_{i,n}(r)\geq \overline{C}$ for every $r \in [2N/\theta^\frac{1}{2}, 10N/\theta^\frac{1}{2}]$ and $i=1,2$. This is a consequence of $v_{i,n} \rightarrow v_i$ in $C(B_{10N/\theta^\frac{1}{2}})\cap H^1(B_{10N/\theta^\frac{1}{2}})$, and $v_i$ (in particular $v_1$) is nonconstant in $B_{10N/\theta^\frac{1}{2}}$, and that $f_{i,n}(x,v_{i,n}) \rightarrow 0$ uniformly in $B_{10N/\theta^\frac{1}{2}}$ by Proposition \ref{ListProp} and $M_n$. This also proves the last part of the statement of the lemma.
    
    Define:
    $$
        s_n
    :=
        \sup
        \Big\{
            s \in ]2N/\theta^\frac{1}{2}, \overline{r}_n/3[:
            J_{i,n}(r)
            >
            0
            ,
            \text{ for every }
            r \in ]2N/\theta^\frac{1}{2},s[
        \Big\}.
    $$
    We wish to prove that $s_n = \overline{r}_n/3$. Using Lemmas \ref{easyLemmaForConditions123}, \ref{sameWeightAltCafCond} and \ref{LambdasMoreThanZeroAltCafCond}, the definition of $s_n$, the definitions of the constants $\epsilon_n = dr_n^2$ and $c_n = Cr_n$, all conditions $(h_0)$-$(h_6)$ of Section \ref{chapter:resultsChap4} are satisfied in the interval $]2N/\theta^\frac{1}{2},s_n[$, so by Theorem \ref{AltCaffMonotonicity} there exists $0<\eta<1$ and $C>0$ such:
    $$
        r
    \mapsto
        \frac
        {J_{1,n}(r)J_{2,n}(r)}
        {r^4}
        e^{
            -C|M_n|^{-\eta}
             r^{-2\eta}
        +
            Cr_n^2r^2
        +
            Cr_nr
        }
    $$
    is monotone nondecreasing for $r \in ]2N/\theta^\frac{1}{2}, s_n[$.
    Thus this implies that for all $s \in ]0,s_n[$ we have:
    \begin{align*}
        J_{1,n}(r)J_{2,n}(r)
    &=
        r^4J_n(r)\geq
        r^4
        J_n(2N/\theta^\frac{1}{2})
        e^{
            -C|M_n|^{-\eta}
        +
            Cr_n^2
        +
            Cr_n
        +
            C|M_n|^{-\eta}
            r^{-2\eta}
        -
            C
            r_n^2
            r^2
        -
            Cr_nr
        }\\
    &\geq
        \overline{C}^2
        e^{-C|M_n|^{-\eta}
        -
            C
            r_n^2
            r^2
        -
            Cr_nr},
    \end{align*}
    for all $r \in [2N/\theta^\frac{1}{2}, R_n]$ we have $r_nr \rightarrow 0$ and by Proposition \ref{concludeProp} there exists $\epsilon>0$ such that $|M_n|\geq \epsilon$, thus $|M_n|^{-\frac{\gamma}{2(\gamma+1)}} \leq \epsilon^{-\frac{\gamma}{2(\gamma+1)}}$. We conclude there exists $\tilde{c}>0$ such that $e^{-C|M_n|^{-\eta}
        -
            C
            r_n^2
            r^2
        -
            Cr_nr} \geq C,
    $
    and so:
    \begin{equation}
    \label{finalEquationForChapter}
        J_{1,n}(r)J_{2,n}(r)
    \geq
        \overline{C}^2C
    >
        0.
    \end{equation}
    By continuity of $J_{i,n}(r)$, using \eqref{finalEquationForChapter}, we have that $J_{i,n}(r)>0$ for all $r \in [0,s_n]$.
    This implies that there exists $\tilde{\epsilon}>0$ such that for $s_n$ there exists $J_{1,n}(s_n)>\tilde{\epsilon}$ and $J_{2,n}(s_n)>\tilde{\epsilon}$, and by the continuity of $J_{i,n}$ we conclude that for $\tilde{\delta}>0$ small enough we have $J_{1,n}(s_n+\tilde{\delta})>0$ and $J_{2,n}(s_n+\tilde{\delta})>0$ in contradiction with the definition of $s_n$ in case $s_n < R_n$. Thus we conclude that $s_n = R_n = \frac{r_n}{3}$
\end{proof}

\begin{proof}[Conclusion of the proof of Lemma \ref{caffMonot}]
With all the conditions $(h_0)$-$(h_6)$ satisfied by the sequence $\{v_n\}$ in the interval $[2N/\theta^\frac{1}{2},\overline{r}_n/3]$, we can apply Theorem \ref{AltCaffMonotonicity} to $\{v_n\}$,  conclude the validity of Lemma \ref{caffMonot}.
\end{proof}

\appendix

\section{Auxiliary Results}

In this appendix we state some auxiliary results, which are used in the course of this work.
\subsection{Divergence operator on the Sphere}
\label{chapter:sphereDivergence}

In this subsection we compute a divergence operator of the sphere $\partial B_1$ of dimension $N-1$, in terms of the divergence in $\mathbb{R}^{N-1}$, using the stereographic projection.

First fix a Riemannian manifold $(M,g)$ of dimension $\dim (M) = N-1$ with metric $g$, a coordinate system $\phi: \mathbb{R}^{N-1} \rightarrow M$, and a vector field $X$ that is $(X_1,...,X_{N-1})$ in the $\phi$ coordinates.

We then have that the divergence in $(M,g)$ is given by:
\begin{equation}
\label{divergenceInCoordinates}
    \div_g(X)
=
    \sum_{i=1}^{N-1}
    \frac{1}{\sqrt{\det g}}
    \frac
    {\partial}
    {\partial x_i}
    (\sqrt{\det g} X_i)
\end{equation}

\begin{prop}
    \label{DivergenceSphereProp}
    Let:
    \begin{itemize}
    \item $\phi : \mathbb{R}^{N-1} \rightarrow \partial B_1$ be the stereographic projection, $\phi(y) = (\frac{2y}{1+|y|^2}, \frac{|y|^2-1}{1+|y|^2})$;
    \item  $B(y): T_y \partial B_1 \rightarrow T_y \partial B_1$ a differentiable hermitian operator satisfying
        \begin{equation}
        \label{PreBoundsOfMatrixInSphere}
        \langle
            B(y)v,
            v
        \rangle_{\partial B_1}
    \geq
        \theta \langle
            v,
            v        \rangle_{\partial B_1}
        \,\,
        \forall v \in T_y \partial B_1,
        \qquad
        \|DB(y)\| \leq M;
    \end{equation}
    \item $M(y) = (d\phi)^{-1}_{\phi(y)}B(\phi(y))(d\phi)_y$.
    \end{itemize}
Let $u : \partial B_1 \rightarrow \mathbb{R}$ be a differentiable function and   take $\tilde{u}(y) = u(\phi(y))$. Then
    \begin{equation*}
        \div_{\partial B_1}
        (B(z)
        \nabla_\theta u)|_{z = \phi(y)}
    =
        (1+|y|^2)^{N-1}
        \div_{\mathbb{R}^{N-1}}
        (
            \frac
            {1}
            {4(1+|y|^2)^{N-3}}M(y)
            \nabla_{\mathbb{R}^{N-1}}
            \tilde{u}
        ).
    \end{equation*}
Moreover, given a compact set $K \subset B_R \subset \mathbb{R}^{N-1}$, and the constants $\theta > 0$ and $M$ from \eqref{PreBoundsOfMatrixInSphere}, there exists $C = C(K,M)$ such that:
    $$
    \left    \langle
            \frac
            {
                1
            }
            {
                4(1+|y|^2)^{N-3}
            }
            M(y)
            \xi,
            \xi
      \right  \rangle
        \geq
        \frac{1}
        {4(1+R^2)^{N-3}}
        \theta
       \left \langle
            \xi,
            \xi
        \right\rangle,\qquad
       \left\|D(\frac{1}{4(1+|y|^2)^{N-3}}            M(y))        \right\|
    \leq
        C,
    $$
    for all
  $ \xi \in  \mathbb{R}^{N-1},    y \in K.$
    
\end{prop}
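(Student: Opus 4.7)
The plan is to translate the sphere divergence $\div_{\partial B_1}$ into a local-coordinate divergence via the stereographic chart $\phi$, using the Riemannian formula \eqref{divergenceInCoordinates}, and then simplify using the conformality of $\phi$. The key preliminary computation is the pullback metric. A direct calculation of $\phi(y)=(\tfrac{2y}{1+|y|^2},\tfrac{|y|^2-1}{1+|y|^2})$ gives the well-known identity
\[
(d\phi)_y^{\mathrm{T}}(d\phi)_y \;=\; \frac{4}{(1+|y|^2)^2}\,\mathrm{Id}_{\mathbb{R}^{N-1}},
\]
so the pullback metric is $g_{ij}(y)=\tfrac{4}{(1+|y|^2)^2}\delta_{ij}$, with inverse $g^{ij}(y)=\tfrac{(1+|y|^2)^2}{4}\delta^{ij}$ and $\sqrt{\det g}=\tfrac{2^{N-1}}{(1+|y|^2)^{N-1}}$.

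Next, I would write $X:=B(\phi(y))\nabla_\theta u$ in local coordinates. Since $\nabla_\theta u$ is the Riemannian gradient of $\tilde u$ in the metric $g$, its local components are $(\nabla_\theta u)^i=g^{ij}\partial_j\tilde u = \tfrac{(1+|y|^2)^2}{4}\partial_i\tilde u$. By the very definition $M(y)=(d\phi)_y^{-1}B(\phi(y))(d\phi)_y$, the local components of $X$ are $X^i = M(y)^{i}{}_{j}\,(\nabla_\theta u)^j = \tfrac{(1+|y|^2)^2}{4}\,M(y)^{i}{}_{j}\,\partial_j\tilde u$. Plugging into \eqref{divergenceInCoordinates} and simplifying the powers of $(1+|y|^2)$ yields
\[
\div_{\partial B_1}(B\nabla_\theta u)\big|_{\phi(y)}
=(1+|y|^2)^{N-1}\,\partial_i\!\left(\frac{1}{(1+|y|^2)^{N-1}}\cdot\frac{(1+|y|^2)^2}{4}\,M(y)^{i}{}_{j}\,\partial_j\tilde u\right),
\]
which is exactly the claimed identity after recognizing $(1+|y|^2)^{N-1}/(1+|y|^2)^2=(1+|y|^2)^{N-3}$.

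For the ellipticity bound, I would exploit the conformal identity above to rewrite $M(y)$ in a manifestly symmetric form: $(d\phi)_y^{-1}=\tfrac{(1+|y|^2)^2}{4}(d\phi)_y^{\mathrm{T}}$, hence
\[
M(y)=\frac{(1+|y|^2)^2}{4}(d\phi)_y^{\mathrm{T}}B(\phi(y))(d\phi)_y.
\]
Then, for $\xi\in\mathbb{R}^{N-1}$,
\[
\langle M(y)\xi,\xi\rangle_{\mathbb{R}^{N-1}}
=\tfrac{(1+|y|^2)^2}{4}\bigl\langle B(\phi(y))(d\phi)_y\xi,(d\phi)_y\xi\bigr\rangle_{T_{\phi(y)}\partial B_1}
\geq \theta|\xi|^2,
\]
using the ellipticity of $B$ together with $|(d\phi)_y\xi|_{\partial B_1}^2=\tfrac{4}{(1+|y|^2)^2}|\xi|^2$. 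Dividing by $4(1+|y|^2)^{N-3}$ and using $|y|\leq R$ on $K$ yields the stated ellipticity constant $\theta/(4(1+R^2)^{N-3})$.

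The derivative bound is then routine: $y\mapsto(d\phi)_y$, $y\mapsto(1+|y|^2)^{-(N-3)}$ and their first derivatives are smooth and hence uniformly bounded on $K$, and by the chain rule the derivatives of $B(\phi(y))$ are controlled by $\|DB\|_{L^\infty}\leq M$ times the bound on $d\phi$. Assembling these contributions gives $\|D(\tfrac{1}{4(1+|y|^2)^{N-3}}M(y))\|\leq C(K,M)$. The only minor care required in the whole argument is the bookkeeping for the conformal factor and the convention that identifies $B(\phi(y))$ with a matrix acting on $\mathbb{R}^{N-1}$ through $(d\phi)_y$; once this is set up cleanly, the computation is a direct substitution into \eqref{divergenceInCoordinates}.
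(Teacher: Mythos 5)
Your proposal is correct and follows exactly the route the paper indicates: pulling back through the stereographic chart, using the conformal identity $(d\phi)_y^{\mathrm{T}}(d\phi)_y=\tfrac{4}{(1+|y|^2)^2}\mathrm{Id}$ to compute $\sqrt{\det g}$ and the local components of $B\nabla_\theta u$, and substituting into the coordinate formula \eqref{divergenceInCoordinates} (the paper itself only sketches this, deferring the details to the arXiv version). The power bookkeeping, the ellipticity estimate via $(d\phi)_y^{-1}=\tfrac{(1+|y|^2)^2}{4}(d\phi)_y^{\mathrm{T}}$, and the derivative bound on the compact set $K$ all check out.
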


\begin{proof}
This follows from directly computations, using formula \eqref{divergenceInCoordinates}. A detailed proof can be found in  \cite[Appendix A]{DiasTavares}. 
\end{proof}

\subsection{Results for functions in the class $\mathcal{G}(\Omega)$}\label{appendix:classG}

 The second author, jointly with S. Terracini, introduced in \cite{HugoTerraciniWeakReflectionLaw} the following set of vector valued functions $\mathcal{G}(\Omega)$. This set has a relation with the blowups of competitive systems; moreover, they also satisfy an  Almgren-monotonicity formulas (see Theorem \ref{theoremForNandFreeStuff} below). We give the definitions and some lemmas along with the respective reference.

\begin{definition}[{\cite[Definition 1.2]{HugoTerraciniWeakReflectionLaw}}]
\label{GspaceDef}
	Given $\Omega \subset \mathbb{R}^N$ an open set, we define $\mathcal{G}(\Omega)$ as the set of nontrivial vector valued functions $v = (v_1,...,v_l)$ whose components are nonnegative, locally Lipschitz continuous in $\Omega$ and such that:
	\begin{itemize}
		\item 
		$v_iv_j = $ for all $i \neq j$
		\item 
		for every $i$,		$-\Delta v_i = f_i(x,v_i) - \mu_i$ in $\Omega$ in the distributional sense, where $\mu_i$ is a nonnegative Radon-Measure supported on the set $\partial \{v_i>0\}$, and $f_i : \Omega \times \mathbb{R}^+ \rightarrow \mathbb{R}$ are $C^1$ functions such that $|f_i(x,s)| \leq d|s|$ uniformly in $x$.
		\item
		For $x_0 \in \Omega$, $r_0>0$ such that $B_{r_0}(x_0) \subset \Omega$, let
		\begin{equation*}
			E(v,x_0,r)
			:=
			\frac{1}{r^{N-2}}\left(
			\int_{B_r(x_0)}
			\sum_{i=1}^l
			|\nabla v_i|^2
			-
			\sum_{i=1}^l
			f_i(x,v_i)v_i
			\right).
		\end{equation*}
		We assume that $E(v,x_0,r)$ is absolutely continuous with respect to $r \in ]0,r_0[$, and that the derivative satisfies:
		\begin{align*}
		    \frac{d}{dr}
		    &E(v,x_0,r)
		=
		    \frac{1}{r^{N-2}}
		    \int_{B_r(x_0)}
		    \sum_{i=1}^l
		    \left(
		    \partial_\nu v_i
		    \right)^2
		    dx
		-
		    \frac{1}{r^{N-2}}
		    \int_{\partial B_r(x_0)}
		    \sum_{i=1}^l
		    f_i(x,v_i)v_i
		    dx\\
		&+
		    \frac{1}{r^{N-1}}
		    \int_{B_r(x_0)}
		    \left(
		        (N-2)
		        \sum_{i=1}^l
		        f_i(x,v_i)v_i
		    +
		        2
		        \sum_{i=1}^l
		        f_i(x,v_i)\nabla v_i\cdot(x-x_0)
		    \right)dx.
		\end{align*}
	\end{itemize}
\end{definition} 
Define, as before,
\begin{align*}
    H(v,x_0,r)
    =
    \sum_{i=1}^l
    \int_{\partial B_r}
        v_i
        d\sigma(x),\qquad 
    N(v,x_0,r)
    =
    \frac{E(v,x_0,r)}{H(v,x_0,r)}.
\end{align*}

\begin{thm}[{\cite[Theorem 2.2]{HugoTerraciniWeakReflectionLaw}}]
    \label{theoremForNandFreeStuff}
	Let $v \in \mathcal{G}(\Omega)$ and let $K \subset\subset \Omega$. There exists $\tilde{r}'$, $\tilde{C}'$ depending only on $d$ and on the dimension $N$, such that for every $x_0 \in K$ and $r \in ]0, \tilde{r}']$ it results that $H(v,x_0,r) \neq 0$ the function $N(v,x_0,r)$ is absolutely continuous in $r$ and:
	$$
	r
	\mapsto
	\left(
	N(v,x_0,r) + 1
	\right)
	e^{\tilde{C}'r}
	$$
	is monotone nondecreasing. Moreover, for every point of the free boundary $x_0 \in \{v=0\}$, we have $N(v,x_0, 0^+)\geq 1$.
\end{thm}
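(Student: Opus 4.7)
The proof will follow the structure of a classical Almgren monotonicity formula for the Laplacian, modified to accommodate the singular boundary measures $\mu_i$ appearing in the equation for $v \in \mathcal{G}(\Omega)$. The crucial structural fact is that each $\mu_i$ is supported on $\partial\{v_i > 0\}$, where $v_i$ vanishes; this will allow us to discard the measure terms when we integrate by parts against $v_i$. My plan is to: (i) derive an equivalent boundary-integral expression for $E(v,x_0,r)$; (ii) compute $H'$ and establish that $H(v,x_0,r) > 0$ locally; (iii) combine with the explicit formula for $E'(v,x_0,r)$ from Definition \ref{GspaceDef} and a Cauchy--Schwarz inequality to conclude the differential inequality $N'(v,x_0,r) \geq -\tilde{C}'(N(v,x_0,r) + 1)$; and (iv) treat the free-boundary lower bound by a doubling argument against the Lipschitz regularity of $v$.

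For step (i), I would test the distributional equation $-\Delta v_i = f_i(x,v_i) - \mu_i$ against $v_i\,\chi_{B_r(x_0)}$ (more rigorously, against smooth cutoffs approximating this function). Since $v_i$ is continuous and vanishes on $\mathrm{supp}(\mu_i)$, the pairing $\int v_i\, d\mu_i$ equals zero, leaving
\[
E(v,x_0,r) = \frac{1}{r^{N-2}}\sum_{i=1}^l\int_{\partial B_r(x_0)} v_i\, \partial_\nu v_i\, d\sigma.
\]
For step (ii), a direct differentiation of $H$, combined with the identity above, yields $H'(v,x_0,r) = \frac{2}{r}E(v,x_0,r) + O(1)\,H(v,x_0,r)$, where the $O(1)$ error comes from the zeroth-order term $f_i$ estimated via $|f_i(x,s)|\leq d|s|$ and Poincar\'e's inequality on $B_r$. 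The same estimate gives $E(v,x_0,r)+H(v,x_0,r)\geq 0$ for sufficiently small $r$, whence $N(v,x_0,r)+1\geq 0$. Positivity of $H$ on the whole interval $(0,\tilde r']$ then follows from the nontriviality of $v$ together with the monotonicity established in the next step: once $H(r_0)>0$ at some small $r_0$, the monotonicity of $H(v,x_0,r)e^{\tilde{C}'r}$ (which one obtains from the bound $H' \geq -\tilde{C}' H$) propagates positivity, while standard unique-continuation arguments applied componentwise on the positivity sets prevent $v$ from being identically zero on any ball centered at $x_0$.

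For step (iii), I will plug the explicit formula for $E'(v,x_0,r)$ from Definition \ref{GspaceDef} into the quotient rule for $N = E/H$. The kinetic contribution $\frac{2}{r^{N-2}}\int_{B_r}\sum_i(\partial_\nu v_i)^2$ in $E'$, together with the boundary identity from step (i) and the Cauchy--Schwarz inequality
\[
\Big(\sum_{i}\int_{\partial B_r} v_i\, \partial_\nu v_i\Big)^{\!2} \leq \Big(\sum_{i}\int_{\partial B_r} v_i^2\Big)\Big(\sum_{i}\int_{\partial B_r}(\partial_\nu v_i)^2\Big),
\]
furnishes a nonnegative principal term in $N'$. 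The remaining lower-order contributions (those containing $f_i v_i$ and $f_i\nabla v_i\cdot(x-x_0)$ in the formula for $E'$) are absorbed into $-\tilde{C}'(N+1)$ via $|f_i|\leq d|v_i|$ and one more application of Poincar\'e's inequality. Integrating the resulting differential inequality gives the monotonicity of $(N(v,x_0,r)+1)e^{\tilde{C}'r}$, and absolute continuity of $N$ follows automatically from its expression as a ratio of two absolutely continuous functions, bounded away from singularities by the positivity of $H$.

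The main obstacle is the free-boundary lower bound $N(v,x_0,0^+)\geq 1$ when $v(x_0)=0$. My plan is a contradiction argument: if $N(v,x_0,0^+) = \lambda < 1$, fix $\lambda' \in (\lambda,1)$; by the monotonicity just established, $N(v,x_0,r)\leq \lambda'$ for all sufficiently small $r$, and a doubling-type argument in the spirit of Lemma \ref{doublingLemma}(1) (whose proof transfers verbatim to the present setting because it only uses the bound $H'\leq \frac{2}{r}N H + O(1) H$) forces $H(v,x_0,r)\geq c\,r^{2\lambda'}$ as $r\to 0^+$. On the other hand, local Lipschitz continuity of $v$ together with $v_i(x_0)=0$ gives $|v_i(x)|\leq L|x-x_0|$, hence with the standard normalization $H(v,x_0,r)\leq C\,r^{2}$. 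Since $2\lambda' < 2$, these two estimates are incompatible as $r\to 0^+$, yielding the required lower bound and completing the proof.
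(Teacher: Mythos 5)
The paper does not actually prove this statement --- it is quoted from \cite{HugoTerraciniWeakReflectionLaw} --- but your argument is the standard Almgren-type proof used there and mirrored in this paper's own Theorem \ref{almgrenMonotonicity} (vanishing of the pairing $\int v_i\,d\mu_i$ to get the boundary form of $E$, then the Cauchy--Schwarz structure for $N'$ with the $f_i$-terms absorbed via Poincar\'e) and in Step~1 of Lemma \ref{radGoToZero} (the doubling-versus-regularity contradiction giving $N(v,x_0,0^+)\geq 1$, where you correctly replace the H\"older bound $Cr^{2\alpha}$ by the Lipschitz bound $Cr^2$ available for $v\in\mathcal{G}(\Omega)$). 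The proposal is correct and takes essentially this same route; the only point you gloss over is why $H(v,x_0,\cdot)$ cannot vanish identically near $r=0$ (your appeal to ``standard unique continuation'' for the measure-valued equation is precisely the delicate step the cited reference handles), but this does not affect the soundness of the approach.
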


\begin{prop}[{\cite[Remark 2.4]{HugoTerraciniWeakReflectionLaw}}]
\label{homogenousSegregatedLemma}
    Let $v \in \mathcal{G}(\Omega)$ with $f_i = 0$ for every $i=1,...,l$. Then $r \mapsto N(v,x_0,r)$ is nondecreasing. Moreover, it holds $N(v,x_0,r) = \sigma$ for all $r \in [0,\overline{r}[$ if and only if $v$ is a nontrivial homogenous function of degree $\sigma$.
\end{prop}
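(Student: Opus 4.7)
The plan is to follow the standard template for Almgren-type monotonicity formulas, exploiting the simplifications that occur once $f_i\equiv 0$. First I would unpack the definitions: since all $f_i$ vanish, the energy and height reduce to
\[
E(v,x_0,r)=\frac{1}{r^{N-2}}\sum_{i=1}^l\int_{B_r(x_0)}|\nabla v_i|^2\,dx,\qquad H(v,x_0,r)=\frac{1}{r^{N-1}}\sum_{i=1}^l\int_{\partial B_r(x_0)}v_i^2\,d\sigma,
\]
and the derivative formula in Definition \ref{GspaceDef} collapses to
\[
E'(v,x_0,r)=\frac{2}{r^{N-2}}\int_{\partial B_r(x_0)}\sum_{i=1}^l(\partial_\nu v_i)^2\,d\sigma.
\]
(Strictly speaking I'd take $x_0=0$ to lighten notation.)

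Next I would compute $H'$. Writing $r^{N-1}H(r)=\sum_i\int_{\partial B_r}v_i^2\,d\sigma$ and differentiating in polar coordinates gives $(r^{N-1}H)'=(N-1)r^{N-2}H+2\sum_i\int_{\partial B_r}v_i\partial_\nu v_i\,d\sigma$. The boundary integral is handled by testing the equation $-\Delta v_i=-\mu_i$ against $v_i$: since $\mu_i$ is supported on $\partial\{v_i>0\}$ and $v_i$ is continuous and vanishes there, one has $\int v_i\,d\mu_i=0$, hence $\sum_i\int_{\partial B_r}v_i\partial_\nu v_i=\sum_i\int_{B_r}|\nabla v_i|^2=r^{N-2}E(r)$. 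This yields the clean identity
\[
H'(v,x_0,r)=\frac{2}{r}E(v,x_0,r).
\]
Monotonicity is then immediate: by Cauchy--Schwarz,
\[
E(r)^2=\frac{1}{r^{2N-4}}\Bigl(\sum_i\int_{\partial B_r}v_i\partial_\nu v_i\Bigr)^2\le\frac{1}{r^{2N-4}}\Bigl(\sum_i\int_{\partial B_r}v_i^2\Bigr)\Bigl(\sum_i\int_{\partial B_r}(\partial_\nu v_i)^2\Bigr)=\frac{r}{2}H(r)E'(r),
\]
so $N'H^2=E'H-\tfrac{2}{r}E^2\ge 0$.

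For the second half, suppose $N(v,x_0,r)\equiv\sigma$ on $[0,\bar r[$. Then equality holds in the Cauchy--Schwarz step above for a.e.\ $r$, which forces $\partial_\nu v_i=c(r)v_i$ on $\partial B_r$ for each $i$ (the sum-form of Cauchy--Schwarz gives a common proportionality constant, since the segregation $v_iv_j=0$ means the individual pairs $(v_i,\partial_\nu v_i)$ live on disjoint subsets of $\partial B_r$ and the equality propagates). Combined with $N\equiv\sigma$, one computes $c(r)=\sigma/r$, so $\partial_r(v_i(r\theta))=\frac{\sigma}{r}v_i(r\theta)$ in the radial variable; integrating this ODE gives $v_i(r\theta)=r^\sigma v_i(\theta)$, i.e., $\sigma$-homogeneity, and conversely any such nontrivial homogeneous $v$ immediately satisfies $N\equiv\sigma$ by direct computation. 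The main (mild) obstacle is justifying the Cauchy--Schwarz equality pointwise in $r$ and pulling the proportionality out of the sum; this is handled rigorously by noting that the supports of $v_1,\dots,v_l$ partition $\partial B_r$ up to a measure-zero set, so the $l$-term Cauchy--Schwarz decouples into $l$ single-function Cauchy--Schwarz equalities, each yielding the desired radial ODE on its own support.
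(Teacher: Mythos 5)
The paper does not actually prove this proposition — it is quoted from \cite[Remark 2.4]{HugoTerraciniWeakReflectionLaw} — and your argument is precisely the standard one behind that remark: the identity $H'=\tfrac{2}{r}E$ obtained by testing $-\Delta v_i=-\mu_i$ against $v_i$ and using that $\mu_i$ charges only $\partial\{v_i>0\}$ where $v_i$ vanishes, Cauchy--Schwarz for monotonicity, and the equality case for homogeneity. Your proof is correct; the only minor points are that the derivative formula as printed in Definition \ref{GspaceDef} should be read as $\frac{2}{r^{N-2}}\int_{\partial B_r}\sum_i(\partial_\nu v_i)^2\,d\sigma$ (which is what you use), and the radial ODE $\partial_r v_i(r\theta)=\tfrac{\sigma}{r}v_i(r\theta)$ should be integrated in the a.e./weak sense (e.g.\ via $\frac{d}{dr}\log H$) since the $v_i$ are only Lipschitz.
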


Adapting slightly the proof of  of \cite[Theorem C.1]{HugoHolderVariable} (where $A_n\equiv A$ does not depend on $n$), in the spirit of \cite[Theorem 1.5]{SOAVE2016388} and \cite[Theorem 3.3]{HugoTerraciniWeakReflectionLaw} we can conclude the following proposition, which is a refinement of Theorem C.
\begin{prop}
\label{propOfBlowUpInNiceSpace}
Let $A_n$ be a sequence of matrices satisfying the bounds \textbf{(A1)}, \textbf{(A2)} uniformly and $A_n(x) \rightarrow Id$ uniformly over compact sets and $m>0$. Consider also a sequence of function $(f_{i,n}$ satisfying \textbf{(F)} and assume:
    $$
        f_{i,n} \rightarrow f_{i}
        \qquad
        \text{ in }
        C_{loc}(\Omega \times [0,m]).
    $$
    Let $k_n$ be a sequence such that $k_n \rightarrow -\infty$. 
    If $u_n$ is a sequence of solutions non negative solutions ($u_n \geq 0$) of the system:
    \begin{equation*}
    \label{limitSystemForSegregatedLimit}
        -\div(A_n(x) \nabla u_{i,n})
    =
        f_{i,n}(x, u_{i,n}(x))
    +
        k_n
        u_{i,n}^\gamma
        \sum_{j\neq i}
        u_{j,n}^{\gamma+1},
    \end{equation*}
    satisfying $\|u_n\|_{L^\infty(\Omega)}\leq m$. Then there exists $u \in \mathcal{G}(\Omega)$ such that up to a subsequence:
    $$
        u_n
    \rightarrow
        u
    \quad
    \text{ in }
    C_{loc}(\Omega)\cap H^1(\Omega).    
    $$
\end{prop}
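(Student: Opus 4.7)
The plan is to adapt the proof of \cite[Theorem C.1]{HugoHolderVariable} — where $A_n \equiv A$ and $f_{i,n} \equiv f_i$ were fixed — to the setting where both $A_n$ and $f_{i,n}$ vary with $n$. The overall strategy is classical for competitive systems: first obtain compactness in $C_{loc}\cap H^1_{loc}$ via uniform H\"older and energy bounds, then verify each of the three defining properties of the class $\mathcal{G}(\Omega)$.

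First I would apply Theorem C uniformly in $n$: since the constants $\theta,M$ are uniform and $f_{i,n}$ satisfies \textbf{(F)} with a uniform constant $d$ (by $C_{loc}$--convergence and the $L^\infty$--bound $m$), for every $K\Subset \Omega$ and $\alpha\in(0,1)$ there is $C=C(K,\alpha)$ with $\sup_n\|u_n\|_{C^{0,\alpha}(K)}\le C$. By Ascoli--Arzel\`a, up to a subsequence $u_n\to u$ in $C_{loc}(\Omega)$. Next, mimicking Proposition~\ref{ListProp}-(5), I would test the $i$-th equation by $\phi u_{i,n}$ for a suitable cutoff $\phi$ to produce a uniform $H^1_{loc}$ bound and the key estimate
\[
\int_{B_r} |k_n|\, u_{i,n}^{\gamma}\mathop{\sum_{j=1}^l}_{j\neq i} u_{j,n}^{\gamma+1}\,dx \le C_r.
\]
Because $|k_n|\to\infty$ while $u_n\to u$ uniformly, passing to the limit yields $\int_{B_r} u_i^{\gamma}u_j^{\gamma+1}\,dx=0$ whenever $i\neq j$, i.e.\ $u_i u_j\equiv 0$ in $\Omega$. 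Strong $H^1_{loc}$--convergence is then obtained exactly as in Proposition~\ref{ListProp}-(5): a Morse--Sard type choice of radii gives uniform control of $\int_{\partial B_r}|\nabla u_{i,n}|^2$, and testing against $u_{i,n}-u_i$ forces $\int_{B_r}\langle A_n\nabla(u_{i,n}-u_i),\nabla(u_{i,n}-u_i)\rangle\to 0$, which combined with the uniform ellipticity of $A_n$ and $A_n\to Id$ yields $u_n\to u$ in $H^1_{loc}(\Omega)$.

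To verify the equations in the definition of $\mathcal{G}(\Omega)$, I would argue as follows. On the open set $\{u_i>0\}$, segregation gives $u_j\equiv 0$ for $j\neq i$, and by Lemma~\ref{estimateLemma} applied to $u_{j,n}$ in a neighbourhood of any point where $u_i>0$ (using that $u_{i,n}\ge \delta>0$ locally and $|k_n|\to\infty$) one obtains $|k_n| u_{j,n}^{\gamma+1}\to 0$ uniformly. Hence the competitive term in the equation for $u_{i,n}$ tends to zero locally uniformly on $\{u_i>0\}$, and, using $A_n\to Id$ uniformly and $f_{i,n}\to f_i$ in $C_{loc}$, passing to the distributional limit yields $-\Delta u_i=f_i(x,u_i)$ on $\{u_i>0\}$. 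Globally the competitive term, which is nonpositive, converges in $(C^\infty_c(\Omega))'$ to a nonpositive distribution of order zero, hence to a measure $-\mu_i$ with $\mu_i\ge 0$; arguing as above on $\{u_i>0\}$ and on the interior of $\{u_i=0\}$ (where $-\Delta u_i=0=f_i(x,0)$) shows $\mathrm{supp}\,\mu_i\subset\partial\{u_i>0\}$.

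The main obstacle is the third bullet in Definition~\ref{GspaceDef}: absolute continuity of $r\mapsto E(u,x_0,r)$ with the explicit derivative formula. For this I would apply the Pohozaev--type identity of Lemma~\ref{PozohaevLemma} to $u_n$ (with matrix $A_n$ and vector field $Z_n(x)=\tfrac{A_n(x)(x-x_0)}{\mu_n(x)}$), which produces an identity for $r\tfrac{d}{dr}E_n$. Thanks to $A_n\to Id$ uniformly and $\|DA_n\|_\infty$ bounded, the ``error'' terms involving $\div(Z_n-(x-x_0))$, $\langle Z_n,\nabla(A_n)_{hl}\rangle$, and $\partial_{x_h}((Z_n)_j-(x-x_0)_j)$ (estimated as in inequality \eqref{weirdBound}) vanish locally uniformly as $n\to\infty$. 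Combined with the strong $H^1_{loc}$--convergence already proved, the uniform control \eqref{eq:uniform_nonvariational} (which lets one pass the competitive integral to the limit as a Radon measure term on $\partial B_r$ for a.e.\ $r$), and $f_{i,n}\to f_i$ in $C_{loc}$, one obtains in the limit precisely the formula for $\tfrac{d}{dr}E(u,x_0,r)$ required in Definition~\ref{GspaceDef}; absolute continuity follows because this derivative is locally integrable in $r$. This establishes $u\in\mathcal{G}(\Omega)$ and completes the proof.
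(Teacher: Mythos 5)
The paper itself does not write out a proof of Proposition \ref{propOfBlowUpInNiceSpace}: it only states that one adapts \cite[Theorem C.1]{HugoHolderVariable} in the spirit of \cite[Theorem 1.5]{SOAVE2016388} and \cite[Theorem 3.3]{HugoTerraciniWeakReflectionLaw}. Your outline is exactly that adaptation: uniform H\"older bounds from Theorem C (whose Remark already covers $n$-dependent $A_n$, $f_{i,n}$), the $H^1_{loc}$ bound and \eqref{eq:uniform_nonvariational} as in Proposition \ref{ListProp}-(5), segregation, identification of the limit equation with a nonnegative measure supported on $\partial\{u_i>0\}$, and the Pohozaev identity of Lemma \ref{PozohaevLemma} with $A_n\to Id$ to recover the derivative formula in Definition \ref{GspaceDef}. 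So in structure you are on the intended route.

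There is, however, one step where your argument as written does not deliver what you claim. In the last paragraph you say that \eqref{eq:uniform_nonvariational} ``lets one pass the competitive integral to the limit as a Radon measure term'' and that one then obtains \emph{precisely} the derivative formula of Definition \ref{GspaceDef}. These two assertions are incompatible: the formula for $\frac{d}{dr}E(v,x_0,r)$ in Definition \ref{GspaceDef} contains \emph{no} competition or measure terms, so the terms $k_n\int_{B_r}(\cdot)\,u_{i,n}^{\gamma+1}u_{j,n}^{\gamma+1}$ and $k_n\int_{\partial B_r}(\cdot)\,u_{i,n}^{\gamma+1}u_{j,n}^{\gamma+1}$ appearing in Lemma \ref{PozohaevLemma} must actually \emph{vanish} in the limit, not merely converge to a bounded measure. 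Boundedness of the total interaction mass (which is all \eqref{eq:uniform_nonvariational} gives) is not enough. The missing ingredient is the stronger decay $k_n\int_K u_{i,n}^{\gamma+1}u_{j,n}^{\gamma+1}\to 0$ for every $K\Subset\Omega$; this is exactly the statement the paper invokes in the proof of Lemma \ref{radGoToZero} and attributes to \cite[Theorem 1.4]{uniformHolderBoundsHugoTerraciniNoris}. It can be derived from what you already have: write $u_{i,n}^{\gamma+1}u_{j,n}^{\gamma+1}=u_{i,n}\cdot\bigl(u_{i,n}^{\gamma}u_{j,n}^{\gamma+1}\bigr)$, test the $i$-th equation against $u_{i,n}\phi$, pass to the limit using the strong $H^1_{loc}$ convergence and compare with the limit equation tested against $u_i\phi$ (where the measure term $\int u_i\phi\,d\mu_i$ vanishes because $u_i=0$ on $\mathrm{supp}\,\mu_i$); the competition term is the residual and must tend to zero. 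Once this is in place, the boundary terms on $\partial B_r$ also vanish for a.e.\ $r$ by a coarea argument, and your passage to the limit in the Pohozaev identity goes through. I would also record explicitly that Lemma \ref{estimateLemma}-(1) only yields $|k_n|u_{j,n}^{\gamma}\leq c$ on $\{u_i>0\}$ (boundedness, not decay); the local uniform vanishing of the competition term there then follows by multiplying by $u_{j,n}\to 0$.
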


\subsection{Excluding points of multiplicity 1} \label{chapter:mult1PointsApp}

Then the following result holds true.

\begin{thm}
    \label{mult2Theoremyeah}
    Let $u_n = (u_{1,n},...,u_{l,n}) \in C(B_R, \mathbb{R}^l)$ be a sequence of nonnegative functions, having a uniform bound $\|u_n\|_{L^\infty(B_R)} \leq m$, for some $m>0$, and satisfying:
    $$
        -\div(A_n(x)\nabla u_{i,n})
    =
        k_n
        u_{i,n}
        \sum_{j\neq i}
        u_{j,n}^2
        +
        f_{i,n}(x,u_{i,n}),
    $$
    where:
    \begin{itemize}
 \item $A_n \in C(B_R, \text{Sym}^{N\times N})$ is a sequence of matrices satisfying conditions \textbf{(A1)} and \textbf{(A2)} uniformly, and $A_n \rightarrow Id$ locally uniformly;
\item for each $i \in \{1,...,l\}$ let $f_{i,n} \in C(B_R\times \mathbb{R}, \mathbb{R})$ is a sequence of functions satisfying condition \eqref{boundForF}, and there exists let $f_i \in C(B_R \times \mathbb{R}, \mathbb{R})$ satisfying $f_{i,n} \rightarrow f_i$ locally uniformly;
\item  $k_n\rightarrow \infty$. 
\end{itemize}
Assume moreover that $u_{i,n} \rightarrow u = (u_1,...,u_l) \in \mathcal{G}(B_R)$. Then any $x_0 \in \{x: u(x) = (u_1(x),...,u_l(x)) = 0\}$ has at least multiplicity  2, in the sense that
	$$
	\#\Big\{
	i \in \{1,...,l\}:
	\text{meas}\{B_r(x_0) \cap \{u_i > 0\}\} 
	> 0 
	\text{ for every } r > 0
	\Big\}\geq 2.
	$$

\end{thm}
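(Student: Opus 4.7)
The plan is to argue by contradiction: assume that only one component of $u$, say $u_1$, is nontrivial in some small ball $B_r(x_0) \Subset B_R$, so that $u_j \equiv 0$ in $B_r(x_0)$ for every $j \neq 1$. Since $u \in \mathcal{G}(B_R)$, the representation $-\Delta u_1 = f_1(x,u_1) - \mu_1$ holds distributionally in $B_r(x_0)$, with $\mu_1 \geq 0$ a nonnegative Radon measure supported on $\partial\{u_1 > 0\} \cap B_r(x_0)$. The key intermediate goal is to show $\mu_1 = 0$ in $B_r(x_0)$: once this holds, $u_1$ is a classical nonnegative $C^{1,\alpha}$-solution of $-\Delta u_1 = f_1(x,u_1)$ in $B_r(x_0)$ with $u_1(x_0) = 0$, and since $|f_1(x,s)| \leq d|s|$ one may rewrite $(-\Delta + d)u_1 = f_1(x,u_1) + du_1 \geq 0$; Hopf's strong maximum principle for the elliptic operator $-\Delta + d$ (which has a nonnegative zero-order coefficient) then forces the nonnegative supersolution $u_1$, attaining its minimum $0$ at the interior point $x_0$ of the connected ball $B_r(x_0)$, to be identically zero there, contradicting the assumed nontriviality of $u_1$.

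To establish $\mu_1 = 0$, I would first prove exponential decay of the competing components on the positivity set of $u_1$. On any compact $K \Subset B_r(x_0) \cap \{u_1 > 0\}$, the uniform convergence $u_n \to u$ gives $u_{1,n} \geq \delta/2$ on $K$ for $n$ large; then from the equation for $u_{j,n}$ with $j \neq 1$ one gets
\[
-\div(A_n \nabla u_{j,n}) \leq \bigl(d - c|k_n|\delta^2\bigr) u_{j,n} \qquad \text{on } K,
\]
and Lemma \ref{estimateLemma}-(2) yields $\|u_{j,n}\|_{L^\infty(K')} \leq C e^{-c\sqrt{|k_n|}}$ on any $K' \Subset K$. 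Next, testing the equation for $u_{j,n}$ against $u_{j,n}\eta^2$ for a cutoff $\eta \in C_c^\infty(B_r(x_0))$ and using the strong $H^1_{loc}$-convergence $u_n \to u$ from Proposition \ref{propOfBlowUpInNiceSpace} yields the crucial variational bound $\int |k_n| u_{j,n}^2 u_{1,n}^2 \eta^2 \to 0$. Finally, for $\phi \in C_c^\infty(B_r(x_0))$, I would split $B_r(x_0) = \{u_{1,n} \geq \delta\} \cup \{u_{1,n} < \delta\}$: on the first set $|k_n| u_{1,n} u_{j,n}^2 \leq |k_n| u_{1,n}^2 u_{j,n}^2/\delta$, whose integral tends to $0$; the Lebesgue measure of the second set tends (as $n \to \infty$, then $\delta \to 0$) to that of $\{u_1 = 0\} \cap B_r(x_0)$, on which both $u_{1,n}$ and $u_{j,n}$ go to zero uniformly, so the corresponding integral can be controlled. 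Passing to the limit in the weak formulation of the equation for $u_{1,n}$ against $\phi$, the competition term $k_n \int u_{1,n} \sum_{j \neq 1} u_{j,n}^2 \phi$ vanishes in the limit, and thus $\mu_1 = 0$ in $B_r(x_0)$.

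The main obstacle will be exactly this measure-vanishing step. The singular measure $\mu_1$ could a priori concentrate on the free boundary $\partial\{u_1 > 0\} \cap B_r(x_0)$ as a genuine reflection-type measure --- indeed, $\mathcal{G}(\mathbb{R}^N)$ does contain single-phase configurations such as $v = ((\alpha \cdot x)^+, 0, \ldots, 0)$. The extra structural information that rules out such configurations is precisely that $u$ arises as a limit of a genuine competition system: in that case, there is no ``second component'' pushing back against $u_1$, so no reflection measure can form. Quantitatively, this is encoded in the exponential decay estimate of the competing components combined with the variational $L^1$-bound on $k_n u_{1,n}^2 u_{j,n}^2$; this mirrors the approach in \cite{MULT1DANCERWANG} for the constant-coefficient Laplacian case, and the variable-coefficient operator $-\div(A_n \nabla \cdot)$ together with the linear-growth terms $f_{i,n}$ enter only through the uniform bounds in \textbf{(A1)}--\textbf{(A2)} and $|f_{i,n}(x,s)| \leq d|s|$, respectively.
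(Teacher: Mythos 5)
Your overall skeleton is the right one, and it is the one the paper itself points to (the paper omits the proof, referring to the Dancer--Wang--Zhang argument and to the extended arXiv version): reduce to showing $\mu_1=0$ near $x_0$ and then invoke the strong maximum principle for $-\Delta+d$. Steps (a) and (b) are fine: the exponential decay on compact subsets of $\{u_1>0\}$ follows from Lemma \ref{estimateLemma}-(2), and testing the $j$-th equation against $u_{j,n}\eta^2$ together with $u_{j,n}\to 0$ strongly in $H^1_{loc}$ and uniformly does give $|k_n|\int u_{1,n}^2u_{j,n}^2\eta^2\to 0$.

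The gap is in the final splitting step, on the set $\{u_{1,n}<\delta\}$. There you need something like a uniform bound on $|k_n|\int_{\mathrm{supp}\,\phi}u_{j,n}^2$, and no such bound follows from the equations: in the model one-dimensional two-phase transition the interface has width $|k_n|^{-1/4}$ with $u_{1,n},u_{j,n}\sim|k_n|^{-1/4}$ there, so $|k_n|\int u_{j,n}^2\sim|k_n|^{1/4}\to\infty$ while $|k_n|\int u_{1,n}u_{j,n}^2=O(1)$. The assertion that ``both $u_{1,n}$ and $u_{j,n}$ go to zero uniformly, so the corresponding integral can be controlled'' is not a proof: uniform smallness of the factors cannot by itself beat $|k_n|\to\infty$. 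Indeed, testing the first equation against $\phi$ shows that $|k_n|\int u_{1,n}\sum_{j\neq 1}u_{j,n}^2\,\phi\to\mu_1(\phi)$, which is \emph{strictly positive} at a genuine two-phase interface (where $\mu_1$ is the jump of the normal derivative on the regular part of the free boundary), and your part-1 estimate on $\{u_{1,n}\ge\delta\}$ is valid there too; so the entire content of the theorem sits precisely in the part of the integral you have not controlled, and any correct argument must exploit the absence of a second phase in a quantitative way. The standard fix --- and the route of \cite{MULT1DANCERWANG} that the paper follows --- is to work with $w_n:=u_{1,n}-\sum_{j\neq 1}u_{j,n}$: since $\{w_n>0\}\subset\{u_{1,n}>\max_{j\neq1}u_{j,n}\}$, the combined interaction terms in the equation for $w_n$ have a favorable sign on $\{w_n>0\}$ (recall $k_n<0$; note the statement's ``$k_n\to\infty$'' should read $k_n\to-\infty$, as in Proposition \ref{propOfBlowUpInNiceSpace}), and Kato's inequality gives $-\div(A_n\nabla w_n^+)\ge\chi_{\{w_n>0\}}\bigl(f_{1,n}(x,u_{1,n})-\sum_{j\neq1}f_{j,n}(x,u_{j,n})\bigr)$ in $\mathcal{D}'$. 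Passing to the limit in $B_r(x_0)$, where $w_n^+\to u_1$ and $f_{j,n}(x,u_{j,n})\to 0$ for $j\neq1$, yields $-\Delta u_1\ge f_1(x,u_1)$ distributionally; combined with $-\Delta u_1=f_1(x,u_1)-\mu_1$ and $\mu_1\ge0$ this forces $\mu_1=0$, after which your maximum-principle conclusion goes through. (A minor loose end: your contradiction hypothesis should also cover the multiplicity-$0$ case, i.e.\ $u\equiv0$ near $x_0$, which the argument as written does not address; in the paper's applications this case is excluded because the relevant $x_0$ always lies on $\partial\{v_1>0\}$.)
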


The proof is very similar to \cite{MULT1DANCERWANG}, which deals with the Laplace operator. Hence, we ommit it. The interested reader can find the details of  the proof of Theorem \ref{mult2Theoremyeah} in the (extended) arXiv version of the current paper, namely in \cite[Appendix B]{DiasTavares}.

\subsection{Other results}
\label{chapter:AuxResultsApp}
Below are some auxiliary results used in the thesis. Some of them are proved while others are simply referenced.

\begin{lemma}
	\label{harmonicLemma}
	If $u$ is an harmonic function in $B_{2NM}(0) \subset \mathbb{R}^N$ and:
	$$u(0)=1, \qquad |\nabla u(0)| \geq \frac{1}{M},$$
	then $u$ changes sign in $B_{2NM}(0)$.
\end{lemma}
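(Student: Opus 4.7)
The plan is to argue by contradiction: suppose $u$ does not change sign in $B_{2NM}(0)$. Since $u(0)=1>0$, this means $u\geq 0$ throughout $B_{2NM}(0)$. I will then derive the gradient estimate $|\nabla u(0)| \leq \frac{N}{2NM}\,u(0) = \frac{1}{2M}$, which contradicts the hypothesis $|\nabla u(0)| \geq 1/M$.

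To produce the bound, I would use the fact that each partial derivative $\partial_i u$ is itself harmonic, so by the mean value property combined with the divergence theorem,
\[
\partial_i u(0) \;=\; \frac{1}{|B_{2NM}|}\int_{B_{2NM}} \partial_i u \, dx \;=\; \frac{1}{|B_{2NM}|}\int_{\partial B_{2NM}} u\, \nu_i \, d\sigma.
\]
Pick the unit vector $e=\nabla u(0)/|\nabla u(0)|$ and take the inner product with $\nabla u(0)$; using $u\geq 0$ and $|e\cdot\nu|\leq 1$, this yields
\[
|\nabla u(0)| \;\leq\; \frac{1}{|B_{2NM}|}\int_{\partial B_{2NM}} u\, d\sigma \;=\; \frac{|\partial B_{2NM}|}{|B_{2NM}|}\,\Xint-_{\partial B_{2NM}}\! u\, d\sigma \;=\; \frac{N}{2NM}\,u(0) \;=\; \frac{1}{2M},
\]
where the last step uses the spherical mean value property for $u$ and $|\partial B_R|/|B_R|=N/R$.

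The comparison $\frac{1}{2M} < \frac{1}{M}$ contradicts the assumption $|\nabla u(0)|\geq 1/M$, completing the argument. I do not expect any genuine obstacle here: the only subtlety is recognizing that one should not bound each $|\partial_i u(0)|$ separately (which would waste a factor $\sqrt N$), but rather project onto the direction of $\nabla u(0)$ before applying the triangle inequality; this is exactly what makes the numerical constant $2N$ in the radius of the ball sharp enough for the contradiction.
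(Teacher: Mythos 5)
Your proof is correct and is essentially the same argument as the paper's: both reduce to the interior gradient estimate $|\nabla u(0)|\le \frac{N}{R}u(0)$ for a nonnegative harmonic function on $B_R$, obtained from the mean value property applied to $u$ and to its (harmonic) partial derivatives together with the divergence theorem; the paper merely packages the same computation through the auxiliary function $v=u-1+\alpha x_1$ and the identity $\int_{\partial B_R}v(1+\nu_1)\,d\sigma=0$. Your remark about projecting onto the gradient direction rather than bounding each $\partial_i u(0)$ separately is exactly the point that makes the radius $2NM$ suffice.
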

\begin{proof}
Suppose, by contradiction, that $u$ does not change sign in $B_{2NM}(0)$ and so $u\geq 0$. Without loss of generality, assume that  $-x_1$ is the direction of the derivative at zero with norm $-\frac{\partial}{\partial x_1}u=|\nabla u(0)|=\alpha \geq \frac{1}{M}$ (otherwise one can rotate the domain, which does not change the fact that the function is harmonic). Then
	\begin{align}
		\label{localIneqHarmonic}
		v(x) := u(x) - 1 + \alpha x_1\geq -1+\alpha x_1
	\end{align}
satisfies $v(0) = 0$ and $\nabla v(0) = 0$, and it is harmonic, therefore
	\begin{equation}
	\label{harmonic1}
		\int_{\partial B_{2NM}(0)}
		v(x)
		d\sigma(x)
		=
		0.
	\end{equation}
	We also have that the function $\partial_{x_1}v$ is also harmonic, and $\partial_{x_1}v(0) = 0$;  so, using integration by parts,
	\begin{equation}
	\label{harmonic2}
		0 
		= 
		\int_{B_{2NM}(0)} 
		v_{x_i}(x)dx 
		= 
		\int_{\partial B_{2NM}(0)} 
		v(x)
		(\nu_{1})_x
		d\sigma(x).
	\end{equation}
	Summing up both equations \eqref{harmonic1} and \eqref{harmonic2} we have:
	\begin{align*}
		\int_{\partial B_{2NM}(0)}
		v(x)(1+(\nu_1)_x)d\sigma(x) = 0.
	\end{align*}
	Now notice that $(1+(\nu_1)_x)\geq 0$ for all $x$, thus we can use inequality \eqref{localIneqHarmonic} to obtain:
	\begin{align}
		\label{localRef}
		\begin{split}
			0
			&=
			\int_{\partial B_{2NM}(0)}
			v(x)
			(1+(\nu_1)_x)
			d\sigma(x) \geq 
			\int_{\partial B_{2NM}(0)}
			(-1+\alpha x_1)
			(1+\frac{x_1}{|x|})
			d\sigma(x)\\
			&=
			-|\partial B_{2NM}(0)| 
			+ 
			\alpha 
			\int_{\partial B_{2NM}(0)}
			\frac{
				x_1^2
			}{
				|x|
			}d\sigma(x).
		\end{split}
	\end{align} 
    By a symmetry argument:
	\begin{align}
		2NM|\partial B_{2NM}(0)|
		&=
		\int_{\partial B_{2NM}(0)}
		|x|
		d\sigma(x)
		=
		\int_{\partial B_{2NM}(0)}
		\sum_{i=1}^N
		\frac{|x_i|^2}{|x|}
		d\sigma(x)
		\geq
		N\int_{\partial B_{2NM}(0)}
		\frac{|x_1|^2}{|x|}
		d\sigma(x).
		\label{harmonic3}
	\end{align}
	Substituting equality \eqref{harmonic3} into \eqref{localRef}, and using also $\alpha \geq \frac{1}{M}$, we obtain
	\begin{align*}
		0 \geq -|\partial B_{2NM}(0)| +  \alpha2M |\partial B_{2NM}(0)|\geq		|\partial B_{2NM}(0)|>0
	\end{align*}
	which is a contradiction, concluding the proof.
\end{proof}

\begin{lemma}[Poincar\'e's Inequality]
	\label{PoincareIneq}
	If $u \in H^1_{loc}(\mathbb{R}^N)$, then:
	$$
	\frac{1}{r^{N-2}}
	\int_{B_r}
	|\nabla u|^2dx
	+
	\frac{1}{r^{N-1}}
	\int_{\partial B_r}
	u^2 d\sigma(x)
	\geq 
	\frac{N-1}{r^N}
	\int_{B_r}
	u^2dx
	$$
\end{lemma}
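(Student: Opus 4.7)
The plan is to prove the inequality via an integration by parts applied to the vector field $x\,u^2$ on $B_r$, combined with Young's inequality. The key observation is that $\div(x\,u^2) = N u^2 + 2\,u\,\nabla u\cdot x$, so that the divergence theorem gives the identity
\begin{equation*}
N\int_{B_r} u^2\,dx + 2\int_{B_r} u\,\nabla u\cdot x\,dx \;=\; \int_{\partial B_r} u^2\,(x\cdot \nu)\,d\sigma(x) \;=\; r\int_{\partial B_r} u^2\,d\sigma(x),
\end{equation*}
which already contains all three ingredients appearing in the claim (a bulk $u^2$ term, a surface $u^2$ term, and a cross term that we must pay for using $|\nabla u|^2$).

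First I would estimate the cross term using $|x|\leq r$ on $B_r$ together with the weighted Young inequality $2ab\leq \alpha a^2 + \alpha^{-1} b^2$ with the choice $\alpha = 1/r$ (tuned so that the resulting coefficients on $u^2$ and $|\nabla u|^2$ have the exponents of $r$ that match those in the final inequality). This yields
\begin{equation*}
\Big|2\int_{B_r} u\,\nabla u\cdot x\,dx\Big| \;\leq\; 2r\int_{B_r} |u|\,|\nabla u|\,dx \;\leq\; \int_{B_r} u^2\,dx + r^2\int_{B_r} |\nabla u|^2\,dx.
\end{equation*}

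Substituting this bound into the divergence identity, one absorbs one power of $\int_{B_r} u^2$ on the left-hand side, producing the coefficient $N-1$:
\begin{equation*}
(N-1)\int_{B_r} u^2\,dx \;\leq\; r\int_{\partial B_r} u^2\,d\sigma(x) + r^2\int_{B_r} |\nabla u|^2\,dx.
\end{equation*}
Dividing both sides by $r^N$ yields the desired inequality. There is no genuine obstacle here: the only subtle point is the choice of the parameter in Young's inequality, which is forced by the scaling of the three terms in the statement.
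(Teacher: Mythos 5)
Your proof is correct and is essentially the same as the paper's: the paper also applies the divergence theorem to $x\,u^2$ (stated for $r=1$ and then rescaled), while you carry out the identical computation directly for general $r$ with the cross term handled by the weighted Young inequality. No issues.
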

\begin{proof}
To prove it for $r = 1$ given $u \in H^1_{loc}(\mathbb{R}^N)$, simply apply the divergence theorem to $xu^2$. The result for a general $r$ follows from a scaling argument.
\end{proof}

\begin{lemma}[{{\cite[Lemma 4.14]{SoaveZilio}}}]
	\label{simplexLemma}
	Let $\epsilon \in ]0,1[$ and $g_n \in C([0,1], \mathbb{R}^l)$ be a sequence of continuous functions such that:
	$$
	g_n([0,1])
	\subset
	\Big\{
	x \in \mathbb{R}^l:
	x_i\geq 0,
	x_i\leq 1-\epsilon,
	\sum_{i=1}^l x_i=1
	\Big\}
	$$ 
	If $\displaystyle \lim\dist(g_n([0,1]), \Sigma_{2,l}) = 0$ holds true, where 
	\[
	\Sigma_{2,l}
	:=
	\Big\{
	x \in \mathbb{R}^l: 
	\exists i,j \in \{1,...l\},\ i\neq j,\ 
	\text{ such that }
	x_h = 0 \quad
	\forall h \neq i,j
	\Big\},
	\] 
	then, up to a subsequence, there exists $i \neq j$ such that 
\[
	\frac{\epsilon}{2}	<g_{i,n}(x),g_{j,n}(x) <1-\frac{\epsilon}{2} \text{ for $n$ sufficiently large, }\quad \text{ and }\quad g_{h,n}\rightarrow 0 \text{ uniformly for $h \neq i,j$}.
	\]

\begin{lemma}
\cite[Lemma B.3]{HugoHolderVariable}
	\label{estimateLemma}
	Let $A(\cdot) \in C^1(B_{2r}, \text{Sym}^{N\times N})$ satisfy \textbf{(A1)} and \textbf{(A2)}, that is, there exist $\theta >0, M>0$, $a_0$ such that
	$$
	\theta |\xi|^2	
	\leq 
	\langle A(x)\xi,\xi \rangle 
	\quad
	\forall x \in B_{2r},
	\xi \in \mathbb{R}^N
	,\qquad
	\sup_{x\in B_{2r}}
	\|A(x)\|
	\leq
	M
	,\qquad
	\sup_{x \in B_{2r}}\|DA(x)\| 
	\leq
	a_0.
	$$
	\begin{enumerate}
\item 	Given $C>0$, $\delta > 0$ and $\gamma\geq 1$, if $u \in H^1(B_{2r}) \cap C^0(\overline{B}_{2r})$ is a nonnegative function that satisfies:
	$$-\div(A(x)\nabla u)\leq -Cu^\gamma + \delta \qquad \forall x \in B_{2r},$$
	then there exists $c>0$, depending only on $N$, $\theta$, $M$, $a_0$ such that:
	$$C\|u\|_{L^\infty(B_r)}^\gamma \leq \frac{c}{r+r^2} \|u\|_{L^\infty(B_{2r})} + \delta.$$
\item Given $C>0$, if $u \in H^1(B_{2r})\cap C^0(\overline{B_{2r}})$ be a nonnegative solution of:
	$$
	-\div(A(x)\nabla u) \leq -Cu
	\qquad \forall x \in B_{2r},
	$$
	then there exist constants $c_1, c_2>0$ depending only on $N$, $\theta$, $M$, $a_0$ such that:
	$$
	\|u\|_{L^\infty(B_r)} \leq c_1\|u\|_{L^\infty(B_{2r})}e^{-c_2r\sqrt{C}}.
	$$
	\end{enumerate}
\end{lemma}

\end{lemma}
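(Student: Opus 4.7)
The plan is to combine the topological structure of $\Sigma_{2,l}$ with the constraint $x_i\leq 1-\epsilon$ and the connectedness of each image $g_n([0,1])$. Write $\Sigma_{2,l}=\bigcup_{\{i,j\}}F_{i,j}$, where $F_{i,j}:=\{x\in\Sigma_{2,l}:x_h=0\;\forall h\ne i,j\}$ is the edge of the standard simplex in $\mathbb{R}^l$ joining the vertices $e_i$ and $e_j$. Distinct edges meet only at a common vertex $e_k$ (when $\{i,j\}\cap\{i',j'\}=\{k\}$) or are disjoint, so $\Sigma_{2,l}$ is precisely the $1$-skeleton of the standard simplex.

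The first step is to observe that the constraint $x_i\leq 1-\epsilon$ keeps $g_n([0,1])$ uniformly away from every vertex: if $|g_n(x)-e_k|<\epsilon$ for some $k$ then $g_{k,n}(x)>1-\epsilon$, contradicting the hypothesis. Define the compact set $K_\epsilon:=\Sigma_{2,l}\setminus\bigcup_{k=1}^l B_{\epsilon/2}(e_k)$; this is a disjoint union of $\binom{l}{2}$ compact arcs, one per pair $\{i,j\}$, so the mutual Euclidean distance between distinct connected components is bounded below by some $\delta=\delta(\epsilon,l)>0$.

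Next, choose $\eta:=\tfrac14\min(\delta,\epsilon,\epsilon/l)$. By the hypothesis $\dist(g_n([0,1]),\Sigma_{2,l})\to 0$, there exists $n_0$ such that $\dist(g_n([0,1]),\Sigma_{2,l})<\eta$ for $n\geq n_0$, and combined with the vertex-separation this forces $g_n([0,1])$ to lie in the $\eta$-neighborhood of $K_\epsilon$. Since $g_n([0,1])$ is connected (continuous image of $[0,1]$) and the $\eta$-neighborhoods of the distinct components of $K_\epsilon$ are pairwise disjoint by the choice $\eta<\delta/4$, the whole image lies in the $\eta$-neighborhood of a single component, labelled by some pair $\{i_n,j_n\}$. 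As there are only finitely many pairs, passing to a subsequence we may assume this pair equals a fixed $\{i,j\}$.

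Finally, being within $\eta$ of $F_{i,j}$ forces $0\leq g_{h,n}(x)<\eta$ for every $h\neq i,j$ and every $x\in[0,1]$, hence $g_{h,n}\to 0$ uniformly. The simplex constraint $\sum_k g_{k,n}(x)=1$ then gives
\[
g_{j,n}(x)=1-g_{i,n}(x)-\sum_{h\ne i,j}g_{h,n}(x)\geq 1-(1-\epsilon)-(l-2)\eta>\tfrac{\epsilon}{2},
\]
and symmetrically for $g_{i,n}$, while $g_{i,n}(x),g_{j,n}(x)\leq 1-\epsilon<1-\epsilon/2$ is inherited from the assumption. The main (albeit mild) obstacle is the combinatorial/topological step of verifying that $K_\epsilon$ decomposes into disjoint components indexed by pairs $\{i,j\}$ with a uniform separation $\delta>0$; this reduces to the elementary fact that distinct edges of the standard simplex intersect only at a common vertex, so excising small neighborhoods of the vertices separates them.
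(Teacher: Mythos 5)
Since the paper does not prove this lemma at all (it is quoted directly from Soave--Zilio, Lemma 4.14), your argument has to stand on its own; its overall strategy --- connectedness of $g_n([0,1])$, separation of the pieces of $\Sigma_{2,l}$ once neighbourhoods of the vertices are excised, pigeonholing a single pair $\{i,j\}$ along a subsequence, and then using $\sum_k g_{k,n}=1$ together with $g_{k,n}\le 1-\epsilon$ to get the lower bound $\epsilon/2$ --- is the natural one and is essentially sound. However, the key topological step is not correct as written. By its definition, $\Sigma_{2,l}$ is the union of the coordinate $2$-planes $P_{ij}=\{x:\ x_h=0\ \forall h\ne i,j\}$, not the $1$-skeleton of the standard simplex: these planes are unbounded and intersect pairwise along entire coordinate axes (e.g.\ $P_{ij}\cap P_{ik}=\mathbb{R}e_i$), not only at the vertices $e_k$. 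Hence your set $K_\epsilon=\Sigma_{2,l}\setminus\bigcup_k B_{\epsilon/2}(e_k)$ is neither compact nor a disjoint union of $\binom{l}{2}$ arcs, and the uniform separation $\delta(\epsilon,l)>0$ you rely on fails for that set. The repair is cheap precisely because $g_n$ takes values in the truncated simplex: for such $x$ one has $\dist(x,\Sigma_{2,l})=\min_{i<j}\bigl(\sum_{h\ne i,j}x_h^2\bigr)^{1/2}$, and this controls, up to a constant depending only on $l$, the distance to the closed simplex edge $P_{ij}\cap\{x_i+x_j=1,\ x_i,x_j\ge 0\}$. So you should run the argument with the genuine $1$-skeleton $\Sigma_{2,l}\cap\{x\ge 0,\ \sum_i x_i=1\}$ (for which ``distinct edges meet only at common vertices'' is true), and keep track of that multiplicative constant when asserting that the image lies in a small neighbourhood of $K_\epsilon$.

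A second, smaller gap: with $\eta$ fixed once and for all, your argument yields only $g_{h,n}<\eta$ for $h\ne i,j$ and $n$ large, which is not the claimed uniform convergence $g_{h,n}\to 0$. To finish, observe that once $g_{i,n},g_{j,n}>\epsilon/2$ on $[0,1]$, any other pair $\{p,q\}$ omits at least one of $i,j$, so $\dist(g_n(t),P_{pq})\ge \epsilon/2$ for every $t$; therefore, as soon as $\sup_t\dist(g_n(t),\Sigma_{2,l})<\epsilon/2$, the minimum defining the distance is attained at $\{i,j\}$, and
\begin{equation*}
\Bigl(\sum_{h\ne i,j}g_{h,n}(t)^2\Bigr)^{1/2}=\dist\bigl(g_n(t),\Sigma_{2,l}\bigr)\longrightarrow 0 \quad\text{uniformly in }t,
\end{equation*}
which gives $g_{h,n}\to 0$ uniformly for $h\ne i,j$. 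With these two corrections (and the otherwise correct bookkeeping $g_{j,n}\ge 1-(1-\epsilon)-(l-2)\eta>\epsilon/2$, $g_{i,n},g_{j,n}\le 1-\epsilon<1-\epsilon/2$), your proof is complete.
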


We now will state two results taken from \cite[Corolaries-2.8,2.10]{SoaveZilio}, and proved in the last section of \cite{TerraciniHolderBoundsOG}. For Lemma \ref{equationNonTrivial} one can also see \cite[Lemma A.3.]{SOAVE2016388}.

\begin{lemma}
	\label{subHarmonicNonTrivial}
	Let $v=(v_1,...,v_l) \in C(\mathbb{R}^N) \cap H^1_{loc}(\mathbb{R}^N)$ such that each component is nonnegative and subharmonic and $v_i\cdot v_j = 0$ for $i \neq j$. If $v$ has a bound
	$$
	    \max_{i=1,...,l}
	    \mathop{\sup_{x\neq y}}_{x,y \in \mathbb{R}^N}
	    \frac{|v_i(x)-v_i(y)|}{|x-y|}
	 <
	    \infty,
	$$
	then $v$ can have at most two nontrivial components.
\end{lemma}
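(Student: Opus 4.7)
The plan is to argue by contradiction, assuming that $v_1,v_2,v_3$ are all nontrivial, and to produce a $1$-homogeneous limiting profile at infinity whose restriction to $S^{N-1}$ violates a spectral partition inequality. First I would introduce the blowdown sequence $v^R(x):=v(Rx)/R$ for $R\to\infty$. Because $v$ is globally Lipschitz with constant $L$, the family $\{v^R\}$ is uniformly Lipschitz, and each component is still nonnegative, subharmonic, and mutually disjointly supported. By Ascoli–Arzel\`a together with the standard compactness for subharmonic families, along a subsequence $R_n\to\infty$ one has $v^{R_n}\to w=(w_1,\dots,w_l)$ locally uniformly and in $H^1_{loc}(\mathbb{R}^N)$, with the same structural properties, and (using Proposition \ref{propOfBlowUpInNiceSpace} applied to an auxiliary competition system with $A_n=Id$ and $f_{i,n}\equiv 0$, of which the $v^{R_n}$ may be realized as suitable limits) $w\in\mathcal{G}(\mathbb{R}^N)$ with $f_i\equiv 0$.

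Second, I would argue that the blowdown preserves three nontrivial components. Since each $v_i$ is nontrivial, nonnegative, and subharmonic, the Hardy–Littlewood mean value property forces $\sup_{B_R}v_i\to\infty$ or $\sup_{B_R}v_i$ to remain bounded below by a positive constant; the Lipschitz bound $v_i(x)\le L|x|+v_i(0)$ gives the matching upper control $\sup_{B_R}v_i\le LR+O(1)$. Thus $\sup_{B_1}v_i^R$ stays bounded and bounded away from $0$ (using a subharmonic mean value argument centered at any point where $v_i>0$, rescaled to the unit ball), so each $w_i$ is nontrivial, $i=1,2,3$.

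Third, I would run Almgren's monotonicity on $w$. By Theorem \ref{theoremForNandFreeStuff} and Proposition \ref{homogenousSegregatedLemma}, $r\mapsto N(w,0,r)$ is monotone nondecreasing, and at any free-boundary point $0\in\{w=0\}$ (which can be arranged by translating so that some triple contact point goes to $0$, or simply noting $w(0)=\lim v(0)/R=0$) we have $N(w,0,0^+)\ge 1$. On the other hand, the global Lipschitz bound yields, for every $r>0$,
\[
E(w,0,r)=\frac{1}{r^{N-2}}\sum_i\int_{B_r}|\nabla w_i|^2\le CL^2 r^2,
\qquad
H(w,0,r)\ge c r^2,
\]
where the lower bound on $H$ uses the nontriviality of $w_1$ and its Lipschitz continuity to show $w_1(x)\ge c|x|$ on a cone. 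Thus $N(w,0,r)\le C$, and combined with $N(w,0,0^+)\ge 1$ and the monotonicity, $N(w,0,\cdot)$ is constant equal to $1$, so by Proposition \ref{homogenousSegregatedLemma} the limit $w$ is $1$-homogeneous at $0$.

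Finally, $1$-homogeneity forces each $w_i$ to be harmonic inside the open cone $\{w_i>0\}$. The restriction $\varphi_i:=w_i|_{S^{N-1}}$ is then a nontrivial first Dirichlet eigenfunction of $-\Delta_\theta$ on the open set $\Omega_i:=\{w_i>0\}\cap S^{N-1}$ with eigenvalue $\lambda_1(\Omega_i)=N-1$ (the spectral value attached to homogeneity degree $1$). The $\Omega_i$ are mutually disjoint. The main obstacle is to exclude this: by the sharp spectral inequality for optimal partitions of $S^{N-1}$ (the higher-dimensional Friedman–Hayman inequality, available e.g. from \cite{TerraciniHolderBoundsOG} and used in the form of \cite[Lemma A.3]{SOAVE2016388}), three disjoint open subsets of the sphere cannot simultaneously satisfy $\lambda_1(\Omega_i)\le N-1$; more precisely, one has $\sum_{i=1}^3\gamma(\lambda_1(\Omega_i))>2+\gamma(N-1)$ with $\gamma$ as in \eqref{GammaCaffareliDef}, contradicting $\gamma(\lambda_1(\Omega_i))=1$ for all three $i$. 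This contradiction completes the proof.
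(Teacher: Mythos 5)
Your overall strategy (blowdown to a $1$-homogeneous profile, then a spectral partition inequality on $S^{N-1}$) differs from the argument this lemma actually rests on — the paper quotes it from \cite{SoaveZilio}, whose proof (from the last section of \cite{TerraciniHolderBoundsOG}) is a direct three-phase Alt--Caffarelli--Friedman argument at \emph{finite} radii: if three components are nontrivial, each factor $J_i(r)=r^{-2}\int_{B_r}|\nabla v_i|^2|y|^{2-N}dy$ is positive and, by the Lipschitz bound, uniformly bounded, while the three-phase ACF monotonicity (whose exponent is governed by the strict Friedman--Hayman inequality for three disjoint spherical sets) forces the product $\prod_i J_i(r)$ to grow like $r^{6(\nu-1)}$ with $\nu>1$ — a contradiction for large $r$. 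Your route has a genuine gap precisely where it departs from this: you never show that all three components survive the blowdown. The dichotomy you invoke is vacuous — ``$\sup_{B_R}v_i$ bounded below by a positive constant'' includes the case where $v_i$ is bounded, and bounded nontrivial nonnegative subharmonic functions do exist for $N\ge 3$ (e.g.\ $\max\{0,\,1-|x|^{2-N}\}$). For such a component $\sup_{B_1}v_i^R=\sup_{B_R}v_i/R\to 0$, so $w_i\equiv 0$ and your final contradiction on the sphere never materializes, even though $v_i$ is nontrivial. Excluding a nontrivial sublinear third component is essentially the content of the lemma, and it is exactly what the finite-radius ACF argument handles and what the blowdown discards. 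A related problem: $w\in\mathcal{G}(\mathbb{R}^N)$ is asserted but not available — the hypotheses give only subharmonicity and segregation, not the domain-variation identity required by Definition \ref{GspaceDef}, and $v$ is not assumed to be a limit of competition systems, so Proposition \ref{propOfBlowUpInNiceSpace} does not apply and Theorem \ref{theoremForNandFreeStuff} and Proposition \ref{homogenousSegregatedLemma} cannot be invoked for $w$.

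Even granting those points, the Almgren step is a non sequitur as written: a monotone quotient satisfying $N(w,0,0^+)\ge 1$ and $N(w,0,r)\le C$ need not be constantly equal to $1$. What is needed is the upper bound $N\le 1$, and that comes from the Lipschitz \emph{upper} bound $H(w,0,r)\le C r^2$ (since $w(0)=0$) combined with the doubling property of Lemma \ref{doublingLemma}: if $N(w,0,r_0)>1$ then $H$ grows superquadratically, which is impossible. The lower bound $H\ge cr^2$ that you use yields only $N\le C$ and proves nothing. The spectral fact at the end — three disjoint open subsets of $S^{N-1}$ cannot all satisfy $\lambda_1(\Omega_i)\le N-1$, by the equality case of Friedman--Hayman — is correct and is indeed the decisive ingredient, but it is most robustly exploited through the ACF quotient at finite scales; I would rewrite the proof along the lines of \cite{TerraciniHolderBoundsOG} rather than trying to repair the blowdown.
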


\begin{lemma}
	\label{equationNonTrivial}
	Let $(v_1,...,v_l)  \in C(\mathbb{R}^N) \cap H^1_{loc}(\mathbb{R}^N)$ be a vector function such that each component is positive, and satisfies the system:
	$$
	-\Delta v_i = C v_i^{\gamma}\sum_{i \neq j}^l v_j^{\gamma+1}.
	$$
	If $v$ has a bound
	$$
	    \max_{i=1,...,l}
	    \mathop{\sup_{x\neq y}}_{x,y \in \mathbb{R}^N}
	    \frac{|v_i(x)-v_j(y)|}{|x-y|}
	 <
	    \infty,
	$$
	then $v$ has at most two nontrivial components.
\end{lemma}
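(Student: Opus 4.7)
The strategy is to argue by contradiction: assume three components, say $v_1, v_2, v_3$, are nontrivial, and reduce to the segregated setting already treated in Lemma \ref{subHarmonicNonTrivial}. First I would observe that, since $C<0$ and all $v_j\ge 0$, the right-hand side of the equation is nonpositive, so $-\Delta v_i \le 0$ for every $i$; hence each $v_i$ is a nonnegative subharmonic function on $\mathbb{R}^N$. Moreover, none of $v_1,v_2,v_3$ can be a nonzero constant: if $v_1\equiv c>0$, the equation would force $\sum_{j\ne 1} v_j^{\gamma+1}\equiv 0$, contradicting the nontriviality of $v_2$ and $v_3$. So each of them is nonconstant, nonnegative, Lipschitz, and subharmonic.

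Next, I would perform a blow-down. Let $L>0$ be the common Lipschitz constant from the hypothesis and, for $R\to\infty$, define
$$
w_{i,R}(x) := \frac{v_i(Rx)}{LR}.
$$
A direct scaling shows that $w_R=(w_{1,R},\dots,w_{l,R})$ is nonnegative, $1$-Lipschitz, uniformly bounded on compact sets, and solves
$$
-\Delta w_{i,R} \;=\; C\,L^{2\gamma}\,R^{2\gamma+2}\, w_{i,R}^{\gamma}\sum_{j\ne i} w_{j,R}^{\gamma+1},
$$
so the competition parameter $C\,L^{2\gamma}\,R^{2\gamma+2}$ diverges to $-\infty$ as $R\to\infty$. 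By the compactness results for strongly competing systems (in the vein of Theorem C and Proposition \ref{propOfBlowUpInNiceSpace}), along a subsequence $w_R\to w$ in $C^0_{\mathrm{loc}}\cap H^1_{\mathrm{loc}}(\mathbb{R}^N)$, with $w=(w_1,\dots,w_l)\in\mathcal{G}(\mathbb{R}^N)$; in particular $w_iw_j\equiv 0$ for $i\ne j$ and each $w_i$ is a nonnegative Lipschitz subharmonic function. Applying Lemma \ref{subHarmonicNonTrivial} to $w$ then yields that at most two of the $w_i$ can be nontrivial.

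The hard part will be to transfer the conclusion back to the original $v$, because the renormalization $w_{i,R}(x) = v_i(Rx)/(LR)$ kills any globally bounded component in the limit, and nonconstant bounded nonnegative subharmonic functions on $\mathbb{R}^N$ do exist when $N\ge 3$; hence not every nontrivial $v_i$ automatically produces a nontrivial $w_i$. To bridge this gap I would split into cases. If at least three of $v_1, v_2, v_3$ satisfy $\limsup_{|x|\to\infty} v_i(x)/|x|>0$, their growth is detected by the blow-down and produces three nontrivial segregated $w_i$, contradicting Lemma \ref{subHarmonicNonTrivial}. If some of them are globally bounded, I would instead test the equation for $v_i$ against a smooth cutoff $\phi_R$ of $B_R$ and, exploiting the sign $C<0$ together with $|\nabla v_i|\le L$, obtain
$$
|C|\int_{B_R} v_i^\gamma\!\!\sum_{j\ne i}v_j^{\gamma+1} \;\le\; \tfrac{C_1}{R}\int_{B_{2R}\setminus B_R} v_i^2 \;\le\; C_2 R^{N-1},
$$
and then use a componentwise normalization $\tilde w_{i,R}(x) := v_i(Rx)/\|v_i\|_{L^\infty(B_R)}$ so that each of the three retained components has $L^\infty$-norm $1$ at the scale $R$; the integral bound above guarantees that in the limit $\tilde w$ is again in $\mathcal{G}(\mathbb{R}^N)$ with three nontrivial segregated components, yielding the contradiction with Lemma \ref{subHarmonicNonTrivial}. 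This delicate preservation of nontriviality under the blow-down in the presence of possibly bounded components is precisely the reason why the lemma is nontrivial and constitutes the main technical step.
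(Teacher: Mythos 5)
The paper itself gives no proof of this lemma: it is quoted from \cite[Corollary 2.10]{SoaveZilio} (see also \cite{TerraciniHolderBoundsOG} and \cite[Lemma A.3]{SOAVE2016388}), where the argument is a \emph{three-phase} Alt--Caffarelli--Friedman monotonicity formula applied directly to $v$: if three components are nontrivial, the product $\prod_{i=1}^{3}J_i(r)$ of the ACF integrals (which contain the interaction terms, so no segregation is needed) is forced by the three-part Friedman--Hayman inequality to grow at least like $r^{6+\delta}$ for some $\delta>0$, while the Lipschitz bound gives $J_i(r)=O(r^2)$ and hence $\prod_i J_i(r)=O(r^6)$ --- a contradiction for large $r$, with no blow-down at all. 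Your route (blow down, obtain a segregated limit in $\mathcal{G}(\mathbb{R}^N)$, invoke Lemma \ref{subHarmonicNonTrivial}) is genuinely different; its first half is fine, but the step you yourself flag as ``the hard part'' is exactly where it breaks, and your proposed fixes do not close it.

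Concretely, there are two gaps. First, even when all three components satisfy $\limsup_{|x|\to\infty}v_i(x)/|x|>0$, these limsups may be realized along pairwise disjoint sequences of scales; to keep all three components nontrivial in a single blow-down limit you need $\sup_{B_R}v_i\geq cR$ for $i=1,2,3$ along one common sequence $R_n\to\infty$, and monotonicity of the spherical means of the subharmonic $v_i$ only propagates a constant lower bound (not a linear one) between the good scales. Making this simultaneous nondegeneracy quantitative is precisely the content of results like Lemmas \ref{doublingForAlt} and \ref{sameWeightAltCafCond} in the paper, and it is not free. Second, the fallback normalization $\tilde w_{i,R}=v_i(R\cdot)/\|v_i\|_{L^\infty(B_R)}$ fails outright when $v_i$ is bounded: the Lipschitz constant of $\tilde w_{i,R}$ is $LR/\|v_i\|_{L^\infty(B_R)}\to\infty$, so there is no equicontinuity and no compactness; moreover, with different normalizing constants in different equations the rescaled system is no longer of the form covered by Theorem C or Proposition \ref{propOfBlowUpInNiceSpace}, so the assertion that the limit lies in $\mathcal{G}(\mathbb{R}^N)$ with three nontrivial segregated components is unsupported. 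The bound $|C|\int_{B_R}v_i^{\gamma}\sum_{j\neq i}v_j^{\gamma+1}\leq C_2R^{N-1}$ is correct, but it yields neither segregation nor nontriviality of that limit. As written, your argument proves the conclusion for the blow-down limit $w$, not for $v$; to prove the lemma itself you should either supply the missing nondegeneracy step or switch to the direct three-phase ACF argument of the cited references.
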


\begin{lemma}
    \label{harmonic2compsystemCharacter}
    Let $u = (u_1,...,u_l) \in C(\overline{B_1})$ be a homogenous function of degree $1$ at zero with two nontrivial components. Assume that $u$ also satisfies the system of equations:
    \begin{align}
        \label{systemOfEquationForRandomLemmaHarmonicuhh}
       -\Delta u_i = 0 \text{in}\quad \{u_i>0\},\qquad u_iu_j = 0 \quad \forall i,j \in \{1,...,l\} \text{ and } i \neq j,\qquad u_j \geq 0 \quad \forall j \in \{1,..,l\}.
    \end{align}
    Then there exist indices $h,k \in \{1,...,l\}$ and constants $\gamma_h,\gamma_k\geq 0$ such that, up to a rotation:
    $$
        v_h(x)
    =
        \gamma_h x_1^+,
    \qquad
        v_k(x)
    =
        \gamma_k x_1^-,
    \qquad
        v_j(x) = 0
    \quad
        \forall j \neq h,k.
    $$
\end{lemma}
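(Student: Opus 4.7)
The plan is to exploit the $1$-homogeneity together with the harmonic structure on each positivity set to reduce the claim to the classical Friedland-Hayman inequality on $S^{N-1}$, in its sharp equality case.

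First I would pass from $u$ to its restriction to the sphere. Writing $g_i := u_i|_{S^{N-1}}$, the $1$-homogeneity gives $u_i(x) = |x|\, g_i(x/|x|)$ in $\overline{B_1}$, and I set $\omega_i := \{g_i > 0\} \subset S^{N-1}$. Using the polar decomposition $\Delta u_i = r^{s-2}\bigl[s(s+N-2)g_i + \Delta_{S^{N-1}} g_i\bigr]$ with $s = 1$, the condition $-\Delta u_i = 0$ on $\{u_i > 0\}$ from \eqref{systemOfEquationForRandomLemmaHarmonicuhh} translates into
\[
-\Delta_{S^{N-1}} g_i = (N-1)\, g_i \ \text{in } \omega_i,\qquad g_i = 0 \ \text{on } \partial \omega_i.
\]
Since $g_i \geq 0$ and is nontrivial, this identifies $g_i$ as a first Dirichlet eigenfunction, so $\lambda_1(\omega_i) = N-1$ for each of the two nontrivial indices $i \in \{h, k\}$.

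Next I would apply the Friedland-Hayman inequality to the two disjoint open sets $\omega_h, \omega_k \subset S^{N-1}$. With $\gamma$ as in \eqref{GammaCaffareliDef}, this asserts $\gamma(\lambda_1(\omega_h)) + \gamma(\lambda_1(\omega_k)) \geq 2$, with equality if and only if $\omega_h$ and $\omega_k$ are opposite open half-spheres. A direct check shows
\[
\gamma(N-1) \;=\; \sqrt{\tfrac{(N-2)^2}{4}+N-1} \;-\; \tfrac{N-2}{2} \;=\; \tfrac{N}{2} - \tfrac{N-2}{2} \;=\; 1,
\]
so the two contributions sum exactly to $2$ and we are precisely in the equality case. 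Hence, after a rotation of $\mathbb{R}^N$ I may assume $\omega_h = \{x \in S^{N-1}: x_1 > 0\}$ and $\omega_k = \{x \in S^{N-1}: x_1 < 0\}$.

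Finally I would identify the eigenfunctions explicitly: on a hemisphere, the (unique up to a positive constant) first Dirichlet eigenfunction of $-\Delta_{S^{N-1}}$, with eigenvalue $N-1$, is the affine coordinate function $x_1$ restricted to the hemisphere. Thus $g_h(x) = \gamma_h x_1$ on $\omega_h$ and $g_k(x) = -\gamma_k x_1$ on $\omega_k$ for some constants $\gamma_h, \gamma_k \geq 0$, while $g_j \equiv 0$ for $j \neq h,k$ by assumption of two nontrivial components. Extending by $1$-homogeneity yields $u_h(x) = \gamma_h x_1^+$, $u_k(x) = \gamma_k x_1^-$, and $u_j \equiv 0$ otherwise. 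The only delicate point in the argument is the rigorous use of the \emph{equality case} in the Friedland-Hayman inequality, but this is a classical sharp result and is exactly what is needed here; all other steps are standard computations on the sphere.
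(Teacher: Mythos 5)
Your proof is correct and follows essentially the same route as the paper's: separate variables using the $1$-homogeneity to reduce to the eigenvalue problem $-\Delta_{S^{N-1}}g_i=(N-1)g_i$ on $\omega_i=\{g_i>0\}$, note that $\gamma(N-1)=1$ so the two disjoint supports realize equality in the Friedland--Hayman inequality, and conclude they are complementary hemispheres with $g_i$ proportional to the linear spherical harmonic. The only difference is cosmetic: the paper asserts the "only opposite half-spheres" step directly, whereas you explicitly cite the equality case of Friedland--Hayman, which is a welcome clarification rather than a deviation.
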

\begin{proof}
    Since $u$ is homogenous of degree 1, for each $i \in \{1,...,l\}$ there exists $f_i \in C(\partial B_1)$ such that $u_i(x) = |x|f_i(\frac{x}{|x|})$. Computing the Laplacian in spherical coordinates we obtain:
    $$
        \Delta u_i
    =
        \frac
        {\partial^2 u_i}
        {\partial r^2}
        +
        \frac{N-1}
        {r}
        \frac{\partial u_i}
        {\partial r}
        +
        \frac{1}{r^2}
        \Delta_\theta u_i
    =
        \frac{1}{r}
        \left(
            (N-1)f
            +
            \Delta_\theta
            f
        \right)
    $$
    where $\Delta_\theta$ is the Laplace-Beltrami operator on the sphere. Thus by the equation \eqref{systemOfEquationForRandomLemmaHarmonicuhh} we must have $-\Delta_\theta f_i(z) = (N-1)f_i(z)$ for $z \in \{f_i >0\}$. Notice that $f_i f_j = 0$ for $i \neq j$. If $u$ has two nontrivial components, say $u_h,u_k$ for $h,k \in \{1,...,l\}$, then the sets $\{f_h>0\}$ and $\{f_k>0\}$ are two disjoint sets on the sphere such that:
    $$
        \lambda(\{f_h>0\})
        =
        \lambda(\{f_k>0\})
        =
        (N-1)
    $$
    where $\lambda(\Omega)$ for $\Omega \subset \partial B_1$ is the value of the first Dirichlet eigenvalue of the Laplace-Beltrami operator. This is only possible if for some unit vector $e_1 \in \mathbb{R}^N$, the sets are given by:
    $$
        \{f_h>0\}
        =
        \{
        z \in\partial B_1 : 
        \langle z, e_1 \rangle>0
        \}
        ,
        \qquad
        \{f_k>0\}
        =
        \{
        z \in\partial B_1 : 
        \langle z, e_1 \rangle<0
        \}
        ,
        \qquad
    $$
    and there exist $\gamma_h, \gamma_k>0$ such that:
    $$
        f_h(z)
        =
        \gamma_h
        \langle z, e_1 \rangle,
        \qquad
        f_k(z)
        =
        \gamma_k
        \langle z, e_1 \rangle.
    $$
\end{proof}

\textbf{Acknowledgments.} Manuel Dias and Hugo Tavares are partially supported by the Portuguese government through FCT - Funda\c c\~ao para a Ci\^encia e a Tecnologia, I.P., under the projects\newline  UID/MAT/04459/2020 and PTDC/MAT-PUR/1788/2020.


\bigbreak

\textbf{Manuel Dias and Hugo Tavares}\\
Departamento de Matem\'atica do Instituto Superior T\'ecnico\\
Universidade de Lisboa\\
Av. Rovisco Pais\\
1049-001 Lisboa, Portugal\\
\texttt{manuel.g.dias@tecnico.ulisboa.pt, hugo.n.tavares@tecnico.ulisboa.pt}

\end{document}